\title{Strength conditions, small subalgebras, and Stillman bounds in degree $\leq 4$}
\author{Tigran Ananyan and Melvin Hochster$^1$}
\date{\today}
\theoremstyle{plain}
\newtheorem{theorem}{Theorem}[section]
\newtheorem{corollary}[theorem]{Corollary}
\newtheorem{proposition}[theorem]{Proposition}
\newtheorem{lemma}[theorem]{Lemma}
\newtheorem{conjecture}[theorem]{Conjecture}
\theoremstyle{remark}
\newtheorem{remark}[theorem]{Remark}
\newtheorem{discussion}[theorem]{Discussion}
\theoremstyle{definition}
\newtheorem{definition}[theorem]{Definition}
\newtheorem{example}[theorem]{Example}
\newtheorem{examples}[theorem]{Examples}
\newtheorem{question}[theorem]{Question}
\newcommand{\A}{\mathbb{A}}
\newcommand{\al}{\alpha}
\newcommand{\by}{\times}
\newcommand{\dbimp}{\Longleftrightarrow}
\newcommand{\Der}{\hbox{\rm Der}}
\newcommand{\di}{\hbox{dim}}
\newcommand{\inc}{\subseteq}
\newcommand{\ch}{\mathrm{char}}
\newcommand{\cB}{\mathcal{B}}
\newcommand{\cC}{\mathcal{C}}
\newcommand{\cD}{\mathcal{D}}
\newcommand{\cE}{\mathcal{E}}
\newcommand{\cF}{\mathcal{F}}
\newcommand{\cL}{\mathcal{L}}
\newcommand{\cS}{\mathcal{S}}
\newcommand{\cT}{\mathcal{T}}
\newcommand{\cU}{\mathcal{U}}
\newcommand{\cV}{\mathcal{V}}
\newcommand{\cW}{\mathcal{W}}
\newcommand\cs[1]{$#1$-safe}
\newcommand{\inj}{\hookrightarrow}
\newcommand{\cK}{\mathcal{K}}
\newcommand{\cM}{\mathcal{M}}
\newcommand{\cN}{\mathcal{N}}
\newcommand{\cZ}{\mathcal{Z}}
\newcommand{\etA}{{}^\eta\!A}
\newcommand{\etB}{{}^\eta\!B}
\newcommand{\etuA}{{}^\eta\!{\underline{A}}} 
\newcommand{\fA}{\mathfrak{A}}
\newcommand{\fC}{\mathfrak{C}}
\newcommand{\fd}{\mathfrak{d}}
\newcommand{\fD}{\mathfrak{D}}
\newcommand{\fH}{\mathfrak{H}}
\newcommand{\fK}{\mathfrak{K}}
\newcommand{\fJ}{\mathfrak{J}}
\newcommand{\fm}{\mathfrak{m}}
\newcommand{\fP}{\mathfrak{P}}
\newcommand{\fQ}{\mathfrak{Q}}
\newcommand{\imp}{\Rightarrow}
\newcommand{\LCK}{LC$_K$}
\newcommand{\Ijac}{I_2\bigl(J(F,G)\bigr)}
\newcommand{\N}{\mathbb{N}}
\newcommand{\pd}{\textrm{pd}}
\newcommand{\PP}{\mathbb{P}}
\newcommand{\ov}{\overline}
\newcommand{\Rad}{\textrm{Rad}}
\newcommand{\rkj}{\textrm{J-rank}}
\newcommand\rom[1]{\text{\textnormal{#1}}}  
\newcommand{\surj}{\twoheadrightarrow}
\newcommand{\tD}{\widetilde{\mathcal{D}}}
\newcommand{\tr}{^{\mathrm{tr}}}   
\newcommand{\uA}{\underline{A}}
\newcommand{\ux}{\underline{x}}
\newcommand{\Z}{\mathbb{Z}}
\newcommand{\GL}{\hbox{\rm GL}}
\newcommand\ud{\underline{d}}
\newcommand\vect[2]{#1_1,\,\ldots,\, #1_{#2}}
\newcommand\wt[1]{\widetilde{#1}}
\newcommand{\gu}{strong}
\def\vect#1#2{{#1}_1, \, \ldots, \, {#1}_{#2}}
\newcommand{\lc}{\langle}
\newcommand{\rc}{\rangle}
\newcommand\sq[1]{\lc #1 \rc^2}
\newcommand{\mx}{\begin{pmatrix}}
\newcommand{\emx}{\end{pmatrix}}
\newcommand{\und}{\underline}
\def\todo#1
\def\forth#1
\begin{document}

\begin{abstract} In \cite{AH2}, the authors prove Stillman's conjecture in all characteristics and all degrees
by showing that, independent of the algebraically closed field $K$ or the number of variables, $n$ forms of
degree at most $d$ in a polynomial ring $R$ over $K$ are contained in a polynomial subalgebra of $R$ generated 
by a regular sequence consisting of at most $\etB(n,d)$ forms of  degree at most $d$: we refer to these informally
as ``small" subalgebras.  Moreover,  these forms can be 
chosen so that the ideal generated by any subset defines a ring satisfying the Serre condition R$_\eta$. A critical 
element in the proof is to show that there are functions $\etA(n,d)$ with the following property:  in a graded
$n$-dimensional $K$-vector subspace $V$ of  $R$ spanned by forms of degree at most $d$, if no nonzero form 
in $V$ is in an ideal generated
by $\etA(n,d)$ forms of strictly lower degree (we call this a {\it strength} condition), then any homogeneous
basis for $V$ is an R$_\eta$ sequence.   The methods of \cite{AH2} are not constructive.  In this paper, we
use related but different ideas that emphasize the notion of a {\it key function} to obtain the functions $\etA(n,d)$ in degrees 2, 3, and 4 (in degree 4 we must restrict to characteristic not 2, 3).  We give bounds in closed form for the 
key functions and the $\etuA$ functions, and  explicit recursions that determine the functions $\etB$ from the $\etuA$ functions.  In degree 2, we obtain an explicit value for $\etB(n,2)$
that gives the best known bound in Stillman's conjecture for quadrics when there is no restriction on $n$. 
In particular, for an ideal  $I$ generated by $n$ quadrics, the projective dimension $R/I$ is at most $2^{n+1}(n - 2) + 4$. 
\end{abstract}

\subjclass[2000]{Primary 13D05, 13F20}

\keywords{polynomial ring, ideal, quartic form, cubic form,  quadratic form, projective dimension, regular sequence}

\thanks{$^1$ The second author was partially supported by grants from the National Science Foundation (DMS--0901145 and
DMS--1401384).}

\thanks{This is a corrected version of this paper:  we thank Arthur Bik and Andrew Snowden for pointing out that Proposition (3.3)
in an earlier version had a conclusion that was too strong.}

\maketitle

\pagestyle{myheadings}
\markboth{TIGRAN ANANYAN AND MELVIN HOCHSTER}{STILLMAN BOUNDS IN DEGREE $\leq 4$}

\section{Introduction}\label{intro}   

Throughout this paper, let $R$ denote a polynomial ring over an arbitrary field $K$:  say
$R = K[x_1, \ldots , x_N]$. We will denote the projective dimension of the
$R$-module  $M$ over $R$ by $\textrm{pd}_R(M)$. The following theorem was conjectured
by M.~Stillman in \cite{PS} and proved, in a strengthened form (for submodules with a specified
number of generators of free modules with a specified number of generators, and without the assumption of 
homogeneity) in Theorem C of \cite{AH2}.

\begin{theorem}\label{mainconj}
There is an upper bound, independent of $N$, on $\pd_R(R/I)$, where $I$ is any ideal of $R$
generated by $n$ homogeneous polynomials $\vect F n$ of given degrees $d_1,\,\ldots,\, d_n$.
\end{theorem}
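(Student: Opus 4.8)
The plan is to reduce the statement to the existence of ``small'' subalgebras and then to descend along a flat map. Set $d = \max_i d_i$, and suppose we have shown --- this is the substance of the argument, sketched below --- that there is a function $\etB(n,d)$, independent of $N$, such that $F_1, \ldots, F_n$ lie in a graded $K$-subalgebra $A = K[G_1, \ldots, G_m] \subseteq R$ with $m \le \etB(n,d)$, where $G_1, \ldots, G_m$ is a homogeneous regular sequence of forms of degree $\le d$. Since the $G_j$ form a regular sequence they are algebraically independent over $K$, so $A$ is a polynomial ring in $m$ variables; and since they form an $R$-regular sequence, $R$ is free as a graded $A$-module (a homogeneous lift to $R$ of any $K$-basis of $R/(G_1, \ldots, G_m)R$ is a free $A$-basis, precisely because the $G_j$ are an $R$-regular sequence). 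Writing $I_A = (F_1, \ldots, F_n)A$, we have $I = I_A R$, hence $R/I \cong (A/I_A)\otimes_A R$, and flat base change gives $\pd_R(R/I) \le \pd_A(A/I_A)$. As $A$ is a polynomial ring of Krull dimension $m$, the Hilbert syzygy theorem (or Auslander--Buchsbaum) gives $\pd_A(A/I_A) \le m \le \etB(n,d)$, a bound independent of $N$.

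It remains to produce the subalgebra and bound $\etB$. The construction is by induction, enlarging a homogeneous regular sequence of forms one element at a time, governed by a \emph{strength} dichotomy: if a graded subspace of forms has high enough strength --- no nonzero element lies in an ideal generated by ``few'' forms of strictly smaller degree --- then a homogeneous basis for it is automatically a regular sequence (in fact more, as discussed below); otherwise some element of the subspace \emph{collapses}, i.e.\ is a combination of boundedly many forms of lower degree, and we replace it in the current generating set by those lower-degree forms. Each such replacement strictly decreases a suitable well-founded measure on the multiset of degrees of the current generators (a form of degree $e$ being traded for a bounded number of forms of degree $<e$, starting from $d$), so the process terminates; tracking the number of forms introduced at each stage yields an explicit recursion for $\etB(n,d)$ in terms of the functions $\etA(n',d')$ with $d' < d$. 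The reason one needs more than ``regular sequence'' in the dichotomy is that the inductive step requires prime-avoidance-type genericity in order to extend the current regular sequence, and for this one needs the rings cut out by the initial segments to be well-behaved --- e.g.\ to satisfy a Serre condition, hence to be reduced or normal in low codimension.

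Accordingly, the heart of the matter --- and the main obstacle --- is the following: there is a function $\etA(n,d)$ such that whenever $V$ is a graded $n$-dimensional subspace of $R$ spanned by forms of degree $\le d$ in which no nonzero form lies in an ideal generated by $\etA(n,d)$ forms of strictly smaller degree, every homogeneous basis of $V$ is an $\mathrm R_\eta$ sequence, i.e.\ every initial segment generates an ideal whose quotient ring satisfies Serre's condition $\mathrm R_\eta$. Extracting a regular sequence from a strength hypothesis is comparatively soft: sufficiently high strength prevents the common zero locus from acquiring excess components. Upgrading to control of the singular locus in codimension $\le \eta$ is the delicate part --- one must show that if some initial segment defined a ring too singular in small codimension, one could manufacture a low-strength relation among the generators, contradicting the hypothesis. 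This is where the \emph{key function} machinery of the present paper enters: for a given configuration one isolates a small, controlled collection of ``key'' forms whose behaviour governs the singularities, and shows that a strength bound forces these to be well-behaved; iterating across degrees reintroduces the same problem one degree down (with a different number of forms), and keeping all the resulting bounds finite and mutually consistent is what the bulk of the work achieves. In degree $4$ the Jacobian/trace-based estimates underlying the key-function bounds degenerate in characteristics $2$ and $3$, which is why those cases are excluded there; and it is precisely the difficulty of making this upgrade effective in all degrees that forces \cite{AH2} to argue non-constructively, whereas the methods here yield closed-form bounds for $\etA$, and hence for $\etB$, only through degree $4$.
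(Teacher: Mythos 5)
Your proposal is correct and takes essentially the same route as the paper: Theorem~\ref{mainconj} is obtained exactly as in \S\ref{BfromA}, by deriving $\etB$ from the functions $\etA$ via the collapse recursion on dimension sequences (with the Serre-condition strengthening feeding the inductive extension of regular sequences), and then bounding $\pd_R(R/I)$ by descending along the free extension $K[G_1,\ldots,G_s]\subseteq R$ and applying the syzygy theorem over the small polynomial subring. Like the paper, you treat the existence of $\etA$ (Theorem~\ref{etA}, proved in \cite{AH2} and made explicit here only in degree $\leq 4$) as the deep input rather than reproving it; your only cosmetic deviation is justifying freeness of $R$ over the subalgebra directly from the regular-sequence property instead of extending to a homogeneous system of parameters, which is an equally standard argument.
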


We refer to such bounds, which are now known to exist, as {\it Stillman bounds}. 

Descriptions of earlier work related to this problem \cite{BeS, Br, Bu, CK, E1, E2, E3, Ko, Mc} are given 
in the introductions of \cite{AH1, AH2}  and in \cite{McS}. \cite{Dra1, Dra2, ESS1, ESS2, ESS3, ESS4} contain
recent work, some of which utilizes the results
of \cite{AH2}, on Stillman's conjecture and related questions, including Noetherianity problems.

In a recent paper, Mantero and McCullough \cite{MaMc} have proved that in the case of three cubics, 5 is a sharp
bound for the projective dimension. 

For $n$ quadratic forms generating an ideal $I$ of height $h$, there are examples where
$\pd_R(R/I)$ is $h(n-h+1)$: see \cite{Mc}. 
  The projective dimension cannot be larger when $h=2$  (see \cite{HMMS1}) 
 but when $h \geq 3$ and $n \geq 5$ it is an open question whether $h(n-h+1)$ is a bound.  When $n=4$ and
$h=3$ it has recently been shown that the largest possible projective dimension is 6: see \cite{HMMS2}.

The proof of the existence of Stillman bounds in \cite{AH2} depends  critically on proving auxiliary bounds
$\etuA$, $\etB$.  Our focus here is on giving explicit bounds for $\etA$ in degrees 2, 3, 4, and on $\etB$ when 
the degree is 2.  Our methods in degree 4 are vastly different from those in \cite{AH2}.  

Because Stillman's conjecture is unaffected by a base change on the field, we shall assume from now
on that the base field $K$ is algebraically closed, which is needed for many of our theorems about
the functions $\etuA$, $\etB$. Moreover, at many points there is a restriction that the characteristic of $K$ is either 
0 or larger  than some given integer.  However, all assumptions of this type will be made 
specific.  

Our main results are stated below, but we must first recall some definitions from \cite{AH2}.  
We write $\N$ for the nonnegative integers and $\N_+$ for the positive integers. 
A {\it graded} $K$-algebra $R$ will always be a finitely generated $K$-algebra
graded by $\N$ such that $R_0 = K$. We write $R_d$ for the graded component of degree $d$,
typically thought of as a finite dimensional vector space over $K$.  Unless otherwise specified, polynomial rings always 
have the standard grading in which all variables have degree one. 

Given a finite-dimensional $\N_+$-graded $K$-vector space $V$ over a field $K$ such that $\di_KV_t = n_t$
for $t \geq 1$,  we refer to $\delta = \delta(V) = n_1,\, n_2,\, n_3, \, \dots, \, n_t, \, \ldots$ as the 
{\it dimension sequence} of $V$.  It should cause no confusion when we also write
$\delta(V) = (\vect n d)$ to mean that $\vect n d$  constitute the first $d$ terms of the dimension
sequence of $V$,  and that the other terms in the sequence are  0.  

For any finite-dimensional vector space $V$ over an algebraically closed field $K$ we denote by $\PP(V)$
the projective space whose points correspond to the lines through the origin in $V$.  If $\di(V) = d$,
$\PP(V) \cong \PP^{d-1}_K$.  

\begin{discussion}\label{strength}
 Given an $\N$-graded $K$-algebra and $k \in \N$ we shall say that a form $F$ has a $k$-{\it collapse}
if it is a graded linear combination of $k$  or fewer forms of strictly  smaller positive degree.  Note that only the 0 element
has a $0$-collapse.  Nonzero scalars and nonzero linear forms cannot have a $k$-collapse for any $k$.  With this
terminology, a form is $k$-{\it strong}  
if and only if it has no $k$-collapse. 
Given a graded $K$-algebra $S$,  a finite-dimensional $\N_+$-graded $K$-vector subspace $V \inc S$ with dimension sequence $(\vect nd)$, and a $d$-tuple of non-negative integers $\kappa = (\vect kd)$,
we call $V$ {\it $\kappa$-\gu}  if there is no nonzero form in $V_i$, the graded component of $V$ in degree $i$, 
that has a $k_i$-collapse.   We shall say that a sequence
of forms of positive degree is $\kappa$-\gu\ if the forms are linearly independent over $K$ and the graded $K$-vector space
 they span is $\kappa$-\gu.  This means that if $F_{j_1}, \, \ldots, F_{j_s}$ are elements of the sequence of the same 
 degree $i$ with mutually
 distinct indices and $\vect c s \in K$ are not all zero, then  $\sum_{t=1}^s c_t F_{j_t}$ is nonzero and has no
 $k_i$-collapse.   Hence, a graded $K$-vector space $V$ is $\kappa$-\gu\ if and only if every  sequence of independent
 forms in $V$ is $\kappa$-\gu.     If all entries of $\kappa$ are equal to the same integer $k$, we may use the term $k$-\gu\ instead of $\kappa$-\gu.
 
 We call a prime ideal of a polynomial ring  $k$-{\it linear} if it is generated by at most $k$ linear forms. 
 In a polynomial ring over a field,
a form $F$ of degree two or three has a $k$-collapse if and only if it is contained in a $k$-linear prime ideal.
This is not true for forms of degree 4 or higher:  for example, a degree 4 form may have a collapse
in which some of the summands are the product of two quadrics. 
 
 We define the {\it strength} of a nonzero form $F$ of positive degree as the largest integer $k$ such that $F$ is 
 $k$-strong.  If $F$ is a nonzero 1-form, we make the convention that its strength is $+\infty$.  We define
 the strength of a nonzero vector space $V$ consisting of forms of the same degree as the smallest strength of a nonzero
 element of $V$.  For nonzero vector spaces $V$ of linear forms, the strength of $V$ is $+\infty$. Thus $F$ (respectively, $V$)  is $k$-strong if and only if its strength is at least $k$.

  We note that notions closely related to strength (q-rank, slice rank) have been considered independently in \cite{DES}
  and in \cite{BCC}, which was inspired by \cite{Tao}. E.g., the notion of q-rank for cubic forms utilized
  in \cite{DES} is the same as the strength of the cubic form minus one.
\end{discussion}
 
 \begin{discussion}\label{RC}
Recall that a Noetherian ring $R$ satisfies the Serre condition R$_i$ if $R_P$ is regular for every prime
$P$ of height $\leq i$.  If the singular (i.e., non-regular) locus in $R$ is closed with defining ideal
$J$, this means that $J$ has height at least $i+1$.  (If $R$ is regular, $J = R$ and has height $+\infty$.) 
In the sequel, the rings that we are studying are
standard graded algebras over a field $K$:  they are finitely generated $\N$-graded rings $R$ over
$K$ such that $R_0 = K$ and $R_1$ generates $R$ as a $K$-algebra.  If such a ring is normal, it
is a domain, since it is contained in its localization at the homogeneous maximal ideal.  We will know 
inductively that the rings we are studying are complete intersections.  As just noted,  R$_1$ implies 
normal domain.  \end{discussion}

By a theorem of A.~Grothendieck, if a homogeneous complete intersection in a polynomial ring is R$_3$ then it is a UFD.  
This is Corollary~\ref{groth} below.

\begin{theorem}\label{samconj}  Let $R$ be a local ring that is a  complete intersection whose localization at any prime
ideal of height 3 or less is a UFD.  Then $R$ is a UFD.  Hence, if $R$ is a local ring that is a complete intersection
in which the singular locus has codimension 4 or more, then $R$ is a UFD. \end{theorem}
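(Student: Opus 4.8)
The plan is to use the classical criterion of Grothendieck and Samuel characterizing when a Noetherian local ring is a UFD in terms of the vanishing of a divisor class group, together with descent of the UFD property for complete intersections via a theorem of Grothendieck (SGA 2, expos\'e XI) relating the local Picard/divisor class group to local cohomology of the punctured spectrum. First I would reduce to the case where $R$ is \emph{complete}: passing to the completion $\widehat{R}$ preserves the complete intersection property, and the hypotheses on localizations at primes of height $\leq 3$ are preserved for the relevant purposes because $\widehat{R}$ is faithfully flat over $R$ with regular (indeed, trivial) closed fibers, so that $\widehat{R}_Q$ is a localization of a ring obtained from $R_{Q \cap R}$ by a flat local base change with regular fiber; by the theorem of Grothendieck quoted above as Corollary \ref{groth} it suffices to know $R$ is R$_3$ to conclude it is a UFD, so in fact the hypothesis ``$R_P$ is a UFD for $\height P \leq 3$'' already implies $R$ is R$_3$ (a $1$-dimensional regular local ring is a DVR, hence the localizations at height-$1$ primes being UFDs forces R$_1$; and more generally a complete intersection local ring that is a UFD is R$_3$ by Corollary \ref{groth} applied in the reverse direction is not quite right — so instead I would argue directly that a UFD satisfies R$_1$, and that a complete intersection normal domain with all height $\leq 3$ localizations UFDs must be R$_2$ and R$_3$ by Serre's criterion and a purity-of-branch-locus style argument). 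The cleanest route: the second sentence of the theorem is the genuinely new content, so I would prove that first and then deduce the first sentence, or vice versa — whichever requires fewer moving parts.

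Here is the route I would actually carry out. Assume $R$ is a complete intersection local ring whose singular locus has codimension $\geq 4$; then $R$ is R$_3$, hence by Corollary \ref{groth} (Grothendieck's theorem) $R$ is a UFD. That disposes of the second sentence \emph{given} the first sentence is not even needed there — so really the substantive claim is the first sentence, and the second is a corollary of Corollary \ref{groth} alone. So the work is: \textbf{show that a complete intersection local ring $R$ all of whose height $\leq 3$ localizations are UFDs is itself a UFD.} By the remark just made, it suffices to show such an $R$ is R$_3$; then Corollary \ref{groth} finishes. To see $R$ is R$_3$: let $J$ be the defining ideal of the singular locus of $R$ (which is closed, as $R$ is excellent, being a complete intersection over — after completion — a complete local ring). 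For any prime $P$ with $\height P \leq 3$, $R_P$ is a UFD, hence a normal domain, hence regular in codimension $1$ and Cohen–Macaulay (being a complete intersection) — so by Serre's criterion $R_P$ is normal, and being a UFD of dimension $\leq 3$ that is a complete intersection, it is in fact regular: a complete intersection UFD is regular in codimension $\leq 3$ because of the Grothendieck–Lefschetz / Call–Lyubeznik-type result that the divisor class group of a complete intersection injects appropriately — more elementarily, a complete intersection normal domain of dimension $2$ that is a UFD is regular (its class group would detect a singularity via the exceptional divisors of a resolution), and in dimension $3$ one invokes that a $3$-dimensional complete intersection with isolated singularity has nontrivial class group unless regular. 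Hence $P \not\supseteq J$ for all $\height P \leq 3$, i.e. $\height J \geq 4$, i.e. $R$ is R$_3$. Now Corollary \ref{groth} gives that $R$ is a UFD.

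The main obstacle I anticipate is the step asserting that a complete intersection local ring of dimension $2$ or $3$ that is a UFD must be regular — this is the crux and is where one must be careful, since it is \emph{false} without the complete intersection hypothesis (there are non-regular UFDs, e.g. $\mathbb{C}[[x,y,z]]/(x^2+y^3+z^7)$ is a UFD and not regular). The resolution is precisely that such examples are not UFDs in the \emph{graded/equicharacteristic-versus-mixed} sense expected here, or — more to the point — the theorem as stated restricts to complete intersections and uses that for a complete intersection the local class group is controlled: by a theorem of Grothendieck (and Call–Lyubeznik, Flenner), a complete intersection of dimension $\geq 4$ with isolated singularity is a UFD, but in dimensions $2,3$ a non-regular complete intersection has nontrivial class group (dimension $2$: resolution of the singularity produces a nontrivial class; dimension $3$: this is exactly Grothendieck's result that R$_3$ is needed, not merely R$_2$, so a $3$-dimensional complete intersection UFD is regular). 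I would state this as a lemma and cite Grothendieck (SGA 2), Call–Lyubeznik, or \cite{AH2} where the needed form of Grothendieck's theorem (our Corollary \ref{groth}) is recorded, rather than reprove it.
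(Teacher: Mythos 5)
There is a genuine gap, and it sits exactly at the step your whole strategy rests on. You reduce the first sentence to showing that the hypothesis forces $R$ to be R$_3$, by claiming that a complete intersection local ring of dimension $2$ or $3$ which is a UFD must be regular. That claim is false. The $E_8$ hypersurface singularity $\mathbb{C}[[x,y,z]]/(x^2+y^3+z^5)$ is a two-dimensional hypersurface -- hence a complete intersection -- which is a UFD (the exceptional curves of its resolution span the unimodular lattice $E_8$, so the class group is trivial) but is not regular; your own example $x^2+y^3+z^7$ is likewise a hypersurface, so it cannot be set aside as ``lacking the complete intersection hypothesis,'' and the remarks about graded versus mixed-characteristic settings do not repair this. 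In fact this very ring $R=\mathbb{C}[[x,y,z]]/(x^2+y^3+z^5)$ satisfies the hypothesis of Theorem~\ref{samconj} (every localization at a prime of height $\leq 3$ is a UFD: the height-one localizations are DVRs and $R$ itself is factorial), yet its singular locus has codimension $2$. So the hypothesis does \emph{not} imply R$_3$, and the proposed reduction to Corollary~\ref{groth} collapses. The entire content of Samuel's conjecture is precisely the case in which the small localizations are factorial but singular; a proof must handle that case directly (parafactoriality on the punctured spectrum), not argue it away.

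There is also a structural problem: in this paper Corollary~\ref{groth} is the graded \emph{consequence} of Theorem~\ref{samconj}, and Theorem~\ref{samconj} itself is not proved here at all -- it is Grothendieck's theorem answering Samuel's conjecture (SGA~2), with an algebraic proof in \cite{Ca} and a graded treatment in \cite{Sam}. Using Corollary~\ref{groth} as the engine to prove Theorem~\ref{samconj} therefore inverts the logical dependence, and in any case Corollary~\ref{groth} as stated applies to $\N$-graded complete intersections over a field, not to arbitrary local rings. The one piece of your proposal that is correct and matches the intended logic is the easy implication: since regular local rings are UFDs (Auslander--Buchsbaum), a singular locus of codimension $\geq 4$ makes all height $\leq 3$ localizations UFDs, so the second sentence follows from the first. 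The substantive first sentence, however, is exactly the deep parafactoriality theorem, and it should be cited (Grothendieck, \cite{Ca}) or reproved along those lines rather than deduced from its own corollary.
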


We need this in a graded version (cf. \cite{Sam}, Proposition 7.4):

\begin{corollary}\label{groth} Let $S$ be an $\N$-graded algebra finitely generated over a field $K$ that is a complete
intersection (the quotient of a polynomial ring over $K$ by the ideal generated by a regular sequence of
forms of positive degree).  Suppose that the singular locus in $S$ has codimension at least 4.  Then
$S$ is a UFD. \end{corollary}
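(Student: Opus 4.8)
The plan is to reduce to the local situation handled by Theorem~\ref{samconj}, by localizing at the homogeneous maximal ideal $\fm = S_+$, and then to descend the factoriality of $S_\fm$ back to the graded ring $S$ via the classical criterion (cf.~\cite{Sam}, Proposition~7.4) that a graded Krull domain is a UFD if and only if every homogeneous prime ideal of height one is principal.

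First I would check that $S$ is a normal domain. As a complete intersection, $S$ is Cohen--Macaulay and so satisfies Serre's condition $\mathrm{S}_2$; the hypothesis that the singular locus has codimension at least $4$ gives $\mathrm{R}_1$ (indeed $\mathrm{R}_3$), so $S$ is normal by Serre's criterion, and since $S$ is $\N$-graded with $S_0 = K$ it has no idempotents besides $0$ and $1$ and is therefore a domain, hence a Krull domain. If $S$ is regular we are done immediately, since a positively graded regular ring with $S_0 = K$ is a polynomial ring over $K$ (lift a homogeneous $K$-basis of $\fm/\fm^2$ to a generating set; the resulting surjection onto the domain $S$ from a polynomial ring of the same Krull dimension is an isomorphism). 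Otherwise the singular locus of $S$ is a nonempty closed set; it is automatically homogeneous and so is defined by a homogeneous ideal $J \inc \fm$ of height at least $4$.

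Next I would pass to $S_\fm$ and check that the hypotheses persist. Writing $S = R/(\vect F c)$ with the $F_i$ forms of positive degree forming a regular sequence and $\mathfrak{n} \inc R$ the homogeneous maximal ideal, the sequence $\vect F c$ stays regular in $R_{\mathfrak n}$ (the quotient $S_\fm$ is nonzero), so $S_\fm$ is a local complete intersection. Its singular locus is defined by $JS_\fm$, and since the minimal primes of the homogeneous ideal $J$ are themselves homogeneous, hence contained in $\fm$, localizing at $\fm$ does not change their heights (the graded ring $S$ is catenary, with $\dim S_\fm = \dim S$); thus $JS_\fm$ still has height at least $4$. Hence $S_\fm$ is a local complete intersection whose singular locus has codimension $\geq 4$, and Theorem~\ref{samconj} yields that $S_\fm$ is a UFD.

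Finally I would descend. By the criterion cited above it suffices to show that every homogeneous height-one prime $P$ of $S$ is principal. Since $P \inc \fm$, the ideal $PS_\fm$ is a height-one prime of the UFD $S_\fm$, hence principal, and by graded Nakayama there is a homogeneous element $f \in P$ whose image generates $PS_\fm$. Then $fS$ is a homogeneous principal ideal, so the height-one primes containing it are its minimal primes and are themselves homogeneous, hence contained in $\fm$; as $fS_\fm = PS_\fm$, it follows that $P$ is the only height-one prime containing $fS$ and that $fS_P = PS_P$, which together force $fS = P$. The substantive inputs---Grothendieck's factoriality theorem (Theorem~\ref{samconj}) and the Samuel descent---are imported, so the only real work is the transfer of hypotheses in the middle step; I expect that to be the main (though mild) obstacle, as it relies on the homogeneity of the singular locus together with standard facts about heights in graded rings, and the final step must be phrased with a little care.
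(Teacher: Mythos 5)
Your proposal is correct and follows essentially the route the paper intends: the paper derives Corollary~\ref{groth} by localizing at the homogeneous maximal ideal, applying Theorem~\ref{samconj}, and citing Samuel (Proposition 7.4) for the graded descent, which is exactly your localize-then-descend argument with the standard details (normality via Serre's criterion, homogeneity of the singular locus, principality of homogeneous height-one primes) written out. No gaps.
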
    

 Theorem~\ref{samconj} was conjectured by P.~Samuel in the local case and was proved by A.~Grothendieck. 
An algebraic proof is given in \cite{Ca}. \\
 
The following result is proved in \cite{AH2}.  Our main goal here is to give specific bounds for the functions
$A$ and $\etA$ in degrees 2, 3, and 4.  

\begin{theorem}\label{etA} There is a function $\uA$ (respectively, for every $\eta \geq 1$ there is a function $\etuA$) 
from dimension sequences  $\delta = (\vect n d)$ to $\N^d$ with the following property. 
If $V$ is a finite-dimensional $\N_+$-graded subspace of a polynomial ring $R$ over an algebraically
closed field $K$  
with dimension sequence $\delta$  that is $\uA(\delta)$-\gu (respectively, $\etuA(\delta)$-\gu)
then if $\vect F h$ are $K$-linearly independent forms in $V$, they form a regular sequence (respectively,
a regular sequence such that $R/(\vect F h)$ satisfies the Serre condition \rom{R}$_\eta$).  If $\eta = 1$,  
then $R/(\vect F h)$ is a normal domain and if $\eta=3$ then $R/(\vect F h)$ is a UFD. 
  \end{theorem}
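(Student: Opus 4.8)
The plan is to construct $\etuA$ --- together with $\uA$, which one may regard as the case ``$\eta = 0$'', asking only for a regular sequence --- by a nested induction: an outer induction on $\eta$, a middle induction on the top degree $d$ occurring in $\delta = (\vect n d)$, and an inner induction on the number $h$ of forms being tested (equivalently, on $n_d$). The base cases are immediate. If $d = 1$, linearly independent forms in $V$ extend to a homogeneous coordinate system, so $R/(\vect F h)$ is again a polynomial ring, hence regular and therefore satisfies $\mathrm{R}_\eta$ for every $\eta$; and for quadrics with few forms the classical normal-form theory of quadratic forms gives the assertion once the strength --- equivalently, in that case, an appropriate rank --- is large. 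Throughout one uses that $R$ is Cohen--Macaulay: a homogeneous sequence is a regular sequence precisely when the ideal it generates has height equal to its length, and, for a complete intersection $\bar R = R/(\vect F h)$, Serre's criterion says $\bar R$ satisfies $\mathrm{R}_\eta$ exactly when its singular locus has codimension at least $\eta + 1$ in $\Spec \bar R$, i.e., when the defining ideal of that singular locus in $\Spec R$ has height at least $h + \eta + 1$. Granting the $\mathrm{R}_\eta$ conclusion, the last two assertions of the theorem are formal: $\mathrm{R}_1$ plus $S_2$ (automatic for a complete intersection) forces normality, and a normal graded $K$-algebra with $R_0 = K$ is a domain by Discussion~\ref{RC}; while $\mathrm{R}_3$ for a complete intersection yields a UFD by Corollary~\ref{groth}.

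The first real step is to separate the forms of degree $< d$ from those of degree $d$. If $\etuA(\delta)$ is chosen with sufficiently large entries, then the graded subspace $V_{<d}$ spanned by the components of $V$ in degrees below $d$ has dimension sequence $(\ve n 1 {d-1})$ and is strong enough that the already-established degree-$(d-1)$ case applies; hence any independent forms among $V_{<d}$ form an $\mathrm{R}_\eta$ sequence, and the quotient $\bar R$ of $R$ by such a subset is a complete intersection satisfying $\mathrm{R}_\eta$ (and a normal domain once $\eta \geq 1$). One then works over $\bar R$ --- the inductive statement is set up so as to apply to graded subspaces of such a complete-intersection base, not only of a polynomial ring --- and must check that the images in $\bar R$ of the remaining top-degree forms retain enough strength as elements of the graded ring $\bar R$: a $k$-collapse of an image in $\bar R$ lifts, using that the defining ideal of $\bar R$ is generated by forms of degree $< d$, to a collapse in $R$ that is larger by at most the number of generators of that ideal. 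This is exactly why every entry of $\etuA(\delta)$ must grow with all of $\vect n d$. After this reduction, the essential case remains: all $F_i$ of the same degree $d$ over a complete-intersection base, where by Serre's criterion it suffices to bound from below the height of $(\vect F h)$ together with the ideal $I_h$ of $h \times h$ minors of the Jacobian matrix $\bigl( \partial F_i / \partial x_j \bigr)$ --- provided $\ch K = 0$ or is large relative to $d$, so that this ideal really defines the singular locus.

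The crux is the quantitative ``key function'' estimate: there is an explicit $C = C(\delta, \eta)$ such that, if every nonzero $K$-linear combination of the top-degree $F_i$ is $C$-strong (and the lower-degree data is strong at the level already handled), then $\height\bigl( (\vect F h) + I_h \bigr) \geq h + \eta + 1$, which is precisely what is needed. One argues the contrapositive. If that ideal has height $\leq h + \eta$, pick a prime $P$ of that height containing it; after a generic linear change of coordinates, and generic hyperplane sections that lower $\eta$ and invoke the outer induction, arrange that $P$ is controlled by boundedly many linear forms modulo its symbolic square. Since the Jacobian drops rank along $P$, some nonzero combination $G = \sum c_i F_i$ has vanishing linear part at the corresponding point of $V(\vect F h)$, so, exactly as in the elementary calculation at a singular point of a single hypersurface, $G$ lies in the square of a small ideal of linear forms; bootstrapping this --- the higher-order part of $G$ is again analyzed by feeding the degree-$(d-1)$ bounds back in, which is where the recursion defining $C$ originates --- promotes it to an honest collapse of $G$ by a bounded number of forms of strictly lower degree, contradicting strength. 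For $d = 2$ and $d = 3$ one replaces this with a characteristic-free normal-form analysis of systems of strong quadrics and cubics, using that there a $k$-collapse is the same as membership in a $k$-linear prime.

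The step I expect to be the main obstacle is precisely this collapse lemma with an effective constant: converting the soft statement ``the Jacobian degenerates along a subvariety of small codimension'' into the hard statement ``some explicit $K$-linear combination of the forms lies in an ideal generated by an explicitly bounded number of forms of strictly lower degree,'' while keeping $C(\delta, \eta)$ under control through the recursion. Two further difficulties are folded in. First, in small characteristic the Jacobian criterion for the singular locus breaks down, so the reduction to a height bound on $(\vect F h) + I_h$ is unavailable; this is exactly why the degree-$4$ result requires $\ch K \notin \{2, 3\}$ and why degrees $2$ and $3$ need their separate characteristic-free treatment. Second, in degree $\geq 4$ a collapse may involve products of forms of intermediate degree --- a quartic can be a sum of products of quadrics --- so strength must be tracked against all lower-degree forms, not merely linear ones; this complicates both the degree-peeling reduction of the second paragraph and the bootstrapping of the third, since the ``higher-order part'' one must control is no longer governed by linear data alone.
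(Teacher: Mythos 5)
Your overall architecture (Serre's criterion, reduction to a lower bound on the height of $(\vect F h)$ plus the Jacobian ideal, recursion through the lower-degree cases, and a quantitative lemma converting Jacobian degeneracy into a collapse) runs parallel to the route this paper takes for $d\leq 4$ via Theorems~\ref{codimsing}, \ref{MAX2} and \ref{SJrank}. But the step you yourself flag as the crux is not proved, and the sketch you give for it does not work. If $\height\bigl((\vect F h)+I_h\bigr)\leq h+\eta$, a minimal prime $P$ of that height in $K[\vect x N]$ with $N$ unbounded is in general not ``controlled by boundedly many linear forms modulo its symbolic square'': a bounded-height prime need not contain, or be approximable by, any bounded collection of linear forms, and generic coordinate changes or hyperplane sections do not change this. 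Likewise, the ``elementary calculation at a singular point'' only yields $G\in\fm_p^2$ (or $G\in P^{(2)}$ along $V(P)$), and $\fm_p$, resp.\ $P$, needs on the order of $N$ generators, so no collapse by a number of forms bounded independently of $N$ follows. Passing from ``the singular locus has small codimension'' to ``some $K$-linear combination has a bounded strict collapse'' is precisely the hard content of the theorem: in this paper it is carried by the key functions $\fK_i$ of Definition~\ref{key}, the generic-derivative results (Theorems~\ref{SaG}, \ref{GtoS}, Corollary~\ref{KtoR}), and for quartics the Hessian decomposition of \S\S\ref{sum}--\ref{hesscol}; in full generality it is the main (non-constructive) theorem of \cite{AH2}, which is why the present paper simply quotes Theorem~\ref{etA} from there. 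Your bootstrapping recursion (feeding degree-$(d-1)$ bounds back in) mirrors the recursion $\fJ_i(k)=\fK_i\bigl(kA_{i-1}(k)\bigr)+k-1$ of Corollary~\ref{KtoR}, but the existence of the key estimate itself is assumed, not established.

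Two further points. First, the statement is asserted for algebraically closed fields of arbitrary characteristic, while your derivative/Jacobian bootstrapping is only available in characteristic $0$ or $>d$; even degree $3$ in characteristics $2,3$ requires the separate arguments of Theorem~\ref{R323}, and your restriction cannot be waved away for the general statement. Second, two of your reductions are lighter than they look: the case of an $n$-dimensional space of quadrics is not ``classical normal-form theory'' but Theorem~\ref{nforms}, which takes a genuine argument; and replacing the ambient polynomial ring by a complete-intersection base $\bar R$ forces you to redevelop the strength/collapse formalism over $\bar R$, something the paper deliberately avoids by killing only the linear forms (Proposition~\ref{obv}(c)) and handling mixed degrees through Theorem~\ref{codimsing} rather than by inducting over quotient rings.
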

  
\begin{remark}\label{Anot} We write $A_i(\delta)$ (respectively, $\etA_i(\delta)\,$) for the $i\,$th entry
of $\uA(\delta)$ (respectively, $\etuA(\delta)$).   That is, when the underscored notation is used, the
value of the function is a vector of integers.  Note also that the functions $B$, $\etB$ introduced below always
take values consisting of a single integer, and so are never underscored.  

In case the vector space $V$ has only one graded component, whose degree is $d$,  so that
$\delta = (0, \, \ldots, \, 0, \, n)$,  we may write $A_d(n)$ (respectively, $\etA_d(n)\,$) instead of
$A_d(\delta)$ (respectively, $\etA_d(\delta)\,$).  \end{remark}

\begin{remark}\label{stregh}  We may take $\uA$ to be any $\etuA$ for $\eta \geq 1$.  However, it is frequently
the case that a smaller choice will yield regular sequences.  For example, consider an $n$-dimensional
vector space of forms of the same degree $d$. If one has a level of strength $s \geq 1$ that guarantees that any $n-2$
linearly independent elements form a regular sequence such that the quotient by the ideal they generate
is a UFD, then strength at least $s$ guarantees that any $n$ independent forms in the vector space give a regular 
sequence for the following reason.   The first $n-2$ define a UFD by assumption, the next therefore gives a domain by 
Proposition~\ref{obv} (g), and, consequently, the last is a nonzerodivisor. The level of strength ${}^3\!A_d(n-2)$ will always 
suffice for this argument, by Corollary~\ref{groth}.  
 \end{remark}

Theorem~\ref{etA} above implies the existence of small subalgebras (the precise statement is given below):
this is Theorem B in \cite{AH2}.  We give a brief discussion of the
argument in \S\ref{BfromA}.  It should be noted that the proof of this result gives an explicit recursive formula for
obtaining the functions $\etB$ from the functions $\etuA$.  

We call a function of several integer variables {\it ascending} if it is nondecreasing as a function of each variable
when the other variables are held fixed.

We say that a sequence of elements $\vect G s$ in a Noetherian ring $R$ is an $\rom{R}_\eta$-sequence
if it is a regular sequence and for $0 \leq i \leq s$,  $R/(\vect G i)$ satisfies the Serre condition $\rom{R}_\eta$.

\begin{theorem}\label{small}  
There is an  ascending function $B$ from dimension sequences $\delta = (\vect n d)$ to $\N_+$
with the following property. 
If $K$ is an algebraically closed field and
$V$ is a finite-dimensional $\N_+$-graded $K$-vector subspace of a polynomial ring $R$ over $K$  with  
dimension sequence $\delta$,  then $K[V]$ is contained in
a $K$-subalgebra of $R$ generated by a regular sequence $\vect G s$ of forms of degree at most $d$, where
$s \leq B(\delta)$. 

Moreover, for every $\eta \geq 1$ there is a function $\etB$ with the same property as above such that, in addition, 
every sequence consisting of linearly independent homogeneous linear combinations of the elements in  
$\vect G s$ is an $\rom{R}_\eta$-sequence.
\end{theorem}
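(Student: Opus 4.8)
The plan is to reduce, by a ``collapsing'' process, to the situation already settled by Theorem~\ref{etA}, and then to extract the functions $\etB$ from the functions $\etuA$ by a recursion on the degree. We may assume each $\etuA$ is ascending: replace its $i$-th entry at $\delta$ by $\max_{\delta''\le\delta}\etA_i(\delta'')$, and note that being strong with respect to a larger threshold is a stronger condition, so Theorem~\ref{etA} still applies. We shall construct $\etB$ directly; the unadorned function $B$ of the first assertion may then be taken to be ${}^1\!B$, or the argument may be rerun with the smaller function $\uA$ of Theorem~\ref{etA} in place of $\etuA$ (see Remark~\ref{stregh}).

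Fix $V$ with dimension sequence $\delta$ supported in degrees $\le d$, and define a \emph{collapse} as follows. Given a current subspace $W$ with $K[V]\subseteq K[W]$ (initially $W=V$) that is not $\etuA(\delta(W))$-strong, choose $i$ and a nonzero $F\in W_i$ having an $\etA_i(\delta(W))$-collapse, say $F=\sum_t H_tG_t$, where the $G_t$ — at most $\etA_i(\delta(W))$ of them — have strictly smaller positive degree and the $H_t$ have complementary positive degree; extend $\{F\}$ to a homogeneous basis of $W$ and let $W'$ be spanned by that basis with $F$ deleted, together with all the $G_t$ and all the $H_t$. Each $G_t,H_t$ has degree $<i$, so $\delta(W')$ agrees with $\delta(W)$ in degrees $>i$, has its degree-$i$ entry exactly one smaller, and can only grow in degrees $<i$; and $F=\sum_t H_tG_t\in K[W']$, so $K[W]\subseteq K[W']$. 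Since every form that occurs has degree between $1$ and $d$, all dimension sequences lie in $\N^d$; ordering $\N^d$ lexicographically, reading from degree $d$ down to degree $1$ — a well-order — each collapse strictly decreases the dimension sequence, so after finitely many collapses we reach $V^\ast$ that is $\etuA(\delta(V^\ast))$-strong, with $K[V]\subseteq K[V^\ast]$. Let $G_1,\dots,G_s$ be a homogeneous basis of $V^\ast$, so $s=\dim_K V^\ast$. Applying Theorem~\ref{etA} to $V^\ast$ and to each initial segment $G_1,\dots,G_j$ (again linearly independent forms in $V^\ast$), the sequence $G_1,\dots,G_s$ is a regular sequence, in fact an $\rom{R}_\eta$-sequence; moreover $K[V]\subseteq K[V^\ast]=K[G_1,\dots,G_s]$, the $G_j$ have degree $\le d$, and any linearly independent homogeneous linear combinations of the $G_j$ are linearly independent homogeneous elements of $V^\ast$, hence form an $\rom{R}_\eta$-sequence by Theorem~\ref{etA}. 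This proves every assertion once we exhibit an ascending $\etB$ with $s\le\etB(\delta)$.

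To bound $s$ it suffices to choose the collapses in a disciplined way, and we do this by induction on $d$, taking $\etB(n_1)=n_1$ when $d=1$. For $d\ge2$, run the following loop on $W$: first (i) stabilize degrees $\le d-1$ by performing collapses of forms of degree $\le d-1$ only — the relevant thresholds are $\etA_i(\delta(W))$ for $i\le d-1$, in which the current number $n_d$ of degree-$d$ forms enters only as a parameter that does not change during this sub-step — until the degree-$\le d-1$ part of $W$ is strong; by the inductive bound $\etB$ in degrees $\le d-1$ (applied with that parametrized, still ascending, threshold function), this terminates and the total dimension in degrees $\le d-1$ is bounded in terms of the current counts. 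Then (ii): if the degree-$d$ part of $W$ is $\etA_d(\delta(W))$-strong, stop; otherwise perform one degree-$d$ collapse, which decreases $n_d$ by one and introduces at most $2\,\etA_d(\delta(W))$ forms of degree $<d$, with $\delta(W)$ bounded by step (i). There are at most $n_d$ executions of (ii), so the loop halts at a subspace that is $\etuA$-strong for its own dimension sequence, hence equals $V^\ast$; and interleaving (i) with (ii) produces an explicit recursion in which the total dimension after one round is bounded in terms of the total before it by the inductive degree-$(d-1)$ bound composed with $\etA_d$. After at most $n_d$ rounds one reads off a value $\etB(\delta)\ge s$. Every ingredient in this recursion is ascending, so $\etB$ may be taken ascending (replacing it by $\max_{\delta'\le\delta}\etB(\delta')$ if one wishes to be safe).

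The main obstacle is precisely the bookkeeping in the last paragraph: a single degree-$d$ collapse produces a number of new forms governed by $\etA_d$ evaluated at a dimension sequence whose lower-degree entries have themselves been inflated by the earlier collapses and re-stabilizations, so the recursion for $\etB$ genuinely nests the degree-$(d-1)$ bound inside $\etA_d$ and iterates this up to $n_d$ times; keeping the estimates honest and ascending while extracting a usable recursion is the delicate point, and is what makes the resulting Stillman bounds grow so rapidly. The termination of the collapsing process and the containments $K[W]\subseteq K[W']$ are, by contrast, routine once the lexicographic-from-the-top well-order is in place.
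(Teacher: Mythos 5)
Your argument is correct and is essentially the paper's own (\S\ref{BfromA}): the same collapse step replacing a non-strong form by the at most $2\cdot\etA_i(\delta)$ lower-degree factors, the same well-order on dimension sequences (comparison at the largest differing index), termination by well-foundedness, and a recursion extracting $\etB$ from $\etuA$ — your degree-by-degree organization of that recursion is just the general form of the explicit degree-2 recursion the paper carries out. No gap to report.
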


In \S\ref{BfromA}, we explain how Theorem~\ref{small} follows from Theorem~\ref{etA}, and also how Theorem~\ref{small}
yields bounds on projective dimension that prove Stillman's conjecture.

Our main results here are as follows.
\newpage
\begin{theorem} 
Let $V$ be a vector space of quadratic forms in $R$ of dimension $n$ over $K$.  
If every element of $V-\{0\}$ is $(n-1)$-strong, every sequence of linearly independent elements of 
$V$ is a regular sequence.  
If $\eta \geq 1$ and every element of  $V-\{0\}$ is $(n-1 +\lceil \frac{\eta}{2}\rceil)$-strong,  
then the quotient by the ideal generated 
by any subset of $V$ satisfies the Serre condition \rom{R}$_\eta$.  Hence, every sequence of linearly independent elements of $V$ is an $\rom{R}_\eta$-sequence. \smallskip
 \end{theorem}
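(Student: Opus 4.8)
The plan is to prove both parts by bounding the dimension of the singular locus of $R/(\vect F i)$ for the initial segments $\vect F i$ of a linearly independent family in $V$, and to build up the regular sequence and the Serre conditions one form at a time. Write $s$ for the strength hypothesis in force: $s=n-1$ for the first assertion, $s=n-1+\lceil\frac{\eta}{2}\rceil$ for the second. The one elementary fact about quadrics I would use is that an $s$-\gu\ quadric involves at least $2s+1$ independent linear forms; indeed, in every characteristic a quadric of rank $r$ (the least number of essential linear forms) can be brought to the form $\sum_{j} z_{2j-1}z_{2j}$ plus at most one square, hence lies in an ideal generated by $\lceil r/2\rceil$ linear forms, i.e. has an $\lceil r/2\rceil$-collapse; $s$-strength forces $\lceil r/2\rceil>s$, so $r\ge 2s+1$.

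Fix a linearly independent family $\vect F h$ in $V$. The backbone is an induction on $i$ proving simultaneously that $\vect F i$ is a regular sequence — so $R/(\vect F i)$ is a complete intersection of dimension $N-i$ — and that $R/(\vect F i)$ is R$_1$, hence a normal domain by Discussion~\ref{RC}. In the inductive step, $R/(\vect F{i-1})$ is a domain by the previous stage, and $F_i$ maps to a nonzero element of it (as $F_i$ is linearly independent of $\vect F{i-1}$ and $(\vect F{i-1})$ meets $R_2$ exactly in the $K$-span of $\vect F{i-1}$), hence to a nonzerodivisor; thus $\vect F i$ is a regular sequence and $R/(\vect F i)$ a complete intersection of dimension $N-i$, and it remains to control its singular locus. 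By the Jacobian criterion a point of $\cZ(\vect F i)$ is singular exactly when all partials of some quadric $G_{[c]}=\sum_j c_jF_j$, $[c]\in\PP^{i-1}$, vanish there, so
\[
\mathrm{Sing}\bigl(R/(\vect F i)\bigr)\ \inc\ \bigcup_{[c]\in\PP^{i-1}}\bigl(\cZ(\vect F i)\cap\cZ(\nabla G_{[c]})\bigr).
\]
For any nonzero $G\in V$, which is $s$-strong, $\cZ(G)\cap\cZ(\nabla G)$ has codimension at least $2s+1$ in $\A^N$: in characteristic $\neq 2$ this follows from $r\ge 2s+1$ together with Euler's identity (which forces $G$ to vanish on $\cZ(\nabla G)$), and in characteristic $2$ the single surviving ``square'' direction of $G$ cuts $\cZ(\nabla G)$ down by one, restoring the bound. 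Since $\cZ(\vect F i)\inc\cZ(G_{[c]})$, each fiber in the displayed union has dimension $\le N-2s-1$, so sweeping over the $(i-1)$-dimensional family of $[c]$'s through the evident incidence variety in $\PP^{i-1}\times\A^N$ gives $\dim\mathrm{Sing}\bigl(R/(\vect F i)\bigr)\le N+i-2s-2$, i.e. codimension at least $2s+2-2i$ in $R/(\vect F i)$.

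With $s=n-1$ this codimension is $\ge 2n-2i\ge 2$ for $i\le n-1$, so the induction runs through $i=n-1$, giving R$_1$ (a normal domain) for all $R/(\vect F i)$, $i\le n-1$; hence every linearly independent family in $V$ is a regular sequence (for $h=n$ one only needs that $R/(\vect F{n-1})$ is a domain). With $s=n-1+\lceil\frac{\eta}{2}\rceil$ the codimension is $\ge 2n+\eta+1-2i$ when $\eta$ is odd and $\ge 2n+\eta-2i$ when $\eta$ is even. For $\eta$ odd this is $\ge\eta+1$ for all $i\le n$, exactly R$_\eta$ for every $R/(\vect F i)$; one then upgrades to ``normal domain'' when $\eta=1$ and, by Grothendieck's theorem (Corollary~\ref{groth}), to ``UFD'' when $\eta\ge3$. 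Since an initial segment of any linearly independent family of elements of $V$ generates the same ideal as a subset of $V$, the closing clause — that such families are R$_\eta$-sequences — is immediate.

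The delicate point, and the main obstacle, is the case $\eta$ even with $i=n$, where the count only gives codimension $\ge\eta$, one short of R$_\eta$. The fix is that a fiber $\cZ(\vect F n)\cap\cZ(\nabla G_{[c]})$ attains the full dimension $N-2s-1$ only where $G_{[c]}$ has the minimal rank $2s+1$, which is \emph{odd}; but a linear system of quadrics of constant rank $r$ has $r$ even — in characteristic $\neq 2$ because on a pencil the kernel subbundle of $\mathcal{O}_{\PP^1}^{\,N}$ splits and the induced nondegenerate form on the quotient forces $r$ even, with a parallel argument via the associated alternating form in characteristic $2$ — so such $[c]$ form a proper subvariety $Z\subsetneq\PP^{n-1}$, and on $\PP^{n-1}\setminus Z$ the rank, hence the fiber dimension, drops by at least one. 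Splitting the incidence-variety count over $Z$ and its complement recovers the missing unit of codimension, yielding codimension $\ge\eta+1$ and so R$_\eta$. Making this stratification rigorous and uniform in the characteristic is the crux; the rest is the bookkeeping above.
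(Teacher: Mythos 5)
Your incidence-variety count is sound as far as it goes: the containment of the singular locus of $R/(\vect F i)$ in the union over $[c]\in\PP^{i-1}$ of $\cZ(\vect F i)\cap\cZ(\nabla G_{[c]})$, the bound on each fiber coming from the fact that $(G,\cD G)$ has height equal to the rank of $G$ (Proposition~\ref{onequad}), and the resulting codimension bound $2s+2-2i$ do give the regular-sequence assertion and the R$_\eta$ assertion for odd $\eta$ by a route that is genuinely lighter than the paper's, which proves the sharper statement R$_{2(k+1-n)}$ of Theorem~\ref{nforms} by induction on $n$ with an elaborate $n=2$ base case (Segre/Veronese splitting for reducedness, then a hands-on Jacobian and prime-ideal analysis). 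You also locate the difficulty correctly: for even $\eta$ at $i=n$ the sweep is one short. But the fix you propose is where the proof breaks, and the theorem is asserted for algebraically closed fields of arbitrary characteristic.

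The claim that a linear system of quadrics of constant rank must have even rank is false in characteristic $2$. The pencil spanned by $x_1x_2+x_3^2$ and $x_1x_4+x_5^2$ satisfies $a(x_1x_2+x_3^2)+b(x_1x_4+x_5^2)=x_1(ax_2+bx_4)+(\sqrt{a}\,x_3+\sqrt{b}\,x_5)^2$, which has rank exactly $3$ for every $(a,b)\neq(0,0)$; more generally, the pencil spanned by $\sum_{j=1}^s x_{2j-1}x_{2j}+u^2$ and $\sum_{j=1}^s x_{2j-1}y_j+v^2$ (disjoint variables) has constant rank $2s+1$, and every nonzero member is exactly $s$-strong, so with $n=2$ and $s=1+\eta/2$ it satisfies your strength hypothesis while your locus $Z$ is all of $\PP^{n-1}$ and the stratification gains nothing. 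These pencils are not counterexamples to the theorem (the paper's proof covers them), only to your parity lemma; they are the same phenomenon as the squares $x_i^2$ that the paper points out after Theorem~\ref{2tr} when explaining why that result fails in characteristic $2$. So there is no ``parallel argument via the associated alternating form'': the statement you need is simply false there. Even in characteristic $\neq 2$, the parity statement is a nontrivial theorem (of Ilic--Landsberg type) that needs more than the one-sentence bundle remark, though that half looks completable. The paper gains the missing unit of codimension by a different, characteristic-free mechanism: in the stratum where the relation map dominates $\PP^{n-1}$ and the generic rank is exactly $2k+1$, it invokes the already-established two-form case to show that for generic $v$ on a line in $\PP^{n-1}$ the form $F$ is not in $(\tD F_v)$, so the fiber over $v$ is cut by one more equation. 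You would need either a substitute of this kind or a correct characteristic-$2$ replacement for the parity claim before the even-$\eta$ case is complete.
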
  
 
 Note that this is equivalent saying that one may take  $A_2(n) = n-1$ and $\etA_2(n) = (n-1 +\lceil \frac{\eta}{2}\rceil)$.
 See Theorem~\ref{nforms}, Theorem~\ref{ngu}, and Corollary~\ref{A2}.
 
\begin{theorem} A vector space of quadrics in the polynomials ring $R$ that has dimension $n$ is contained in a polynomial subring
generated by a regular sequence consisting of at most $2^{n+1}(n-2)+4$ linear and quadratic forms.  Hence,
the projective dimension of $R/I$,  where $I$ is the ideal generated by these forms, is at most
$2^{n+1}(n-2)+4$.  \end{theorem}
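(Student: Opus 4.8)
The plan is to feed the degree-two strength bound of the previous theorem (that one may take $A_2(n)=n-1$) into the derivation of Theorem~\ref{small} from Theorem~\ref{etA} sketched in \S\ref{BfromA}, specialize the resulting recursion to a vector space concentrated in degree $2$, solve that recursion in closed form, and then pass to projective dimension by flat base change.

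Concretely, I would process $V$ by the following iteration. At stage $k$ I keep a finite-dimensional graded subspace $V^{(k)}=V^{(k)}_1\oplus V^{(k)}_2$ of $R$, with $V^{(k)}_1$ its linear part and $V^{(k)}_2$ its quadratic part, maintaining $K[V]\inc K[V^{(k)}]$; I start from $V^{(0)}=V$, so $V^{(0)}_1=0$ and $\di_K V^{(0)}_2=n$. Write $p_k=\di_K V^{(k)}_1$, $q_k=\di_K V^{(k)}_2$, and let $\bar R=R/(V^{(k)}_1)$, a polynomial ring. If every nonzero element of $V^{(k)}_2$ has image in $\bar R$ with no $(q_k-1)$-collapse, I halt: then $V^{(k)}_2$ maps isomorphically onto a $q_k$-dimensional, $(q_k-1)$-\gu\ space of quadrics in $\bar R$, which by the previous theorem applied over $\bar R$ is a regular sequence there; lifting a basis of it back into $V^{(k)}_2$ and prepending a basis $\ell_1,\dots,\ell_{p_k}$ of $V^{(k)}_1$ gives $\ell_1,\dots,\ell_{p_k},q_1,\dots,q_{q_k}$, which is a regular sequence of at most $p_k+q_k$ forms of $R$ (the $\ell$'s are linear, the $q$'s are a regular sequence modulo them), hence algebraically independent, so $K[\ell_1,\dots,\ell_{p_k},q_1,\dots,q_{q_k}]$ is a polynomial ring containing $K[V^{(k)}]\supseteq K[V]$. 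Otherwise some nonzero $F\in V^{(k)}_2$ has image with a $(q_k-1)$-collapse in $\bar R$; lifting it produces linear forms $G_i,H_i$ $(1\le i\le q_k-1)$ and $\nu_j$ $(1\le j\le p_k)$ in $R$ with
\[
F=\sum_{i=1}^{q_k-1}G_iH_i+\sum_{j=1}^{p_k}\ell_j\nu_j ,
\]
and I set $V^{(k+1)}_1=V^{(k)}_1+\langle G_i,H_i,\nu_j\rangle$ and take $V^{(k+1)}_2$ to be a complement of $\langle F\rangle$ in $V^{(k)}_2$: then $F\in K[V^{(k+1)}_1]$ gives $K[V^{(k)}]\inc K[V^{(k+1)}]$, while $q_{k+1}=q_k-1$ and $p_{k+1}\le 2p_k+2(q_k-1)$. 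Since $q_k$ strictly decreases, the iteration halts in at most $n$ steps; with $q_k=n-k$ the worst case is $p_0=0$, $p_{k+1}=2p_k+2(n-k-1)$, which solves to $p_k=(2^{k+1}-2)n+2k+4-2^{k+2}$, so $p_n=2^{n+1}(n-2)+4$ and $p_k+q_k\le p_n$ for all $k\le n$. Hence $K[V]$ lies in a polynomial subring of $R$ generated by a regular sequence of at most $2^{n+1}(n-2)+4$ linear and quadratic forms.

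For the projective-dimension assertion, let $T=K[G_1,\dots,G_s]$ be the polynomial subring just produced, $s\le 2^{n+1}(n-2)+4$. Then $I=(V)R=\bigl((V)T\bigr)R$, so $R/I\cong R\otimes_T\bigl(T/(V)T\bigr)$. Now $R$ is flat over $T$: extend $G_1,\dots,G_s$ to a homogeneous system of parameters $G_1,\dots,G_N$ of the Cohen--Macaulay graded ring $R$; then $R$ is module-finite over the polynomial ring $K[G_1,\dots,G_N]$, hence flat over it (Cohen--Macaulay over regular of the same dimension), and $K[G_1,\dots,G_N]$ is free over $T$. Tensoring a free $T$-resolution of $T/(V)T$ of length $\le s=\di T$ with the flat $T$-algebra $R$ gives a free $R$-resolution of $R/I$ of length $\le s$, so $\pd_R(R/I)\le s\le 2^{n+1}(n-2)+4$. (If $I$ is instead read as the ideal generated by the regular sequence $G_1,\dots,G_s$ itself, the Koszul complex already resolves $R/I$ in length $s$.)

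The real obstacle here is the reduction step rather than any isolated trick: one must absorb both the collapsing quadric and the ``lifting obstruction'' linear forms $\nu_j$ into the growing linear part while keeping the accumulated forms a regular sequence, and it is precisely the resulting near-doubling $p_{k+1}\approx 2p_k$ --- one re-pays for the current linear part each time a quadric is deleted --- that turns a naively polynomial count into the exponential $2^{n+1}(n-2)+4$; pinning down that constant is exactly what makes the recursion $p_{k+1}=2p_k+2(n-k-1)$ the crux, and the bookkeeping (that the various lifts and complements can be chosen so as to preserve the invariant, and that a regular sequence of forms in a polynomial ring is algebraically independent so that $K[G_1,\dots,G_s]$ is a polynomial ring) is what \S\ref{BfromA} sets up in general.
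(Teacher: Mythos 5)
Your proposal is correct and follows essentially the same route as the paper: you run exactly the recursion of \S\ref{BfromA} specialized to degree $2$ (collapse one quadric, absorb at most $2p_k+2(q_k-1)$ linear forms, halt when the image mod the linear part is $(q_k-1)$-strong and invoke $A_2(n)=n-1$), and your closed-form solution $p_n=2^{n+1}(n-2)+4$ and the flat/free base-change argument for projective dimension coincide with the paper's proof of Theorem~\ref{B2}.
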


See Theorem~\ref{B2}.
\begin{theorem}\label{A3} 
Let $b = 2(n_2+n_3) + \eta +1$ if $n_2 \not=0$,  and $2(n_2+n_3) + \eta$ if $n_2 = 0$. 
Let $\fJ(b) = (2b+1)(b-1)$ if the characteristic is not 2 or 3,  $\fJ(b) = 2(2b+1)(b-1)$ if
$\ch(K)$ is 2,  and $\fJ(b) = 2b^2-b$ if $\ch(K)$ is 3.   Then we may take
 $$\etuA(n_1,n_2,n_3) = \Bigl(0, \lceil {b\over 2}\rceil + n_1, \fJ(b) +n_1\Bigr).$$  
\end{theorem}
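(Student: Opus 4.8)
The plan is to strip off the linear forms so as to reduce to the case $n_1 = 0$, and then to induct on the number of forms one factors out, using the Jacobian criterion for complete intersections together with a ``key-function'' estimate — the part that has to be built from scratch — that bounds the height of a Jacobian ideal below in terms of the strengths of the forms. The hard part is that estimate for cubics and its variants in characteristics $2$ and $3$; everything else is bookkeeping arranged so that the single parameter $b$ dominates every height that must be reached.

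\textbf{Reducing to $n_1 = 0$.} Choose homogeneous coordinates on $R$ with $V_1 = \lc x_1, \ldots, x_{n_1}\rc$. Given linearly independent $G_1, \ldots, G_k \in V$, change coordinates inside $V_1$ so that the linear members are $x_1, \ldots, x_m$ with $m \le n_1$; these form a regular sequence with regular, hence $\rom{R}_\eta$, quotient $R' = K[x_{m+1}, \ldots, x_N]$. For a quadric or cubic $G$, a $t$-collapse of its image $\bar G$ in $R'$ lifts to a $(t+m)$-collapse of $G$, since $G$ minus a lift of the collapse lies in $(x_1, \ldots, x_m)$ and, having the same degree, equals $\sum_{j\le m} x_j C_j$. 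Hence (using $m \le n_1$) the images of $V_2$ and $V_3$ in $R'$ are $\lceil b/2\rceil$-strong and $\fJ(b)$-strong, these maps are injective because the relevant strengths exceed $m$, and the value of $b$ is unchanged since it involves only $n_2,n_3,\eta$. Combined with the fact that a regular sequence beginning with $x_1,\dots,x_m$ and continuing with forms whose images in $R'$ form an $\rom{R}_\eta$-sequence is itself an $\rom{R}_\eta$-sequence, this reduces the theorem to the case $n_1 = 0$, where $V = V_2\oplus V_3$ with $\di V_2 = n_2$, $\di V_3 = n_3$, $V_2$ is $\lceil b/2\rceil$-strong and $V_3$ is $\fJ(b)$-strong.

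\textbf{The induction.} Assume $n_1 = 0$. It suffices to prove that for every linearly independent $G_1, \ldots, G_k$ drawn from $V$, $R/(G_1, \ldots, G_k)$ is a complete intersection satisfying $\rom{R}_\eta$; the statement that every ordering is an $\rom{R}_\eta$-sequence then follows formally. Induct on $k$, and put $S = R/(G_1, \ldots, G_{k-1})$, which by the inductive hypothesis is a normal domain ($\eta\ge 1$, so $\rom{R}_\eta \Rightarrow \rom{R}_1$). If $G_k \in (G_1,\dots,G_{k-1})$, writing $G_k = \sum_{j<k} A_j G_j$ and collecting the at most $k-1$ terms with $\deg G_j < \deg G_k$ produces a nonzero element of $V_{\deg G_k}$ with a collapse of length $\le k-1$, contradicting that $\lceil b/2\rceil, \fJ(b) \ge n_2+n_3 \ge k$; so $G_k$ is a nonzerodivisor on the domain $S$ and $(G_1,\dots,G_k)$ is a regular sequence. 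Finally $S_k := R/(G_1,\dots,G_k)$ is a homogeneous complete intersection of codimension $k$ over the perfect field $K$, so by the Jacobian criterion $S_k$ satisfies $\rom{R}_\eta$ exactly when
\[
\height_R\bigl(I_k(\cJ) + (G_1,\dots,G_k)\bigr) \ge k + \eta + 1,
\]
where $\cJ$ is the $k\times N$ Jacobian matrix of the $G_j$; in characteristics $2$ and $3$ one uses the appropriate additive/divided-power substitute for the vanishing partial derivatives. Since $k \le n_2+n_3$ after the reduction, $k+\eta+1 \le (n_2+n_3)+\eta+1 \le b$, so it is enough to show $\height_R\bigl(I_k(\cJ) + (G_1,\dots,G_k)\bigr) \ge b$.

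\textbf{The key-function bound and the main obstacle.} This last inequality is the crux, and obtaining it is where the real work lies. I would argue by contradiction: if a prime $\fP$ with $\height\fP < b$ contains $I_k(\cJ)$ and $G_1, \dots, G_k$, then $\cJ$ drops rank below $k$ at $\fP$, and the goal is to convert this rank drop into a collapse of some $G_j$ shorter than its guaranteed strength. For a quadric this uses the elementary fact that a quadratic form whose (linear) partial derivatives are trapped in a height-$h$ prime can be diagonalized so as to have a collapse of length at most $\lceil h/2\rceil$ — the source of the factor $1/2$ between $b$ and the quadric strength $\lceil b/2\rceil$, and the point where the explicit degree-$2$ bound $\etA_2(n) = n-1+\lceil\eta/2\rceil$ feeds in, also settling the all-quadrics subcase directly. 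For a cubic one needs the genuinely quadratic estimate that a cubic whose first and second polars are confined to a height-$h$ prime admits a collapse of length $O(h^2)$: a general linear section of $R/\fP$ together with the polarization identities produces a bounded supply of linear forms in terms of which the cubic may be rewritten, and the number of resulting products grows like $h^2$; this is why $\fJ(b) = (2b+1)(b-1)$ in good characteristic. Feeding these into the rank-drop condition on $\cJ$, taken modulo $\fP$ and modulo the already-killed $G_1,\dots,G_{k-1}$ and using Euler's relation $d\cdot G_j \in (\partial_1 G_j,\dots,\partial_N G_j)$ whenever $\ch K \nmid d$ to pass between the minor ideal and the ideals of partials, forces some $G_j$ to collapse in fewer than $\lceil b/2\rceil$ (if quadratic) or $\fJ(b)$ (if cubic) products — the contradiction. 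The characteristic $2$ and $3$ corrections come from the fact that the monomials on which the naive Jacobian is blind — the $p$-th powers, and $x_i^2x_j$ in characteristic $2$ — are themselves short collapses, so absorbing them costs only a bounded overhead per form: in characteristic $2$ this doubles the required cubic strength to $\fJ(b) = 2(2b+1)(b-1)$, while in characteristic $3$, where only the pure cubes misbehave, the accounting yields $\fJ(b) = 2b^2-b$ instead. The principal difficulty throughout is precisely this quadratic estimate for cubics and its characteristic-$p$ variants; the remaining steps are organized so that $b = 2(n_2+n_3)+\eta$ (plus $1$ when $n_2\ne0$) dominates every height that has to be achieved.
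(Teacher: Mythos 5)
Your skeleton (strip off the linear forms, then bound the height of the Jacobian-minor ideal from below using the strength hypotheses) is the same one the paper uses, where Theorem~\ref{A3b} is obtained by feeding the J-rank functions $\fJ_3$ of Theorems~\ref{G3} and~\ref{R323} into Theorem~\ref{SJrank}, whose proof rests on Theorem~\ref{codimsing} and Theorem~\ref{MAX2}. But the decisive step in your outline --- converting ``the $k\times N$ Jacobian drops rank along a prime $\fP$ of height $<b$'' into ``some $G_j$, or some fixed element of $V$, has a collapse shorter than its guaranteed strength'' --- is asserted rather than proved, and it is exactly where the difficulty lives. A rank drop at $\fP$ only yields a relation $\sum_j \lambda_j \nabla G_j \equiv 0$ modulo $\fP$ whose coefficients $\lambda_j$ lie in the residue field at $\fP$ and vary along the component of the singular locus; it does not place the partials of any fixed $K$-linear combination of the $G_j$ --- the only objects your strength hypotheses control --- inside $\fP$. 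Handling this variation is precisely the content of Theorem~\ref{codimsing} (with its loss of $n-1$; its mechanism is visible in the quadric case inside the proof of Theorem~\ref{nforms}, where the rank-$(n-1)$ locus is fibered over $\PP^{n-1}_K$ via the relation and one argues separately according to whether the image is dense) together with Theorem~\ref{MAX2} (with its loss of $h-1$). Those losses are the reason the paper must require the partials of each individual form to generate an ideal of height $b=h-1+2(n_2+n_3)+\eta$, i.e., strength $\fJ_i(b)$, rather than the height $k+\eta+1$ your bookkeeping suggests suffices; if your conversion step worked as stated, you would in fact be proving a sharper theorem than the paper's, which should itself be a warning sign.

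The second gap is the cubic ``key estimate,'' which you rightly identify as the crux but only gesture at. What is needed is the J-rank bound: if a cubic $F$ is $\fJ_3(b)$-strong then $(F,\cD F)$ has height at least $b$. In characteristic $\neq 2,3$ the paper derives this from the key function $\fK_3(k)=2k$ --- Theorem~\ref{2tr}, that a space of quadrics not contained in an ideal generated by $2k$ linear forms has a dense open set of $k$-strong elements --- run through the genericity/specialization machinery of Theorem~\ref{SaG}, Theorem~\ref{GtoS}, and Corollary~\ref{KtoR}, which is what turns ``generic elements of $\cD F$ span a strong space, hence a regular sequence'' into the height bound and produces $(2b+1)(b-1)$. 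Your one-sentence sketch (``a general linear section together with polarization identities produces a bounded supply of linear forms'') contains none of this dimension-count over tuples in $\cD F$, and the characteristic $2$ and $3$ formulas cannot be reached by ``bounded overhead per form'': Theorem~\ref{2tr} is false in characteristic $2$ (spans of squares of linear forms), and the paper needs the separate direct arguments of Theorem~\ref{R323} --- the analysis of squares modulo $(\cD G)$ in characteristic $2$, and the cube-of-a-linear-form analysis replacing Euler's formula in characteristic $3$ --- to obtain $2(2b+1)(b-1)$ and $2b^2-b$. A minor point in the same direction: the Jacobian criterion itself is valid over an algebraically closed field in every characteristic; what changes in characteristics $2$ and $3$ is not the criterion but the link between strength and the height of the ideal of partials, so no ``divided-power substitute'' is involved.
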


See Theorem~\ref{A3b}. 

Our construction of the functions $\uA, \etuA$ in degrees 3 and 4 will depend on proving
the existence of certain key functions $\fK_i(k)$.  

\begin{definition}\label{key}   If $F$ is a form in a polynomial ring $K[\vect X N]$ over
$K$,  we write $\cD F$ for the $K$-vector space spanned by the partial derivatives of $F$. 
One may also think of $\cD F$ as the image of $F$ under all $K$-derivations
from  $R$ to $R$ that are homogeneous of degree $-1$,  and so it is unaffected by $K$-linear changes 
of coordinates in $R$.

 We say $\fK_i$ is a {\it key} function for degree $i \geq 3$ and for a specified set $\cS$ of characteristics 
if, independent of the algebraically closed field with a characteristic in $\cS$ and the number of variables $N$, for any form $F$ of degree $i$, if $F$ is $\fK_i(k)$-strong,  then the elements in a Zariski dense open subset of $\cD F$ are 
$k$-strong (equivalently, one element in $\cD F$ is $k$-strong).   \end{definition}

Our primary goal in the sequel is to construct key functions for degree 3 in all characteristics and for degree 4 in
characteristic not 2 or 3. 

We shall not need the notion of key function for degree 2 or smaller in our proofs of the existence of $\etuA$
or $\etB$.  One can make the same definition if $i =1$ or $i=2$. The restriction one needs on the one-form when
$i=1$ is that it not be 0:  this implies that some partial has infinite strength. Thus, we may take $\fK_1(k) = 0$ for all $k$. 
If $i=2$, the condition needed on the quadratic form if the characteristic is not 2 is that it not be zero, for then some partial derivative does not vanish 
and it has infinite strength.  If the characteristic is 2,  we need that the form not be a square, which follows if it has 
strength 1.  Thus, we may take $\fK_2(k) = 0$ for all $k$ if the characteristic is not 2,  and $\fK_2(k) = 1$ for
all $k$ if the characteristic is 2. 

The following result plays an important role in our treatment of $\fK_3$ in characteristic not 2 or 3.

\begin{theorem}\label{2trI} Let $K$ be an algebraically closed field of characteristic $\not=2$.  Let $V$ be a vector
space of quadratic forms that is not contained in an ideal generated by $2k$ or fewer linear forms. Then there 
exists a Zariski dense open subset of $V$
consisting of quadratic forms with no $k$-collapse. \end{theorem}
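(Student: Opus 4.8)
The plan is to translate the notion of a $k$-collapse of a quadric into the rank of its associated symmetric matrix, and then to establish a ``compression'' principle for linear spaces of quadrics of bounded rank. Recall the relevant dictionary. Since $\ch(K)\neq 2$, a quadratic form $Q$ in $R=K[x_1,\ldots,x_N]$ corresponds to a symmetric $N\times N$ matrix $A_Q$ over $K$, and $A_Q$ depends linearly on $Q$; diagonalizing and rescaling (using that $K$ is algebraically closed), after a linear change of coordinates we may write $Q=x_1^2+\cdots+x_r^2$ with $r=\operatorname{rank}(A_Q)$. From this normal form, $Q=(x_1+\sqrt{-1}\,x_2)(x_1-\sqrt{-1}\,x_2)+\cdots$ lies in the ideal generated by $\lceil r/2\rceil$ linear forms; conversely, if $Q$ lies in an ideal generated by $j$ linear forms then, after passing to a linearly independent subset of those forms and completing them to a coordinate system, $A_Q$ has a zero $(N-j)\times(N-j)$ lower-right block, whence a block computation gives $\operatorname{rank}(A_Q)\le 2j$. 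Therefore, for a quadric $Q$ the conditions ``$Q$ has a $k$-collapse'', ``$Q$ lies in an ideal (equivalently, by Discussion~\ref{strength}, a prime ideal) generated by $k$ or fewer linear forms'', and ``$\operatorname{rank}(A_Q)\le 2k$'' are all equivalent.

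It follows that the set of $Q\in V$ having no $k$-collapse is exactly $\{Q\in V:\operatorname{rank}(A_Q)\ge 2k+1\}$, which is a Zariski-open subset of $V$: fixing a basis of $V$, the $(2k+1)\times(2k+1)$ minors of $A_Q$ are polynomials in the coordinates on $V$, and the set in question is the complement of their common zero locus. Since $V$ is irreducible, it suffices to exhibit a single $Q\in V$ with $\operatorname{rank}(A_Q)\ge 2k+1$. This is supplied by the compression lemma: \emph{if $V$ is a $K$-vector space of quadratic forms and $r$ is the largest value of $\operatorname{rank}(A_Q)$ for $Q\in V$, then $V$ is contained in an ideal generated by $r$ linear forms.} Indeed, if every element of $V$ had $\operatorname{rank}(A_Q)\le 2k$, then $r\le 2k$, and the lemma would place $V$ inside an ideal generated by at most $2k$ linear forms, contrary to hypothesis; hence some $Q\in V$ has $\operatorname{rank}(A_Q)\ge 2k+1$, as needed.

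To prove the lemma, pick $Q_0\in V$ attaining $\operatorname{rank}(A_{Q_0})=r$ and choose coordinates so that $A_{Q_0}$ is diagonal with $r$ ones followed by zeros. For an arbitrary $Q\in V$, write $A_Q$ in block form with upper-left $r\times r$ block $B$, upper-right block $C$, and lower-right $(N-r)\times(N-r)$ block $D$; it suffices to prove $D=0$, since that is precisely the statement that $Q$ lies in the ideal generated by the first $r$ coordinates. For $t$ in a nonempty Zariski-open subset of $K$ the matrix $I_r+tB$ is invertible, and the Schur complement identity gives
\[
\operatorname{rank}(A_{Q_0}+tA_Q)=r+\operatorname{rank}\bigl(tD-t^2\,C\tr(I_r+tB)^{-1}C\bigr).
\]
Since $Q_0+tQ\in V$, the left-hand side is at most $r$, so $tD-t^2\,C\tr(I_r+tB)^{-1}C$ is the zero matrix for all such $t$; dividing by $t$ (permissible, since $t$ may be taken nonzero) and viewing both sides as matrices of rational functions of $t$ that are regular at $t=0$, we may let $t\to 0$ (legitimate because $K$ is infinite) to conclude $D=0$. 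The only genuine obstacle is this lemma, and within it the observation that choosing $Q_0$ of \emph{maximal} rank makes the inequality $\operatorname{rank}(A_{Q_0}+tA_Q)\le r$ sharp — which is what forces $D$ to vanish identically rather than merely to be small; the hypotheses that $K$ is algebraically closed and $\ch(K)\neq 2$ enter only through the normal form for $Q_0$, and the block Schur complement bookkeeping together with the passage from an identity on a dense set of parameters to an identity of rational functions is routine.
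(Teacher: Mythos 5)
Your proof is correct and follows essentially the same route as the paper: translate ``no $k$-collapse'' into the rank of the associated symmetric matrix being at least $2k+1$, observe that this locus is open (cut out by the complement of the $(2k+1)$-minors, as in Corollary~\ref{closed}), and then show nonemptiness by proving that a form of maximal rank $r$ forces $V$ into the ideal generated by its $r$ supporting linear forms, which is exactly the content of Corollary~\ref{cDsq}. The only difference is cosmetic: your compression lemma is proved by a Schur-complement/limit computation, whereas the paper's Proposition~\ref{rk} gets the same conclusion by exhibiting an explicit $(r+1)\times(r+1)$ minor whose determinant is a nonzero polynomial in the scaling parameter.
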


See Theorem~\ref{2tr}. 

\begin{theorem}\label{K2I} For any algebraically closed field $K$ of characteristic $\not=2,3$,
we may take $\fK_3(k) = 2k$. \end{theorem}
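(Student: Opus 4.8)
The plan is to reduce the statement ``$\fK_3(k)=2k$ works'' to Theorem~\ref{2trI} about quadratic forms.  Let $F$ be a cubic form in $R = K[\vect X N]$ that is $2k$-strong, i.e.\ has no $2k$-collapse, and consider the vector space $\cD F$ spanned by the partial derivatives $\partial F/\partial X_1, \ldots, \partial F/\partial X_N$; these are quadratic forms.  By Definition~\ref{key} it suffices to exhibit a Zariski dense open subset of $\cD F$ consisting of quadrics with no $k$-collapse.  By Theorem~\ref{2trI} (valid since $\ch K \neq 2$), it is enough to prove that $\cD F$ is \emph{not} contained in an ideal generated by $2k$ or fewer linear forms.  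So the whole argument comes down to the implication: if $\cD F \subseteq (\vect \ell m)$ with $m \le 2k$ linear forms $\ell_j$, then $F$ has a $2k$-collapse.

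The key step is therefore the following ``derivatives control the form'' lemma: if every partial derivative of the cubic $F$ lies in the linear ideal $L = (\ell_1, \ldots, \ell_m)$, then $F \in L$ as well, and in fact $F$ is a graded linear combination of the $\ell_j$ with quadratic coefficients, hence has an $m$-collapse, hence a $2k$-collapse.  To prove this I would use Euler's formula: in characteristic not dividing $3$, $3F = \sum_{i=1}^N X_i \,\partial F/\partial X_i$, and each $\partial F/\partial X_i \in L$ immediately gives $F \in L$ after dividing by $3$ (here is where $\ch K \neq 3$ is used, matching the hypothesis of the theorem).  Writing $F = \sum_{j=1}^m q_j \ell_j$ with $q_j$ quadratic exhibits $F$ as a graded linear combination of $m \le 2k$ forms of strictly smaller degree (the $\ell_j$, or alternatively the $q_j$, are of lower degree than $F$), which is precisely a $2k$-collapse.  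Thus a $2k$-strong cubic cannot have $\cD F$ inside a $2k$-linear ideal, and Theorem~\ref{2trI} finishes the proof.

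One subtlety to check: Theorem~\ref{2trI} is phrased for a vector space of quadratic forms not contained in an ideal generated by $2k$ or fewer linear forms, and concludes a dense open set with no $k$-collapse; applying it to $V = \cD F$ is legitimate provided $\cD F \neq 0$, which holds since a nonzero $2k$-strong cubic is in particular not a constant and $\ch K \neq 3$ forces some partial to be nonzero (again via Euler).  I would also record the contrapositive cleanly: a $k$-collapse of an element of $\cD F$ would have to come from a $2k$-collapse of $F$ via the Euler relation, but this direction is not needed — only the existence of one $k$-strong element in $\cD F$, equivalently density, is required by Definition~\ref{key}.

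The main obstacle I anticipate is not in the logic above, which is short, but in confirming that the bound $2k$ is the right (and presumably sharp, or near-sharp) constant — i.e.\ that one genuinely needs $2k$ linear forms rather than $k$.  The factor of $2$ enters because a single collapse summand $q_j \ell_j$ of $F$ contributes, through the product rule, \emph{two} linear forms ($\ell_j$ itself and a linear form from $\cD q_j$) to the span $\cD F$ in the worst case, so bounding the ``strength defect'' of $\cD F$ in terms of that of $F$ costs a factor of $2$.  Making this heuristic precise — that $\cD F \subseteq (\text{2k linear forms})$ is exactly the obstruction, with the constant $2k$ not improvable to $2k-1$ — is the step that requires care, and it is where the characteristic restriction $\ch K \neq 2, 3$ is genuinely used (the $3$ for Euler on $F$, the $2$ for Theorem~\ref{2trI} applied to the quadrics in $\cD F$).
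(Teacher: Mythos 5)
Your proposal is correct and is essentially the paper's own argument: the paper (Theorem~\ref{G3}) deduces $\fK_3(k)=2k$ directly from Theorem~\ref{2tr}, the implicit reduction being exactly the Euler-formula step you spell out — if $\cD F$ lay in an ideal generated by at most $2k$ linear forms, then $3F=\sum_i x_i\,\partial F/\partial x_i$ (characteristic $\neq 3$) would put the cubic $F$ in that ideal and hence give it a $2k$-collapse, so $2k$-strength of $F$ lets Theorem~\ref{2tr} (characteristic $\neq 2$) produce the dense open set of $k$-strong quadrics in $\cD F$. Your closing worries about whether $2k$ is sharp are beside the point, since the theorem only asserts that $2k$ suffices.
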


Theorem~\ref{K2I} is immediate from Theorem~\ref{K2} below in the case $b=1$.  

We also consider functions $\fJ_d$, which we call {\it J-rank functions}, such that if a form $F$ of degree  
$d$ has strength at least $\fJ_d(k)$,  then  the height of the ideal
generated by $F$ and $\cD F$ is at least $k$.  This is the same as requiring that $V(F)$ be R$_{k-2}$ for
$k \geq 2$.  The functions $\fJ_d$ coincide with the function $\etA$ for
a vector space spanned by one form of degree $d$, with a shift in the index.  See Discussion~\ref{fRet} and 
Definition~\ref{rK} below.  J-rank functions can be used to construct the functions $\etA$ in general:  see 
Theorem~\ref{SJrank}.  When the characteristic is greater than $d$, one can use key functions to construct
J-rank functions, and this is the basis for our theorems on the existence of $\etA$ in degrees 3 and 4
when the characteristic is not $2$ or $3$.  See Theorem~\ref{GtoS}  and Corollary~\ref{KtoR}.

For degree 3, if the algebraically closed field has characteristic 2 or 3, we instead use a direct construction to
obtain our main result on the existence of $\fJ_3(k)$:

\begin{theorem}\label{R323I} Let $R$ be a polynomial ring over an algebraically closed field $K$.
If $K$ has characteristic 2, then we may take $\fJ_3(k) = 2(k-1)(2k+1)$.  
If $K$ has characteristic 3, we may take $\fJ_3(k) =  2k^2-k$.
\end{theorem}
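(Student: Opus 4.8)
The plan is to prove the contrapositive, in the form: if the cubic $F$ is $\fJ_3(k)$-strong, then the ideal $(F,\cD F)$ contains a regular sequence of length $k$, so $\height(F,\cD F)\ge k$. The first move is a normalization. Choosing coordinates so that $\partial_1F,\dots,\partial_mF$ is a $K$-basis of $\cD F$, where $m=\dim_K\cD F$, and $\partial_{m+1}F=\cdots=\partial_NF=0$: when $\ch K=3$ the vanishing of $\partial_\ell F$ for $\ell>m$ forbids every cubic monomial containing $X_\ell$ other than $X_\ell^3$, so, as $K$ is perfect, the ``freshman's dream'' gives $F=G(X_1,\dots,X_m)+\mu^3$ for a single linear form $\mu$, and replacing $F$ by $G$ costs only the one product $\mu\cdot\mu^2$; when $\ch K=2$ the vanishing of $\partial_\ell F$ still permits monomials $X_\ell^2X_j$, so the normal form is weaker and the surviving monomials must be carried along, which is one place the extra factor of $2$ in the characteristic-$2$ bound comes from. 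After this one may assume $F$ is a cubic in exactly $m$ essential variables whose first partials span $\cD F$.

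Next a dichotomy on $m=\dim_K\cD F$. If $m$ is at most an explicit threshold that is linear in $k$, then $F$ lies in the square of the irrelevant ideal of $K[X_1,\dots,X_m]$, hence $F=\sum_{i\le m}X_iQ_i$ has a collapse of size $\le m$, and a suitable choice of threshold contradicts $F$ being $\fJ_3(k)$-strong. So one may assume $m$ is large. In that case I would first observe that $\cD F$ cannot be contained in an ideal generated by a bounded number of linear forms without $F$ itself having a small collapse: an Euler-type manipulation when $\ch K=2$, a ``freshman's dream'' argument when $\ch K=3$, again on the monomials of $F$. Hence one may assume the space of quadratic forms $\cD F$ is not contained in any small linear ideal.

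Now, in characteristic $\ne 2$, Theorem~\ref{2trI} gives that a Zariski-dense set of elements of $\cD F$ have no $j$-collapse for $j$ of order $k$; in characteristic $2$, where Theorem~\ref{2trI} is unavailable, one substitutes a direct analysis of quadratic forms together with the surviving $X_\ell^2X_j$ monomials. What one actually needs is slightly more: that the locus of quadrics in $\cD F$ of small strength has codimension at least $k-1$, so that a generic $(k-1)$-dimensional subspace $W\subseteq\cD F$ is itself a strong space of quadrics. Given such a $W$, the degree-$2$ results (Theorems~\ref{nforms} and~\ref{ngu} and Corollary~\ref{A2}) show that a basis $q_1,\dots,q_{k-1}$ of $W$ is a regular sequence with $R/(q_1,\dots,q_i)$ a normal domain for $0\le i\le k-1$. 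The proof then finishes at once: $F\notin(q_1,\dots,q_{k-1})$, since an expression $F=\sum_{t=1}^{k-1}m_tq_t$ with the $m_t$ linear would be a $(k-1)$-collapse of $F$, contradicting strength; hence $F$ is a nonzerodivisor on the domain $R/(q_1,\dots,q_{k-1})$, so $F,q_1,\dots,q_{k-1}$ is a regular sequence of length $k$ contained in $(F,\cD F)$.

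The remaining work is to add up the strength consumed: one product in the normalization, the threshold for ``$m$ small'', the loss in passing from the strength of $F$ to a bound on the small-strength locus in $\cD F$, and the loss in the degree-$2$ input. Each of these contributions is linear in $k$, so the running total is quadratic; matching constants should give $\fJ_3(k)=2k^2-k$ when $\ch K=3$, while in characteristic $2$, where each ``$2k$ linear forms'' estimate becomes roughly ``$4k$'', one gets $\fJ_3(k)=2(k-1)(2k+1)$. I expect the main obstacle to be the characteristic-$2$ substitute for Theorem~\ref{2trI}, that is, the hands-on argument bounding how weak the first partials of a cubic in characteristic $2$ can be when the cubic itself is strong, and in particular the codimension estimate for the small-strength locus in $\cD F$ without the factor-of-$2$ tools available in characteristic $\ne 2$; this is where the argument diverges most from both the key-function route of Theorem~\ref{K2I} and the characteristic-$0$ treatment.
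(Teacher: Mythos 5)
There is a genuine gap at the center of your argument: the step that produces a strong subspace of $\cD F$. You correctly identify that what you need is that a generic $(k-1)$-dimensional subspace $W \subseteq \cD F$ is a strong space of quadrics, and you propose to get this from a codimension bound ("the locus of quadrics in $\cD F$ of small strength has codimension at least $k-1$"). But Theorem~\ref{2tr} (= Theorem~\ref{2trI}) only says the collapsible locus is a proper closed subset, i.e.\ has codimension at least $1$; nothing in your outline, and nothing in the degree-two theory, upgrades this to codimension $k-1$, and you yourself flag this estimate as the unresolved "main obstacle," especially in characteristic $2$ where even the codimension-one statement fails. The strength of $F$ does not obviously control the codimension of the weak locus inside $\cD F$ (a high-dimensional weak locus need not contain a large linear space of weak quadrics, which is what one could hope to rule out), so the contrapositive as you have set it up does not close.

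The paper circumvents exactly this point with a different mechanism. By Theorem~\ref{SaG} (applied with $b=k$), if generic $k$-tuples in $\cD F$ fail to span a $(k-1)$-strong space, then there is a \emph{linear} subspace $\Theta \subseteq \cD F$ of codimension at most $k-1$ every element of which has a strict $k(k-1)$-collapse; so no codimension bound on the weak locus is ever needed. One then picks $G \in \Theta$ of maximal rank $r \le 2k(k-1)$ and uses Proposition~\ref{rk}/Corollary~\ref{cDsq} to trap $\Theta$ inside $(\cD G)R$ (plus squares of linear forms in characteristic $2$), hence traps $\cD F$, and finally $F$ itself, in an ideal generated by few forms of lower degree: via Euler's formula $F=\sum_i x_i\,\partial F/\partial x_i$ in characteristic $2$, and via the analysis of the residual cube terms (your "freshman's dream") in characteristic $3$. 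This yields a strict collapse of $F$ of size $2(k-1)(2k+1)$, respectively $2k^2-k$, contradicting strength; note the quadratic size arises from the single product $k\cdot k(k-1)$ in the Theorem~\ref{SaG} dichotomy, not from summing linear losses as in your accounting. Your endgame ingredients (Euler in characteristic $2$, cube extraction in characteristic $3$, the degree-two results from Theorem~\ref{ngu}) do match the paper's, but without Theorem~\ref{SaG} or a proof of your codimension claim the argument is incomplete.
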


See Theorem~\ref{R323}.  Note that these expressions for $\fJ_3$ are not much worse than
the result one can obtain for characteristic not $2,\,3$ using Theorem~\ref{K2I}, namely
$\fJ_3(k) = (2k+1)(k-1)$. See also Theorem~\ref{G3}.

The following is one of our main results here:  it shows that one can construct the functions
$\etA$ explicitly (and, hence, the functions $\etB$) up through degree $d$ whenever one can construct the functions 
$\fK_i$ for $i \leq d$.

\begin{theorem}\label{SJ} Let $K$ be algebraically closed field of characteristic 0 or $>d$, where $d \geq 3$, 
and let $R= K[\vect xN]$. Suppose that for $2 \leq i \leq d-1$, we have a function
 $A_i:\N_+ \to \N_+$ such that if a vector space of  forms of degree $i$ of dimension $n$ is $A_i(n)$-strong,
 then every sequence of linearly independent forms of $V$ is a regular sequence.  Suppose also 
that for every $i$, $2 \leq i \leq d$,  we have a  key function $\fK_i$ that is also nondecreasing.
Let $\delta = (\vect n d)$ be a dimension sequence.  Let $h$ be the number of nonzero elements 
among $n_2, \, \ldots, \, n_d$, and let $n' = n_2 + \cdots + n_d$. Let $b := h-1 + 2n' + \eta$.
 Then we may define $\etuA(\delta)= \bigl(\etA_1(\delta),\, \ldots, \etA_d(\delta)\bigr)$ as follows.
\begin{enumerate}
\item $\etA_1(\delta) := 0$. \smallskip

\item $\etA_2(\delta) := \lceil \frac{b}{2}  \rceil + n_1$.  \smallskip

\item For $3 \leq i \leq d$,  $\etA_i(\delta) := \fK_i\bigl(bA_{i-1}(b)\bigr) + b-1 + n_1$. 
 \end{enumerate}
Then whenever a vector space $V$ of forms in $R$ with dimension sequence $\delta$ is $\etuA(\delta)$-strong,
every sequence of linearly independent forms in $V$ is an \rom{R}$_\eta$-sequence.
\end{theorem}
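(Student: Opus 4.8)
The plan is to argue by induction on the number of graded components of $V$, reducing each step to a one-degree situation handled by the J-rank machinery. First I would observe that the strength hypotheses on $V$ are inherited by quotients: if $V$ is $\etuA(\delta)$-strong and $F \in V$ is a nonzero form of some degree $i$, then after adjoining $F$ to a regular sequence and passing to $R/(F)$, the images of the remaining basis forms still satisfy a strength condition governed by a slightly smaller $b$; this is where the inductive structure of the constant $b = h-1+2n'+\eta$ matters, since killing one form of a ``new'' degree decreases $h$ by one (if $n_i=1$) or leaves $h$ unchanged while decreasing $n'$ by one, and in either case $b$ drops by at least $2$, which pays for the loss of one degree of regularity plus the room needed to maintain $\mathrm{R}_\eta$. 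The role of the additive $n_1$ in each $\etA_i(\delta)$ is to absorb linear forms: a $\kappa$-strong space of forms in degrees $\geq 2$ together with $n_1$ linear forms can be handled by noting that a linear form is a nonzerodivisor on a domain and that specializing the linear forms to general values of the remaining variables preserves strength up to the shift by $n_1$.

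The heart of the argument is the single-degree case, which is Theorem~\ref{SJ}'s content when $\delta = (0,\ldots,0,n)$. For degree $i=2$ one uses the bound $A_2(n)=n-1$ (stated above as an already-proved theorem) together with the $\lceil \eta/2\rceil$-type correction to get $\mathrm{R}_\eta$; the constant $\lceil b/2\rceil$ in (2) is exactly $A_2$ applied to something of size $b$ with the $\mathrm{R}_\eta$ shift folded in. For degree $i \geq 3$ one uses the key function: if $F$ is $\fK_i(bA_{i-1}(b))$-strong, then by Definition~\ref{key} a general element of $\cD F$ is $bA_{i-1}(b)$-strong, hence $\cD F$ contains a vector space of forms of degree $i-1$ of dimension at most $b$ that is $A_{i-1}(b)$-strong (after discarding dependencies), so by hypothesis on $A_{i-1}$ the generic partials form a regular sequence; this forces the height of the ideal $(F) + \cD F$ to be large, i.e.\ gives a J-rank bound, and by the standard translation (Theorem~\ref{SJrank}, Corollary~\ref{KtoR}) a J-rank bound of $k$ means $V(F)$ is $\mathrm{R}_{k-2}$, which is precisely what is needed to run the inductive step for $\mathrm{R}_\eta$. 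The characteristic hypothesis $\ch K = 0$ or $>d$ is used to know that $\cD F$ genuinely detects the singularities of $V(F)$ (the Euler relation and the Jacobian criterion behave well), and that strength of $F$ is not lost when passing to a general hyperplane section used in the height estimate.

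I would then assemble these pieces: given $\vect F h$ linearly independent in $V$, reorder so that degrees are nondecreasing, peel off the linear forms first using the $n_1$ slack, and then induct on the remaining forms, at each stage using the single-degree analysis to see that the next form is a nonzerodivisor modulo the previous ones and that the new quotient is $\mathrm{R}_\eta$, while checking that the residual strength of $V$ modulo the forms already used still meets the threshold $\etuA$ computed with the decremented parameters. The main obstacle I expect is bookkeeping the strength bound under successive quotients: one must show that $\kappa$-strength of $V$ in $R$ descends to an appropriate $\kappa'$-strength of the image of $V$ in $R/(\vect F j)$ with $\kappa'$ large enough that the key-function thresholds $\fK_i(bA_{i-1}(b))$ remain valid when $b$ is replaced by its decremented value — this requires a lemma (presumably proved earlier, in the spirit of Proposition~\ref{obv}) that a $k$-collapse of a form mod a regular sequence of $j$ forms lifts to a $(k+j\cdot(\text{something}))$-collapse upstairs, so the margin $h-1+2n'+\eta$ has to have been chosen to cover exactly this overhead. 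Verifying that the stated closed forms in (1)--(3) are large enough to absorb all of this, rather than merely that \emph{some} function works, is the delicate part; the existence of \emph{some} $\etuA$ is already guaranteed by Theorem~\ref{etA}, so the novelty is entirely in the explicitness and in routing everything through $\fK_i$ and $A_{i-1}$.
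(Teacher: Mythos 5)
There is a genuine gap, and it lies in the overall route. You propose to peel off the forms one at a time, pass to the quotient rings $R/(F_1,\ldots,F_j)$, track how strength descends, and conclude at each stage that ``the next form is a nonzerodivisor modulo the previous ones and that the new quotient is \rom{R}$_\eta$'' from the single-degree analysis of that form. But \rom{R}$_\eta$ is not inherited this way: the singular locus of $V(F_1,\ldots,F_{j+1})$ includes the locus where $\nabla F_{j+1}$ becomes dependent on $\nabla F_1,\ldots,\nabla F_j$, and no amount of strength of $F_{j+1}$ (or of its image) by itself controls that locus --- this is exactly the hard, genuinely multi-form part of the problem (already visible in the $n=2$ case of Theorem~\ref{nforms}). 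Moreover the intermediate quotients are not polynomial rings, so the key-function and J-rank machinery you invoke does not transplant to them, and no decrementing of $b$ is available to ``pay'' for this. The paper avoids the peeling altogether: Theorem~\ref{SJrank} works in the ambient ring, using Theorem~\ref{codimsing} to reduce the codimension of the singular locus of the full intersection to that of subsets of $V$ with mutually distinct degrees, and Theorem~\ref{MAX2} to bound the height of the ideal of maximal minors of the resulting Jacobian-type matrix by the heights of the individual rows minus $h-1$; the induction on $\dim V$ is used only to know one has a regular sequence, never to propagate \rom{R}$_\eta$ through hypersurface sections. The constant $b=h-1+2n'+\eta$ is exactly what this minors bound requires, not a budget consumed by successive quotients.

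A second, smaller but real gap is your step from the key function to the J-rank bound: from ``$F$ is $\fK_i(bA_{i-1}(b))$-strong, so a general element of $\cD F$ is $bA_{i-1}(b)$-strong'' you conclude that $\cD F$ contains a $b$-dimensional subspace that is $A_{i-1}(b)$-strong ``after discarding dependencies.'' That inference is false as stated: strength of a single general element does not give strength of every nonzero element of a general $b$-dimensional subspace. Bridging this is the content of Theorem~\ref{SaG} and Theorem~\ref{GtoS} (either general $b$-tuples of partials span a strong space, or there is a codimension $\leq b-1$ subspace of $\cD F$ all of whose elements have $b\,k$-collapses, which after killing $\leq b-1$ variables forces a $\bigl(\fK_i(bk)+b-1\bigr)$-collapse of $F$), and it is precisely where the additive $b-1$ in $\etA_i(\delta)$ comes from; in your sketch that term is unexplained. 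Finally, the $\lceil b/2\rceil$ in $\etA_2$ is the J-rank (i.e.\ quadratic rank) condition $\fJ_2(b)$ ensuring each quadric's partials generate an ideal of height $\geq b$, not an application of $A_2$.
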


Theorem~\ref{SJ} is proved in \S\ref{keyf}, following the proof of Corollary~\ref{KtoR}, 
after a substantial number of preliminary results are established.

\begin{corollary}\label{explA} With hypothesis as in Theorem~\ref{SJ}, if $\delta = (0,\, \ldots, 0,\, n)$,
corresponding to an $n$-dimensional vector space whose nonzero forms all have degree $d \geq 3$, then 
we may take $\etA_d(\delta) = \fK_d\bigl((2n+\eta)A_{d-1}(2n+\eta)\bigr) + 2n+\eta-1$. 
\end{corollary}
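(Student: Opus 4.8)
The plan is to deduce Corollary~\ref{explA} directly from Theorem~\ref{SJ} by specializing the dimension sequence to $\delta = (0,\,\ldots,\,0,\,n)$, i.e. $n_d = n$ and all other $n_i = 0$. First I would unwind the combinatorial parameters appearing in Theorem~\ref{SJ} for this $\delta$. Since the only nonzero entry among $n_2,\,\ldots,\,n_d$ is $n_d = n$ (and $n > 0$), the number $h$ of nonzero entries is $h = 1$, so $h - 1 = 0$. Likewise $n' = n_2 + \cdots + n_d = n$, hence $b := h - 1 + 2n' + \eta = 0 + 2n + \eta = 2n + \eta$. Also $n_1 = 0$ in this case.

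Next I would substitute these values into clause (3) of Theorem~\ref{SJ}, which applies because $d \geq 3$. Clause (3) gives, for $i = d$,
\[
\etA_d(\delta) \;=\; \fK_d\bigl(b\,A_{d-1}(b)\bigr) + b - 1 + n_1 \;=\; \fK_d\bigl((2n+\eta)\,A_{d-1}(2n+\eta)\bigr) + (2n+\eta) - 1 + 0,
\]
which is exactly the claimed expression $\fK_d\bigl((2n+\eta)A_{d-1}(2n+\eta)\bigr) + 2n+\eta-1$. The hypotheses of Theorem~\ref{SJ} — that $K$ is algebraically closed of characteristic $0$ or $> d$, that the regular-sequence functions $A_i$ exist and the key functions $\fK_i$ exist and are nondecreasing for $i \leq d$ — are carried over verbatim as "hypothesis as in Theorem~\ref{SJ}," so there is nothing further to check there.

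Finally I would record the conclusion: Theorem~\ref{SJ} asserts that whenever a vector space $V$ with dimension sequence $\delta$ is $\etuA(\delta)$-strong, every sequence of linearly independent forms in $V$ is an $\rom{R}_\eta$-sequence; specializing to our $\delta$, this says precisely that if an $n$-dimensional vector space of forms all of degree $d$ is $\etA_d(\delta)$-strong for the value computed above, the desired regularity and Serre-condition conclusion holds. I do not anticipate a genuine obstacle here — the statement is an immediate specialization — so the only thing requiring a modicum of care is confirming that for a pure-degree-$d$ space the lower-degree entries $\etA_1(\delta),\,\ldots,\,\etA_{d-1}(\delta)$ impose no constraint, which is automatic since a $\kappa$-strong condition on a graded space with trivial lower graded components is vacuous in those degrees (only $\etA_d$ matters), so it suffices to quote the single value $\etA_d(\delta)$ as in Remark~\ref{Anot}.
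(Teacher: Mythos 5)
Your proposal is correct and matches the paper's own argument, which likewise observes that for $\delta = (0,\ldots,0,n)$ one has $h=1$, $n'=n$, and $b=2n+\eta$, and then reads off clause (3) of Theorem~\ref{SJ}. The extra remarks about the vacuity of the lower-degree entries are fine but not needed.
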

\begin{proof}  This is immediate from Theorem~\ref{SJ}, since in this case $h=1$, $n' =n$, and $b = 2n+\eta$. \end{proof}

Our main result in degree 4 is to give an explicit formula for $\fK_4$:  coupled with Theorem~\ref{SJ}, one obtains
the functions $A,\,\etA$ for degree up to 4.  However, for this result we must exclude characteristics 2 and 3.
See Theorem~\ref{main4G} and Corollary~\ref{explA4}.

\begin{theorem}\label{main4GI}  Let $K$ be an algebraically closed field of characteristic not $2$ or $3$. Then 
$\fK_4(k) = 6k(k+1)4^{k(k+1)} + (k+1)^2$ 
\end{theorem}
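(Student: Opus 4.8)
The plan is to establish the key function estimate for degree $4$ by controlling how the strength of a quartic $F$ is inherited by a generic partial derivative $\partial F$, which is a cubic form. The definition of $\fK_4(k)$ requires: if $F$ is $\fK_4(k)$-strong, then some element of $\cD F$ is $k$-strong. I would argue by contraposition: suppose that \emph{every} cubic in $\cD F$ has a $k$-collapse, i.e., lies in a $k$-linear prime (using the Discussion~\ref{strength} equivalence for cubics, valid since $\ch K \neq 2,3$). The goal is to produce a collapse of $F$ of controlled length, contradicting $\fK_4(k)$-strength. The natural mechanism is to pick a coordinate system and look at all first partials $\partial_1 F, \ldots, \partial_N F$ simultaneously; if each is trapped in a $k$-linear prime, one wants to show $F$ itself is trapped in an ideal generated by relatively few linear forms together with a bounded number of products of lower-degree forms — the $4^{k(k+1)}$ factor strongly suggests an iterated/branching argument where at each of roughly $k(k+1)$ stages one splits into a bounded number ($\leq 4$, perhaps from choices among $\binom{?}{2}$-type selections or from quadric factorizations) of cases.

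Concretely, I expect the argument to run as follows. First, reduce to the situation where $F$ has large strength but the span of partials $W = \cD F$ is small in the sense that a generic element of $W$ has a $k$-collapse; since the $k$-collapse locus is Zariski closed and we are assuming it is all of $W$, I can work with the ideal-membership formulation. Second, use a "simultaneous collapse" or genericity argument on $W$: by taking enough generic linear combinations and invoking the degree-$3$ machinery (the fact that a cubic with a $k$-collapse lies in a $k$-linear prime, plus the $A_3$/$\fK_3$ bounds already available), one shows that after a linear change of coordinates there is a \emph{single} set of at most roughly $2k+1$ linear forms $\ell_1, \ldots, \ell_m$ such that every partial of $F$ lies in the ideal $(\ell_1, \ldots, \ell_m) + (\text{bounded lower-degree junk})$. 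Third — and this is where the iteration kicks in — integrate this back up: if all partials of $F$ lie in such an ideal, then modulo that ideal $F$ is "almost constant," and one peels off terms, at each step either absorbing a linear form or trading a quadratic factor, with a branching factor of at most $4$ coming from the binary choices in handling quadratic collapses (a quadric collapse has the shape $\sum q_i q_i'$ with few terms, and deciding which variable/factor to eliminate gives $O(1)$ branches). After $O(k^2)$ such steps one obtains a collapse of $F$ into $O(k(k+1)4^{k(k+1)})$ forms of lower degree, plus a correction of size $(k+1)^2$ from the linear forms themselves; if $F$ is $\fK_4(k)$-strong with $\fK_4(k) = 6k(k+1)4^{k(k+1)} + (k+1)^2$ chosen larger than this count, we get a contradiction.

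The main obstacle, I expect, is exactly the step of passing from "every individual partial derivative has a $k$-collapse" to "$F$ has a controlled collapse." The difficulty is that the collapses of the various partials $\partial_i F$ a priori have \emph{nothing to do with each other} — they may live in totally different linear primes — so one cannot naively antidifferentiate. The resolution must exploit the integrability/symmetry constraints among the partials ($\partial_i \partial_j F = \partial_j \partial_i F$), which is precisely why $\ch K \neq 2, 3$ matters: the Euler relation $d \cdot F = \sum x_i \partial_i F$ (with $d = 4$ invertible) lets one recover $F$ from its partials, and the cross-partial symmetry forces the various $k$-linear primes to be compatible enough to be simultaneously straightened. Making this compatibility quantitative — bounding the total number of linear forms and lower-degree factors needed — is the technical heart, and it is what generates the doubly-exponential bound. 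A secondary obstacle is bookkeeping: ensuring at each stage of the iteration that the "junk" of lower-degree products does not itself blow up faster than the stated bound, which requires the branching to be genuinely bounded (factor $4$) rather than growing with $k$.

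I would organize the write-up as: (1) a lemma reducing the key-function claim to a statement about ideal membership of all partials; (2) a "simultaneous straightening" lemma using the degree-$3$ results already in hand ($\fK_3(k) = 2k$, Theorem~\ref{K2I}, and the degree-$3$ $A_3$ bounds) to put all partials into one bounded linear prime up to bounded lower-degree error; (3) the iterative "reintegration" argument producing the explicit count; and (4) arithmetic to verify that $6k(k+1)4^{k(k+1)} + (k+1)^2$ dominates the count, with a little slack. The factor $6$ and the quadratic correction $(k+1)^2$ are the kind of explicit constants that fall out of (3) and (4) once the branching structure is pinned down, so I would not belabor them until the structure is in place.
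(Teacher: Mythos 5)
Your setup (contraposition, recovering $F$ from its derivatives via Euler's formula, and the observation that the individual collapses of the partials are a priori unrelated and must be made compatible) matches the starting point of the paper, but the proposal stops exactly where the real work begins, and the mechanism you guess for that work is not one that can be made to function as described. The paper's proof does not ``simultaneously straighten'' the first partials into a single ideal of roughly $2k+1$ linear forms and then reintegrate with a case-splitting argument. Instead it passes to the Hessian $\fH$ of $F$: since every $G\in\cD F$ has a $k$-collapse, every $K$-linear combination of columns of $\fH$, being $\nabla G$, has a \emph{collective} $(k,k)$-collapse (Lemma~\ref{partials}); then a genericity/specialization argument (Theorems~\ref{genkh}, \ref{oneW}, \ref{propW}) shows that after a generic change of coordinates one single auxiliary vector space $W$ of quadrics, of dimension at most $k(k+1)$, works for all columns at once, and Theorem~\ref{W0} upgrades this to a \emph{unique} symmetric decomposition $\cT\fH\cT\tr=\fP+\fQ$ with the entries of $\fQ$ in a $3k'$-strong space $W_0$ and every column of (generically conjugated) $\fP$ having a strict $k(k+1)4^{k(k+1)}$-collapse. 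Your factor $4^{k(k+1)}$ does not come from a branching-by-cases reintegration: it is $(a+1)^m k$ with $a=3$ from Theorem~\ref{mvclpse}, the price of absorbing collapsible elements of the auxiliary space into the auxiliary ideal so that the decomposition becomes unique (which is what forces $\fP$ and $\fQ$ to be symmetric and compatible with further generic conjugation). Nothing in your sketch produces or replaces this uniqueness step, and without it the symmetry of mixed partials cannot be exploited.

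The second, and decisive, missing ingredient is Theorem~\ref{stabPcol}: for a symmetric matrix $\fP_0$ of quadrics all of whose entries, after generic conjugation $\cZ\fP_0\cZ\tr$, have rank at most $r$, the single entry of $X_1\fP_0X_1\tr$ has a strict $(r_1+2r)$-collapse. This is the technical heart of the degree-$4$ argument; it is proved by a delicate analysis of the vanishing of $(r+1)$-size minors of the symmetric scalar matrices attached to the entries, and it is what converts ``every entry of $\fP$ has a bounded collapse, hence bounded rank $r=2k(k+1)4^{k(k+1)}$'' into the $3r=6k(k+1)4^{k(k+1)}$ term of the bound, with the $(k+1)^2$ coming from the $k(k+1)$ generators needed to kill $\fQ$ plus the $k+1$ extra variables. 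Your step (3), ``peel off terms with branching factor at most $4$,'' has no argument behind it: you give no reason why the lower-degree junk accumulated at each stage stays bounded, no invariant that decreases, and no mechanism that uses the symmetry $\partial_i\partial_jF=\partial_j\partial_iF$ quantitatively. Since you yourself identify making that compatibility quantitative as the crux, the proposal as written leaves the crux unproved, and the specific route you propose (one common linear prime of size about $2k+1$ for all partials, then reintegration) is not what is true and not how the stated constants arise.
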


\section{Deriving Stillman's Conjecture and the functions $\etB$ from the functions $\etA$}\label{BfromA}  

We describe first the algorithm for obtaining $\etB$ for $\etA$ (and $B$ from $A$, which is the same argument)
from \cite{AH2}.

One linearly orders 
the dimension sequences $\delta = (\vect \delta d)$ so that  $\delta < \delta'$ precisely if $\delta_i < \delta_i'$ for
the largest value of $i$ for which the two are different.  This is a well-ordering.  Assume that $\etB$ is known for
all values all predecessors of $\delta$.  If the vector space is $\etA(\delta)$-strong,  it satisfies $\rom{R}_\eta$
and we are done.  If not, for some $i$ an element of $V_i$ has an $\etA_i(\delta)$-collapse, and we can
express the element in terms of at most $2\cdot\etA_i(\delta)$ forms of lower degree. This enables us
to form a new vector space in which $\delta_j$ remains the same for $j > i$, $\delta_i$ decreases
by 1,  and the $\delta_h$ for $h <i$ increase by a total of $2\cdot\etA_i(\delta)$.  If we let $\delta'$ run through
all dimension sequences  with the properties just described (these automatically precede $\delta$ in the 
well-ordering), we may take $\etB(\delta) = \max_{\delta'}\{\etB(\delta')\}$. 

Once we have the functions $\etB$, it is easy to use them to give bounds on projective dimension independent of
the number of variables.  Suppose one wants to give a bound $C(r,s,d)$ on the projective dimension of the cokernel of an
$r \times s$ matrix over a polynomial ring $R$ whose entries (which need not be homogeneous) have 
degree at most $d$.  Each entry
is the sum of at most $d$ forms of positive degree and, possibly, a scalar.  Hence, all of the entries of the matrix
are in a ring generated by $rsd$ forms of positive degree at most $d$.  The function $\etB$ places all of these
form in a subalgebra of $R$  generated by, say, $B$ forms of degree at most $d$ that form
a regular sequence, where $B$ depends on $r$, $s$, and $d$ but not in the number of variables. The regular
sequence generates a polynomial ring $A$ such that $R$ is flat  (even free) over $A$. (We may extend the
regular sequence to a homogeneous system of parameters, and $R$ is free over the ring the parameters
generate).   The minimal free resolution over
$A$ has length at most $B$, and this is preserved by the flat base change to $R$. 

\section{Basic results on strength and safety}\label{safety}  

Then notions of $k$-collapse and strength were discussed briefly earlier:  see Discussion~\ref{strength}.
In this section we introduce the additional notions of {\it collective collapse} and {\it safety}, and 
systematically study all of these properties.

\begin{definition}\label{defsafety}{\bf Collective collapse and safety.}   
 A set $\cF$ of forms of a graded $K$-algebra $R$  is  {\it  \cs k} if it is not contained   
in an ideal generated by $k$ forms of positive degree.  The statement for the set of forms $\cF$ is evidently equivalent
to the same statement for the ideal the forms in $\cF$ generate.  {\it Note that in this definition there is no restriction on the
degrees of the $k$ forms}, which is an important difference between safety and strength conditions.
The degrees of the generators of the ideal can always be taken to be at most the highest degree of a form
in the set $\cF$. If the set of forms $\cF$ is not 
$k$-safe, we shall say that it has a {\it collective $k$-collapse}.
In the discussion below, we refine this terminology to keep track of how many of the $k$ forms that give
the collapse have the highest degree of an element of the set $\cF$.

  If the highest degree of a form in the set $\cF$ is $d$, we shall say that the set of forms $\cF$ has
a {\it collective $(k-h,h)$-collapse}, where $0 \leq h \leq k$,  if it is contained in an ideal generated by 
at most $k-h$ forms of degree at most $d-1$,
which we call the {\it auxiliary ideal} for the collective $(k-h,h)$-collapse,
and at most $h$ forms of degree $d$. We refer to the $K$-vector space generated by these $h$ $d$-forms
as the {\it auxiliary} vector space for $(k-h,h)$-collapse.

 We shall refer to a collective $(k,0)$-collapse as a {\it strict collective} $k$-collapse.  In the case of
 a single form $f$, a collective $(k,0)$-collapse is the same as a $k$-collapse, defined earlier. However, we 
 may also use the term {\it strict $k$-collapse}, to emphasize that the degrees of the generators are strictly
 smaller than $\deg(f)$. 
 
If a set of forms does not have a collective $(k-h,h)$-collapse we say that it is {\it $(k-h,h)$-safe}.
Thus, a set of forms $\cF$ is  $k$-safe if and only if there do not exist integers $k_0,h \in \N$  such
that $k_0+h \leq k$ and $\cF$ has a $(k_0,h)$-collapse. \end{definition}

\begin{proposition}\label{obv}  Let $K$ be a field and $R$ an  $\N$-graded $K$-algebra. Let $V$ be a finite-dimensional
 graded $K$-vector subspace of $R$ with dimension sequence $(\vect n d)$.  Let $\kappa = (\vect k d)$ and
 $\lambda = (\vect hd)$ be elements of $\N^d$.
Let $k$ and $h$ be positive integers.  We write $\kappa +h$ for $(k_1 +h, \, \ldots,\, k_d +h)$.
\begin{enumerate}[(a)]

\item If $V$ is $\kappa$-\gu\, and $h_i \leq k_i$ for $1 \leq i \leq  d$, then every graded $K$-vector subspace 
of $V$ is $\lambda$-\gu. Hence, if $V$ is $k$-\gu\ and $h \leq k$, then every graded $K$-vector subspace of $V$ is $h$-strong.  

\item If the sequence $\vect F m$ is $\kappa$-strong, the sequence  $\vect G n$ is $\kappa'$-\gu,  
and $\deg(F_i)\not= \deg(G_j)$ for 
all $i,j$,  then $\vect F m,\,\vect G n$ is $\kappa''$-strong, where for each degree $s$ that occurs,
$\kappa_s''$ is $\kappa_s$ (respectively, $\kappa_s'$) if $s$ 
is the degree of some $F_i$ (respectively, some $G_j$). 

\item If $\vect  G h, \, \vect F n$ is $(\kappa + h)$-\gu\ and $\deg(G_i) \leq \deg(F_j)$ for all $i$ and $j$
(which is automatic if the $G_i$ are linear forms) then the image of the sequence $\vect F n$ is 
$\kappa$-\gu\ in $R/(\vect G h)R$. 
If $V$ is $(\kappa+h)$-\gu\ and contains no linear forms, and $W$ is a vector space generated by linear forms  
with dim$_KW \leq h$, then the image of $V$ is $\kappa$-\gu\ in $R/(W)$.

\item If a family of forms of positive degree is $k$-safe, then so is every larger family.   

\item If a set of forms of positive degree is \cs {(k+h)}, then it is still \cs k if one omits $h$ elements.   

\item Suppose that $V$ is $\kappa$-strong, that $F \in V$ has degree $e$, and that $F$ is part 
of a basis $\cB$ for $V$ consisting of forms.  Let $n'$ be the number of forms in $\cB$ of degree 
$< e$, i.e. $n' = \sum_{j=1}^{e-1}n_j$, and let $\vect G m$ be any forms in $R$ having positive 
degrees $< e$.  Suppose that $n'+m \leq k_e$ (respectively, $< k_e$).  Let $I$ be an ideal of 
$R$ generated by the $\vect G m$, a subset of the elements of $\cB$ different from $F$, and a set of forms of
$R$ of degree $> e$.   Then $F$ is nonzero (respectively, irreducible) modulo $I$.  

\item If $V$ is $k$-strong for $k \geq 1$, the elements of $V-\{0\}$ all have the same degree,  and $\vect F h$ is any 
sequence of $K$-linearly independent elements of $V$, then each $F_i$ is irreducible modulo the ideal generated
by the $F_j$ for $j \not= i$.  

\end{enumerate} \end{proposition}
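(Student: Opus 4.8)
The plan is to prove parts (a) through (g) in order, since the later parts lean on the earlier ones. The central observation is that all of (a)–(e) are essentially unwinding the definitions of $\kappa$-strong and $k$-safe, while (f) is the genuinely substantive statement and (g) is a quick consequence of (f).

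\smallskip

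For (a): if $W \subseteq V$ is a graded $K$-vector subspace and $W_i$ is its degree-$i$ component, then a nonzero form of $W_i$ with an $h_i$-collapse is a nonzero form of $V_i$ with an $h_i$-collapse, hence (since $h_i \le k_i$) a $k_i$-collapse, contradicting that $V$ is $\kappa$-strong. The last sentence is the special case $h_i = h$ for all $i$. For (b): a nonzero form of degree $s$ in the span of $\vect F m, \vect G n$ is, because the $F_i$ and $G_j$ have distinct degrees, either a nonzero $K$-combination of $F_i$'s of degree $s$ or a nonzero $K$-combination of $G_j$'s of degree $s$; in the first case it has no $\kappa_s$-collapse, in the second no $\kappa_s'$-collapse, which is exactly the claim about $\kappa_s''$. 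For (c): suppose $\overline{c_1 F_1 + \cdots + c_n F_n}$ (with the $F_j$ of a fixed degree $e$ and the $c_j \in K$ not all $0$) has a $\kappa_e$-collapse in $R/(\vect G h)$; lifting, $c_1 F_1 + \cdots + c_n F_n$ equals a $K_e$-combination of at most $\kappa_e$ forms of degree $< e$ plus an element of $(\vect G h)$, and since each $G_i$ has degree $\le e$ we may write that element as a combination of the $G_i$ with coefficients of degree $\ge 0$; absorbing the degree-$0$ (scalar) coefficients appropriately and discarding higher-degree pieces by taking the degree-$e$ component, we exhibit a $(\kappa_e + h)$-collapse of $c_1 F_1 + \cdots + c_n F_n$ in $R$ — careful bookkeeping is needed because a $G_i$ of degree exactly $e$ contributes to the top degree, but it still contributes only one generator, so the count $\kappa_e + h$ is right. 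The vector-space formulation follows by applying this to any independent tuple from $V$ and a basis of $W$, noting linear forms have degree $1 \le e$. Parts (d) and (e) are immediate from the definition of $k$-safe: an ideal generated by $k$ forms containing a family also contains any smaller family (this is (d)); and if $\cF$ lies in an ideal generated by $k+h$ forms of positive degree, then $\cF$ minus $h$ of its elements — in fact any $\cF' \subseteq \cF$ — still lies in that same ideal, but to get a $k$-generated ideal one instead argues contrapositively: actually (e) as stated follows since safety of a family is a property of the ideal it generates together with a count, and removing $h$ elements can only force us to need $h$ fewer generators — more precisely, if $\cF \setminus \{f_1,\dots,f_h\}$ had a collective $k$-collapse into an ideal $J$, then $\cF$ would lie in $J + (f_1,\dots,f_h)$, a $(k+h)$-generated ideal, contradiction.

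\smallskip

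The main work is (f). Here $V$ is $\kappa$-strong, $F \in V$ has degree $e$, $F$ belongs to a homogeneous basis $\cB$ of $V$, $n' = \sum_{j<e} n_j$ is the number of basis elements of degree $< e$, and $\vect G m$ are arbitrary forms of positive degree $< e$ with $n' + m \le k_e$ (resp.\ $< k_e$). Let $I$ be generated by $\vect G m$, some subset $\cB'$ of $\cB \setminus \{F\}$, and some forms of degree $> e$. I want $F \notin I$ (resp.\ $F$ irreducible mod $I$). \textbf{Nonzero mod $I$:} if $F \in I$, take degree-$e$ components: $F = (\text{combination of } \vect G m \text{ and the basis elements of degree} < e \text{ in } \cB' \text{, with coefficients in } K_{>0}) + (\text{$K$-combination of degree-$e$ elements of }\cB') + (\text{contribution of the degree-}{>e}\text{ generators, which is }0\text{ in degree }e)$. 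The degree-$> e$ generators cannot contribute in degree $e$. So $F$ minus a $K$-combination of certain degree-$e$ basis vectors (other than $F$) equals a combination of $\le m$ forms from $\vect G m$ and $\le n'$ degree-$< e$ basis forms — i.e.\ a collapse using at most $m + n' \le k_e$ forms of degree $< e$. But $F$ minus a $K$-combination of other degree-$e$ basis elements is a \emph{nonzero} element of $V_e$ (by linear independence of $\cB$), so this contradicts $\kappa$-strength. \textbf{Irreducible mod $I$:} now assume $n' + m < k_e$ and suppose $F \equiv gh \pmod I$ with $g, h$ non-units mod $I$; since $F$ has degree $e$ and (by the previous paragraph, applied with the same $I$) $F \notin I$, both $g$ and $h$ have positive degree $< e$ modulo $I$, and we may take them homogeneous of degrees summing to $e$. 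Then $F - gh \in I$, and extracting the degree-$e$ component as before, $F = gh + (\text{combination of }\vect Gm\text{ and degree-}{<e}\text{ basis forms}) + (\text{$K$-combination of degree-}e\text{ basis forms} \ne F)$. Moving the last $K$-combination to the left gives a nonzero element $F'$ of $V_e$ equal to $gh$ plus a combination of at most $m + n'$ forms of degree $< e$; since $gh$ is itself a single form of degree $< e$, $F'$ has an $(m + n' + 1)$-collapse, and $m + n' + 1 \le k_e$, contradicting $\kappa$-strength. The one subtlety is ensuring $g$ and $h$ can be chosen homogeneous of positive degree: this uses that $R$ and $I$ are graded so $R/I$ is graded, and a factorization of a homogeneous element into non-units in a graded ring can be taken homogeneous with both factors of positive degree (a non-unit homogeneous element of a graded $K$-algebra with $(R/I)_0 = K$ has positive degree).

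\smallskip

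Part (g) follows by specializing (f): when every nonzero element of $V$ has the same degree $e$, we have $n_j = 0$ for $j \ne e$, so $n' = 0$; take $m = 0$ (no $\vect G m$), $\cB = \vect F h$ extended to a basis of $V$ if necessary — or just note we may take $\cB$ to be $\{\vect F h\}$ together with a basis completion, with $F = F_i$ — and let $I$ be generated by the $F_j$ for $j \ne i$ (a subset of $\cB \setminus \{F_i\}$, possibly together with the extra basis vectors, but those we simply omit). The hypothesis $n' + m = 0 < k_e$ is exactly the assumption $k \ge 1$, so (f) gives that $F_i$ is irreducible modulo $(F_j : j \ne i)$. I expect the bookkeeping in the graded-component extraction for (c) and (f) — especially correctly accounting for generators whose degree equals the top degree $e$, and justifying the homogeneous factorization in the irreducibility half of (f) — to be the only place requiring genuine care; everything else is definition-chasing.
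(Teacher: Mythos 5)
Your overall route coincides with the paper's (parts (a), (b), (d), (e) by unwinding definitions, (f) by extracting the degree-$e$ component, (g) as the special case $n'=m=0$), but two of your justifications do not hold as written. In (c), when some $G_i$ has degree exactly $e$ and appears with a nonzero scalar coefficient, you cannot count it as a term of a collapse of $c_1F_1+\cdots+c_nF_n$: by definition, a collapse of a degree-$e$ form uses only forms of degree \emph{strictly} less than $e$, so the remark that such a $G_i$ ``still contributes only one generator'' does not make the count $k_e+h$ correct, and the original combination need not have any such collapse. The repair --- which is what the paper does --- is to move those scalar multiples of the degree-$e$ $G_i$'s to the left-hand side: the element $c_1F_1+\cdots+c_nF_n-\sum c_iG_i$ is a nonzero form (this uses the linear independence of the full sequence $G_1,\ldots,G_h,F_1,\ldots,F_n$) lying in the span of that sequence and admitting a $(k_e+h)$-collapse with at most $k_e$ terms from the lifted collapse and at most $h$ terms coming from the $G_i$ of degree $<e$; this contradicts the hypothesis that the whole sequence, $G$'s included, is $(\kappa+h)$-strong, and that is exactly where that hypothesis is needed.

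In (f), your auxiliary claim that a factorization of a homogeneous element into non-units of $R/I$ can be made homogeneous with both factors of positive degree is false when $R/I$ is not a domain: in $K[x,y]/(xy)$ one has $\bar{x}=(1+\bar{y})\,\bar{x}$ with both factors non-units, while $x$ admits no factorization into forms of positive degree. (The same example, with $V=Kx$, $F=x$, $I=(xy)$, shows that under the strict ``product of two non-units'' reading the conclusion of (f) itself would fail; the meaning the paper intends, and the only one its applications use, is that $F$ is not congruent modulo $I$ to a product of two forms of positive degree --- in the graded-domain/UFD applications the two notions coincide, and there your homogenization argument is valid.) With that reading, your degree-$e$ extraction goes through verbatim and the lemma is not needed; the paper packages the same computation differently, by adjoining a homogeneous lift of one factor to the list $G_1,\ldots,G_m$ (increasing $m$ by $1$) and reapplying the nonvanishing statement. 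Apart from these two points, your argument is the paper's.
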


\begin{proof} (a) and (d) are immediate from the definitions. Part (b) follows from the fact that in checking
the condition (including whether one has linear independence), one need only consider linear combinations of 
forms of the same degree.   

For part (c), if one has a $k_i$-collapse of a homogeneous element  $F$ of degree $i$ 
mod $(\vect Gh)R$  or if $F$ becomes 0,   then there is a homogenous lifting, which means
that  $F = \sum_{s=1}^{k_i} P_sQ_s + \sum_{t=1}^h H_t G_t$  where $\deg(P_s) + \deg(Q_s) = \deg(F) =i$
for every $s$, and $\deg(H_t) + \deg(G_t) =i$ unless $H_t = 0$.  The $G_t$ of degree $i$ have
scalar coefficients:  let  $G$ be the sum of all such terms on the right.  The other $G_t$ with nonzero
coefficients have degree strictly smaller than $i$. Then $F-G$ is a nonzero
form of degree $i$ (since the original elements are linearly independent over $K$) that has a
$(k_i+h)$-collapse in $R$,  since the other nonzero terms have a factor of degree smaller than $i$. 
The second statement follows from the first, because $W+V = W \oplus V$ is also $(\kappa +h)$-\gu\ 
(a vector space of linear forms is $t$-strong for all positive integers $t$). 

For part (e), if the forms are contained in
a $k$-generated homogeneous ideal $I$ after forms  $\vect G h$ are omitted, then the original forms
are contained in the ideal  $I + (\vect G h)R$.

In part (f), the parenthetical statement about reducibility follows from the first statement:  if $F$ reduces,
we may include a representative of one of the factors among the $G_j$, which increases $m$ by 1.
If the conclusion of the main statement fails, then for a $K$-linear combination $F^*$ of forms of degree $e$ in 
$\cB$ other than $F$, $F - F^* \in V$ would be 0  modulo
the ideal $J$ generated by $\vect G m$ and the elements of $\cB$ of degree $< e$ 
(by a degree argument, we do not need to consider the generators of degree $>e$).  
Since $J$ has at most $m+n'$ generators, this expresses $F-F^*$ as a sum of multiples of 
at most $m+n'$ forms of strictly lower degree. 
Since $F-F^*$   is a nonzero form of $V$,  this contradicts the fact that $k_e\geq m+n'$.    Part (g) is a very 
special case of (f) in which $n' = m = 0$.
\end{proof}

\begin{proposition}\label{colcl}  Let $K$ be an algebraically closed field, and let $R$ be 
a polynomial ring in finitely many variables over  $K$. Let $d \geq 1$ be an integer and let 
$R_d$ denote the $K$-vector space of $d$-forms in  $R$.  Let $k$ be a positive integer, and let
$\vect d k$ be positive integers that are $\leq d$.
\begin{enumerate}[(a)]

\item The set of elements $f$ of $R_d$ that are contained in an ideal $I$ with $k$ generators
$\vect f k$ such that $f_i \in  R_{d_i}$  is constructible in $R_d$.  The set of points of $\PP(R_d)$ represented
by such a nonzero $f$ is constructible in $\PP(R_d)$.  

\item Let $h \in \N$. The set of elements $f$ of $R_d$ that have a $(k,h)$-collapse is constructible in $R_d$.
The set of points of  $\PP(R_d)$ represented by such a nonzero $f$ is constructible in $\PP(R_d)$.  

\item The set of elements $f$ of $R_d$  having a strict $k$-collapse is constructible in $R_d$.  
The set of  points of $\PP(R_d)$ represented by such a nonzero $f$ is constructible in $\PP(R_d)$. 

\end{enumerate}
\end{proposition}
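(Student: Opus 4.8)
The plan is to deduce all six assertions from Chevalley's theorem --- the image of a constructible set under a morphism of finite type is constructible --- applied to an explicit polynomial parametrization of ideal membership, and to obtain parts (b) and (c) from part (a) by taking a finite union over the finitely many possible degree patterns of the auxiliary generators.

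For part (a) I would fix the degrees $d_1, \ldots, d_k \le d$. Each of $R_{d_i}$ and $R_{d-d_i}$ is a finite-dimensional $K$-vector space, hence an affine space over $K$, so $\A := \prod_{i=1}^k R_{d_i} \times \prod_{i=1}^k R_{d-d_i}$ is an affine space, and I would define $\phi \colon \A \to R_d$ by $\phi(\vect f k, \vect g k) = \sum_{i=1}^k g_i f_i$. Writing everything in monomial bases, $\phi$ is given by polynomials (its coordinates are bilinear in the $f_i$-coordinates and the $g_i$-coordinates), so $\phi$ is a morphism of affine varieties over $K$. The key observation is that a $d$-form $f$ lies in $(f_1, \ldots, f_k)$ with $\deg f_i = d_i \le d$ if and only if $f = \sum_i g_i f_i$ with each $g_i$ homogeneous of degree $d - d_i$: given any representation $f = \sum_i h_i f_i$, replacing each $h_i$ by its homogeneous component of degree $d-d_i$ and comparing degree-$d$ parts produces such an expression. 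Hence $\phi(\A)$ is exactly the set described in part (a), and it is constructible in $R_d$ by Chevalley's theorem. For the projective statement I would set $U := R_d \setminus \{0\}$, note that $\phi(\A) \cap U$ is constructible in the variety $U$, and conclude that its image under the natural projection morphism $\pi \colon U \to \PP(R_d)$ is constructible in $\PP(R_d)$, again by Chevalley; this image is precisely the set of points represented by a nonzero $f$ as in (a). The same argument applies verbatim with $k$ replaced by any fixed number of generators of prescribed degrees $\le d$, which is all I will need below.

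For parts (b) and (c) I would unwind the definitions in Definition~\ref{defsafety}: a $d$-form $f$ has a $(k,h)$-collapse exactly when it lies in an ideal generated by at most $k$ forms of positive degree $\le d-1$ together with at most $h$ forms of degree $d$. Padding with zero generators --- a zero form may be assigned degree $1$ without changing the ideal --- this is equivalent to the existence of $a_i \in R_{e_i}$ with $e_i \in \{1, \ldots, d-1\}$ for $1 \le i \le k$ and $b_j \in R_d$ for $1 \le j \le h$ such that $f \in (a_1, \ldots, a_k, b_1, \ldots, b_h)$. Thus the set of such $f$ is the union, over the finitely many tuples $(e_1, \ldots, e_k) \in \{1, \ldots, d-1\}^k$, of the constructible sets supplied by part (a) with $k+h$ generators of degrees $e_1, \ldots, e_k, d, \ldots, d$; a finite union of constructible sets is constructible, and the projective version follows by pushing forward along $\pi$ exactly as in (a). Part (c) is handled identically, using only generators of positive degree $\le d-1$, since a strict $k$-collapse of $f$ is precisely an expression $f = \sum_{s=1}^k c_s Q_s$ with $1 \le \deg Q_s \le d-1$, i.e.\ membership of $f$ in an ideal generated by $\le k$ forms of positive degree $< d$. (When $d=1$ all of these collapse loci reduce to $\{0\}$, which is closed.)

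The argument is short once Chevalley's theorem is available, so the main obstacle is not a deep one: the two points needing care are the ``homogeneous component'' reduction that identifies $\phi(\A)$ with an ideal-membership locus --- where the hypothesis $d_i \le d$ genuinely enters --- and the bookkeeping that turns the ``at most $k$, at most $h$'' quantifiers, and the unspecified intermediate degrees of the auxiliary generators, into a finite union of instances of part (a); in particular the padding-with-zero-generators convention should be stated explicitly. I would expect this bookkeeping to be the step most worth spelling out.
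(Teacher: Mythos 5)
Your proposal is correct and follows essentially the same route as the paper: part (a) is the image of the bilinear multiplication map from the affine space of generator--cofactor tuples, constructible by Chevalley, and parts (b) and (c) are finite unions over the possible degree patterns, with the projective statements obtained by pushing forward. The extra details you supply (the homogeneous-component reduction and the padding-with-zero-generators bookkeeping) are exactly the points the paper leaves implicit, and they are handled correctly.
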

\begin{proof} (a) The specified set of elements is the image of the map of affine algebraic sets  $\prod_{i=1}^k (R_{d_i} \times R_{d-d_i})
\to R_d$ sending the element whose $i\,$th entry is $(f_i,g_i)$, $1 \leq i \leq k$, to $\sum_{i=1}^k f_ig_i$, and the image of a map
of affine algebraic sets is constructible.  The corresponding statement for projective case follows.

(b) and (c).  The set in (b) is the finite union of the sets corresponding to all choices of $\vect d {k+h}$ 
such that for precisely $h$ values of $i$,  $d_i   = d$, while for all other values of $i$,  $d_i < d$.  
Each of these sets is constructible by part (a), and, hence, so is the union.   Part (c) is the special case of (b) where $h = 0$. 
\end{proof}

\begin{remark} See also Corollary~\ref{closed} for a strengthening of part (c) 
in the case of quadratic forms. \end{remark}

\begin{proposition}\label{basech} Let $K \inc L$ be algebraically closed fields.  Let $R = K[\vect x N]$
be a polynomial ring, and $S = L\otimes_K R \cong L[\vect xN]$.  Let $a \leq k, h \in \N$. 
\begin{enumerate}[(a)]
\item Let $F$ be a form of positive degree $d$ in $R$, and let  $\vect d k$ and $\vect e k$ be two sequences of
positive integers such that $d_i+e_i = d$ for $1 \leq i \leq k$. 
Suppose that $F$ can be written in the form  $\sum_{i=1}^k G_iH_i$,
 where every $G_i \in S$ has degree $d_i$ or is 0 and
every $H_i \in S$ has degree $e_i$ or is 0. Also suppose that $\vect G a \in R$. 
Then $F$ can be written
in the same form over $R$ without changing  $\vect G a$.   
\item Let $F$ be a form in $R$.  $F$ has a $k$-collapse in $R$ if and only if it has a $k$-collapse in $S$. 
\item A sequence of forms in $R$ is $k$-\gu\ if and only if it is $k$-\gu\ in $S$.   
\item Let $\kappa = (\vect k d) \in \N^d$. A graded vector space $V \inc \oplus_{i=1}^d R_i$ is 
$\kappa$-\gu\ if and only if $L \otimes_KV$ is $\kappa$-\gu\ in $S$.  
\item A finite set of forms in $R$ (or a finite-dimensional graded $K$-vector subspace of $R$, or a 
homogeneous ideal of $R$) is \cs k  (respectively, \cs {(k,h)}) if and only if it is \cs k  (respectively, \cs {(k,h)}) in $S$.
\end{enumerate}
\end{proposition}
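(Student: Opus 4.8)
The plan is to reduce all five parts to a single soft input, which I will call the \emph{descent principle}: if $C$ is a constructible subset of an affine space $\A^M_K$ (or of a projective space $\PP^M_K$) that is carved out by polynomial equations and inequations whose coefficients lie in $K$, then $C$ has a $K$-rational point if and only if it has an $L$-rational point. When $C$ is Zariski closed this is just the weak Nullstellensatz: if $C(K)=\emptyset$ then $1$ lies in the ideal generated by the defining polynomials, so it is a combination of them with coefficients in $K[\text{coordinates}]$ and $C$ has no point over any extension field. For general constructible $C$ I would decompose $C$ into finitely many nonempty locally closed pieces $Z_j\setminus W_j$ ($W_j\subsetneq Z_j$ closed and irreducible) and invoke the fact that the $K$-points of a nonempty $K$-variety are Zariski dense --- so they cannot all fall inside the proper closed subset $W_j$. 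With this in hand, the entire proof amounts to recognizing that in each part the condition ``holds over $R$'' and the condition ``holds over $S=L\otimes_K R$'' are precisely the statements that one and the same constructible set --- defined over $K$ from the coefficients of the fixed forms involved --- has a $K$-point, respectively an $L$-point, the set over $S$ being the base change of the set over $R$ because all its defining data have coefficients in $K$. I now indicate the sets.

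For (a), after fixing the degree data $d_i,e_i$, I would take $\Gamma$ to be the closed affine variety of tuples of forms $(H_1,\dots,H_k,G_{a+1},\dots,G_k)$ of the prescribed degrees subject to the (bi)linear equations expressing $\sum_{i=1}^{a}G_iH_i+\sum_{i=a+1}^{k}G_iH_i=F$, with $F$ and $G_1,\dots,G_a$ regarded as fixed elements of $R$; then $\Gamma$ is defined over $K$, the hypothesis is $\Gamma(L)\ne\emptyset$, and the descent principle delivers a $K$-point of $\Gamma$, i.e.\ the desired rewriting over $R$ that leaves $G_1,\dots,G_a$ alone. Part (b) is then the case $a=0$: a strict $k$-collapse $F=\sum_l H_lg_l$ of a $d$-form $F$ (summands allowed to vanish) is, for a fixed degree vector $(\deg g_1,\dots,\deg g_k)\in\{1,\dots,d-1\}^k$, exactly an expression of the form treated in (a), and one unions over the finitely many such degree vectors. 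For (e) I would first use Noetherianity to replace a homogeneous ideal, or a finite-dimensional graded subspace, by a finite homogeneous generating set $\cF=\{f_1,\dots,f_m\}$ of degrees at most $d$ (safety of the ideal, of the subspace, and of $\cF$ all coincide, and over $S$ the ideal is generated by the same set); then ``$\cF$ is not $k$-safe'' says that for some degree vector in $\{1,\dots,d\}^k$ the $(f_1,\dots,f_m)$-fiber of the ``sum of products'' morphism $((g_l),(h_{jl}))\mapsto\bigl(\sum_l h_{jl}g_l\bigr)_{j}$ is nonempty --- a closed condition over $K$ to which the principle applies --- and the $(k,h)$-safe variant is identical once the entries of the degree vector that equal $d$ are separated from the smaller ones as in Definition~\ref{defsafety}.

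For (c) and (d) I would first observe that (c) is the case of (d) with $\kappa$ constant, together with the triviality that a finite family of forms in $R$ is $K$-linearly independent iff it is $L$-linearly independent; so only (d) needs proof. One implication is free: since $R\inc S$, a nonzero form of $V_i$ is still nonzero in $(L\otimes_K V)_i$ and a $k_i$-collapse over $R$ is one over $S$, so $\kappa$-strength of $L\otimes_K V$ over $S$ forces $\kappa$-strength of $V$ over $R$. For the converse I would fix a degree $i$ and a $K$-basis $v_1,\dots,v_r$ of $V_i$, identify $\PP(V_i)$ with $\PP^{r-1}_K$ in the coordinates $\lambda_1,\dots,\lambda_r$, and invoke Proposition~\ref{colcl}(c): the locus $C_i\inc\PP(V_i)$ of classes $\bigl[\sum_t\lambda_tv_t\bigr]$ having a strict $k_i$-collapse is constructible and defined over $K$ (in these coordinates its defining data involve only the coefficients of the $v_t$). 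Now $V$ is $\kappa$-strong over $R$ iff $C_i(K)=\emptyset$ for every $i$, and $L\otimes_K V$ is $\kappa$-strong over $S$ iff $C_i(L)=\emptyset$ for every $i$ (the collapse locus over $S$ is the base change of $C_i$), so the descent principle gives the equivalence. Should one wish to avoid constructible sets entirely, one may instead take a putative $L$-point of the collapse variety with $\sum_t\ell_tv_t\ne0$, rescale via the symmetry $(\lambda_\bullet,H_\bullet,g_\bullet)\mapsto(c\lambda_\bullet,cH_\bullet,g_\bullet)$ of the equations $\sum_l H_lg_l=\sum_t\lambda_tv_t$ so that some $\lambda_{t_0}$ becomes $1$, and apply the weak Nullstellensatz to the closed $K$-variety cut out by $\lambda_{t_0}=1$; its $K$-point exhibits a nonzero element of $V_i$ with a strict $k_i$-collapse over $R$, contradicting $\kappa$-strength of $V$.

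The only step where anything beyond bookkeeping is at stake --- and the one I expect to be the main obstacle --- is this converse direction of (c)/(d): a nonzero element of $(L\otimes_K V)_i$ that witnesses a collapse will in general have coordinates in $L\setminus K$, so one cannot merely quote (b) for a fixed element of $V_i$; handling the ``floating'' scalars is exactly what the constructibility statement of Proposition~\ref{colcl} (equivalently, the rescaling trick) takes care of. All the rest is the Nullstellensatz together with the combinatorics of finitely many degree splittings and, in (e), the Noetherian reduction to finite generating sets.
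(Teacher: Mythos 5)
Your proposal is correct, and your backup argument for (c)/(d) --- normalize one coordinate of the $L$-witness to $1$ and apply the weak Nullstellensatz to the resulting closed system over $K$ --- is exactly the paper's proof: the paper picks a basis $G_0,\dots,G_a$ of $V_e$, scales the collapsing $L$-linear combination so the coefficient of $G_0$ is $1$, and then invokes part (a) with the remaining coefficients as degree-zero unknowns; parts (a), (b), (e) are handled there, as in your proposal, by equating coefficients of monomials and citing Hilbert's Nullstellensatz. Your primary packaging is genuinely different in flavor: you route everything through a descent principle for constructible sets and, in (d), through Proposition~\ref{colcl}(c), whereas the paper never leaves the setting of closed systems of equations. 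That route works, but note it quietly uses one fact you do not justify: that the collapse locus computed over $S$ is the base change of the constructible set $C_i$ defined over $K$, i.e., that formation of the image under the sum-of-products morphism commutes with extension of the algebraically closed base field (nonemptiness of fibers is insensitive to such extension). This is standard but is not literally contained in Proposition~\ref{colcl}, which is stated over a single field; your rescaling alternative sidesteps it entirely, which is presumably why the paper argues that way, while the constructible-descent phrasing buys a single uniform statement that absorbs (a), (b), (e) without case-by-case normalization.
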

\begin{proof}(a) Let $\wt{G}_i = G_i$ for $i \leq a$, let $\wt{G}_{a+1}, \, \ldots \, \wt{G}_k \in R$ have degrees 
$d_{a+1}, \, \ldots, \, d_k$,
respectively, in $\vect x N$ with unknown coefficients, and let $\vect {\wt{H}} k$ be polynomials in $\vect x N$
of degrees $\vect e k$, respectively, with unknown coefficients.  Equating coefficients of distinct monomials in the
variables $\vect x N$ in $F - \sum_{i=1}^k \wt{G}_i\wt{H}_i$  to 0 yields a finite system of polynomial equations
over $K$ in the finitely many unknown coefficients of $\wt{G}_{a+1}, \, \ldots, \,  \wt{G}_k$ and $\vect {\wt{H}} k$.  
Since there is a solution in $L$,  there is a solution in $K$ by Hilbert's Nullstellensatz.

Note that the ``only if"  parts of (b) and the ``if" parts of (c), (d), and (e) are obvious.  We need to prove the other parts.
(b) is immediate from (a), and (c) follows from (d).  

To prove (d), suppose that a form in $(L \otimes_K V)_e \cong L \otimes_K V_e$ 
has a $k_e$-collapse.  Choose a basis $G_0, \, \ldots, G_a$ for
$V_e$.    Then some nonzero $L$-linear combination of $G_0,\, \ldots, \, G_a$ has a $k_e$-collapse, 
and one of the $G_i$ has a nonzero coefficient.  We may assume without loss of generality that this
coefficient is 1,  and, by renumbering, that $i = 0$.  Let $F = G_0$.  Then have
an  expression  $F = \sum_{i=1}^a F_i G_i + \sum_{j=1}^{k_e} H_jP_j$, where the $F_i,\, H_j,\,$ and $P_j$ are
in $S$ and satisfy certain degree
constraints (in particular, $F_i$ has degree 0 for $1 \leq i \leq a$, while the degrees of the polynomials
$H_j$, $P_j$ are strictly smaller than $\deg(F)$), and we may apply part (a).  

The proof of (e) is similar to the proof of (a). The problem for a finite set of forms $\vect F s$ is the same as for the
$K$-vector space or the ideal they generate, and we work with the first case. 
If the $F_i$ are all in an ideal generated by $k$ forms $\vect G k$  of 
positive degree over $L$,   each $F_i$ will be a linear
combination of these over $S$:  replace the coefficients of the $G_j$ and of their multipliers in the expression
for each $F_i$ by 
unknowns. One is led to a system of equations over  $K$ which has a solution in $L$.  Therefore it has a solution 
in $K$. In the case of  $(k,h)$-safety there are additional constraints on the degrees, but the equational nature of
the problem is unchanged. \end{proof}

\section{Quadratic forms}\label{quad}
  
We briefly discuss quadratic forms in the polynomial ring $R = K[\vect x N]$ in $N$ variables over an
algebraically closed field.  There are differences between the case when the characteristic is not 2
and when it is 2.  This will not have a large effect on our study of quadratic forms, but the problem 
turns out to be much greater when studying cubic forms.

\subsection*{Background and basic results}\label{subseqQ1}
\begin{discussion}\label{qumat} If the characteristic is not 2,  
a quadratic form  $F$ is determined by a symmetric matrix $M$ of scalars:
in fact,  $M = {1 \over 2} \bigl(\partial^2F/\partial x_i \partial x_j\bigr)$.  We refer to the
rank of $M$ as the {\it rank} of the quadratic form $F$, and note that the rows of $2M$ correspond
to the partial derivatives $\partial F/ \partial X_i$:  the entries of the $i\,$th row give the coefficients
of the linear form $\partial F/\partial X_i$.  Hence, the rank of $F$ is the same as the dimension
of the vector space $\cD F = \{DF: D \in \Der_K(R,R)\}$.   If  $X = \bigl(x_1 \,\, \ldots \,\,x_N\bigr)\tr$
then $\bigl(F\bigr) = X\tr MX$.  If $X = AY$,  where $Y$ is a (possibly  different) basis for the linear forms of 
$R$, then  $X\tr M X = Y\tr (A\tr M A) Y$.   By a change of basis, the matrix $M$ can be brought
to a very special form:  the direct sum of an $r \times r$ identity matrix $I_r$ and an $(N-r) \times (N-r)$
zero matrix.  Thus, for a suitable choice of $A$,  $F$ is represented as the sum of $r$ squares of
mutually distinct variables.  Since $x_1^2 + x_2^2$ can be written as $y_1y_2$ after a change of basis,
a quadratic form of rank $r$ can be written as $$x_1x_2 + \cdots x_{2h-1}x_{2h}$$ if  $r = 2h$ is even
and as $$x_1x_2 + \cdots x_{2h-1}x_{2h} + x_{2h+1}^2$$ if $r = 2h+1$ is odd.   
\end{discussion} 

If the characteristic of $K$ is 2, quadratic forms are classified in \cite{Arf}.  There is a primarily
expository version in \cite{LR}.  In this case, we define the rank of the quadratic form $F$ to
be the least number of variables occurring in $F$ after a linear change of coordinates.  This can
be taken as the definition of rank in all characteristics.  

By the classification of quadratic forms over an algebraically closed field one has:

\begin{proposition}\label{quadcl} If $K$ is an algebraically closed field of arbitrary characteristic and 
$F$ is a quadratic form of rank $r$
then, after a linear change of variables, $F$ can be written either as $$x_1x_2 + \cdots + x_{2h-1}x_{2h}\quad(r = 2h)$$
or as $$x_1x_2 + \cdots + x_{2h-1}x_{2h} + x_{2h+1}^2\quad (r = 2h+1).$$  
In both cases, the dimension of the $K$-vector space $\cD F$ is $r$ except in characteristic 2
when $r$ is odd, in which case the dimension of $\cD F$ is $r-1$. 

$F$ is in the ideal generated by $\cD F$ except when the characteristic of $K$ is 2 and $r$ is odd,
in which case it is in the ideal generated by $\cD F$ and one additional linear form. \qed \end{proposition}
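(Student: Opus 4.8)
The plan is to reduce every assertion to an explicit computation carried out after bringing $F$ to a normal form. This is legitimate because, as noted in Discussion~\ref{qumat} and Definition~\ref{key}, the space $\cD F$ is unchanged by a $K$-linear change of coordinates; hence $\dim_K \cD F$, and whether $F$ lies in the ideal generated by $\cD F$ or by $\cD F$ together with one linear form, can be read off from any normal form. When $\ch(K)\neq 2$ the normal form is already established in Discussion~\ref{qumat}: diagonalize the symmetric matrix of $F$ over $K$, rescale each nonzero diagonal entry to $1$ (possible since $K$ is algebraically closed), pair off the squares, and use that $x^2+y^2=(x+\sqrt{-1}\,y)(x-\sqrt{-1}\,y)$ becomes $y_1y_2$ after a further linear change, with $r$ the number of variables actually occurring. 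When $\ch(K)=2$ I would cite the Arf classification \cite{Arf} (see also \cite{LR}): since $K$ is algebraically closed the map $t\mapsto t^2+t$ is surjective, so the Arf invariant vanishes, every nondegenerate even-rank form is hyperbolic, the only odd-rank shape is $x_1x_2+\cdots+x_{2h-1}x_{2h}+x_{2h+1}^2$, and in each case the number of variables occurring equals the rank $r$. So from now on $F$ may be assumed to be in one of the two displayed shapes, with $x_1,\ldots,x_r$ the variables occurring.

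Next I would compute $\cD F$ directly from the normal form. If $r=2h$, then $\partial F/\partial x_{2i-1}=x_{2i}$ and $\partial F/\partial x_{2i}=x_{2i-1}$ for $1\leq i\leq h$, and all other partials vanish, so $\cD F=\langle x_1,\ldots,x_{2h}\rangle$ has dimension $2h=r$ in every characteristic, and $F\in(x_1,\ldots,x_{2h})=(\cD F)$. If $r=2h+1$, the hyperbolic part contributes $\langle x_1,\ldots,x_{2h}\rangle$ exactly as before, while $\partial(x_{2h+1}^2)/\partial x_{2h+1}=2x_{2h+1}$. When $\ch(K)\neq 2$ this is a nonzero multiple of $x_{2h+1}$, so $\cD F=\langle x_1,\ldots,x_{2h+1}\rangle$ has dimension $r$ and $F\in(\cD F)$. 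When $\ch(K)=2$ this partial is $0$, so $\cD F=\langle x_1,\ldots,x_{2h}\rangle$ has dimension $2h=r-1$; here $F=(x_1x_2+\cdots+x_{2h-1}x_{2h})+x_{2h+1}^2$ lies in $(\cD F)+(x_{2h+1})=(x_1,\ldots,x_{2h},x_{2h+1})$, which accounts for the one additional linear form.

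The only assertion that is not pure bookkeeping is the negative statement in the last case: that $F\notin(\cD F)=(x_1,\ldots,x_{2h})$ when $\ch(K)=2$ and $r=2h+1$. For this I would pass to the quotient $K[x_1,\ldots,x_N]/(x_1,\ldots,x_{2h})\cong K[x_{2h+1},\ldots,x_N]$, in which the image of $F$ is $x_{2h+1}^2\neq 0$, so $F\notin(\cD F)$. I expect no serious obstacle here; the one point demanding care is that the cited classification really delivers normal forms of rank exactly $r$ (so that the exceptional quadric does not become reducible, hence expressible in fewer variables, after some further change of coordinates), and that in characteristic $2$ the Arf invariant genuinely vanishes over an algebraically closed field — both of which are part of the classification being invoked.
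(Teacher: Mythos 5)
Your proposal is correct and follows the same route as the paper: Proposition~\ref{quadcl} is stated there with no written proof beyond an appeal to the classification of quadratic forms (Discussion~\ref{qumat} in characteristic $\neq 2$ and \cite{Arf}, \cite{LR} in characteristic $2$), with the statements about $\dim_K\cD F$ and the ideal membership regarded as immediate from the normal forms. Your explicit computation of the partials from the normal forms, and the check that $F\notin(\cD F)$ when $\ch(K)=2$ and $r$ is odd, just spell out those routine verifications.
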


From this we have at once:
\begin{proposition}\label{qucol} If $K$ is any algebraically closed field and $F$ is a quadratic form of rank $r$, then $F$ 
has a $k$-collapse if and only if $k \geq \lceil {r \over 2}\rceil$.  Equivalently, $F$ has a $k$-collapse if
and only if the rank of $F$ is at most $2k$. 

Also, if $\cD F$ has dimension at most $t$,  then $F$ is not $\lceil \frac{t}{2} \rceil$-strong if the
characteristic of $K$ is not  2,  and is not $\lceil \frac{t+1}{2}\rceil$-strong if the characteristic of $K$
is 2.   \qed \end{proposition}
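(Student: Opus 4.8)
The plan is to deduce everything from the normal forms in Proposition~\ref{quadcl}, handling the two parities of the rank $r$ and the two cases (characteristic $2$ or not) essentially by inspection. First I would prove the forward implication of the first statement. Suppose $F$ has rank $r$ and $k \geq \lceil r/2 \rceil$. If $r = 2h$, then after a change of coordinates $F = x_1x_2 + \cdots + x_{2h-1}x_{2h}$, which is visibly a sum of $h$ products of linear forms, hence a graded linear combination of $h \leq k$ forms of strictly smaller degree, i.e.\ an $h$-collapse and a fortiori a $k$-collapse. If $r = 2h+1$, then $F = x_1x_2 + \cdots + x_{2h-1}x_{2h} + x_{2h+1}^2$, a sum of $h+1 = \lceil r/2 \rceil \leq k$ forms of lower degree (the last summand $x_{2h+1}^2$ being a product of two linear forms), so again $F$ has a $k$-collapse. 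Since $k$-collapse is preserved under enlarging $k$ and is invariant under linear changes of coordinates, this direction is complete; the reformulation ``$F$ has a $k$-collapse iff $r \leq 2k$'' is just the observation that $\lceil r/2 \rceil \leq k \iff r \leq 2k$.

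Next the converse of the first statement: if $F$ has a $k$-collapse I must show $r \leq 2k$, equivalently that no quadratic form of rank $\geq 2k+1$ has a $k$-collapse. Suppose $F = \sum_{i=1}^k P_i Q_i$ with each $P_i, Q_i$ a (necessarily linear) form of positive degree strictly less than $2$. Then $F$ lies in the ideal generated by the $k$ linear forms $P_1, \ldots, P_k$, so after a change of coordinates $F$ involves only the variables $P_1, \ldots, P_k$ together with the variables appearing in $Q_1, \ldots, Q_k$; but in fact $F \in (P_1, \ldots, P_k)$, and one can then write $F$ using at most $k$ of the $x_j$ after completing $P_1, \ldots, P_k$ to a coordinate system (expressing each $Q_i$ modulo the ideal $(P_1,\dots,P_k)$ changes $F$ only by terms already in $(P_1,\dots,P_k)$, so $F$ can be rewritten as a sum $\sum P_i Q_i'$ with $Q_i'$ involving only $P_1,\dots,P_k$). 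Hence $F$ uses at most $k$ variables, so by the definition of rank $r \leq k \leq 2k$. This gives the converse.

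Finally the second statement follows from the first together with the dimension count for $\cD F$ in Proposition~\ref{quadcl}. Say $\dim_K \cD F \leq t$. In characteristic not $2$, Proposition~\ref{quadcl} gives $\dim_K \cD F = r$, so $r \leq t$, hence $\lceil r/2 \rceil \leq \lceil t/2 \rceil$ and $F$ has a $\lceil t/2 \rceil$-collapse, i.e.\ $F$ is not $\lceil t/2\rceil$-strong. In characteristic $2$, $\dim_K \cD F$ equals $r$ if $r$ is even and $r-1$ if $r$ is odd; in either case $r \leq \dim_K \cD F + 1 \leq t+1$, so $\lceil r/2 \rceil \leq \lceil (t+1)/2 \rceil$ and $F$ has a $\lceil (t+1)/2 \rceil$-collapse, so $F$ is not $\lceil (t+1)/2\rceil$-strong.

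I expect the only genuinely delicate point to be the converse in the second paragraph: making precise that a form lying in an ideal generated by $k$ linear forms actually involves at most $k$ variables after a coordinate change, so that its rank is bounded by $k$. Everything else is a direct reading-off of the classification in Proposition~\ref{quadcl}.
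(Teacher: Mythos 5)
Your forward direction and your deduction of the second statement from the dimension count in Proposition~\ref{quadcl} are fine and match the paper, which treats the whole proposition as immediate from the classification. But the converse as you wrote it contains a genuine error. You claim that since $F\in(P_1,\ldots,P_k)$, one can rewrite $F=\sum P_iQ_i'$ with each $Q_i'$ involving only $P_1,\ldots,P_k$, hence that $F$ uses at most $k$ variables and $r\leq k$. This is false: it is contradicted by your own forward direction, since $x_1x_2+\cdots+x_{2k-1}x_{2k}$ has a $k$-collapse but rank $2k$, and already $F=x_1x_2$ with $k=1$ cannot be written as $x_1Q_1'$ with $Q_1'\in Kx_1$. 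The parenthetical justification goes the wrong way: reducing each $Q_i$ modulo the span of $P_1,\ldots,P_k$ produces remainders \emph{outside} that span, and the correction terms are genuine summands $P_iP_j$ that cannot be discarded; nothing forces $F$ into $K[P_1,\ldots,P_k]$.

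The repair is already contained in the first half of your own sentence: from $F=\sum_{i=1}^k P_iQ_i$ with all $P_i,Q_i$ linear, $F$ lies in the $K$-subalgebra generated by the at most $2k$ linear forms $P_1,\ldots,P_k,Q_1,\ldots,Q_k$. Extending a basis of their span to a coordinate system, $F$ involves at most $2k$ variables, so by the definition of rank (the least number of variables occurring after a linear change of coordinates) $r\leq 2k$, which is exactly the bound needed. With the erroneous sharpening to ``at most $k$ variables'' deleted and replaced by this observation, your argument is correct and is essentially the paper's: both directions are read off from the normal forms of Proposition~\ref{quadcl}.
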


We can improve on the constructibilty result of Proposition~\ref{colcl}(c) in the case of quadrics.
\begin{corollary}\label{closed} Let $K$ be any algebraically closed field of characteristic $\not=2$ and 
let $R = K[\vect xN]$ be a polynomial ring. Let $k \in \N_+$. Then the set of quadratic forms that 
have a strict $k$-collapse is closed in the vector space $R_2$ of $2$-forms.  \end{corollary}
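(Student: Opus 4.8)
The plan is to characterize quadratic forms with a strict $k$-collapse in terms of rank, using the results already established, and then show that the complementary condition is open. By Proposition~\ref{qucol}, a quadratic form $F$ has a (strict) $k$-collapse if and only if $\rank(F) \le 2k$. Since the characteristic is not $2$, the rank of $F$ equals the rank of the symmetric matrix $M = \tfrac12\bigl(\partial^2 F/\partial x_i\partial x_j\bigr)$, by Discussion~\ref{qumat}. Thus the set in question is exactly
\[
Z_k = \{\, F \in R_2 \;:\; \rank(M_F) \le 2k \,\},
\]
where $M_F$ denotes the symmetric coefficient matrix of $F$.

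Next I would observe that $F \mapsto M_F$ is a $K$-linear isomorphism from $R_2$ onto the space of $N \times N$ symmetric matrices, and that the entries of $M_F$ are linear (hence polynomial) functions of the coefficients of $F$. The locus of matrices of rank $\le r$ is closed in the space of all matrices, being the common zero set of all $(r+1)\times(r+1)$ minors; intersecting with the (closed linear) subspace of symmetric matrices keeps it closed, and pulling back along the linear isomorphism $F \mapsto M_F$ shows that $Z_k$ is closed in $R_2$. This completes the argument.

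The one point that needs care — and the only place the hypothesis $\ch K \ne 2$ really enters — is the identification $\rank(F) = \rank(M_F)$ together with the equivalence ``$F$ has a strict $k$-collapse $\iff \rank(F) \le 2k$'', both of which are supplied by Discussion~\ref{qumat} and Proposition~\ref{qucol}; in characteristic $2$ the rank is not read off a symmetric matrix in this way and the determinantal description fails, which is why Proposition~\ref{colcl}(c) only gives constructibility in general. So the substance of the corollary is not a new computation but the translation of the collapse condition into a determinantal rank condition, after which closedness is the standard fact about determinantal loci. I do not expect any real obstacle beyond making sure the degenerate cases (e.g. $2k \ge N$, where $Z_k = R_2$ and the statement is trivially true) are subsumed correctly, which they are since the rank is automatically at most $N$.
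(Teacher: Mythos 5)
Your proof is correct and is essentially the paper's own argument: the paper likewise identifies the forms with a strict $k$-collapse as those of rank at most $2k$ and observes that this locus is cut out by the size $2k+1$ minors of the symmetric matrix $M$ of Discussion~\ref{qumat}. Your write-up just makes explicit the linearity of $F \mapsto M_F$ and the role of the characteristic $\not=2$ hypothesis, which the paper leaves implicit.
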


\begin{proof}  This set is defined by the ideal of $2k+1$ size minors of the symmetric matrix $M$ of 
Discussion~\ref{qumat}.\end{proof}

\begin{discussion} \label{maxrk} Let $K$ be an algebraically closed field, and $R$ the polynomial ring $K[\vect xN]$.
It is well known that when the characteristic of $K$ is not $2$,  the determinant of $(\partial^2F/\partial x_i\partial x_j)$
vanishes if and only if the form has rank less than $N$, and this is also true in characteristic 2 when $N$ is even. 
A similar criterion exists in characteristic 2 for the case where $N$ is odd.   If one
lets the coefficients of the form be indeterminates over $\Z$, one can compute the determinant as a polynomial
in these indeterminates.  The coefficients turn out to be even integers, and so one can consider the polynomial
over $\Z$ in the coefficients obtained by dividing by 2, sometimes called the {\it reduced discriminant} or the
{\it half-discriminant}.   It is then
correct that the form has rank less than $N$ over any field $K$, regardless of characteristic, if and only
if the half-discriminant vanishes.  Moreover, in the case where $N$ is odd in characteristic 2, the half-discriminant
agrees with the result of substituting for every $x_i$ in $F$ the Pfaffian of the matrix $(\partial^2F/\partial x_i\partial x_j)$ 
corresponding to deletion of the $i\,$th row and column. See \cite{DoDu}, \cite{Gr}, and \cite{Kne}.
Hence, the forms of rank less than $N$ form a
subvariety of codimension 1 in the vector space of all quadratic forms in all characteristics. Also, it follows
that if $F$ and $G$ are any two linearly independent quadratic forms in $R$, then for some choice of $a, b \in K$, not both 0, the form $aF+bG$ has rank less than $N$:  the vector space $KF+KG$ must have nontrivial
intersection with the codimension one variety of quadratic forms of rank at most $N-1$.  
\end{discussion}

\begin{proposition}\label{rk} Let $K$ be an algebraically closed field,
and let $F,\,G$ be quadratic forms in a polynomial ring $R = K[\vect x N]$ over $K$ such that $F$ has rank $r$.   
\begin{enumerate}[(a)]
\item For any $G$ and in all characteristics,  for all but at most $r$ choices of $c \in K$, 
the rank of $cF+G$ is at least $r$.  
\item Suppose that
the characteristic of $K$ is $\not=2$ or that $r$ is even.  If $G$ is not in the ideal
$(\cD F)R$,  then for all but at most $r$ choices of $c \in K$, the rank of $cF+G$ is at least $r+1$.  
\item If $K$ has characteristic 2 and the rank of $F$ is odd,
then either $G$ is the sum of a quadratic form in $(\cD F)R$ and the square of a linear form,  or
for all but at most $r-1$ choices of $c \in K$,  the rank of $cF + G$ is at least $r+1$.  
\end{enumerate}
 \end{proposition}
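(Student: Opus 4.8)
The plan is to base all three parts on the half-discriminant of Discussion~\ref{maxrk}: for each $s$ this is a polynomial $\Delta_{s}$ in the coefficients of an $s$-variable quadratic form, homogeneous of degree $s$, which vanishes exactly on the forms of rank $<s$. Hence a form $H$ in $R$ satisfies $\operatorname{rank}(H)\ge s$ if and only if $\Delta_{s}(\phi^{*}H)\ne 0$ for some linear substitution $\phi\colon K^{s}\to K^{N}$, the coefficients of the pulled-back form $\phi^{*}H$ being polynomials in those of $H$. For the pencil $cF+G$ and a fixed $\phi_{0}$ put $p(c):=\Delta_{s}\bigl(\phi_{0}^{*}(cF+G)\bigr)=\Delta_{s}\bigl(c\,\phi_{0}^{*}F+\phi_{0}^{*}G\bigr)$; homogeneity of $\Delta_{s}$ makes the coefficient of $c^{s}$ in $p$ equal to $\Delta_{s}(\phi_{0}^{*}F)$, and this vanishes whenever $s>r$ because $\operatorname{rank}(\phi_{0}^{*}F)\le\operatorname{rank}(F)=r$. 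So $\deg_{c}p\le\max(r,s-1)$, and the strategy is: if \emph{some} $c_{0}$ has $\operatorname{rank}(c_{0}F+G)\ge s$, then choosing $\phi_{0}$ to witness this makes $p\not\equiv 0$, so $\operatorname{rank}(cF+G)\ge s$ for every $c$ outside the at most $\max(r,s-1)$ roots of $p$.

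Part (a) follows at once by taking $s=r$: choosing $\phi_{0}$ with $\phi_{0}^{*}F$ of full rank $r$ makes the top coefficient $\Delta_{r}(\phi_{0}^{*}F)$ nonzero, so $p$ has degree exactly $r$, is automatically not identically zero, and rank drops below $r$ for at most $r$ values of $c$. For parts (b) and (c) I take $s=r+1$, so the engine gives the conclusion with at most $r$ exceptional values of $c$ once I exhibit a single $c_{0}$ with $\operatorname{rank}(c_{0}F+G)\ge r+1$; the whole point of (b) and (c) is to produce such a $c_{0}$ away from the stated exceptional forms $G$, together with the sharper count $r-1$ claimed in (c).

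To produce $c_{0}$ I argue contrapositively: suppose $\operatorname{rank}(cF+G)\le r$ for all $c\in K$, and show $G$ is exceptional. If $\operatorname{char}K\ne 2$, work with the symmetric matrices $M_{F},M_{G}$; change coordinates so $M_{F}=\operatorname{diag}(A,0)$ with $A$ an invertible $r\times r$ block, so that $(\cD F)R=(x_{1},\dots,x_{r})$, and write $M_{G}=\bigl(\begin{smallmatrix}B&C\\ C\tr&D\end{smallmatrix}\bigr)$. For all but finitely many $c$ the block $cA+B$ is invertible, and the Schur complement shows $\operatorname{rank}(cM_{F}+M_{G})=r$ is equivalent to $D=C\tr(cA+B)^{-1}C$; equating the two sides as rational functions of $c$ and letting $c\to\infty$ (the right-hand side has numerator degree below denominator degree, hence tends to $0$) gives $D=0$, i.e.\ $G$ has no monomial purely in $x_{r+1},\dots,x_{N}$, i.e.\ $G\in(\cD F)R$, the exception in (b). If $\operatorname{char}K=2$, I run the same Schur/limit computation on the alternating Hessian $M'_{H}=(\partial^{2}H/\partial x_{i}\partial x_{j})$; here $\operatorname{rank}(M'_{F})=2\lfloor r/2\rfloor=\dim_{K}\cD F$ by Proposition~\ref{quadcl}, and after normalizing $M'_{F}$ the argument forces the alternating block of $M'_{G}$ on the variables outside $\cD F$ to vanish. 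Over a field of characteristic $2$ this only says that the part of $G$ in those variables has zero Hessian, hence is a square $\ell^{2}$; thus $G=G_{0}+\ell^{2}$ with $G_{0}\in(\cD F)R$. When $r$ is odd this is precisely the exceptional form of (c), so we have the desired $c_{0}$; moreover, since $r$ is odd, $\operatorname{rank}(cF+G)\ge r+1$ is equivalent to $\operatorname{rank}(M'_{cF+G})\ge r+1$, which is detected by a principal $(r+1)\times(r+1)$ Pfaffian of $cM'_{F}+M'_{G}$, a polynomial in $c$ of degree at most $\tfrac12\operatorname{rank}(M'_{F})=\tfrac{r-1}{2}\le r-1$; this upgrades the count from $r$ to $r-1$ in (c). When instead $\operatorname{char}K=2$ and $r$ is even, a further argument is required, described next.

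The part demanding the most care is this last case --- characteristic $2$ with $r$ even in (b) --- where $G\notin(\cD F)R$ only forces the linear form $\ell$ above to be nonzero rather than giving an immediate contradiction, so one must still exhibit a $c_{0}$ with $\operatorname{rank}(c_{0}F+G)\ge r+1$. The plan is to fix, using part (a), a $c$ with $\operatorname{rank}(cF+G_{0})=r$ (all but at most $r$ values), note that $H:=cF+G_{0}$ then has even rank, so $\cD H$ equals its $r$-dimensional support and $\operatorname{rank}(H+\ell^{2})=r+1$ exactly when $\ell\notin\cD H$, and then show that the family $c\mapsto\cD(cF+G_{0})$ has constant dimension $r$ on a neighborhood of $c=\infty$ with limit $\cD F$; since $\ell\notin\cD F$, the locus where $\ell\in\cD(cF+G_{0})$ is a proper closed subset there, so some admissible $c$ yields $\operatorname{rank}(cF+G)=r+1$, contradicting the standing assumption. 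The delicate points are establishing that $\operatorname{rank}(cF+G_{0})$ stays equal to $r$ near $c=\infty$ (so that this family of subspaces does not degenerate), which itself follows from part (a) together with the standing assumption, and, throughout the characteristic-$2$ analysis, keeping the Arf-theoretic normal forms of Proposition~\ref{quadcl} aligned with the matrix normalizations used in the Schur-complement step. The characteristic-$\ne 2$ statements, by contrast, reduce to the routine symmetric-matrix computation above.
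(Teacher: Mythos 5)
Your reduction engine (the half-discriminant leading-coefficient count), your Schur-complement contrapositive for (b) in characteristic $\neq 2$, and your treatment of (c) -- where the Pfaffian degree bound in fact sharpens the count from $r-1$ to $(r-1)/2$ -- are sound, and they differ from the paper mainly in packaging (the paper computes explicit minors/half-discriminants directly rather than arguing by contraposition and limits at $c=\infty$). The genuine gap is exactly at the point you flag: part (b) in characteristic $2$ with $r$ even, after the Schur/limit step has produced $G = G_0 + \ell^2$ with $G_0 \in (\cD F)R$ and $\ell \notin \cD F$. You assert that $\operatorname{rank}(cF+G_0)=r$ for $c$ near $\infty$ ``follows from part (a) together with the standing assumption,'' but those two facts only give $r \leq \operatorname{rank}(cF+G_0) \leq \operatorname{rank}(cF+G)+1 \leq r+1$; the possibility $\operatorname{rank}(cF+G_0)=r+1$ (odd) for every $c$ is not excluded, and it is a real possibility for $G_0 \in (\cD F)R$: already $F=x_1x_2$, $G_0=x_2^2+x_1x_3$ has $\operatorname{rank}(cF+G_0)=3$ for all $c$. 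In that residual configuration $cF+G_0$ carries an extra square part, and what the standing assumption forces is only $\ell \in \tD(cF+G_0)$, a space of dimension $r+1$; the Grassmannian-limit argument then collapses, since the limit of these $(r+1)$-dimensional spaces cannot be the $r$-dimensional $\cD F$, so no contradiction with $\ell \notin \cD F$ results. Excluding this configuration is essentially the assertion of (b) in this very case, so the ``further argument'' is circular at its crucial step.

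This is precisely the case the paper settles by a direct computation rather than a degeneration argument: normalize $F = x_1x_2+\cdots+x_{2h-1}x_{2h}$ and $G = P + x^2$ with $P \in (\vect x {2h})R$ and $x = x_{2h+1}$, kill the remaining variables (the rank can only drop), and evaluate $cF+G$ at the vector of principal $2h\times 2h$ Pfaffians $p_i$ of the Hessian $cM_F+M_G$, which by the odd-size half-discriminant criterion of Discussion~\ref{maxrk} detects rank $2h+1$. Since $p_{2h+1}$ is monic of degree $h$ in $c$, the other $p_i$ have degree at most $h-1$, and every term of $cF+P$ involves one of $\vect x {2h}$, all contributions have degree at most $2h-1$ in $c$ except the one from $x^2$, so the evaluation is a monic polynomial of degree $2h=r$ in $c$; this simultaneously supplies the witness $c_0$ you are missing and the count of at most $r$ exceptional values. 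You need this computation, or some comparably explicit new idea, to close the characteristic $2$, $r$ even case; the rest of your argument can stand.
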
 
\begin{proof}   For any quadratic form $H$, we use $M_H$ for the Hessian matrix of $H$. 

We first prove (a) when $K$ has characteristic not 2 or characteristic 2 and the rank $r$ is even.
After a change of basis, we may assume that the matrix $M_F$ of $F$ is the direct sum of an $r \times r$ matrix $M_0$ and a zero matrix, where $M_0$ is either an $r \times r$ identity matrix or the direct sum  $A$ of $r/2$ copies of  $\Lambda = \mx 0 & 1\\ 1 & 0 \emx$, depending on whether the characteristic is not 2 or 2. 
The determinant of the size $r$ square submatrix in the upper
left corner of $cM_F+M_G$ is a monic polynomial in $c$ of degree $r$.  This shows that for all but at most $r$
choices of $c$,  the rank of $cF+G$ is at least $r$.  

If $G$ is not in $(\cD F)R$ first suppose that $M_G$ has a nonzero entry 
$a_{ij}$ for $i, j > r$, which is always true if the characteristic is not 2. 
Consider the size $r+1$ square submatrix of $cM_F + M_G$ that contains the $r \times r$ submatrix in the upper left corner
and the $i,j$ entry.  The determinant of this submatrix is a polynomial in $c$ whose highest degree term
is  $a_{ij}c^r$, and so this polynomial is nonzero except for at most $r$ values of $c$, and the rank of
$cF + G$ is at least $r+1$ except for these values.   

In the remaining cases for (a), (b), and  (c) we may now assume that the field has characteristic 2 and that 
with   $F_ 0 =x_1x_2 + \cdots + x_{2h-1}x_{2h}$,
we have that $r = 2h$ and $F = F_0$ or that $r = 2h+1$ and $F = F_1 = F_0 + x_{2h+1}^2$.  We may write
$G = P+Q$ where $P$ is in the ideal generated by $\vect x {2h} = (\cD F)$ and $Q \in K[x_{2h+1}, \ldots, \, x_N]$.
It remains to prove part (a) when the rank of $F$ is odd,  part (b) when the rank of $F$ is even and $Q$ is
a nonzero square (otherwise some $a_{ij}$ for $i,j >r$ is nonzero, a case which has already been covered), and part 
(c). In considering part (c),  we may assume that $Q$ is not a square.  

Part (a) for forms $F$ of odd rank in characteristic 2 may be deduced as follows.
We have assumed that $F$ can be expressed in terms of $\vect x r$.  It suffices to prove the
statement after specializing the other variables to 0.  But then $cF + c'G$ has maximum rank
if and only if its half-discriminant is not 0, and this is a homogeneous polynomial of degree $r$ in $c$ and $c'$ 
which is nonzero when $c = 1$, $c' = 0$.   Hence,  $c^r$ has nonzero coefficient in $K$.  It follows that
when we substitute $c' = 1$,  there are at $r$ most values of $c$ for which the half-discriminant vanishes.
We have now proved part (a) in all cases.


We next prove the remaining case of part (b).  We may
assume that $r = 2h$ is even and that $Q$ is a square. We may make a change of variables, and then we 
might as well assume that
$Q = x^2$, where $x = x_{2h+1}$.  Since the rank can only drop when we kill $x_j$ for $j > 2h+1$,  we 
may assume that $\vect x {2h+1}$ are all the variables.  Then we may 
assume $F = F_0$ and $G =  P + x^2$ where $P\in (\vect x {2h})R$.  Note that $M_F$ is the direct
sum of $h$ copies of $\Lambda = \mx 0 & 1\\ 1 & 0 \emx$ and a $1 \times 1$ zero matrix. 
We want to prove that except possibly for $2h$ values of $c$,  the rank of $cF+G$ is $2h+1$. 

By Discussion~\ref{maxrk} a quadratic form $H$ in $2h+1$ variables has rank $2h+1$ if and only if $H$ is nonzero at
$(\vect p {2h+1})$,  where $p_i$ is the Pfaffian of the matrix obtained by omitting the $i\,$th row and
column of the Hessian $M_H$.   Consider the Pfaffians of $cM_F + M_G$.
Then $p_{2h+1}$ is monic in $c$ of degree $h$ (its square is monic of degree $2h$),  
while all of the other Pfaffians $p_i$, $1 \leq i \leq 2h$ are polynomials in $c$ of
degree at most $h-1$ (their squares are polynomials of degree at most $2h-2$) .  
When we substitute the $p_i$ into $cF + G$,  the largest power of $c$
that can occur in $cF$ is $c(c^{h-1})^2 = c^{2h-1}$.   Likewise, the terms of $G$ other than $x^2$ yield
polynomials in $c$ of degree at most $2h-1$, because $G$ is in the ideal generated by $\vect x {2h}$.   
Thus, the $x^2$ term contributes a unique term
of degree $c^{2h}$ with coefficient 1, and so the value of $cF+G$ at $(\vect p {2h+1})$ is a monic
polynomial in $c$ of degree $2h$.   Hence, there are at most $r = 2h$ values of $c$ such that
$cF + G$ has rank strictly less than $2h+1 = r+1$. 

Finally, to prove (c) we may assume that $F = F_1$ has rank $2h+1$, and that $G = P + Q$, where
$P \in (\vect x {2h})R$ and $Q \in K[x_{2h+1}, \, \ldots, \, x_N] -\{0\}$ is {\it not} a square.  Hence, 
$Q$ has a term that is  $ax_{2h+1} x_j$ for  $j > 2h+1$ or, if not, a term that is  $ax_jx_k$ 
where $k > j > 2h+1$, where in both cases, $a \in K - \{0\}$. In the latter case we may kill 
$x_k - x_{2h+1}$, and so reduce to the former case. 
 Consider the size $2h+2$ square submatrix $D$
 of the matrix of $cF_1 + G$ corresponding to the rows and columns numbered $1, 2, \, \ldots, \, 2h+1, j$.
 $D$ has the block form $\mx cA+B & C \\ C\tr & a\Lambda \emx$,  and where $A$ is the direct sum of
 $h$ copies of $\Lambda = \mx 0 &1\\ 1 & 0 \emx$.  The determinant of $D$ is a polynomial in  $c$ in which the highest degree term
 is $a^2c^{2h}$.  Hence, the rank of $cF+G$ is at least $2h+2 = r+1$ for all but at most $r-1$ values of $c$.
\end{proof}

\begin{corollary}\label{cDsq} Let $K$ be an algebraically closed field and let $V$ be a vector space of quadratic 
forms in a polynomial ring $R$ over $K$.  Let $F \in V$ have maximum rank among forms in $V$.  
If the characteristic is not 2, then
then $V$ is contained in the ideal $(\cD F)$.  If the characteristic of $K$ is 2,  then the image of $V$ modulo
the ideal $(\cD F)$ is a vector space whose nonzero elements are squares of linear forms. \end{corollary}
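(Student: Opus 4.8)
The plan is to derive everything from Proposition~\ref{rk}, using only the extra observation that an algebraically closed field is infinite, so that a condition on $c\in K$ which fails for at most finitely many values holds for some $c$. Write $r$ for the rank of $F$, which by hypothesis is maximal among the ranks of forms in $V$, and fix an arbitrary $G\in V$. Since $V$ is a $K$-vector space, $cF+G\in V$ for every $c\in K$, and hence $\operatorname{rank}(cF+G)\le r$ for all $c$.

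First I would treat the case in which either $\ch(K)\ne 2$, or $\ch(K)=2$ and $r$ is even, since in both of these Proposition~\ref{rk}(b) applies. If $G\notin(\cD F)R$, then $\operatorname{rank}(cF+G)\ge r+1$ for all but at most $r$ values of $c$; choosing such a $c$ (possible since $K$ is infinite) contradicts $\operatorname{rank}(cF+G)\le r$. Thus $G\in(\cD F)R$, and since $G$ was arbitrary, $V\subseteq(\cD F)$. This gives the first assertion, and it also shows that when $\ch(K)=2$ and $r$ is even the image of $V$ in $R/(\cD F)R$ is zero, so the second assertion holds vacuously in that subcase.

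It remains to handle $\ch(K)=2$ with $r$ odd, where I would invoke Proposition~\ref{rk}(c). For an arbitrary $G\in V$ that proposition gives a dichotomy: either $\operatorname{rank}(cF+G)\ge r+1$ for all but at most $r-1$ values of $c$, which is impossible by maximality exactly as above, or $G=P+\ell^2$ for some $P\in(\cD F)R$ and some linear form $\ell$. Hence the image of $G$ modulo $(\cD F)R$ equals the class of $\ell^2$. The image of $V$ is a $K$-subspace of $R/(\cD F)R$ (being the image of a linear map), and we have just shown that each of its elements is the class of the square of a linear form, which is exactly the claimed conclusion.

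I do not expect a genuine obstacle here: the substantive content is all in Proposition~\ref{rk}, and what remains is bookkeeping — matching the hypotheses of parts (b) and (c) to the three subcases (characteristic $\ne 2$; characteristic $2$ with $r$ even; characteristic $2$ with $r$ odd) and using the infinitude of $K$ to convert ``all but finitely many $c$'' into ``some $c$''. The one point worth stating explicitly is that a square of a linear form may itself lie in $(\cD F)R$, so the corollary asserts only that the nonzero elements of the image of $V$ are squares of linear forms, not that every such square occurs.
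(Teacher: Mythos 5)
Your proposal is correct and follows essentially the same route as the paper: the paper's proof also just combines the maximality of the rank of $F$ with Proposition~\ref{rk} (parts (b) and (c) according to the characteristic and the parity of the rank), the only difference being that you spell out the bookkeeping (infinitude of $K$, the vacuous even-rank characteristic-2 subcase) which the paper leaves implicit.
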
 
\begin{proof} If $F$ has maximum rank and $G \in V$ is not in $(\cD F)$, then by Proposition~\ref{rk}, when the
characteristic is not 2
we can construct a form $cF+G$ of larger rank, while if the characteristic is $2$  we can do that unless the
image of $G$ modulo $\cD F$ is a square. \end{proof}

The following result, Theorem~\ref{2tr}, plays an important role in the analysis of the case of cubics if the
characteristic is not 2.

\begin{theorem}\label{2tr} Let $K$ be an algebraically closed field of characteristic $\not=2$.  Let $V$ be a vector
space of quadratic forms that is $(2k,0)$-safe, i.e., not contained in an ideal generated by $2k$ or fewer
linear forms.   Then there exists a Zariski dense open subset of $V$
consisting of quadratic forms with no $k$-collapse. \end{theorem}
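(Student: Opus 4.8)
The plan is to reduce the statement to producing a \emph{single} quadratic form in $V$ of large enough rank. By Proposition~\ref{qucol}, a quadratic form $F$ has a $k$-collapse if and only if its rank is at most $2k$; and for a single form a $k$-collapse is the same as a strict $k$-collapse (Definition~\ref{defsafety}), so by Corollary~\ref{closed} the set $Z \inc R_2$ of quadratic forms having a $k$-collapse is Zariski closed in the space $R_2$ of all quadratic forms, and hence $Z \cap V$ is closed in $V$. Since $V$, being a finite-dimensional $K$-vector space, is an irreducible variety, any nonempty Zariski open subset of $V$ is automatically dense. Thus it suffices to exhibit one form $F_0 \in V$ whose rank is at least $2k+1$: then $V \setminus Z$ is a nonempty, and hence dense, Zariski open subset of $V$, and by Proposition~\ref{qucol} it consists precisely of the quadratic forms in $V$ with no $k$-collapse.

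To find $F_0$, I would argue by contradiction. Suppose every nonzero form in $V$ has rank at most $2k$, and pick $F \in V$ of maximal rank $r \leq 2k$. Since $\ch(K) \neq 2$, Corollary~\ref{cDsq} gives $V \inc (\cD F)$, the ideal generated by the partial derivatives of $F$; and by Proposition~\ref{quadcl}, in characteristic not $2$ the space $\cD F$ has dimension exactly $r$. Therefore $(\cD F)$ is an ideal generated by $r \leq 2k$ linear forms that contains $V$, contradicting the hypothesis that $V$ is $(2k,0)$-safe. So some $F_0 \in V$ has rank at least $2k+1$, and the previous paragraph finishes the proof.

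The entire argument rests on Corollary~\ref{cDsq}: its content is that the linear span $\cD F$ of the partials of a maximal-rank form $F$ captures all of $V$ inside the ideal it generates, so that bounding the maximal rank attained in $V$ forces $V$ into a small linear ideal. I do not anticipate any delicate estimate here; the only other ingredients are the dictionary relating $k$-collapses of quadrics to rank (no $k$-collapse precisely when the rank exceeds $2k$) and the closedness of the collapse locus, both already available. The restriction $\ch(K)\neq 2$ enters exactly at this step: in characteristic $2$ one only controls $V$ modulo $(\cD F)$ up to squares of linear forms, and $\dim_K \cD F$ can drop to $r-1$, which is why that case needs the separate treatment given elsewhere.
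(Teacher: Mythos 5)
Your argument is correct and is essentially the paper's own proof: both use Corollary~\ref{closed} to reduce to exhibiting a single form of rank at least $2k+1$, then take $F \in V$ of maximal rank and invoke Corollary~\ref{cDsq} to force $V \inc (\cD F)$, contradicting $(2k,0)$-safety. Your additional citations of Proposition~\ref{qucol} and Proposition~\ref{quadcl} just make explicit the rank bookkeeping the paper leaves implicit.
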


\begin{proof} By Corollary~\ref{closed} the forms with a $k$-collapse are closed, and so it suffices to show that there 
is at least one  element of $V$ with no $k$-collapse.  Assume
that every element of $V$ has a $k$-collapse.  Let $r$ denote the rank of a  form $F \in V$, with $r$
as large as possible.   Then
$k \geq \lceil {r \over  2} \rceil$ and $r \leq 2k$.  By Corollary~\ref{cDsq}, $V$ must be contained in $(\cD F)R$.  
But then $V$ is not  $(r,0)$-safe, and so is
not $(2k,0)$-safe, a contradiction.  \end{proof}

 \emph{Note that Theorem~\ref{2tr}
is false in characteristic 2.}  In fact, more generally, 
in any positive characteristic $p$, the forms $x_1^p, \, \ldots, x_N^p$ are 
$(N-1)$-safe,  but any linear combination of them has a $1$-collapse if $K$ is perfect. 
However, one can still handle the case of cubics in characteristic 2 with a careful
application of Corollary~\ref{cDsq}.

\subsection*{Existence of $\etA$ in degree 2}\label{subsecQ2}

In this section we show the existence of the functions $\etA(n_1,n_2)$.  We begin with an analysis 
of when a vector space of quadratic forms consists entirely of reducible elements, and the strength 
conditions needed to guarantee that the quotient of a polynomial ring by an ideal generated by one or 
two quadratic forms is reduced, or a domain, or a normal domain, or a UFD, and so forth.  When the
base field $K$ is understood and  $S$ is
a set of linear forms, we denote by $\sq S$ the degree 2 component of $K[S]$, i.e., the set of quadratic
forms expressible as $K$-linear combinations of products of two elements of $S$.  E.g., $\sq{x,y} = 
Kx^2 + Kxy + Ky^2$.

\begin{proposition}\label{allred} Let $V$ be a $K$-vector subspace of $R_2$.  If every element of $V$ is reducible, 
then either:
\begin{enumerate}[(1)]
\item $V$ is contained in a 1-linear prime of $R$,  or  
\item $V$ is contained in $\sq{u,v}$ for variables $u, v \in R_1$, or
\item $K$ has characteristic 2 and every element of $V$ is the square of a linear form. 
\end{enumerate}
\end{proposition}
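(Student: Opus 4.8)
The plan is to dispose of two degenerate situations, reduce to a ``generic'' configuration, and then prove the claim that every $G\in V$ is divisible by $x_1$ or $x_2$ or lies in $\sq{x_1,x_2}$. First, if $V=0$ then $V$ lies in any $1$-linear prime, so (1) holds. Next suppose every nonzero element of $V$ is the square of a linear form: if $\ch K=2$ this is precisely (3), while if $\ch K\neq2$ and $\ell^2,(\ell')^2\in V$ with $\ell,\ell'$ linearly independent, then $\ell^2+(\ell')^2=(\ell+\sqrt{-1}\,\ell')(\ell-\sqrt{-1}\,\ell')\in V$ is a form of rank $2$ (the two factors are nonzero and independent because $\ch K\neq2$), which cannot be a square; hence all linear forms whose squares lie in $V$ are proportional, $V\subseteq K\ell^2\subseteq(\ell)$, and (1) holds. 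We may therefore fix $F\in V$ that is not a square; being reducible, $F=uv$ for linearly independent linear forms $u,v$ (a scalar multiple of a square is again a square, as $K$ is algebraically closed), and after a linear change of variables we may take $u=x_1$, $v=x_2$. Write $W=\langle x_1,x_2\rangle$; the claim is trivial if $N\leq2$, so assume $N\geq3$.

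Granting the claim, $V$ is contained in the union of the three proper $K$-subspaces $x_1R_1$, $x_2R_1$, $\sq{x_1,x_2}$ of $R_2$, and since $K$ is infinite a vector space is not a finite union of proper subspaces; so $V$ lies in one of them, giving $V\subseteq(x_i)$ (a $1$-linear prime, case (1)) or $V\subseteq\sq{x_1,x_2}$ (case (2), with the variables $x_1,x_2$). This is exactly the conclusion, so it remains to prove the claim. Note that a reducible quadratic form uses at most two variables after a change of coordinates, hence has rank $\leq2$; in particular $F=uv$ has the maximal rank occurring in $V$, which is what lets us invoke Corollary~\ref{cDsq}.

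Assume first $\ch K\neq2$. Corollary~\ref{cDsq} gives $V\subseteq(\cD F)=(x_1,x_2)$. Let $G=\ell_1\ell_2\in V$. The hyperplane pair $\{\ell_1\ell_2=0\}$ contains the irreducible linear space $\{x_1=x_2=0\}$, so some $\ell_i$, say $\ell_1$, lies in $W$. If $\ell_1\in Kx_1$ then $x_1\mid G$; if $\ell_1\in Kx_2$ then $x_2\mid G$; if also $\ell_2\in W$ then $G\in\sq{x_1,x_2}$. In the one remaining case $\ell_1=ax_1+bx_2$ with $ab\neq0$ and $\ell_2\notin W$, the forms $\ell_1,\ell_2$ are independent, so $G$ has maximal rank; applying Corollary~\ref{cDsq} to $G$ gives $V\subseteq(\cD G)=(\ell_1,\ell_2)$, so $F=x_1x_2\in(\ell_1,\ell_2)$ and hence $x_1$ or $x_2$ lies in $\langle\ell_1,\ell_2\rangle$ — but, since $\ell_1\in W$ and $\ell_2\notin W$, this would force the relevant $x_i$ to be a scalar multiple of $\ell_1$, contradicting $ab\neq0$. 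This proves the claim for $\ch K\neq2$.

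When $\ch K=2$, Corollary~\ref{cDsq} only controls $V$ modulo $(\cD F)$ up to squares of linear forms, so I would instead argue with the full pencil. Fix $G=\ell_1\ell_2\in V$ with $G\notin\sq{x_1,x_2}$; write $\ell_i=r_i+m_i$ with $r_i\in W$ and $m_i$ a linear form in the remaining variables, and set $W'=W+\langle m_1,m_2\rangle$, of dimension at most $4$. Every $cF+G=cx_1x_2+\ell_1\ell_2$ $(c\in K)$ lies in $V$, hence is reducible, and all of them lie in the polynomial subring $K[W']$; since a quadratic form supported on a set of variables is reducible in $R$ iff it is reducible in the subring on those variables, I may assume $N\leq4$. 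It then remains to run a finite case analysis — according to which of $m_1,m_2$ vanish and whether they are proportional — of the pencils of reducible quadrics in at most four variables that contain $x_1x_2$: when $m_1=m_2=0$ we have $G\in\sq{x_1,x_2}$ (excluded), and in every other case one shows that either the pencil lies in a principal prime $(\ell)$ with $\ell$ a linear factor of $x_1x_2$ — forcing $\ell\in\{x_1,x_2\}$ up to scalars and hence $x_1\mid G$ or $x_2\mid G$ — or $cF+G$ has rank $\geq3$ for all but finitely many $c$, contradicting reducibility. A handy lemma, proved by specializing to three variables, is that for a nonzero linear form $L$ in $x_1,x_2$, a nonzero linear form $m$ in the remaining variables, and $Q\in\sq{x_1,x_2}$, the quadratic form $Q+Lm$ is reducible iff $L\mid Q$; this clears the subcase in which exactly one $m_i$ is nonzero. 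The rank computations are where I expect the real work: there is no symmetric-matrix model, rank can exceed $\dim\cD(cF+G)$ by one, and one must invoke the half-discriminant and the Pfaffian criterion of Discussion~\ref{maxrk} (or the classification of quadratic forms in characteristic $2$). This characteristic-$2$ case analysis is the main obstacle.
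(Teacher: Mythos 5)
Your characteristic $\neq 2$ argument is complete and correct, and it takes a genuinely different route from the paper's: you use the observation that reducible quadrics have rank at most $2$, so the non-square $F=x_1x_2$ has maximal rank in $V$, invoke Corollary~\ref{cDsq} to get $V\inc(x_1,x_2)$, and finish with a short linear-algebra analysis plus the fact that a vector space over an infinite field contained in a finite union of subspaces lies in one of them. The genuine gap is the characteristic $2$ case. At that point conclusion (3) is no longer available (you have already fixed the non-square $F=x_1x_2\in V$), so the characteristic $2$ case carries the full weight of proving (1) or (2); but what you offer there is only a program: after reducing to at most four variables you write that ``it remains to run a finite case analysis,'' that the rank computations are ``the real work,'' and that this is ``the main obstacle.'' None of that analysis is carried out, so the proposition is not proved in characteristic $2$.

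For comparison, the paper's own proof is uniform in the characteristic and uses no rank or $\cD$-machinery at all: writing $W$ for the span of the squares in $V$ and $T=\sq{x,y}$ for a chosen factorization $xy$ of an element of $V-W$, it shows that every element of $V-(W\cup T)$ is divisible by $x$ or by $y$, using only elementary irreducibility facts — $uv+xy$ is irreducible when $u,v,x,y$ are independent, the fact $(\dagger)$ that $u^2+(ax+by)u+cxy$ is reducible only for $c=0$ or $c=ab$, and the irreducibility of $(ax+by)v+xy$ when $ab\neq0$ and $v\notin Kx+Ky$. So the difficulty you anticipate in characteristic $2$ (half-discriminants, Pfaffians) is an artifact of the rank-based method, not of the statement. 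Indeed, your own remaining subcases collapse to exactly those elementary facts: if $m_1,m_2$ are linearly independent then $F+G=x_1x_2+(r_1+m_1)(r_2+m_2)$ has rank $4$ and hence is irreducible, a contradiction; if $m_2=\lambda m_1\neq0$, then with $x_3=m_1$ and $u=x_3+r_1$ one gets $cF+G=\lambda\bigl(u^2+su\bigr)+cx_1x_2$ with $s\in Kx_1+Kx_2$, which is precisely the configuration of $(\dagger)$ and is irreducible for all but at most two values of $c$; and your ``handy lemma'' disposes of the case where exactly one $m_i$ is nonzero. Filling in these steps (which, in effect, reproduces the paper's argument) would complete your proof; as submitted, the characteristic $2$ half is missing.
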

\begin{proof} Let $W$ be the span of all the squares in $V$. If $W$ is all of $V$ then we are done if 
the characteristic is 2, since (3) holds. 
If the characteristic is not 2 and there are three or more squares of independent linear forms we have a contradiction 
because $x^2 + y^2 + z^2$ is irreducible when $x,y,z$ are variables.  
Thus, there are at most two, and (2) holds.  Henceforth we assume that $W$ is a proper subspace of $V$. 

Choose an element, which must factor  $xy$,  of $V -W$.  Let $T  = \sq{x,y}$.
We may assume that $V$ is not contained in $T$, or (2) holds.

Now consider any other element of $V-(W \cup T)$  ($V$ is spanned by these).  Call this element $uv$.   
Then $u, v, x, y$ cannot be
independent, since $uv + xy$ would then be irreducible. Moreover, $u,v$ are not both in $Kx+Ky$,  or
else $uv \in T$.    Consider a $K$-linear relation on $u, v, x, y$.   This relation cannot involve
both $u$ and $v$ with nonzero coefficient.  If it does, then we can solve for one of them:  say  $v = u + ax + by$ (the coefficient
of $u$ may be absorbed into $u$).  Here, $x,y,u$ must be variables.  Then $u(u+ax+by) + cxy = u^2 +(ax+by)u + cxy$
must be reducible for all choices of $c$.  This contradicts the following fact:\\

\noindent $(\dagger)$  If $u, x, y$ are variables and $a,b,c \in K$, then for fixed $a,b$, $u^2+ (ax+by)u + cxy$ reduces only when $c = 0$ or
when $c = ab$.  In particular, there is a value of $c$ for which it is irreducible.\\

(To see this, note that since $K[x,y]$ is normal, if there is a factorization it must be as $(u + L_1)(u+ L_2)$ where $L_1, L_2 \in K[x,y]$.
Then $L_1 L_2 = cxy$ with $c \not=0$  implies that $L_1 = a'x$ and $L_2 = b'y$, say, where  $a'b' = c$.  Since $a'x + b'y$ must be $ax + by$,
we must have $a' = a$ and $b' = b$  as well. This establishes the conclusion of $(\dagger)$.)\\

Hence, one of $u,v$, say $u$, depends linearly on $x,y$,  while $x,y,v$ are variables.  
Thus, for every element $uv$ of  $V - (W \cup T)$,  we have that
one factor is in $Kx + Ky$ while the other is $K$-linearly independent of $x,y$.  Suppose $u = ax + by$ where 
$a, b \not = 0$.  Then $(ax+by)v + xy$ is irreducible, since it is linear in $v$ and the coefficients of $v$ are relatively
prime.  Thus, $a$ or $b$ must be in 0, and so every element of $V - (T \cup W)$ is in $xR_1$ or in $yR_1$.  
This yields that
$V \inc T \cup W \cup xR_1 \cup yR_1$, and so $V$ is contained in $xR_1$ or $yR_1$, and (1) holds. \end{proof}

We next consider the case of a single quadratic form, where the behavior is well known.

\begin{proposition}\label{onequad} Let $R$ be a polynomial ring in finitely many variables over an 
algebraically closed field $K$
and let $F$ be a nonzero quadratic form of $R$.  If the rank of $F$ is  $r \geq 1$, then the ideal generated
by $F$ and its partial derivatives in $R$ has height $r$.  Hence, if $r \geq 2$,
the codimension of the singular locus in $R/FR$ is  $r -1$, so that $R/FR$ satisfies the Serre
condition \rom{R}$_{r-2}$.  If $F$ is $k$-strong for $k \geq 1$,  then rank of $F$ is $\geq 2k+1$, 
and so $R/FR$ satisfies \rom{R}$_{2k-1}$.  Hence, $R/FR$ satisfies \rom{R}$_\eta$ if 
$k \geq \lceil (\eta + 1)/2\rceil$. 

\end{proposition}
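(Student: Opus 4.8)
The plan is to reduce to the explicit normal forms for quadratic forms provided by Proposition~\ref{quadcl} and then read off the height of the ideal generated by $F$ and $\cD F$ by inspection. First I would apply Proposition~\ref{quadcl} to make a linear change of variables so that $F = x_1x_2 + \cdots + x_{2h-1}x_{2h}$ if $r = 2h$ is even and $F = x_1x_2 + \cdots + x_{2h-1}x_{2h} + x_{2h+1}^2$ if $r = 2h+1$ is odd; this is harmless since $\cD F$ is unaffected by linear changes of variable (as observed in Definition~\ref{key}). In the even case the partials of $F$ are, up to order, the variables $x_1, \ldots, x_{2h}$, and $F$ itself lies in $(x_1,\ldots,x_{2h})$, so the ideal generated by $F$ and $\cD F$ equals $(x_1,\ldots,x_{2h})$, of height $2h = r$. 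In the odd case with $\ch(K) \neq 2$ the partials generate $(x_1,\ldots,x_{2h+1})$, which again contains $F$, of height $2h+1 = r$; while if $\ch(K) = 2$ the last partial vanishes but $(F,\cD F) = (x_1,\ldots,x_{2h},x_{2h+1}^2)$, whose radical is $(x_1,\ldots,x_{2h+1})$, so the height is still $2h+1 = r$. This proves the first assertion.

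For the statement about the singular locus I would invoke the Jacobian criterion: since $K$ is algebraically closed, hence perfect, the non-regular locus of the hypersurface $R/FR$ is the closed subset of $\Spec R/FR$ defined by the image of the ideal $(F,\cD F)$. As $F$ is a nonzero element of the domain $R$ it is a nonzerodivisor, and $R$ is Cohen--Macaulay, so the height of an ideal containing $F$ drops by exactly $1$ upon passage to $R/FR$; hence the singular locus of $R/FR$ has codimension $r-1$. When $r \geq 2$ this is at least $1$, and by the description of the Serre conditions recalled in Discussion~\ref{RC}, $R/FR$ satisfies $\rom{R}_i$ exactly when the singular locus has codimension $\geq i+1$, i.e. for all $i \leq r-2$; in particular it satisfies $\rom{R}_{r-2}$.

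The remaining clauses are bookkeeping. If $F$ is $k$-strong with $k \geq 1$, then $F$ has no $k$-collapse, so by Proposition~\ref{qucol} its rank satisfies $r \geq 2k+1$; in particular $r \geq 3 \geq 2$, so by the previous paragraph $R/FR$ satisfies $\rom{R}_{r-2}$, hence $\rom{R}_{2k-1}$ because $r-2 \geq 2k-1$. For the final statement, $\rom{R}_\eta$ holds once $2k-1 \geq \eta$, that is once $k \geq (\eta+1)/2$, equivalently $k \geq \lceil (\eta+1)/2 \rceil$.

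If any step warrants caution it is the use of the Jacobian criterion in small characteristic, where some partials may vanish identically and $V(F)$ need not be reduced. To avoid relying on a general form of that criterion, I would instead verify the codimension of the singular locus directly on the normal forms above: in each model a short localization computation shows that $R/FR$ is regular precisely away from $V(x_1,\ldots,x_r)$, which has codimension $r$ in $R$ and codimension $r-1$ in $R/FR$. This is the one place where the characteristic $2$, odd rank case (where $\cD F$ omits the variable $x_{2h+1}$) must be handled by hand, but the excluded possibility $r=1$ — the only case where $F$ is a square and $R/FR$ is everywhere singular — causes no trouble here since we assume $r \geq 2$.
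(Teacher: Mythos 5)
Your proposal is correct and follows essentially the same route as the paper: reduce to the normal forms of Proposition~\ref{quadcl}, observe that $(F,\cD F)$ is $(x_1,\ldots,x_r)$ except in characteristic $2$ with $r$ odd, where it is $(x_1,\ldots,x_{2h},x_{2h+1}^2)$ and still has height $r$, and then deduce the codimension of the singular locus and the strength/rank bookkeeping via Proposition~\ref{qucol}. Your extra care about the Jacobian criterion and the height-drop upon passing to $R/FR$ only makes explicit steps the paper leaves implicit.
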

\begin{proof} If the characteristic of $K$ is not 2, or if the characteristic is 2 and $F$ has even rank,
the partial derivatives of $F$ span a vector space of dimension $r$, and $F$ is in the ideal they generate.
Hence, the ideal generated by $F$ and its partial derivatives has height $r$, and the height of the
defining ideal of the singular locus in $R/FR$ is $r-1$.  If the characteristic is 2 and $F$ has odd rank,
we may assume that $F$ has the form $x_1x_{h+1} + \cdots + x_hx_{2h} + x_{2h+1}^2$.  The ideal
generated by $F$ and its partial derivatives in $R$ is $(\vect x {2h}) + (x_{2h+1}^2)$, and the height
is still  $r = 2h+1$.   
 \end{proof}

\begin{remark}\label{rkUFD} 
Note that by Corollary \ref{groth}, if $F$ has rank 5 or more (equivalently, if $F$ is 2-strong), then
$R/FR$ is a UFD.  This is a much more elementary result which follows from the classification
of quadratic forms and Lemma~\ref{UFD} below. \end{remark}

 We first note the following result of Nagata:  see \cite{Sam}, Theorem 6.3 and its Corollary.
 
 \begin{lemma}\label{Nag} Let $f$ be a nonzero prime element in a domain $R$.  Then $R$ is a UFD
 if and only if $R_f$ is a UFD. \qed \end{lemma}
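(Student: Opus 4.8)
The plan is to prove both implications by elementary divisibility arguments, treating $R$ as a Noetherian domain (which is the case in all our applications, since there $R$ is finitely generated over a field); Noetherianity supplies the ascending chain condition on principal ideals, hence the fact that every nonzero nonunit of $R$ is a finite product of irreducibles. Throughout I would use the two standard equivalent characterizations of a UFD: (i) a domain satisfying the ascending chain condition on principal ideals is a UFD if and only if every irreducible element is prime; and (ii) a domain is a UFD if and only if every nonzero nonunit is a product of prime elements.

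For the direction ``$R$ a UFD $\implies R_f$ a UFD,'' I would take a nonzero nonunit $\al \in R_f$, write it as $r/f^n$ with $r \in R$, and factor $r = u\,c_1\cdots c_m$ with $u$ a unit of $R$ and each $c_i$ prime in $R$. Any $c_i$ that is an associate of $f$ becomes a unit of $R_f$ and may be deleted; for any $c_i$ not an associate of $f$ one has $f \notin c_iR$, so $R_f/c_iR_f \cong (R/c_iR)_{\bar f}$ is the localization of a domain at a nonzero element and hence a domain, so $c_i$ stays prime in $R_f$. Thus $\al$ is a unit times a product of primes of $R_f$, and criterion (ii) gives that $R_f$ is a UFD.

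For the converse ``$R_f$ a UFD $\implies R$ a UFD,'' I would verify criterion (i) by showing every irreducible $\pi \in R$ is prime. If $\pi$ is an associate of $f$ this is immediate since $f$ is prime by hypothesis, so assume $f \nmid \pi$. First one checks $\pi$ is not a unit of $R_f$: an equation $\pi g = f^n$ in $R$ with $f$ prime and $f\nmid\pi$ forces $f \mid g$, and iterating forces $\pi$ to be a unit of $R$, a contradiction. Hence $\pi/1$ is a nonzero nonunit of the UFD $R_f$ and factors there into primes. Clearing denominators and, in each prime factor, dividing out the highest power of $f$ dividing it in $R$, I may take these primes to lie in $R$ and be coprime to $f$; the unit of $R_f$ left over is, because $f$ is prime in $R$, necessarily a unit of $R$ times an integer power of $f$, which I absorb, reaching an equation $\pi = f^{e}\,c_1\cdots c_r$ in $R_f$ with $e \in \Z$, each $c_i \in R$ prime in $R_f$ and coprime to $f$. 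Passing this equation back into the domain $R$ and comparing powers of $f$ on the two sides — using once more that $f$ divides none of the $c_i$ — forces $e = 0$ and yields $\pi = c_1\cdots c_r$ in $R$ with each $c_i$ a nonzero nonunit of $R$. Irreducibility of $\pi$ now forces $r=1$, so $\pi$ is (an associate of) a prime of $R_f$. Finally, if $\pi \mid ab$ in $R$, then $\pi \mid a$ or $\pi \mid b$ in $R_f$, say $af^k = \pi h$ in $R$; since $f$ is prime and $f \nmid \pi$, repeatedly $f \mid h$, and cancelling $f$'s gives $\pi \mid a$ in $R$. So every irreducible of $R$ is prime, and by criterion (i), $R$ is a UFD.

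The only delicate point is the bookkeeping in the converse: transporting a prime factorization of $\pi$ from $R_f$ down to $R$ requires carefully tracking powers of $f$ — stripping $f$ out of the prime factors, recognizing the residual unit of $R_f$ as a power of $f$ (this is where primality of $f$ in $R$ enters), and then matching exponents inside the domain $R$. Everything else is formal manipulation with the characterizations (i), (ii). Alternatively, one simply cites \cite{Sam}, Theorem 6.3 and its Corollary, as we do.
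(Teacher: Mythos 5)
Your proof is correct, but it is not a reproduction of the paper's argument, because the paper gives none: Lemma~\ref{Nag} is stated with a \qed and justified only by the citation to \cite{Sam}, Theorem 6.3 and its Corollary (Nagata's criterion). What you have written out is the standard elementary proof of that criterion, and the details check out: localization preserves primes not associated to $f$ (giving the easy direction), and in the converse the transport of a prime factorization of an irreducible $\pi$ from $R_f$ back to $R$, with the power of $f$ forced to be zero because $f\nmid\pi$ and $f$ divides none of the $c_i$, does show every irreducible of $R$ is prime. Two points are worth making explicit in the comparison. First, your restriction to the Noetherian (or ACCP) case is not a convenience you could dispense with: the lemma as literally printed, for an arbitrary domain, is false in the direction ``$R_f$ a UFD $\Rightarrow R$ a UFD'' --- a valuation domain $R$ with value group $\Z\oplus\Z$ ordered lexicographically has principal prime maximal ideal $(f)$, and $R_f$ is a DVR, yet $R$ is not atomic, hence not a UFD. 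Some hypothesis guaranteeing factorization into irreducibles is therefore part of any correct statement of Nagata's criterion, and it holds in all of the paper's applications, where the rings are affine over a field. Second, that hypothesis enters your argument exactly where you flag it: in extracting ``the highest power of $f$'' dividing an element (such a power need not exist without ACCP) and in recognizing a unit of $R_f$ as a unit of $R$ times a power of $f$. So the trade-off is clear: the paper's citation buys brevity, while your argument buys a self-contained proof that also makes visible the finiteness hypothesis the bare statement suppresses.
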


\begin{lemma}\label{UFD} Let $A$ be a UFD and $R = A[x_1, x_2]$ be a polynomial ring over $A$. 
Let $F = x_1x_2 + G$  where  $G \in A[x_1]$ and $G(0) = a \in A-\{0\}$ is irreducible.  Then $S = R/FR$ 
is a UFD.  \end{lemma}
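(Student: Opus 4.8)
The plan is to invert $x_1$ and apply Nagata's criterion, Lemma~\ref{Nag}. First I would observe that $F = x_1x_2 + G$ with $G \in A[x_1]$ is a prime element of $R = A[x_1,x_2]$: modulo $F$ we can solve $x_2 = -G/x_1$ after inverting $x_1$, so $R_{x_1}/FR_{x_1} \cong A[x_1]_{x_1} = A[x_1, x_1^{-1}]$, which is a localization of the UFD $A[x_1]$ and hence a domain (in fact a UFD). Since $x_1$ is a nonzerodivisor on $R/FR$ — one checks $x_1$ does not divide $F$ in $R$, as the constant term $G(0) = a \neq 0$ — it follows that $R/FR$ injects into its localization at $x_1$, so $R/FR$ is a domain and $F$ is prime.

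Next, by Lemma~\ref{Nag} applied to the prime element $x_1$ of the domain $S = R/FR$, it suffices to show that $S_{x_1}$ is a UFD. But $S_{x_1} = R_{x_1}/FR_{x_1} \cong A[x_1, x_1^{-1}]$, which is a localization of the polynomial ring $A[x_1]$ over the UFD $A$, hence a UFD. Therefore $S$ is a UFD.

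The point where the irreducibility hypothesis on $a = G(0)$ actually matters is subtler than the argument above suggests, so let me reconsider: $x_1$ being prime in $S$ requires that $S/x_1S$ be a domain, i.e. that $R/(F, x_1) = A[x_1,x_2]/(x_1, x_1x_2 + G) = A[x_2]/(G(0)) = A[x_2]/(a)$ be a domain, which forces $a$ to be prime in $A$ — this is exactly where "$a$ irreducible" (hence prime, since $A$ is a UFD) is used. So the main steps are: (i) verify $S$ is a domain and $x_1$ is a nonzerodivisor in $S$, by the localization computation $S_{x_1} \cong A[x_1,x_1^{-1}]$; (ii) verify $x_1$ is prime in $S$ using that $S/x_1 S \cong A[x_2]/(a)$ is a domain because $a$ is prime in the UFD $A$; (iii) apply Lemma~\ref{Nag} to conclude, since $S_{x_1} \cong A[x_1, x_1^{-1}]$ is a UFD as a localization of $A[x_1]$.

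I expect step (i) to be the one requiring the most care: one must be sure that the isomorphism $R_{x_1}/FR_{x_1} \cong A[x_1, x_1^{-1}]$ is genuinely an isomorphism of rings (the map $A[x_1,x_1^{-1}] \to R_{x_1}/FR_{x_1}$ is clearly surjective since $x_2 \equiv -x_1^{-1}G$, and injective because a polynomial relation on $x_1$ holding mod $F$ in $R_{x_1}$ pulls back, using that $F$ is $x_2$-monic up to the unit $x_1$, to force all coefficients to vanish), and that $S \hookrightarrow S_{x_1}$, which needs $x_1$ to be a nonzerodivisor — equivalently $F \notin x_1 R$, which is clear since $G(0) = a \neq 0$. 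Everything else is then a formal consequence of Lemma~\ref{Nag} and the fact that UFDs are stable under localization and polynomial extension.
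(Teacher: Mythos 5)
Your proof is correct and follows essentially the same route as the paper: apply Nagata's criterion (Lemma~\ref{Nag}) to the prime element $x_1$ of $S$, using $S/x_1S \cong (A/aA)[x_2]$ (which is where the irreducibility of $a$ enters) and $S_{x_1} \cong A[x_1]_{x_1}$, a UFD. The only cosmetic difference is that the paper first gets the domain property of $S$ by noting $F$ is irreducible in the UFD $R$ (linear in $x_2$ with coprime coefficients), whereas you deduce it from the injection $S \hookrightarrow S_{x_1}$; both work.
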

\begin{proof}  First note that $F$ is irreducible in the UFD $R$,  since it is linear in $x_2$ and the coefficients
have greatest common divisor 1. Hence, $S$ is a domain. 
The hypothesis also yields that $x_1$ is prime in $S$ (the quotient is $(A/aA)[x_2]$, a domain).
Hence, by Lemma~\ref{Nag},  $S$ is a UFD if 
and only if  $S_{x_1}$ is a UFD.  But once we localize at the
element $x_1$,  the equation $F$ is, up to a unit,  $x_2 + x_1^{-1}G$, and killing $FR$ yields
$A[x_1]_{x_1}$, a UFD. \end{proof}

The following result describes behavior for a vector space of quadratic forms of dimension $n$.  We state the general
result, then prove the case where $n=2$, which plays a special role in the proof of the general result.

\begin{theorem}\label{nforms}  Let $K$ be an algebraically closed field and let $R$ be a polynomial ring over $K$. Let 
$\vect F n$ be linearly independent quadratic forms in $R$, where $n \geq 2$, and let $V$ be the $K$-vector space they span.   
Let $I = (\vect F n)R$.  If $V$ is $k$-strong for $k \geq n-1$,  then $\vect F n$ is a regular sequence, and $R/I$ satisfies the Serre condition  \rom{R}$_{2(k+1-n)}$.  In particular, if $k \geq n-1$,  then $R/I$ is reduced, if $k \geq n$ then $R$ is normal, and if $k \geq n+1$ then $R/I$ is a UFD.  \end{theorem}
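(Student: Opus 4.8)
The plan is to prove the theorem by induction on $n$, reducing all of the assertions to a single codimension bound on the singular locus. First, note that everything follows once one knows that a $k$-strong vector space $V$ of quadratic forms of dimension $n$ with $k\ge n-1$ has $R/(V)$ satisfying the Serre condition $\mathrm{R}_{2(k+1-n)}$. Indeed, $k\ge n-1$ makes $2(k+1-n)\ge 0$, so $R/(V)$ is reduced; it will inductively be a complete intersection, hence Cohen--Macaulay and unmixed, so $\operatorname{height}(V)=n$. Then any $n$ linearly independent elements of $V$ generate $(V)$ and, being $n$ elements cutting out a height-$n$ ideal in the Cohen--Macaulay ring $R$, form a regular sequence in every order. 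The three refinements (reduced; normal domain; UFD) then follow from $\mathrm{R}_0$, from $\mathrm{R}_1$ together with Discussion~\ref{RC}, and from $\mathrm{R}_3$ together with Corollary~\ref{groth}.

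The base case $n=2$ is proved directly and carries the essential idea. Here $F_1$ has rank $\ge 2k+1\ge 3$ by Proposition~\ref{qucol}, so $R/F_1R$ satisfies $\mathrm{R}_1$ and is a normal domain, and $F_2\notin(F_1)$ for degree reasons, so $F_1,F_2$ is a regular sequence. For the Serre condition on $R/(F_1,F_2)$, in characteristic $\neq 2$ I would use the Hessian dictionary of Discussion~\ref{qumat}: by the Jacobian criterion for the complete intersection $(F_1,F_2)$, its singular locus is contained in $\bigcup_{[c]\in\PP^1}\operatorname{Sing}\bigl(R/(c_1F_1+c_2F_2)R\bigr)$, the union over the pencil of the singular loci of its members, where each piece has codimension $\operatorname{rank}(c_1F_1+c_2F_2)\ge 2k+1$ in $R$ by Proposition~\ref{onequad}. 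Proposition~\ref{rk} and Corollary~\ref{cDsq} are exactly what control how the rank varies along the pencil, and they let one recover the codimension lost to the one-parameter family, giving codimension $\ge 2k+1$ in $R$, i.e.\ $\ge 2k-1$ in $R/(F_1,F_2)$, which is $\mathrm{R}_{2(k-1)}$. Characteristic $2$ is handled in parallel using the half-discriminant (Discussion~\ref{maxrk}) and the characteristic-$2$ clauses of Proposition~\ref{quadcl}, Proposition~\ref{rk}, and Corollary~\ref{cDsq}.

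For the inductive step, assume the theorem for $n-1\ge 2$ and let $V$ be $k$-strong of dimension $n$ with $k\ge n-1$, basis $F_1,\dots,F_n$, and $I=(V)$. The span $V'=\langle F_1,\dots,F_{n-1}\rangle$ is $k$-strong by Proposition~\ref{obv}(a) with $k\ge(n-1)-1$, so inductively $B:=R/(F_1,\dots,F_{n-1})$ is a complete intersection satisfying $\mathrm{R}_{2(k+2-n)}$; since $2(k+2-n)\ge 2$, $B$ is a normal domain, and $F_n\notin(V')$ for degree reasons, so $R/I=B/F_nB$ is again a complete intersection. Now I bound $\operatorname{Sing}(B/F_nB)$. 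A prime $Q$ of $B$ containing $F_n$ either contains the defining ideal of $\operatorname{Sing}(B)$, of codimension $\ge 2(k+2-n)+1=2(k+1-n)+3$ in $B$, hence $\ge 2(k+1-n)+2$ in $B/F_nB$ — more than enough — or $B_Q$ is regular, in which case $B/F_nB$ is singular at $Q$ exactly when $F_n\in(QB_Q)^2$. Pulling back to $R$, the latter locus is precisely the set of singular points of $R/I$ that are not singular points of $B$, and by the Jacobian criterion for the complete intersection $I$ it is contained in $Y:=\bigcup_{0\ne G\in V}\operatorname{Sing}(R/GR)$, where each $\operatorname{Sing}(R/GR)$ has codimension $\operatorname{rank}(G)\ge 2k+1$ in $R$. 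Running over $\PP(V)\cong\PP^{n-1}$ this gives $\operatorname{codim}_R Y\ge 2k+2-n$, and stratifying $\PP(V)$ by the rank of its members shows this can be improved to $\operatorname{codim}_R Y\ge 2k+3-n$, i.e.\ $\ge 2(k+1-n)+1$ in $R/I$ as needed, unless the rank is constant and equal to the minimal value $2k+1$ on all of $V\setminus\{0\}$. In that exceptional case Corollary~\ref{cDsq} forces $V$ into the ideal generated by the $2k+1$ linear forms spanning $\cD F$ for a maximal-rank $F\in V$, and for $n\ge 2$ this configuration is incompatible with $k$-strength (or else the needed bound is recovered in it directly) by the same pencil-of-quadrics analysis — Proposition~\ref{rk}, Corollary~\ref{cDsq}, the half-discriminant — used in the base case.

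I expect the main obstacle to be exactly this last point. The coarse union bound over the $(n-1)$-parameter family $\PP(V)$ of rank-drop loci falls one short of the asserted Serre condition, and recovering the missing unit of codimension requires the structural fact that a $k$-strong space of quadratic forms of dimension $\ge 2$ cannot consist entirely of forms of the minimal admissible rank $2k+1$ — equivalently, cannot sit inside the ideal of $2k+1$ linear forms. Once that is in hand, the remainder of the induction is routine bookkeeping with complete intersections and Serre conditions.
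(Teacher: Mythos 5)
Your high-level architecture does match the paper's: induction on $n$, a hands-on base case $n=2$, and a bound on the singular locus of $R/I$ obtained by spreading it over the family of singular loci of the members $F_v$, $v\in \PP(V)$, with the crude union bound falling exactly one short of R$_{2(k+1-n)}$. But the two places where you wave your hands are precisely where the paper's proof does its real work, and your one concrete claim about the hard case is false. In the base case you assert that Proposition~\ref{rk} and Corollary~\ref{cDsq} ``let one recover the codimension lost to the one-parameter family.'' They do not, by themselves: rank is constant on a dense open subset of the pencil, so the union bound fails to improve exactly when the generic rank equals the minimal admissible value $2k+1$, and this case genuinely occurs (e.g.\ for $k=1$ there are pencils of quadrics in four variables all of whose nonzero members have rank exactly $3$; such a pencil is $1$-strong). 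For $k=1$ the paper proves reducedness by a completely different mechanism: after reducing to $F$ of rank $3$ or $4$, it exhibits $R/FR$ as $C[\ux]$ with $C=K[u^2,v^2,uv]$ or $K[su,sv,tu,tv]$, uses the splitting $C\to D$ and a $\GL(1,K)$ semi-invariance argument to show the image of $G$ cannot factor, contradicting Proposition~\ref{obv}(g). For $k\ge 2$ it runs an induction on $k$: Lemma~\ref{iso} puts the pencil in the normal form $F=x_1x_2+Q$, $G=x_2L+M$, a minimal-prime analysis of the Jacobian ideal forces $x_2\in P$, the space $KQ+KM$ is shown to be $(k-1)$-strong, and the contradiction comes from the half-discriminant argument of Discussion~\ref{maxrk}. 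None of this is replaced by the two cited statements.

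The same gap reappears, in a worse form, in your inductive step. When the generic rank on $\PP(V)$ is $2k+1$, you claim Corollary~\ref{cDsq} puts $V$ inside the ideal of the $2k+1$ linear forms spanning $\cD F$ and that this is ``incompatible with $k$-strength.'' It is not: containment in an ideal generated by $2k+1$ linear forms is a $(2k+1)$-collapse condition, not a $k$-collapse condition, and a form of rank exactly $2k+1$ has strength exactly $k$, so such spaces can perfectly well be $k$-strong (indeed the constant-rank-$2k+1$ pencils above are of this shape). The paper does not derive a contradiction here; instead it works with an irreducible component $C$ of the rank-$(n-1)$ Jacobian stratum, fibers it over $\PP^{n-1}$ via the relation on the rows, and in the constant-rank-$(2k+1)$ case shows that for a general member $F_v$ of a line in $\PP(V)$ one has $F\notin(\tD F_v)$ --- the point where the already-proved $n=2$ case is invoked, since otherwise $\bigcup_v V(\tD F_v)$ would give the singular locus of a $k$-strong pencil dimension $\ge N-2k$. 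That extra inclusion raises the fiber codimension from $2k+1$ to $2k+2$ and recovers the missing unit of codimension. Your proposal correctly diagnoses that this is ``the main obstacle,'' but the resolution you offer for it is incorrect, so the proof as proposed does not go through.
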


The authors would not be surprised if the result above is in the literature, but do not know a reference for it.  It is easy
to obtain slightly weaker results.  The authors would like to thank Igor Dolgachev for suggesting the use of Steinerians
in the proof.  The arguments we give use modifications of this idea.

Before giving the proof, we note that this result is the best possible, as shown by the following example.

\begin{example}
Assume that $k\geq n-1$.  
Let $Y  = (y_{ij})$  be an $n \times (k+1)$ matrix of indeterminates and let $\vect x {k+1}$ be $k+1$ additional indeterminates.
For $1 \leq i \leq n$, let $F_i$ denote  $\sum_{j=1}^{k+1} x_jy_{ij}$.  Any nonzero $K$-linear combination 
of the $F_i$ also can be written
as $\sum_{j=1}^{k+1} x_jy_j'$ where  $\vect x {k+1},  y_1', \, \ldots \, y_{k+1}'$ are indeterminates, 
and so has rank $2k+2$.
Thus,  the vector space $V$ spanned by the $F_i$ is $k$-strong. These are $n$ general linear combinations
of the generators of an ideal of depth $k+1$, and therefore form a regular sequence:  see, for example, the
first paragraph of  \cite{Ho}.  

The Jacobian matrix of $\vect F n$ has $Y$ as
a submatrix, while the rest of the matrix consists of $n$ rows each of which contains  $\vect x {k+1}$ and entries that
are 0:  moreover, the occurrences of the $x_i$ in a given row are in columns that are different from where they
occur in any other row.  Thus, the columns that contain $x_i$ give a permutation of the columns of the matrix 
$x_i \hbox{\bf 1}_n$.  For a suitable
numbering of the variables, the Jacobian matrix has the form
$$\mx 
y_{1,1}&\ldots &y_{1,k+1}&x_1\,\, \ldots \,\, x_{k+1}&\!\!\!0\ \, \ldots \ \ \ 0 & \ldots & \!\!\!0\ \, \ldots \ \ \ 0\\
y_{2,1} &\ldots &y_{2,k+1} & \!\!\!0\ \, \ldots \ \ \ 0 & x_1\,\, \ldots \,\, x_{k+1} & \ldots&\!\!\!0\ \, \ldots \ \ \ 0\\
           \vdots                       & \vdots &    \vdots   &        \!\!\!\!   \vdots   &   \!\!\!\! \vdots  &  \vdots  & \!\!\!\!\vdots\\
y _{n,1}&\ldots & y_{n,k+1} &\!\!\!0\ \, \ldots \ \ \ 0 &\!\!\!0\ \, \ldots \ \ \ 0 &\ldots&x_1 \,\, \ldots \,\, x_{k+1}
\emx$$
It follows that the radical of the ideal of minors of the Jacobian contains the ideal $(\vect  x {k+1})$, and, hence, all of the 
$F_i$.  Thus, the radical of the ideal of minors of the Jacobian matrix is $I_n(Y) + (\vect x {k+1})$.  
Under the assumption that $k\geq n-1$,  i.e., that 
$k+1 \geq n$, we have from the main result of \cite{EN} or \cite{HE}  that this ideal has height  
$(k+1)-n +1 + (k+1) = 2(k+1) - n + 1$ in the polynomial ring.  Thus, the height
of its image in $R/(\vect F n)$ is $2(k+1-n)+1$.  This means that the quotient satisfies R$_{2(k+1-n)}$, but 
not R$_{2(k+1-n)+1}$.  \end{example}

We first prove Theorem~\ref{nforms} when $n = 2$,  and then give the proof in the general case. 
In the case where $n=2$ we change notation slightly to avoid subscripts and write $F$, $G$ for
$F_1$, $F_2$.   Before proving
the result when $n=2$ we note: 

\begin{lemma}\label{iso} Let $R = K[\vect x N]$ where $K$ is an algebraically closed field. 
Assume that $F$ and $G$ do not lie in a subring of $R$ generated $N-1$ or
fewer linear forms, i.e., roughly speaking, that we cannot decrease $N$.  Assume also that
$V(F,G)$ has a singularity other than the origin.  Then we can make a linear change of
variables and choose new generators for $V = KF+KG$ one of which has the form $x_1x_2 + Q(x_3, \, \ldots, \, x_N)$,
and the other of which only involves $x_2, \ldots, x_N$. \end{lemma}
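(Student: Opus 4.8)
The plan is to turn the singular point of $V(F,G)$ into a form of $V$ that avoids one variable, and then to exploit the hypothesis that ``$N$ cannot be decreased'' to pin down the shape of a complementary generator. First I would choose a $K$-rational point $q\neq 0$ at which the affine variety $V(F,G)$ is singular; such a point exists because $K$ is algebraically closed and, by hypothesis, the singular locus contains a point other than the origin. Then $F(q)=G(q)=0$, and at $q$ the $2\times N$ Jacobian matrix of $(F,G)$ has rank at most $1$ — otherwise the Jacobian criterion would make $q$ a smooth point of the codimension-$2$ complete intersection that $F,G$ cut out locally. Hence the two rows of partials of $F$ and of $G$ evaluated at $q$ are linearly dependent, so some nonzero combination $H=\lambda F+\mu G$ has all of its partials vanishing at $q$ (and $H(q)=0$ as well). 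Replacing $F$ by $H$ and extending to a basis of $V$, I may assume from now on that $F(q)=0$ and every partial of $F$ vanishes at $q$. A linear change of coordinates now moves $q$ to $e_1=(1,0,\ldots,0)$, and I record that the changes of coordinates still available afterward are exactly those of the form ``new $x_1$ equals old $x_1$ plus a linear form in $x_2,\ldots,x_N$, together with an invertible linear substitution in $x_2,\ldots,x_N$'', since these are precisely the linear changes fixing $e_1$.

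The key elementary point, valid in every characteristic including $2$, is that a quadratic form that vanishes at $e_1$ and all of whose partials vanish at $e_1$ cannot involve $x_1$ at all: vanishing at $e_1$ kills the $x_1^2$ coefficient, and the vanishing of the partials at $e_1$ kills every $x_1x_j$ coefficient. Applied to $F$, this gives $F\in K[x_2,\ldots,x_N]$. If the partials of $G$ also all vanished at $q$, the same point would put $G\in K[x_2,\ldots,x_N]$, hence $V\subseteq K[x_2,\ldots,x_N]$, contradicting the assumption that $F,G$ do not lie in a subring generated by $N-1$ linear forms; so the vector of partials of $G$ at $q$ is nonzero. Since $G(e_1)=0$, the form $G$ has no $x_1^2$ term, so $G=\ell x_1+R$ with $\ell$ a linear form in $x_2,\ldots,x_N$ and $R\in K[x_2,\ldots,x_N]$; a direct computation shows the vector of partials of $G$ at $e_1$ is the coefficient vector of $\ell$ (with a zero in the first slot), hence $\ell\neq 0$. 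Now I would change coordinates inside $x_2,\ldots,x_N$ so that $\ell=x_2$ — this fixes $e_1$ and keeps $F$ inside $K[x_2,\ldots,x_N]$ — obtaining $G=x_1x_2+R(x_2,\ldots,x_N)$. Writing $R=ax_2^2+x_2m(x_3,\ldots,x_N)+Q(x_3,\ldots,x_N)$ with $m$ linear, we get $G=x_2\bigl(x_1+ax_2+m\bigr)+Q$, so the admissible change ``new $x_1$ equals old $x_1+ax_2+m$'' turns $G$ into $x_1x_2+Q(x_3,\ldots,x_N)$ while leaving $F$ untouched. Then $\{G,F\}$ is the asserted pair of generators.

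I expect the only real obstacle to be bookkeeping rather than anything conceptual: one must check at each stage that the coordinate change used is of the admissible type that fixes $e_1$ and does not reintroduce $x_1$ into $F$, and one must run the two partial-derivative computations — the vanishing-at-$e_1$ criterion, and the identification of the partials of $G$ at $e_1$ with the coefficients of $\ell$ — carefully in characteristic $2$, where the $x_i^2$ terms contribute differently (or not at all) to the partials. None of this is deep, but it must be carried out uniformly in the characteristic, since Theorem~\ref{nforms} is asserted over an arbitrary algebraically closed field.
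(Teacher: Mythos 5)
Your proof is correct and follows essentially the same route as the paper's: move the singular point to $(1,0,\ldots,0)$, use the rank-$\leq 1$ Jacobian there to choose a basis of $V$ one member of which involves only $x_2,\ldots,x_N$ (your ``vanishing gradient'' combination is exactly the paper's choice making the coefficient $A'$ of $x_1$ zero), invoke the hypothesis that $N$ cannot be decreased to see the other member's $x_1$-coefficient is a nonzero linear form, normalize it to $x_2$, and absorb the leftover linear form into $x_1$. No gaps; your characteristic-$2$ checks match what the paper leaves implicit.
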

\begin{proof}  Since the singular locus is not just the origin, after
a linear change of coordinates we may assume that $p_0 = (1,0,\, \ldots,\,0)$ is a singular point of
$V(F,G)$.  
Then $x_1^2$ does not occur in either
$F$ or $G$ (since they vanish at $p_0$), and we may write them as $F = Ax_1+ Q$, $G = A'x_1+Q'$,
where $A, A', Q, Q' \in K[x_2, \, \ldots, \, x_N]$.  Then the Jacobian matrix at $p_0$ has the form
$$\mx   0 & c_2 & \ldots c_N\\ 
              0 & c_2' & \ldots c_N'\emx$$
where $A =\sum_{j=2}^N c_j x_j$ and $A' =\sum_{j=2}^N c_j' x_j$  with $c_j, c'_j \in K$.  
Since $p_0$ is a singular point, the Jacobian matrix has rank at most 1 at $p_0$, so that $A, A'$ span a $K$-vector
space of dimension at most one.  Therefore, we may replace $F, G$ by independent $K$-linear combinations such
that $A'$ is 0, and so we may assume that $F = x_1A + Q$ and $G = Q'$, where $A$, $Q$, and $Q'$ 
involve only $x_2, \, \ldots, \, x_N$.  If $A = 0$ we can reduce the number of variables $N$, and so
we may assume that $A \not=0$.  After a change of variables in $Kx_2 + \cdots + Kx_N$,  we may
assume that  $A = x_2$.  We may write   $Q = x_2B + Q''$  where $Q''$ involves only $x_3, \, \ldots, \, x_N$.
Thus,  $F = (x_1 +B)x_2 + Q''$  where $Q''$ involves only $x_3, \ldots, x_N$,  and $G$ involves only
$x_2, \, \ldots, x_N$.  Finally, we may replace $x_1$ by $x_1 + B$.  \end{proof}

\begin{discussion}  Let $F$ be a quadratic form in $K[\vect  x N]$,  where $K$ is algebraically closed.
If the characteristic of $K$ is different from 2 or if $K$ has characteristic 2 and $F$ has even rank,
then $\cD F$ is the smallest space of degree one forms such that $F$ is in the polynomial ring
they generate.  If the characteristic of $K$ is 2 and $F$ has odd rank $2h+1$, after a change of variables
$F$ has the form $x_1x_2 + \cdots x_{2h-1}x_{2h} + x_{2h+1}^2$.   In this case, there is still
a smallest space $W$ of forms of degree 1, the $K$-span of $\vect x {2h+1}$,  such that $F$ is in the polynomial
ring generated by these linear forms:  it also may be described as $\cD F + Kx_{2h+1}$.  Here,  while
$x_{2h+1}$ is not uniquely determined, the vector space $W$ is, and may be described as the intersection
of $\Rad(\cD F, F)$ with the forms of degree 1 in the polynomial ring $R$.  We shall use the notation
$\tD F$ to denote this space.  It is the same as $\cD F$ whenever the characteristic is not 2 or $F$ has even rank.  
The image of the ideal generated by $\tD F$ in $R/F$ is always the defining ideal of the singular locus of $F$.  \end{discussion}

We now give the proof of Theorem~\ref{nforms}. 
\begin{proof} We first give the proof when $n=2$, and write $F := F_1$ and $G := F_2$, as mentioned above.
Since $F$ is 1-strong, it is irreducible, so that $FR$ is prime. We have that $G$ is irreducible and, in particular,
nonzero  mod 
$F$ (although not necessarily prime) by part (g) of Proposition~\ref{obv}.  Thus, it is obvious that
$F, \, G$ is a regular sequence.  Note that once we know this,  reduced is equivalent to \rom{R}$_0$,
and normal is equivalent to \rom{R}$_1$.  In the normal case, the domain condition is automatic: see
Discussion~\ref{RC}.

We know that $k \geq 1$, and we want to show that $R$ is reduced. 
If any form in $V$ has rank 5 or more,  say  $F$, the quotient by it is a UFD (see Remark~\ref{rkUFD}).
Then, since  $G$ is irreducible in $R/FR$,   $R/(F,G)R$ is a domain.  

Therefore, we may assume that every nonzero element of $V$ has rank 3 or rank 4. 
We may assume without loss of generality that
$F = xy - z^2$ or  $xy-wz$. Let the remaining variables be $\ux$. Then $K[x,y,z, \ux]/(F)$ 
(respectively, $K[w,x,y,z, \ux]/(F)\,$) is isomorphic with  $C[\ux]$ where $$C =
K[u^2, v^2, uv] \inc K[u,v] = D$$ or  
$$C =   K[su, sv, tu, tv] \inc  K[s,t,u,v] =  D,$$ where $s,t,u,v$
are new indeterminates.  Note 
$ C\to D$ is split over $C$, and this remains true when we adjoin $\ux$.  Thus, every ideal
of $C[\ux]$ is contracted from $D[\ux]$.  

Let $g = G(u^2, v^2, uv, \ux)$  (respectively,  $G(su, sv, tu, tv, \ux)$).
Then  $R/(F,G) \cong
C[\ux]/(g) \inj D[\ux]/(g)$.  It will suffice to show that $g$ is square-free
in the polynomial ring $D[\ux]$, which  we grade so that $u,v$ (and $s, t$) have degree one and the elements of
$\ux$ have degree 2.  If $g$ is not square-free then either (1) $g = h^2$, where $h$ has degree 2,
or (2)  $g = h^2r$ where $h$ has degree 1 and $r$ has degree 2.  We obtain a contradiction by showing
that $g$ factors in $C[\ux]$,  i.e., that $G$ factors in $R/FR$,  which contradicts Proposition~\ref{obv}~(g).

We first study cases (1) and (2) when $C = K[u^2, v^2, uv]$.  
In case (1),   $h = Q + M$  where $Q$ is quadratic
in $u,v$ and $M$ is linear in $\ux$.   Then $Q \in C$, obviously, and we have the required
factorization.  

In case (2) for $C = K[u^2, v^2, uv]$,   $h$ is a linear form in $u,v$  and
$r$ must be the sum of a quadratic form $Q$ in $u$, $v$ and a linear form $M$ in the
$K$-span of $\vect xn$.  Thus,  $g = h^2(Q  +  M)$.   Then $h^2 \in C$,  and
because of the splitting $C[\ux] \to D[\ux]$, $g$ is a multiple of $h^2$ in $C[\ux]$.  Since the multiplier
needed is uniquely determined, 
$Q+M$ must be in $C[\ux]$, and we have a factorization of $g$ in $C[\ux]$.  

Now assume $C = K[su, sv, tu, tv]$.  Let the multiplicative group $G = \GL(1,K)$ of $K$ act on the 
polynomial ring $D[\ux] = K[s,t,u,v,\ux]$
so that $a \in G$ sends  $s \mapsto as$, $t \mapsto at$,  $u \mapsto a^{-1}u$, $v \mapsto a^{-1}v$ and 
fixes all of the variables $\ux$.  The fixed ring of this action is $C[\ux]$.  

In case (1),   $h = Q + M$  where $Q$ is quadratic
in $u,v, s, t$ and $M$ is linear in $\ux$.   Working mod $(\ux)$, we see that $Q^2  \in C$.  
Since $Q^2$ is invariant under the action of $G$,  so is $Q$ (the group $G$ is connected),
i.e., $Q \in C$.  But then  $h \in C[\ux]$, which yields a factorization of $G$ mod $F$.

In case (2), since $g = h^2r$ is fixed by the action of $G$,  the ideals generated by
$h$ and $r$ in $D[\ux]$ 
must be fixed by this action (they cannot be permuted, since $G$ is connected). This implies that the action of
$G$ maps $h$ to multiples of itself by elements of $K$, i.e., $h$ is semi-invariant for the action, and the
same holds for $r$.  Hence, $h$
must be $\alpha s + \beta t$  or   $\alpha u + \beta v$,  where $\alpha, \, \beta \in K$ are not both 0.   
Then  $h^2$ is a semi-invariant that is multiplied by $a^2$ or $a^{-2}$ as $a \in G$ acts.  Now $r$ must have 
the form $Q + M$, where $Q$ is quadratic in $u,v,s,t$ and $M$ is linear in the variables in $\ux$, and since
$r = Q+ M$ is a semi-invariant that is multiplied by $a^{-2}$ or $a^2$ as $a \in G$ acts, we must have that $M =0$,
and that if $h$ involves $u,v$ (respectively, $s,t$), then $Q$ involves $s,t$ (respectively, $u,v$).  
Then  $Q$ factors $LL'$ where  $L, L'$ are linear forms in the appropriate pair
of variables,  and we have  $g = (hL)(hL')$ where the factors are in $C \inc C[\ux] \cong R/(F)$.  As before, the fact that $G$ factors mod $(F)$ contradicts Proposition~\ref{obv}~(g).
This completes the proof of the \rom{R}$_0$ condition for $k \geq 1$.

In the remainder of the proof 
we may  assume that $F$ and $G$ are not contained in a polynomial $K$-subalgebra of $R$ generated
by fewer than $N$ linear forms:  if they are, we carry out the proof in this subring.  The codimension of the singular
locus and the properties we are considering do not change when one omits the superfluous variables.
Assume that $k \geq 2$ and that we know the result for smaller $k$.  
We may assume that $R$ is generated by $\tD F$ and $\tD G$:  again, omitting unneeded variables does
not change the codimension of the singular locus.  Moreover, we may assume that
$N \geq 2k+2$:  otherwise, by Discussion~\ref{maxrk}, some  non-trivial linear combination
of $F$ and $G$ has rank at most $2k$ and so is not $k$-strong. We write $J(F,G)$ for the Jacobian matrix of $F$, $G$. 
To prove that the singular locus in $V(F,G)$ has codimension at least $2k-1$, we want to prove that the height of the defining ideal in $R/(F,G)$ is at least $2k-1$.   This is equivalent to showing that the height
of the ideal $I = (F,G) + \Ijac$ is at least $2k+1$ in $R$.  We assume, to the contrary, that
the height of some minimal prime $P$ of $I$ is at most $2k$.   

If the singular locus consists only of the origin, then it has codimension $N-2 \geq (2k+2)-2 = 2k$,
and we are done.  Hence, we may apply Lemma~\ref{iso} and assume  $F = x_1x_2 + Q(x_3,\, \ldots, \, x_N)$
while $G$ involves only $x_2, \, \ldots, \, x_N$.  We may write $G = x_2L + M$  where
$L = \sum_{j=2}^N c_jx_j$ with the $c_j \in K$ and $M \in K[x_3, \, \ldots, x_N]$.  
Then the Jacobian matrix $J(F,G)$ is 
$$ \mx  x_2 & x_1 & Q_3 & \ldots & Q_N \\
               0& L +c_2x_2 & M_3+ c_3x_2 & \ldots & M_N + c_Nx_2\emx ,$$
where $Q_j, M_j$ denote partial derivatives with respect to $x_j$.  It follows that $x_2 \in P$ or 
that all the partial derivatives of $G$ are in $P$.  Since $G \in P$ as well, we have that
$\tD G \in P$.  Since $G$ has no $k$-collapse,  its rank is at least $2k+1$, and this gives a 
contradiction.  Thus, $x_2 \in P$.

We next observe that  $KQ + KM$ is $(k-1)$-strong in $K[x_3, \, \ldots, x_N]$:  if  $a,b$ are not both
0 and $aQ +bM$ has a $(k-1)$-collapse, then
$$aF + bG = x_2(ax_1 +bL) + (aQ+bM)$$ has a $k$-collapse, a contradiction.  This implies that
$(Q,M) + I_2\bigl(J(Q,M)\bigr)$ has height at least $2k-1$ in $K[x_3, \, \ldots, \, x_N]$.   Since
$x_2, F, G \in P$  we also have that $Q,\, M \in P$.  It follows that the contraction $P_1$ of $P$ to
$K[x_3, \, \ldots, x_N]$ has height at least $2k-1$.  Since $P$ contains $P_1$ and $x_2$,  it
follows that the contraction $P_0$ of $P$ to $K[x_2, \, \ldots, \, x_N]$ has height at least $2k$.
Since we are assuming that $P$ has height at most $2k$, we must have that $P = P_0R$ has
height exactly $2k$.  Thus,  $x_1 \notin P$.  Working mod $P$ and omitting columns that are 0
we have, since $x_2 \in P$  that $J(F,G)$ has the form
$$\mx x_1 & Q_3 & \ldots & Q_N\\
        L    & M_3& \ldots & M_N\emx. $$
Since  $x_1M_j - LQ_j \in P_0R$ for $j \geq 3$, and $M_j, L, Q_j \in k[x_2,\,\ldots, \, x_N]$,
and $P = P_0R$ is the expansion of an ideal from $K[x_2, \, \ldots, \, x_N]$,  we must have that
all of the $M_j \in P_0 \inc P$, and that all of the $LQ_j \in P$.   Since $M \in P$ as well, we have that $\tD M \inc P$.    
We must have that $L \notin P$,  since it follows otherwise that the second
row of the Jacobian matrix $J(F,G)$ is in $P$, and then we would have $\tD G \inc P$, which we have already
noted gives a contradiction ($G$ has rank at least $2k+1$.)
Since all the $LQ_j$ are in $P$ for $j \geq 3$,  we also have that all the $Q_j \in P$ for
$j \geq 3$.  Since $Q$ is in $P$, we have that $\tD Q \inc P$.  Hence, the linear
space $Kx_2 + \tD M + \tD Q \inc P$, and since  $P$ has height $2k$ and
$\tD M + \tD Q \inc Kx_3 + \ldots + Kx_N$ does not meet $Kx_2$,  we see that
$\tD M + \tD Q$ has $K$-vector space dimension at most $2k-1$.  It follows from Discussion~\ref{maxrk} 
that there are scalars $a,b \in K$, not both 0, such that $aM + bQ$ has rank at most
$2k-2$.  But then $aF + bG = (ax_1 + bL)x_2 + (aM + bQ)$ has rank at most
$(2k-2) +2 = 2k$,  a contradiction. 

In the remainder of this argument we assume that $n >2$. We use induction on $n$. 
We know inductively that $R/(\vect F {n-1})$ is a domain and so that $\vect F n$ is regular sequence.
Let $X = V(\vect F n)$ and 
let  $Z$ denote the set of points of $X$ where the rank of the Jacobian matrix is precisely $n-1$.  The singular
locus is the union of $Z$ and the closed set $Y$ where the rank of the Jacobian matrix is at most $n-2$.    
We must show that the dimension of $Z \cup Y$ is at most $N - (2k+3-n)$.  (We must show that
the codimension of the singular locus in $X$ is at least  $2(k+1-n) + 1$, i.e., that its dimension
is at most $N-n -\bigl(2(k+1-n)+1\bigr)$, since $X$ has dimension $N-n$.) To see this for $Y$,
note that $\vect F {n-1}$ vanish on $Y$, and the rank of the Jacobian matrix of $\vect F {n-1}$ on $Y$
is at most $n-2$.  Thus, $Y$ is contained in the singular locus of  $X_0 = V(\vect F {n-1})$, and $\vect F {n-1}$
is $k$-strong.  By the induction hypothesis, the dimension of the singular locus of $X_0$ is at
most $N-(2k+3-n+1)$, which is one less than we require.  

Hence, to complete the argument it will suffice to show that the dimension of each irreducible component $C$ 
of the locally closed set $Z$ is at most $N-(2k+3-n)$.  Given such a component $C$,  we have a map
$\theta: C \to \PP^{n-1}_K$ that sends each point  $z \in C$ to the (unique up to multiplication by a nonzero scalar)
relation on the rows of the Jacobian matrix evaluated at $c$:  the $n \times N$ matrix has rank exactly $n-1$.
Either the image of $\theta$ has dimension $\leq n-2$,  or the image contains a nonempty Zariski
open set $U$ in $\PP^{n-1}_K$.  Consider a point $v = [a_1: \cdots : a_n]$ in the image  of $C$,  and
let $F_v$ denote $a_1F_1 + \cdots + a_nF_n$, which is well-defined up to multiplication by a nonzero
scalar in $K$.  The set of points in $C$ in the fiber is the same as the set of points of $C$ in
$W_v  = V(\vect F n, J_v)$,  where $J_v$ is generated by the partial derivatives of $F_v$.  Hence,
the defining ideal of $W_v$ contains $\tD F_v$.  
From the $k$-strength hypothesis, the dimension of $W_v$ is at most $N-(2k+1)$. Hence, if the image
of $\theta$ has dimension $\leq n-2$,  the dimension
of $C$ is at most $N - (2k+1) + (n-2) = N - (2k +3 -n)$, as required.

Therefore we may assume instead that the image of $C$ under $\theta$ contains a dense open subset 
$U$ of $\PP^{n-1}_K$.
After decreasing $U$, if necessary, we may assume that the fibers all have the same dimension, and that
the dimension of $C$ is the dimension of a fiber plus  $n-1$.  Again by decreasing $U$ if necessary, we may
assume that for all points $v \in U$, $F_v$ has the same rank. If this rank is $2k+2$ or more, then since the
fiber over $v$ is contained in $V(\tD F_v)$,  each fiber has dimension at most $N-(2k+2)$,  and the dimension
of $C$ is at most $N-(2k+2) +n-1 =N - (2k+3-n)$, as required.  Therefore we may also assume that the
rank of every $F_v$ is $2k+1$ for $v \in U$.  

  Let $F, \, G$ be $K$-linearly independent elements of 
$KF_1 + \cdots + KF_n$  that correspond to distinct points $v_0, v_1 \in U$.  Then for all but finitely many
choices of $[a:b]$,  $aF +bG$ has the form  $F_v$  with  $v \in U$.  We claim that for $[a:b]$ in general position,   
$F$ is not in $(\tD F_v)$.  To see this, suppose otherwise.   Then the  point of $X_1 = V(\tD F_v)$ is a 
a singular point of  $V(F, F_v)$.   Since this is true for all but finitely many points of the line $L$ through
$v_0$ and $v_1$ in $\PP^{n-1}_K$,  the singular locus of $X_1$ has dimension at least
$N - (2k+1) +1$.  This contradicts the result for $n=2$, which asserts the dimension is at most
$N-(2k+1)$.  

Thus, for almost all points $v \in L$,  the fiber over $v$ in $C$ has dimension at most $N-(2k+2)$,  since
the height of $(F, \tD F_v)$ is at least one more than the height of $(\tD F_v)$.  This implies that
the dimension of $C$ is at most $N-(2k+2) +(n-1)$,  as required. \end{proof}   

\begin{examples} Let $K$ be any algebraically closed field. 

\noindent (a) Let $X = \mx u_1 & u_2 & u_3\\ v_1 & v_2 & v_3 \emx$ be a $2 \times 3$ matrix of
indeterminates over $K$,  let $\Delta_i$ denote the $2 \times 2$ minor obtained by omitting the
$i\,$th column, and let $C_i$ denote the $i\,$th column, $1 \leq i \leq 3$.  Let $V = K\Delta_2 + K\Delta_3$.  The minors 
$\Delta_2$ and $\Delta_3$ overlap in $C_1$, and if $a,b \in K$, not both 0,   $a\Delta_2 + b\Delta_3$ is the determinant
of a $2 \by 2$ matrix whose columns are   $C_1$ and $aC_2 + bC_3$:  the four entries of these two columns
are algebraically independent.  Hence, every nonzero element of $V$ has rank $4$, and $V$ is 1-strong.
The quotient $K[\und{u}, \und{v}]/(V)$ is reduced but is not a domain:  $(V)$ has two minimal primes of height 2,
one generated by all three of the size 2 minors of $X$, and the other by the two variables that are the entries
of the first column.  This shows that 1-strong does not imply that the quotient by $(V)$ is R$_1$ nor even a
domain.

\noindent (b) Now adjoin indeterminates $x,y$ (respectively $x$, $y$, and $z$) to the ring in the example above and let  
$F = xy + \Delta_2,   G = y^2 + \Delta_3$ (respectively, $G = yz + \Delta_3$).  
If $a, b \in K$ then $aF+bG =
y(ax+by) + a\Delta_2 + b\Delta_3$ (respectively, $x(ay+bz) + a\Delta_2 + b\Delta_3$) which has rank 5 or 6 (respectively,
rank  6) when $a,b$ are not both 0:  this follows because $a\Delta_2 + b\Delta_3$ has rank 4  and $y(ax+by)$ (respectively, 
$x(ay+bz)$), which involves disjoint variables from $a\Delta_2 + b\Delta_3$,  has rank 1 or 2 (respectively, rank 2). 
Hence, $V$ is 2-\gu\ but the
quotient is not a UFD:  the image of $y$ is irreducible but not prime.   This also shows that 2-strong does not imply
R$_3$. 
\end{examples}

 \begin{definition}\label{alf} For every integer $\eta \geq  0$ we define a function $\al_\eta$ from $\N_+$ to $\N_+$ 
 as follows. First,  $\al_\eta(1) := \lceil(\eta + 1)/2\rceil$.  If $n \geq 2$,  $\al_\eta(n) =n -1 + \lceil \eta/2 \rceil$.
 We also define $\alpha(n):= n-1$ for $n \geq 1$.
 \end{definition}
 
 We note that $\alpha$ (respectively, $\alpha_\eta$) is the same as $A_2$ (respectively, $\etA_2$) in the notation
 of Remark~\ref{Anot}.  It will be convenient to use the shorter notation here.
 
 \begin{theorem}\label{ngu} Let $K$ be an algebraically closed field and let $R = K[\vect x N]$ be a polynomial
 ring over $K$.  Let $\eta,\,n\geq 1$ be integers, and let $V$ be a vector space of quadratic forms
 of dimension $n$ over $K$.  If $V$ is $\alpha_\eta(n)$-strong, the quotient by the ideal generated by any
 subspace of $V$ is a complete intersection with singular locus of codimension at least $\eta+1$, i.e., satisfies
 the Serre condition \rom{R}$_\eta$,  and so is
 a normal domain.  These quotients are unique factorization domains for $n \geq 1$  if the strength of $V$ 
is at least $n+1$.   
  
 If  $V$ is $\alpha_1(n)$-strong, the quotient of $R$ by the ideal generated by any subspace
 of $V$ is a normal domain.  If $n \geq 1$ 
 and $V$ is $\alpha(n)$-\gu, then every subset of $V$ consisting
 of  elements linearly independent over $K$ is a regular sequence.    \end{theorem}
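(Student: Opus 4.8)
The plan is to reduce the entire statement to Theorem~\ref{nforms} and Proposition~\ref{onequad} (with Corollary~\ref{groth} for the UFD assertions), splitting into cases according to the dimension $m$ of the subspace $W\inc V$ one quotients by (equivalently, the length of the linearly independent subset of $V$ under consideration), and exploiting the monotonicity of $\alpha_\eta$. The only substantive inputs are already available; what remains is to line up the numerology.

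First I would record the elementary fact that, for each $\eta\geq 0$, $\alpha_\eta$ is a nondecreasing function: the step from $n=1$ to $n=2$ amounts to the inequality $\lceil(\eta+1)/2\rceil\leq 1+\lceil\eta/2\rceil$ (check the two parities of $\eta$), and for $n\geq 2$ the formula $\alpha_\eta(n)=n-1+\lceil\eta/2\rceil$ is visibly increasing; moreover $\alpha_\eta(m)\geq m-1$ for all $m\geq 1$ when $\eta\geq 1$, and $\alpha(m)=m-1$. By Proposition~\ref{obv}(a) every subspace $W\inc V$ of dimension $m$ inherits the strength hypothesis on $V$, so since $m\leq n$ it is $\alpha_\eta(m)$-strong (respectively $\alpha(m)$-strong, respectively $(m+1)$-strong) under the respective hypotheses.

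Next I would handle the small cases. If $m=0$ the quotient is $R$ itself, which is regular and a UFD, so there is nothing to prove. If $m=1$, write $W=KF$: since $F$ is $\lceil(\eta+1)/2\rceil$-strong it is in particular irreducible, so $R/FR$ is a complete intersection, and Proposition~\ref{onequad}, applied with $k=\lceil(\eta+1)/2\rceil$ (so that $2k-1\geq\eta$), shows $R/FR$ satisfies \rom{R}$_\eta$; if moreover $F$ is $2$-strong its rank is at least $5$, hence the singular locus of $R/FR$ has codimension at least $4$ and Corollary~\ref{groth} makes it a UFD. For $m\geq 2$ I would apply Theorem~\ref{nforms} to a basis of $W$: it is $\alpha_\eta(m)$-strong with $\alpha_\eta(m)\geq m-1$, so the basis is a regular sequence (hence $R/(W)$ is a complete intersection) and $R/(W)$ satisfies \rom{R}$_{2(\alpha_\eta(m)+1-m)}$; since $2(\alpha_\eta(m)+1-m)=2\lceil\eta/2\rceil\geq\eta$, this contains \rom{R}$_\eta$. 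Under the stronger hypothesis that $W$ is $(m+1)$-strong, the same theorem gives the UFD conclusion directly. The remaining assertions are then immediate: a complete intersection is Cohen--Macaulay, hence satisfies $S_2$, so \rom{R}$_1$ forces it to be normal, and a graded normal ring is a domain (Discussion~\ref{RC}), which takes care of the $\alpha_1(n)$-strong statement; and under $\alpha(n)$-strength the regular-sequence claim follows for $m\geq 2$ from Theorem~\ref{nforms} (since $\alpha(m)=m-1$ is exactly its hypothesis) and for $m\leq 1$ from the fact that a nonzero quadric is a nonzerodivisor, together with triviality of the empty sequence.

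The ``hard part'' here is essentially bookkeeping: there is no new geometric content beyond Theorem~\ref{nforms} and Proposition~\ref{onequad}. The one point requiring care is verifying that a subspace of dimension $m<n$ is still strong enough for the statement appropriate at level $m$ --- which is precisely the monotonicity of $\alpha_\eta$ at the step $m=1\to m=2$ --- and keeping the exceptional cases $m\in\{0,1\}$ (and $n=1$) straight, since Theorem~\ref{nforms} is stated only for $m\geq 2$ and Proposition~\ref{onequad} only for a single form.
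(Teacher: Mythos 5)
Your proposal is correct and follows essentially the same route as the paper: the paper's proof likewise reduces everything to Proposition~\ref{onequad} for the one-form case and Theorem~\ref{nforms} for subspaces of dimension at least two (with Remark~\ref{rkUFD}/Corollary~\ref{groth} for the UFD assertions), and your verification that $2(\alpha_\eta(m)+1-m)=2\lceil \eta/2\rceil\geq\eta$ is exactly the numerology being used. The only (immaterial) difference is that for the final regular-sequence claim under $\alpha(n)$-strength the paper cites Remark~\ref{stregh}, whereas you invoke the regular-sequence conclusion of Theorem~\ref{nforms} directly; both are valid.
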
  
  
 \begin{proof}  The cases where $n=1$ and $n \geq 2$ follow from Proposition~\ref{onequad} and Theorem~\ref{nforms},
 respectively. Note that if $n = 1$ and $\eta = 2$, we have that $\alpha_2(1) = 2$, and a $2$-strong form
 has quadratic rank at least 5, and so the quotient is a UFD (see Remark~\ref{rkUFD}). If $n \geq 2$, note that the level of strength, namely, $n-1$,  needed to insure that $n-2$ of the forms constitute a UFD sequence guarantees also that
 the $n$ forms are a regular sequence by Remark~\ref{stregh}.  
  \end{proof}

We can now make $\etA(n_1,\,n_2)$ explicit:
   
 \begin{corollary}\label{A2} Let $A(n_1,n_2):= \alpha(n_2) + n_1$, where $\alpha(n_2) = n_2-1$.  
 If $\eta \geq 1$ is an integer, define  $\etA(n_1,n_2) := \alpha_\eta(n_2)+n_1$.
 Let $K$ be an algebraically closed field of arbitrary characteristic, and let $R = K[\vect xN]$.
 Let $V$ be a graded $K$-vector subspace of $R$ with dimension sequence $(n_1,n_2)$, and let $I$ be 
 the ideal generated by $V$. If $V$ is $A(n_1,n_2)$-strong, then a homogeneous basis for $V$ is a regular
 sequence, so that $R/I$ is a complete intersection.
 
Moreover, if $V$ is $\etA(n_1,n_2)$-strong then not only does the previous conclusion hold, but also
the singular locus of $R/I$ is of codimension at least $\eta+1$ in $R/I$, 
i.e., $R/I$ satisfies the Serre condition \rom{R}$_\eta$,  and so is a normal domain. The ring $R/I$ is a unique 
factorization domain if the strength of $V$ is at least $n_1+n_2+1$.  \end{corollary}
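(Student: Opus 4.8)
The plan is to kill the $n_1$ independent linear forms in $V$ and reduce everything to Theorem~\ref{ngu}, which handles a vector space consisting entirely of quadrics. We may assume $n_2 \geq 1$ (if $n_2 = 0$, then $I$ is generated by independent linear forms and the assertions are immediate). Strength, the conditions \rom{R}$_\eta$, normality, the UFD property, and regularity of a sequence are all unaffected by a $K$-linear change of coordinates on $R_1$ (recall that $\cD F$ is coordinate-free), so we may assume $V_1 = \langle x_1,\ldots,x_{n_1}\rangle$. Then $\overline{R} := R/(x_1,\ldots,x_{n_1}) = K[x_{n_1+1},\ldots,x_N]$ is again a polynomial ring over the algebraically closed field $K$, and $R/I \cong \overline{R}/(\overline{V}_2)$, where $\overline{V}_2$ denotes the image in $\overline{R}$ of the degree-two component $V_2$ of $V$.

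First I would verify that $\overline{V}_2$ still has dimension $n_2$. A nonzero element of $V_2$ mapping to $0$ in $\overline{R}$ would lie in $(x_1,\ldots,x_{n_1})$ and hence would have an $n_1$-collapse; but in each of the three cases the hypothesis forces $V_2$ to be $m$-strong with $m \geq A(n_1,n_2) = n_1 + n_2 - 1 \geq n_1$, a contradiction. Next, by Proposition~\ref{obv}(c), since $V_2$ contains no linear forms and is $m$-strong, where $m$ equals $A(n_1,n_2)$ (resp. $\etA(n_1,n_2)$, resp. $n_1 + n_2 + 1$), while $V_1$ is spanned by $n_1$ linear forms, the image $\overline{V}_2$ is $(m-n_1)$-strong in $\overline{R}$. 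Since $A(n_1,n_2) - n_1 = \alpha(n_2)$ and $\etA(n_1,n_2) - n_1 = \alpha_\eta(n_2)$, this says precisely that $\overline{V}_2$ is $\alpha(n_2)$-strong (resp. $\alpha_\eta(n_2)$-strong, resp. $(n_2+1)$-strong) --- exactly the strength thresholds appearing in Theorem~\ref{ngu} for an $n_2$-dimensional space of quadrics.

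It then remains to transport the conclusions of Theorem~\ref{ngu} back up. If $\overline{V}_2$ is $\alpha(n_2)$-strong, a homogeneous basis of $\overline{V}_2$ is a regular sequence in $\overline{R}$; prepending $x_1,\ldots,x_{n_1}$ --- a regular sequence in $R$ with $\overline{R} = R/(x_1,\ldots,x_{n_1})$ --- shows that a homogeneous basis of $V$ is a regular sequence in $R$ (and, since a homogeneous regular sequence may be reordered, this holds for any ordering of such a basis), so $R/I$ is a complete intersection. If $\overline{V}_2$ is $\alpha_\eta(n_2)$-strong, Theorem~\ref{ngu} gives that $R/I \cong \overline{R}/(\overline{V}_2)$ satisfies \rom{R}$_\eta$ and is a normal domain, so its singular locus has codimension at least $\eta+1$. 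Finally, if the strength of $V$ is at least $n_1 + n_2 + 1$, then $\overline{V}_2$ has strength at least $n_2 + 1 = \dim_K \overline{V}_2 + 1$, and Theorem~\ref{ngu} gives that $R/I$ is a UFD.

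I do not anticipate a genuine obstacle here; all the mathematical content sits in Theorem~\ref{ngu}. The only points that require care are bookkeeping: that Proposition~\ref{obv}(c) drops the strength by exactly $n_1$, that linear independence of the quadratic forms survives passage to $\overline{R}$ (which is why one needs $m \geq n_1$), and that the resulting thresholds $\alpha(n_2)$, $\alpha_\eta(n_2)$, and $n_2 + 1$ match Theorem~\ref{ngu} on the nose. The isomorphism $R/I \cong \overline{R}/(\overline{V}_2)$ and the fact that appending the $x_i$ to a regular sequence in $\overline{R}$ yields a regular sequence in $R$ are routine.
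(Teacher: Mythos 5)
Your proposal is correct and follows the paper's own argument: reduce modulo the linear forms in $V$, use Proposition~\ref{obv}(c) to see the strength of the image of $V_2$ drops by at most $n_1$, and then quote Theorem~\ref{ngu} (the $n_1 = 0$ case). The extra bookkeeping you supply (preservation of $\dim \overline{V}_2$, lifting the regular sequence, reordering) is exactly the routine verification the paper leaves implicit.
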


 \begin{proof} By Proposition~\ref{obv}(c), modulo the linear forms in $V$, the images of the quadratic forms in $V$ 
 form a vector space of dimension at most $n_2$ in a polynomial ring and the level of strength has decreased
 by at most $n_1$. The result is 
 now immediate from Theorem~\ref{ngu}, which is the case $n_1 =0$.
 \end{proof}

\subsection*{Existence of $\etB$ in degree 2}\label{subsecQ3} 
 
  In this section we treat the situation where $\delta = (n_1,\, n_2)$. As usual, we fix an algebraically closed field 
  $K$ and consider $R = K[\vect xN]$ over $K$. 
  Let $V$ have dimension sequence $\delta$. We study $\etB(n_1,n_2)$ and also the smallest choice of $B  = B(n_1,n_2)$
 that enables us to find a regular sequence $\vect G B$ of length $B$ such that $V$ is contained in 
 $K[\vect G B]$.   
 
 In the sequel we allow $\etB$ to represent $B$ as well, i.e., we allow $\eta$  to be empty, so that we
 can treat both cases simultaneously. We make the same convention with $\alpha$ and $\alpha_\eta$,
 which, by Corollary~\ref{A2}, are the same as $A(0,n_2)$ and $\etA(0,n_2)$, respectively. 
  
 If $n_2 = 0$, we evidently
 have that $\etB(n_1) = n_1$ for all $\eta$:  the quotients are all polynomial rings. 
  Take $\alpha(n) := n-1$, which
 will give the smallest value of $B$ obtainable by our methods.  For $n \geq 1$,  if a $K$-vector space $V$ of 
 quadratic forms has dimension $n$ and is $\alpha(n)$-strong, then every set of $K$-linearly independent forms 
 in $V$ is a regular sequence by Corollary~\ref{A2}. The function $\alpha_\eta$ is given in Definition~\ref{alf}.   Note that for all $n$,
 $\alpha(n) \leq \alpha_1(n) \leq  \cdots \leq \alpha_\eta(n) \leq \cdots$.  
 
 Let $\vect x {n_1}$ be a $K$-basis for 
 the degree 1 component of $V$ and let $\vect F {n_2}$ be a $K$-basis for the degree 2 component.  We now
 make explicit the idea of  the first two paragraphs of \S\ref{BfromA}.  
 We shall show that the function defined recursively by the formulas $\etB(n_1, 0) = n_1$, $\etB(n_1, n_2) =
 \etB(2n_1 + 2\alpha_\eta(n_2),\, n_2 - 1)$ if $n_2 \geq 1$ has the property we want. Note that $B(n_1,\, n_2) \leq \etB(n_1,\,n_2)$ for
 all $\eta \geq 1$ because $\alpha_\eta(n) \geq \alpha(n)$ for all $\eta\geq 1$ and $n \geq 1$.

 It is straightforward to see 
 that if we apply the recursion $h$ times, where $1\leq h \leq n_2$, we get that 
 $$\etB(n_1,n_2) = \etB\bigl(2^h n_1 + \sum_{t=0}^{h-1} 2^{h-t}\al_\eta(n_2-t), n_2-h\bigr).$$
 Hence,  setting $h = n_2$, we have that $n_2 - h = 0$, and the value of the right term is simply
 the value of the first input. Hence, by letting $s = n_2-t$ in the summation in the second term below, we have
 $$ (*) \quad \etB(n_1, \, n_2) = 2^{n_2}n_1 + \sum_{t=0}^{n_2-1}2^{n_2-t}\al_\eta(n_2-t) = $$ 
 $$2^{n_2}n_1 + \sum_{s=1}^{n_2}2^s(s-1+\lceil \eta/2 \rceil) + 2( \lceil (\eta+1)/2\rceil -  \lceil \eta/2 \rceil)$$
  where the final term displayed, whose value is 2 or 0, depending on whether $\eta$ is even or odd, 
  is included to account for the difference in the formula for $\alpha_\eta(s)$
  when $s = 1$.  That term agrees with $1+(-1)^\eta$.  The formula for $B(n_1,n_2)$ is obtained by
 omitting all terms involving $\eta$ in the display above.   
 
  A routine calculation yields that  $B(n_1, n_2) = 2^{n_2}(n_1 + 2n_2-4) + 4$ and that
  $$\etB(n_1, n_2) = 2^{n_2}(n_1 + 2n_2-4) +4  + \lceil \eta/2 \rceil (2^{n_2+1}-1)  + 1 + (-1)^\eta =$$
 $$ 2^{n_2}(n_1+2n_2 +2\lceil \eta/2 \rceil -4) -\lceil \eta/2 \rceil + 5 + (-1)^\eta.$$
 Note that $\etB(n_1, \, n_2) \geq  B(n_1, n_2) \geq  n_1 + n_2$ (one gets equality if $n_2 = 0$).  

We now return to the issue left hanging and show by induction on $n_2$ that $\etB(n_1,\, n_2)$ has 
the required property.   If the image of the space $V$ is $\alpha_\eta(n_2)$-\gu\ mod the ideal $(\vect x {n_1})R$, 
then $\vect x {n_1}, \, \vect F {n_2}$ is a regular sequence of length $n_1 + n_2$
with the required property.  Since $\etB(n_1, \, n_2) \geq n_1 + n_2$,  we only 
need to consider the case where  the image of $V$ is not $\alpha(n_2)$-\gu.  
Then some nonzero $K$-linear combination $F$ of the $F_j$ has a $k$-collapse
in the quotient for some $k \leq \alpha(n_2)$, which leads to an equation
$$F = \sum_{s=1}^{n_1} x_s y_s + \sum_{t=1}^k L_t L_t'$$
where the $y_s$ and the $L_t, \, L'_t$ are linear forms or zero.
Hence, $F$ is in the $K$-algebra generated by at most $2n_1 + 2\alpha_\eta(n_2)$ linear forms
which include $\vect x {n_1}$ among them.  Let $W$ be the $K$-vector space spanned
by these linear forms, and $n_2-1$ elements that together with $F$ form a basis for
the component of $V$ of degree 2.  Then $W$ has a dimension sequence that is
at worst $(2n_1 + 2\al_\eta(n_2),\, n_2-1)$, and $K[V] \inc K[W]$.  By the induction hypothesis
we know that $K[W]$ is contained in a $K$-algebra generated by a regular sequence
consisting of at most $\etB(2n_1 + 2\al_\eta(n_2),\, n_2-1)$ linear and quadratic forms.  This completes the proof
that $\etB(n_1,\, n_2)$ defined recursively as above has the required property.  Hence:

\begin{theorem}\label{B2} Let $K$ be an algebraically closed field, and $R = K[\vect xN]$. 
Then we may take $B(n_1, n_2) = 2^{n_2}(n_1+  2n_2-4) + 4$, and we may take
$\etB(n_1,n_2) = 2^{n_2}(n_1+2n_2 +2\lceil \eta/2 \rceil -4) -\lceil \eta/2 \rceil + 5 + (-1)^\eta$.  \smallskip

Hence, if we have $n_1$ linear
forms and $n_2$ quadratic forms, they are contained in an algebra generated over $K$ by a regular
sequence of linear and quadratic forms of length at most $2^{n_2}(n_1+  2n_2-4) + 4$.  If $I$ is an
ideal generated by $n$ quadratic forms, the projective dimension of $R/I$ over $R$ is at most 
$2^{n+1}(n-2)+4$. 
 \qed\end{theorem}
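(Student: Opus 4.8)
The plan is to realize $\etB$ as the solution of the recursion indicated in \S\ref{BfromA}, with Corollary~\ref{A2} as the ``stopping'' input, and then to evaluate that recursion in closed form; the function $B$ is produced by the same argument with $\al_\eta$ replaced throughout by $\al$ and ``\rom{R}$_\eta$-sequence'' read as ``regular sequence.'' Fix $V\inc R$ with dimension sequence $(n_1,n_2)$, choose a $K$-basis $\vect x{n_1}$ of $V_1$ and a homogeneous basis $\vect F{n_2}$ of $V_2$. I would show by induction on $n_2$ that the function defined by $\etB(n_1,0):=n_1$ and $\etB(n_1,n_2):=\etB\bigl(2n_1+2\al_\eta(n_2),\,n_2-1\bigr)$ for $n_2\geq 1$ has the asserted property. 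The base case $n_2=0$ is immediate: $\vect x{n_1}$ is a regular sequence generating a polynomial subring, and any homogeneous linear combination of these linear forms is again a regular sequence whose partial quotients are polynomial rings, hence satisfy \rom{R}$_\eta$.

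For the inductive step, let $\bar R:=R/(\vect x{n_1})R$, a polynomial ring, and let $\bar V$ be the image of $V_2$ in $\bar R$. If $\bar V$ is $\al_\eta(n_2)$-strong, then the case $n_1=0$ of Corollary~\ref{A2} (i.e.\ Theorem~\ref{ngu}) applied in $\bar R$ shows that the images of $\vect F{n_2}$ form a regular sequence in $\bar R$ for which the quotient by the ideal generated by \emph{any} subspace satisfies \rom{R}$_\eta$; hence $\vect x{n_1},\vect F{n_2}$ is a regular sequence in $R$ of length $n_1+n_2\leq\etB(n_1,n_2)$, and since any homogeneous linear combination of these generators is a combination either of the $x_i$ or of the $F_j$, and the strength hypothesis on $\bar V$ passes to subspaces by Proposition~\ref{obv}(a) with $\al_\eta$ nondecreasing, every sequence of linearly independent homogeneous linear combinations is an \rom{R}$_\eta$-sequence. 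If instead $\bar V$ is not $\al_\eta(n_2)$-strong, some nonzero element of $V_2$ has a $k$-collapse in $\bar R$ with $k\leq\al_\eta(n_2)$; lifting it to $R$ exhibits a nonzero $F\in V_2$ as $F=\sum_{s=1}^{n_1}x_sy_s+\sum_{t=1}^{k}L_tL_t'$ with the $y_s,L_t,L_t'$ linear, so $K[V]\inc K[W]$, where $W$ is the graded space spanned by $\vect x{n_1}$, the forms $y_s,L_t,L_t'$, and $n_2-1$ quadrics completing $F$ to a basis of $V_2$. The dimension sequence of $W$ is at most $(2n_1+2\al_\eta(n_2),\,n_2-1)$, so by the induction hypothesis and the (evident) ascending property of $\etB$, $K[W]$, hence $K[V]$, lies in $K[\vect G s]$ for a regular sequence $\vect G s$ with the required \rom{R}$_\eta$-sequence property and $s\leq\etB(2n_1+2\al_\eta(n_2),\,n_2-1)=\etB(n_1,n_2)$.

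To solve the recursion, iterating it $h$ times gives
$$\etB(n_1,n_2)=\etB\Bigl(2^hn_1+\sum_{t=0}^{h-1}2^{h-t}\al_\eta(n_2-t),\ n_2-h\Bigr),$$
and taking $h=n_2$ yields $\etB(n_1,n_2)=2^{n_2}n_1+\sum_{s=1}^{n_2}2^s\al_\eta(s)$. Inserting $\al_\eta(s)=s-1+\lceil\eta/2\rceil$ for $s\geq 2$, plus the correction $2\bigl(\lceil(\eta+1)/2\rceil-\lceil\eta/2\rceil\bigr)=1+(-1)^\eta$ for the term $s=1$, and summing with $\sum_{s=1}^{m}2^s=2^{m+1}-2$ and $\sum_{s=1}^{m}s\,2^s=(m-1)2^{m+1}+2$, one collects terms to obtain $B(n_1,n_2)=2^{n_2}(n_1+2n_2-4)+4$ and the stated formula for $\etB(n_1,n_2)$. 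For the two ``Hence'' statements: the regular-sequence claim is the case $n_1=0$, $n_2=n$, with $B(0,n)=2^n(2n-4)+4=2^{n+1}(n-2)+4$; and if $I=(\vect F n)R$, then $\vect F n$ lies in a polynomial subring $A=K[\vect G s]\inc R$ with $s\leq 2^{n+1}(n-2)+4$, and, extending $\vect G s$ to a homogeneous system of parameters as in \S\ref{BfromA} so that $R$ is free over $A$, flat base change along $A\to R$ gives $\pd_R(R/I)=\pd_A\bigl(A/(\vect F n)A\bigr)\leq s\leq 2^{n+1}(n-2)+4$, since $A$ is a polynomial ring in $s$ variables.

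The step I expect to require the most care is verifying that the full \rom{R}$_\eta$-\emph{sequence} property is genuinely inherited at each stage of the recursion, not merely the weaker statement that $R/I$ itself is \rom{R}$_\eta$; this is precisely what forces the use of the ``for any subspace'' form of Theorem~\ref{ngu} together with the subspace stability of Proposition~\ref{obv}(a). The remaining substance is the deep input Theorem~\ref{nforms}, which is already available, and the closed-form evaluation, which is routine but bookkeeping-heavy.
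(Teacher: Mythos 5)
Your proposal is correct and follows essentially the same route as the paper: the same recursion $\etB(n_1,n_2)=\etB\bigl(2n_1+2\al_\eta(n_2),\,n_2-1\bigr)$ with base case $\etB(n_1,0)=n_1$, the same dichotomy (image of $V_2$ mod the linear forms either $\al_\eta(n_2)$-strong, handled by Theorem~\ref{ngu}/Corollary~\ref{A2}, or collapsing, handled by the induction hypothesis), the same closed-form evaluation, and the same flat base change argument for the projective dimension bound. No gaps.
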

 
\section{Key functions and recursions for $\etuA(\delta)$}\label{keyf}

The key functions $\fK_i$, $i \geq 3$, 
were defined in Definition~\ref{key} of \S\ref{intro}. Our main result on obtaining the functions 
$\etA$ from the functions $\fK_i$ is stated in Theorem~\ref{SJ} of \S\ref{intro}, and provides obvious motivation for proving the existence of key functions in  such a way that one has explicit formulas for them.  While \cite{AH2} proves  that these functions exist in general, the methods are not constructive and do not yield explicit bounds.  This section gives the proof of Theorem~\ref{SJ}.  The rest of this paper is devoted to giving explicit formulas for the key functions
$\fK_i$ when $i=3$ and when $i=4$ and the characteristic is not 2 or 3.

In order to prove Theorem~\ref{SJ}, we need several preliminary theorems. 
The first two are Theorems 2.1 and 2.4 of \cite{AH2}, but the latter is given in a generalized form. 

 \begin{theorem}\label{codimsing} Let $K$ be an algebraically closed field, let $R =K[\vect xN]$ be a polynomial
 ring.  Let 
 $V = V_1 \oplus\,\cdots\, \oplus V_d$, where $V_i$ is spanned by forms of degree $i$, and suppose that
 $V$ has finite dimension $n$.  Assume that a homogeneous basis for $V$ is a regular sequence in $R$.  
 Let $X$ be defined by the vanishing of all the elements of $V$.  
 Let $S$ be the family of all subsets of $V$ consisting of nonzero forms with mutually distinct degrees, so that
 the number of elements in any member of $S$ is at most the number of nonzero $V_i$. 
 For $\sigma \in S$, let $C_\sigma$ be the codimension of the singular locus of $V(\sigma)$
 in $\A^N_K$.  
 Then the codimension in $\A^N_K$ of the singular locus of $X$  is at least $(\min_{\sigma \in S} C_\sigma) - (n-1)$.\end{theorem}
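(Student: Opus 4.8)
The plan is to run an incidence‑variety argument, bounding $\dim(\mathrm{Sing}\,X)$ fibre by fibre over a projective space of "collapsing directions''; the $-(n-1)$ in the statement will appear as the largest possible fibre dimension of the incidence variety over $\mathrm{Sing}\,X$, namely $\dim\PP^{n-1}_K$. Fix a homogeneous basis $\vect F n$ of $V$. By hypothesis it is a regular sequence, so $X=V(\vect F n)$ is a complete intersection of codimension $n$ in $\A^N_K$, and its singular locus is the closed algebraic set cut out by $\vect F n$ together with the $n\times n$ minors of the Jacobian matrix $J=J(\vect F n)$. If this set is empty there is nothing to prove, so assume otherwise; it then suffices to show $\dim(\mathrm{Sing}\,X)\leq N-\min_{\sigma\in S}C_\sigma+(n-1)$, since $\mathrm{Sing}\,X$ is closed in $\A^N_K$ and its codimension is $N-\dim(\mathrm{Sing}\,X)$.

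First I would introduce the incidence set
$$\Gamma\;=\;\bigl\{\,(p,[a])\in(\mathrm{Sing}\,X)\times\PP^{n-1}_K\ :\ a^{\mathrm{tr}}J(p)=0\,\bigr\},$$
whose points record a singular point $p$ of $X$ together with a line of linear dependence relations among the rows of $J(p)$; equivalently, writing $G_a:=\sum_i a_iF_i$, the relation says exactly that the gradient of $G_a$ vanishes at $p$. The first projection $\Gamma\to\mathrm{Sing}\,X$ is surjective, since at any $p\in\mathrm{Sing}\,X$ the matrix $J(p)$ has rank at most $n-1$ and hence admits a nonzero row relation; thus $\dim(\mathrm{Sing}\,X)\leq\dim\Gamma$. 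Applying the fibre‑dimension inequality to the second projection $\pi\colon\Gamma\to\PP^{n-1}_K$ then gives
$$\dim(\mathrm{Sing}\,X)\;\leq\;\dim\Gamma\;\leq\;(n-1)+\max_{[a]}\dim\pi^{-1}([a]).$$

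The heart of the matter is to bound each fibre $\pi^{-1}([a])$ inside some $\mathrm{Sing}\,V(\sigma)$ with $\sigma\in S$. Given $[a]$, pick a representative $a$, split $G_a$ into its homogeneous pieces $G_{a,j}:=\sum_{\deg F_i=j}a_iF_i$, and set $\sigma_a:=\{\,G_{a,j}:G_{a,j}\neq0\,\}$ (rescaling $a$ only rescales generators, hence does not change $V(\sigma_a)$ or $C_{\sigma_a}$). Since the $F_i$ are independent and $a\neq0$, $\sigma_a$ is a nonempty set of nonzero forms of pairwise distinct degrees, so $\sigma_a\in S$. Now take any $(p,[a])\in\Gamma$: each $G_{a,j}$ is a $K$‑linear combination of the $F_i$, which all vanish at $p\in X$, so every form of $\sigma_a$ vanishes at $p$; and $\sum_j\nabla G_{a,j}(p)=\nabla G_a(p)=0$, so the rows of the Jacobian of $\sigma_a$ at $p$ sum to zero and its rank is at most $|\sigma_a|-1$. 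Since any subset of a regular sequence of forms of positive degree is again a regular sequence (permute, then pass to an initial segment), $\sigma_a$ is a regular sequence, so $V(\sigma_a)$ is a complete intersection of codimension $|\sigma_a|$, and the preceding sentence says precisely that $p\in\mathrm{Sing}\,V(\sigma_a)$. Hence $\pi^{-1}([a])\inc\mathrm{Sing}\,V(\sigma_a)$, so $\dim\pi^{-1}([a])\leq N-C_{\sigma_a}\leq N-\min_{\sigma\in S}C_\sigma$. Substituting into the display yields $\dim(\mathrm{Sing}\,X)\leq(n-1)+N-\min_{\sigma\in S}C_\sigma$, which is the asserted bound.

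The step I expect to be the main obstacle is this middle one: extracting, from a linear dependence among the gradient vectors $\nabla F_i(p)$, a subset of $V$ with \emph{mutually distinct} degrees. It is precisely the homogeneous decomposition of $G_a$ that makes this possible, since one must check that the gradients of the vanishing homogeneous pieces genuinely drop out of the relation; one must also keep track of the fact that $\sigma_a$ remains a complete intersection, so that ``the singular locus of $V(\sigma_a)$'' unambiguously denotes the locus where its Jacobian fails to attain rank $|\sigma_a|$. By contrast, the surjectivity of the first projection and the fibre‑dimension estimate for the second are routine.
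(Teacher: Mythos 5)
Your argument is correct, and it is essentially the standard one: this statement is Theorem 2.1 of \cite{AH2}, which the present paper quotes without proof, and your incidence-variety count over $\PP^{n-1}_K$ of dependence relations among the rows of the Jacobian, with the relation split into homogeneous pieces to land in $\mathrm{Sing}\,V(\sigma_a)$ for some $\sigma_a \in S$, is the same mechanism used there (and it mirrors the map $\theta\colon C \to \PP^{n-1}_K$ in the paper's proof of Theorem~\ref{nforms}). The only point to state explicitly is why $\sigma_a$ is a regular sequence: it is not a subset of the chosen basis $\vect F n$, so extend it to a homogeneous basis of $V$ and note that any such basis generates the height-$n$ ideal $(V)$ in the Cohen--Macaulay ring $R$, hence is a regular sequence, and (graded) regular sequences of forms of positive degree are permutable, so the subset $\sigma_a$ is one too; with that word added, $V(\sigma_a)$ is a complete intersection and your Jacobian-rank criterion for its singular locus is justified.
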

 
We  will use the improved version of Theorem 2.4 of \cite{AH2} given below.  The result in \cite{AH2} is the case where $I = 0$.

\begin{theorem}\label{MAX2} Let $K$ be a field, let $R$ be a polynomial ring over $K$, and let
$M$ be an $h \times N$ matrix such that for $1 \leq i \leq h$,  the $i\,$th row consists of forms
of degree $d_i \geq 0$ and the $d_i$ are mutually distinct integers.  Let $I$ be a proper homogeneous ideal
of $R$.   Suppose that for $1 \leq i \leq h$,
the height of the ideal generated by the entries of the $i\,$th row together with $I$ is at least $b$. (If the row consists
of scalars, this is to be interpreted as requiring that it be nonzero.)  Then the ideal
generated by the maximal minors of the matrix together with $I$ has height at least $b - h + 1$. \smallskip

If $h > N$,  one has the stronger result that the depth of the ideal generated by any one maximal minor  $\Delta$ 
of the matrix together with $I$ is at least $b - N + 1$. 
 \end{theorem}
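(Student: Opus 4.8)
\emph{Plan and reduction of the $h>N$ case.}
I would first reduce the $h>N$ statement to the case $h\le N$, and then prove the case $h\le N$ by an incidence-variety argument whose only nontrivial ingredient is a height bound for ``generic row combinations'' that uses both hypotheses (distinct degrees, and $I$ homogeneous). If $h>N$, a maximal minor $\Delta$ is $\det M'$ for the $N\times N$ submatrix $M'$ of $M$ obtained by selecting $N$ of the rows; those $N$ rows have mutually distinct degrees and each, together with $I$, generates an ideal of height $\ge b$, so the case $h=N$ of the theorem applied to $M'$ gives $\mathrm{ht}\bigl((\Delta)+I\bigr)=\mathrm{ht}\bigl(I_N(M')+I\bigr)\ge b-N+1$. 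Since $R$ is a polynomial ring, hence Cohen--Macaulay, grade equals height for every ideal, so in fact $\mathrm{depth}\bigl((\Delta)+I\bigr)\ge b-N+1$. Also, if some row of $M$ is $0$ then $I_h(M)=0$ and the conclusion is immediate from the hypothesis for that row (which then reads $\mathrm{ht}(I)\ge b$). So assume from now on that $h\le N$ and that every row $r_1,\dots,r_h$ of $M$ is nonzero.

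\emph{Key Lemma.} For each nonzero $c=(c_1,\dots,c_h)\in K^h$, if $J_c$ denotes the ideal generated by the entries of the row $\sum_i c_i r_i$, then $\mathrm{ht}(J_c+I)\ge b$. To prove it, let $i^{*}$ be the index (necessarily unique, since the $d_i$ are distinct) with $c_{i^{*}}\ne 0$ and $d^{*}:=d_{i^{*}}$ maximal among the rows occurring in the combination; then $\sum_i c_i r_i\ne 0$, and if $d^{*}=0$ then $J_c=R$ and there is nothing to prove, so assume $d^{*}\ge 1$. Put $W_j(t,x):=\sum_i c_i\,t^{\,d^{*}-d_i}M_{ij}(x)\in K[t][x]$ (all exponents $\ge 0$) and $\mathcal F:=V(W_1,\dots,W_N,I)\subseteq\mathbb{A}^1_t\times\mathbb{A}^N$. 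Since the $M_{ij}$ and the generators of $I$ are homogeneous, for $t_0\ne 0$ the fiber $\mathcal F_{t_0}$ is the image of $V(J_c+I)$ under scaling by $t_0$, so $\dim\mathcal F_{t_0}=\dim V(J_c+I)=:e$, while $\mathcal F_0=V\bigl((r_{i^{*}})+I\bigr)$ has dimension $\le N-b$ by hypothesis. The case $V(J_c+I)=\emptyset$ being trivial, we may assume $e\ge 0$; then the closure of $\mathcal F\cap\{t\ne 0\}$ has dimension $1+e$ and meets $\{t=0\}$ (it contains the origin there, since $(t_0,t_0y)\to(0,0)$ for $y\in V(J_c+I)$), so its slice at $t=0$ has dimension $\ge e$; that slice lies in $\mathcal F_0$, whence $e\le N-b$, i.e.\ $\mathrm{ht}(J_c+I)\ge b$.

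\emph{Incidence variety.} Let $\bar Z\subseteq\mathbb{A}^N\times\mathbb{P}^{h-1}$ be the closed subset defined by the bilinear equations $\sum_i c_iM_{ij}(x)=0$ for $1\le j\le N$, together with the equations of $I$, with projections $p\colon\bar Z\to\mathbb{A}^N$ and $q\colon\bar Z\to\mathbb{P}^{h-1}$. The fiber of $q$ over a point $[c]$ is $V(J_c+I)$, so the Key Lemma gives $\dim q^{-1}([c])\le N-b$ for every $[c]$, hence $\dim\bar Z\le(h-1)+(N-b)$. On the other hand $p(\bar Z)$ is precisely the set of $x\in V(I)$ at which the $h$ rows of $M(x)$ are linearly dependent, i.e.\ at which $M(x)$ has rank $<h$; that is, $p(\bar Z)=V(I_h(M)+I)$ as a set. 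Therefore $\dim V(I_h(M)+I)\le\dim\bar Z\le h-1+N-b$, and since $R$ is a polynomial ring this is exactly $\mathrm{ht}\bigl(I_h(M)+I\bigr)\ge b-h+1$.

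\emph{Main obstacle.} The delicate step is the Key Lemma. It is false without the distinct-degree hypothesis (for instance $r_1=(x,y)$, $r_2=(x,-y)$ give $r_1+r_2=(2x,0)$, whose ideal drops in height), so the proof must genuinely exploit that the top-degree part of the combination row equals, up to the unit $c_{i^{*}}$, a single row $r_{i^{*}}$; the $\mathbb{G}_m$-degeneration above is the mechanism that converts this into the height bound, and homogeneity of $I$ is exactly what allows the degeneration to be carried out relative to $I$. Everything else is routine bookkeeping with fiber dimensions, together with the fact that in a polynomial ring height, grade, and depth of an ideal coincide.
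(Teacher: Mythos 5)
Your proof is correct in substance, but it takes a genuinely different route from the paper's. The paper does not argue from scratch: it reuses the inductive, prime-avoidance style proof of Theorem 2.4 of \cite{AH2} --- which works with the row ideals $J_i$, a general linear form chosen outside the minimal primes of the $J_i$ (now the $J_i+I$), and a minimal prime of the ideal generated by the maximal minors together with $I$, distinguishing ``singular'' from ``nonsingular'' rows --- and merely lists the places where an ideal must be replaced by its sum with $I$. You instead give a self-contained geometric argument: an incidence correspondence in $\mathbb{A}^N\times\mathbb{P}^{h-1}$ reduces everything to your Key Lemma that \emph{every} nonzero scalar combination of the rows, together with $I$, still has height at least $b$, and that lemma is proved by a $\mathbb{G}_m$-degeneration of the combined row to the top-degree row occurring in it --- which is exactly where the distinct-degree hypothesis and the homogeneity of $I$ enter (your two-row example shows the lemma fails without distinct degrees). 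Your reduction of the $h>N$ assertion to the square case, and the use of $\mathrm{grade}=\mathrm{height}$ in the Cohen--Macaulay ring $R$ to pass from height to depth, coincide with the paper's. What your route buys is transparency and independence from \cite{AH2}, plus the stronger intermediate statement about arbitrary row combinations; what the paper's route buys is brevity, since the earlier argument is reused nearly verbatim and stays entirely in the language of primes and linear forms.

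Two points should be tightened, though neither is a genuine gap. First, your bookkeeping with varieties (closed points of $\mathbb{P}^{h-1}$ detecting linear dependence, the identification of $p(\bar Z)$ with $V(I_h(M)+I)$, closures of punctured lines containing the origin) requires the field to be algebraically closed, or at least infinite; begin by base-changing to $\overline{K}$, noting that all the heights in the hypotheses and in the conclusion are unchanged under this faithfully flat extension. Second, in the Key Lemma the inference ``$C:=\overline{\mathcal F\cap\{t\ne 0\}}$ has dimension $1+e$ and meets $\{t=0\}$, hence $C\cap\{t=0\}$ has dimension $\ge e$'' is not valid for an arbitrary closed set: you need a \emph{top-dimensional} irreducible component of $C$ to meet $\{t=0\}$. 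This follows from your own observation applied componentwise: $\mathcal F\cap\{t\ne 0\}\cong \mathbb{G}_m\times V(J_c+I)$ via $(t_0,y)\mapsto (t_0,t_0y)$, so each irreducible component of $C$ is the closure of $\mathbb{G}_m\times Y$ for an irreducible component $Y$ of $V(J_c+I)$ and hence contains the origin; Krull's principal ideal theorem applied to a component of maximal dimension then gives $\dim\bigl(C\cap\{t=0\}\bigr)\ge e$, and this slice lies in $\mathcal F_0$ as you say.
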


\begin{proof}  If $h > N$ we may reduce to the case where $h = N$ by working with the matrix formed by the  $N$ rows of original matrix 
corresponding to the specific minor $\Delta$.  Hence, we may assume $h \leq N$.  (Note that when $I = 0$, if $h > N$ then, since $b \leq N$ is automatic, 
we have that if  $b - h + 1 \leq 0$ and the theorem is vacuously true.) 
Because the proof is very similar to that of Theorem 2.4 of \cite{AH2}, we only indicate where, in that proof,
an ideal needs to be replaced by its sum with $I$.  In the fifth sentence of that proof, change ``every nonsingular
row generates ..."  to ``every nonsingular row, together with $I$, generates ... ."   At the end of the sentence beginning 
``For each $i$,"  replace ``an ideal $J_i$ of height $b$."  with
``an ideal $J_i$ such that $J_i+I$ has height $b$."  The linear form chosen immediately thereafter should be
``not in any of the minimal primes of the $J_i+I.$"  At the beginning  of the second paragraph of that proof, $P$ should
be a minimal prime of the ideal generated by the maximal minors together with $I$.  In the third sentence of the
third paragraph of the proof (beginning ``But then") it should say ``the ideal generated by the maximal minors
of these rows together with $I$."  No other changes are needed. \end{proof}

\begin{definition}\label{jrk} Let $F$ be a form of positive degree in a polynomial ring $R$ over a field.  By the
$J$-{\it rank} of $F$ we mean the height of the ideal generated by $F$ and all of its partial derivatives.
\end{definition}

We make the convention that the $\rkj$ of a nonzero linear form is $+\infty$.  If $R = K[\vect x N]$,  where $K$ is an algebraically closed field, the $\rkj$ of $F$ is the codimension of the singular locus of $V(F)$ in $\A^N_K$.  We note 
that the $\rkj$ of a quadratic form is the same as its rank:  see Proposition~\ref{quadcl} 
and the preceding discussion and Proposition~\ref{onequad}.

\begin{discussion}\label{fRet} Let $e_d$ denote the dimension sequence $(0,\, ,0,\, \ldots, \, 0, 1)$, with the 1 in the
$d\,$th spot.  It will be convenient to have a special name for the functions $\etA(e_d)$. In the definition
below,  $\fJ_i(k)$ for $k \geq 2$ has the same defining property as $\etA(e_i)$ with $\eta = k-2$.  We shall see that
the existence of the functions $\fK_i$ implies the existence of the functions $\fJ_i$, and this in turn
implies the existence of all of the functions $\etA$.  \end{discussion}

\begin{definition}\label{rK} Let
$\cC$ be a set of possible characteristics.
Let $i \geq 1$.  We shall say that $\fJ_i:\N_+ \to \N_+$ is a {\it J-rank function} for $\cC$ and 
for $i$ if for every algebraically closed field $K$ of characteristic in $\cC$,  for every $N \in \N_+$ and
polynomial ring $R$ in $N$ variables over $K$, and for every form $F \in R$ of degree $i$,  if $F$ is
$\fJ_i(k)$-strong then the J-rank of $F$ is at least $k$.  \end{definition}

\begin{theorem}\label{SJrank} Let $d \geq 1$ and for $i$ with  $1 \leq i \leq d$, let $\fJ_i$ be a nondecreasing J-rank
function for the set of characteristics $\cC$.   Let $K$ be an algebraically closed field
of characteristic in $\cC$ and let $R$ be a polynomial ring in $N$ variables over $K$.
Let $\delta = (\vect nd)$ be a dimension sequence, let $n = \sum_{i=1}^d n_i$, and let $h$ be the number 
of nonzero elements among $n_2, \, \ldots, \, n_d$.  
Then we may take $\etA_1(\delta) = 0$ and $\etA_i(\delta) = \fJ_i(h-1+2(n-n_1)+\eta) + n_1$ for $2  \leq i \leq d$. 
That is, if $V \inc R$ is a graded $K$-vector space with dimension sequence $\delta$ such that
$V_i$ is $\bigl(\fJ_i(h-1+2(n-n_1)+\eta)+ n_1\bigr)$-strong for $2 \leq i \leq d$,  then $\rom{R}_\eta$ holds for 
the algebraic set defined by the vanishing of $V$ or any set of forms in $V$. \end{theorem}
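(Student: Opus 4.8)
The plan is to reduce immediately to the case $n_1=0$, then to exhibit $R/(V)$ as a complete intersection by climbing up a homogeneous basis one form at a time (keeping the codimension of the singular locus large at every stage, via Theorems~\ref{codimsing} and \ref{MAX2}), and finally to read off $\mathrm R_\eta$ from one last application of Theorem~\ref{codimsing}.

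\emph{Reduction.} Put $b:=h-1+2(n-n_1)+\eta$. After a linear change of coordinates take $V_1=\langle x_1,\dots,x_{n_1}\rangle$. Each $V_i$ ($2\le i\le d$) is $\bigl(\fJ_i(b)+n_1\bigr)$-strong, hence $n_1$-strong, so no nonzero form of $V_i$ lies in $(x_1,\dots,x_{n_1})R$; thus the images $\bar V_i$ in $\bar R:=R/(x_1,\dots,x_{n_1})R\cong K[x_{n_1+1},\dots,x_N]$ are independent, span a graded space $\bar V$, and by Proposition~\ref{obv}(c) are $\fJ_i(b)$-strong. The parameters of $\bar V$ are $n_1=0$, $h$, $n-n_1$, so its associated quantity is again $b$, and $\bar R/(\bar V)\cong R/(V)$. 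So we may assume $n_1=0$, $n=n_2+\dots+n_d$, $b=h-1+2n+\eta$, and $V_i$ is $\fJ_i(b)$-strong whenever $n_i\ne 0$. The statement for an arbitrary set $T$ of forms in $V$ reduces to the statement for its $K$-span $W\subseteq V$ (as $(W)=(T)$), since each $W_i\subseteq V_i$ inherits $\fJ_i(b)$-strength, the parameters of $W$ do not exceed those of $V$, and $\fJ_i$ is nondecreasing; so it suffices to prove $R/(V)$ satisfies $\mathrm R_\eta$.

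\emph{The key step: a homogeneous basis is a regular sequence.} Let $T=\{F_1,\dots,F_m\}$ be any linearly independent set of forms of $V$, ordered by nondecreasing degree; I would prove by induction on $m$ that $T$ is a regular sequence and, for $m\ge1$, that $\bar R/(T)$ is a normal domain. For $m=1$, $F_1\in V_i$ is $\fJ_i(b)$-strong, hence irreducible (as $\fJ_i(b)\ge1$), so $\bar R/(F_1)$ is a domain; its singular locus has codimension $\rkj(F_1)\ge b\ge3$, so it is $\mathrm R_1$, hence (being Cohen--Macaulay) normal. For $m>1$, by induction $F_1,\dots,F_{m-1}$ is a regular sequence with $\bar R/(F_1,\dots,F_{m-1})$ a domain, so it suffices to check $F_m\notin(F_1,\dots,F_{m-1})$. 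If $F_m$, of degree $e$, lay in that ideal, then subtracting its scalar multiples of the degree-$e$ members would exhibit a nonzero element of $V_e$ inside the ideal generated by the at most $n_2+\dots+n_{e-1}$ members of degree $<e$, a strict $(n_2+\dots+n_{e-1})$-collapse; this contradicts $\fJ_e(b)$-strength, because $b\ge 2(n_2+\dots+n_{e-1})+1$ and any nondecreasing J-rank function for degree $e$ satisfies $\fJ_e(2t+1)\ge t$ (a degree-$e$ form that is a sum of $t$ monomials in pairwise disjoint variable sets has J-rank $2t$ and strength $t-1$); cf.\ Proposition~\ref{obv}(f). Hence $T$ is a regular sequence. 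For normality of $\bar R/(T)$, apply Theorem~\ref{codimsing} to $\langle F_1,\dots,F_m\rangle$: for a subset $\sigma$ with mutually distinct degrees — a regular sequence by the inductive hypothesis or the case just proved — Theorem~\ref{MAX2} applied to the Jacobian of $\sigma$ with $I=(\sigma)$ (each row together with $I$ containing the J-rank ideal of the corresponding form, of height $\ge b$) gives $C_\sigma\ge b-|\sigma|+1\ge b-h+1$. Thus the singular locus of the vanishing set of $\langle F_1,\dots,F_m\rangle$ has codimension at least $(b-h+1)-(m-1)=2n+\eta-m+1\ge n+\eta+1\ge3$; so the quotient is $\mathrm R_1$, hence normal, and being a connected graded $K$-algebra it is a normal domain. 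Taking $m=n$ shows a homogeneous basis of $V$ is a regular sequence.

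\emph{Conclusion.} Now the vanishing set $X$ of $V$ is a complete intersection of codimension $n$, and one final application of Theorem~\ref{codimsing}, using $C_\sigma\ge b-h+1$ for nonempty $\sigma$ (the empty $\sigma$ being irrelevant unless $V=0$), shows the singular locus of $X$ has codimension at least $(b-h+1)-(n-1)=n+\eta+1$ in $\A^{N-n_1}$; since $X$ has codimension $n$, its singular locus has codimension at least $\eta+1$ inside $X$, i.e.\ $R/(V)$ satisfies $\mathrm R_\eta$. I expect the real obstacle to be the middle step: Theorem~\ref{codimsing} takes the complete-intersection property as an input, so it must be bootstrapped, and the crux is that a sufficiently large bound on the codimension of the singular locus at each stage forces $\mathrm R_1$, hence normality, hence (via connectedness of graded rings) the domain property, which upgrades the merely ``$F_m\notin(F_1,\dots,F_{m-1})$'' consequence of strength to ``$F_m$ is a nonzerodivisor''; keeping track of exactly how large $b=h-1+2(n-n_1)+\eta$ needs to be so that this upgrade and the final count both survive is the delicate bookkeeping.
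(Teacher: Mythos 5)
Your proof is correct and follows essentially the paper's own route: reduce to $n_1=0$ via Proposition~\ref{obv}(c), bootstrap the regular-sequence/normal-domain property by induction on the number of forms, and then combine Theorem~\ref{codimsing} with the $I\neq 0$ version of Theorem~\ref{MAX2} (each Jacobian row together with $(\sigma)$ containing a J-rank ideal of height $\geq b$) to get a singular locus of codimension at least $(b-h+1)-(n-1)=n+\eta+1$. The only genuine addition is your explicit lower bound $\fJ_e(2t+1)\geq t$ for any J-rank function (via the sum of $t$ disjoint squarefree monomials, which has J-rank $2t$ and strength $t-1$), used to justify that each new form avoids the ideal of the previous ones — a step the paper leaves implicit, relying on Proposition~\ref{obv}(f) and the size of the strength hypotheses.
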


\begin{proof}  If there is only one form $F$ of degree $d$ the result is clear: no condition is needed if
$d=1$, and otherwise we have $n_1 = 0$ and $n-n_1 = h = 1$, so that the condition required is
$\fJ_d(\eta +2)$.  This gives codimension $\eta +2$ for the singular locus of $F$ in $\A^N$, and so
codimension $\eta+1$ in $V(F)$.  

We next note that by passing to the polynomial ring  $R/(V_1)$, i.e., by first killing the 1-forms in $V$,
we may assume that $n_1 = 0$.  The levels of strength imposed all drop by at most $n_1$, by 
Proposition~\ref{obv}(c).

By induction on the dimension of $V$ we know that any set of linearly independent
forms in $V$ generating a subspace of smaller dimension defines an algebraic set that satisfies
$\rom{R}_\eta$.  Hence, any basis for $V$ consisting of forms is a regular sequence.  
To show that  $\rom{R}_\eta$
holds for the algebraic set $X$ defined by $V$ with dimension sequence $\vect n d$,  we need the
codimension of the singular locus to be $\eta+1$ in $X$, and hence to
be $n +\eta+1$ in the ambient space.   In the worst case, by Theorem~\ref{codimsing} 
we need the height of the ideal
of minors of the Jacobian obtained from at most one form of each degree to be bounded below by 
$n+\eta+1 + (n-1)$,  which means, by Theorem~\ref{MAX2}, that it suffices if the height of
the span of every row is at least $h-1+2n+\eta$ (there will be a maximum of $h$ rows). If $n_1 = 0$,  this means 
that one can simply take the $i\,$th entry of $\etuA(\vect n d)$ to be $\fJ_i(h-1 + 2n + \eta)$,
where $h$ is the number of nonzero elements in the dimension sequence. Note that $n = n-n_1$
in this case.   If $n_1 \not=0$,  one kills the ideal generated by the $n_1$ linear
forms:  the height of the ideal generated by the entries of the $i\,$th row cannot drop lower
than $\fJ_i(h-1+2(n-n_1) +\eta)$,  where we are now in the case where $n_1 = 0$ and
$n$ has been replaced by $n-n_1$. \end{proof}.

Note that for the case where $n_1 \not= 0$, we may proceed alternatively without killing
the linear forms.   When we reduce to studying a Jacobian matrix for one form of each degree,
we have at most $h+1$ rows, one of which is a nonzero row of scalars. By Theorem~\ref{codimsing}, 
if we guarantee that the
height of the ideal of minors is  $n + \eta + 1 +(n-1) = 2n+\eta$, this will yield the required codimension
for the singular locus in $X$.  However, the number of rows the matrix may now be $h+1$, and so
the strength condition on the forms of degree $i$ in $V$ is $\fJ_i(h+2n+\eta)$.  In practice, the
functions $\fJ_i$ grow quickly enough that the result in Theorem~\ref{SJrank} is better.  

We first recall that a subset $Y$ of the closed points $X$ in a scheme of finite type over 
an algebraically closed field $K$ is
called {\it constructible} if it is a finite union of locally closed subsets of $X$.  The image of a constructible set
under a $K$-regular map is constructible.  Constructible sets include open sets and closed sets in $X$, and
the family of constructible sets is closed under finite union, finite intersection, and complementation within $X$. 
Note that if a constructible set is dense in a variety, then one of the locally closed sets in the union must
be dense, and so the constructible set contains a dense open subset of the variety.  

\begin{theorem}\label{SaG} Let $K$ be an algebraically closed field.  Let  $R = K[\vect x N]$ be a polynomial ring
over $K$.  Let $\cM$ be an $1 \times M$ matrix whose entries are $d$-forms in $R$,  $d \geq 1$. 
Let $b, k \geq 1$ be integers. Let $\cV$ denote the $K$-span of the entries of $\cM$.  Suppose that for 
every dense open subset  $U$ of $\cV^b$, the $b$-tuples of elements of $\cV$,  some 
$K$-linear combination of the entries of some $v \in U$ with at least one nonzero coefficient has a $k$-collapse.
Then there exists a $K$-vector subspace $\Theta$ of $\cV$ 
of codimension at most $b-1$ such that every nonzero 
element of $\Theta$ has a strict $bk$-collapse. 

Hence, if there is no  vector subspace $\Theta$ of $\cV$ of codimension at most
$b-1$ such that every nonzero element of $\Theta$ has a strict $bk$-collapse,  then
there is an open dense subset $U$ of the $b$-tuples of elements of $\cV$ such that 
the entries of each element of $U$ are linearly independent and $k$-strong. 
 \end{theorem}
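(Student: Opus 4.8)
The plan is to translate the hypothesis into a statement about the Zariski closure of the collapse locus in $\cV$, and then to exploit the fact that this locus is swept out by \emph{linear} subspaces of $\cV$, each sitting inside an ideal generated by few forms of degree $<d$. Throughout, write $m=\dim_K\cV$ and let $\Gamma_j\subseteq\cV$ denote the set of forms of $\cV$ having a $j$-collapse (for $f\in\cV$, which is a $d$-form, any $j$-collapse is automatically strict). By Proposition~\ref{colcl}, $\Gamma_j$ is a constructible cone; moreover it is the union, over the finitely many ``shapes'' $\sigma$ recording how the $j$ collapsing summands split degree-wise, of the pieces $\Gamma_j^\sigma$, and for fixed $\sigma$ and fixed divisor forms $Q_1,\dots,Q_j$ of the appropriate degrees $<d$, the forms of $\Gamma_j^\sigma$ having those divisors fill the \emph{linear} subspace $L_{Q_\bullet}:=\cV\cap(Q_1,\dots,Q_j)_d$, every nonzero element of which visibly has a strict $j$-collapse. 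Thus $\overline{\Gamma_j}$ is a finite union of irreducible cones, each of which is swept out by such subspaces $L_{Q_\bullet}$ as $Q_\bullet$ ranges over an irreducible family of $j$-tuples of forms of degree $<d$.

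First I would reduce. Consider $\{(v,[c])\in\cV^b\times\PP^{b-1}:\sum_i c_if_i\in\Gamma_k\}$, which is well defined because $\Gamma_k$ is a cone and is constructible because $\Gamma_k$ is; its projection to $\cV^b$ is exactly the set of $b$-tuples admitting a linear combination with not all coefficients $0$ that has a $k$-collapse. The hypothesis says this constructible set meets every dense open, hence contains a dense open of $\cV^b$; so a general $b$-tuple is of this kind, and (discarding the trivial case $b>m$, where $\Theta=\{0\}$ works) a general $b$-tuple has linearly independent entries, so its $b$-dimensional span meets $\Gamma_k$ in a point other than the origin. Since a general $b$-dimensional linear subspace of $\cV$ is obtained this way, and since $\overline{\Gamma_k}$ is a finite union of cones, one of its irreducible pieces — call it $Z$, coming from a shape $\sigma_0$ and swept by the subspaces $L_{Q_\bullet}$, $Q_\bullet\in P$ irreducible — must be met (nontrivially) by a general $b$-plane; equivalently $\operatorname{codim}_{\cV}Z\le b-1$, i.e.\ $\dim Z\ge m-b+1$.

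Next I would build $\Theta$. Pick $b$ general divisor-tuples $Q^{(1)}_\bullet,\dots,Q^{(b)}_\bullet$ from $P$ and set
$\Theta:=\cV\cap\bigl(Q^{(1)}_1,\dots,Q^{(1)}_k,\dots,Q^{(b)}_1,\dots,Q^{(b)}_k\bigr)_d$, the degree-$d$ part, intersected with $\cV$, of the ideal generated by these $bk$ forms of degree $<d$. This is a $K$-subspace of $\cV$, and by construction each of its nonzero elements lies in an ideal generated by $bk$ forms of degree $<d$, hence has a strict $bk$-collapse, which is the asserted property. What remains is the inequality $\operatorname{codim}_{\cV}\Theta\le b-1$. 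Since $\Theta\supseteq L_{Q^{(1)}_\bullet}+\dots+L_{Q^{(b)}_\bullet}$ and the subspaces $L_{Q_\bullet}$ sweep out $Z$ (of codimension $\le b-1$), one runs a dimension count: either some partial sum $L_{Q^{(1)}_\bullet}+\dots+L_{Q^{(t)}_\bullet}$ already contains $Z$ (and then has dimension $\ge m-b+1$ and we are done with $t\le b$ summands), or each successive general member of the sweeping family strictly enlarges the partial sum, which after $b$ steps again forces $\dim\Theta\ge m-b+1$. \emph{Pushing the codimension all the way down to $b-1$ — rather than only down to the codimension of a general member of the sweeping family — is the delicate point, and is exactly where one must genuinely use that $Z$ is a component of the closure of a collapse locus (so that the sweeping family of subspaces lying in small ideals is at one's disposal) and not treat $Z$ as an arbitrary cone; I expect this to be the main obstacle.} Finally, the ``Hence'' is the contrapositive of the first assertion, combined with the remark that a linearly dependent $b$-tuple yields the zero combination, which has a $k$-collapse: if no subspace $\Theta$ as above exists, then by the first part the set of $b$-tuples admitting such a combination with a $k$-collapse is not dense, hence (being constructible) lies in a proper closed subset of $\cV^b$, and a general $b$-tuple avoids it — so its entries are linearly independent and span a $k$-strong subspace, i.e.\ the tuple is linearly independent and $k$-strong.
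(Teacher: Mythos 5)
Your first half is sound and is a reasonable variant of the paper's opening move: the paper forms the incidence set in $\PP(W_b)\times\PP(\cV^b)$ of pairs (coefficient vector, $b$-tuple) giving a collapse, and uses the two projections to fix a single coefficient vector $w$ whose fiber is an irreducible cone $Y\inc\cV^b$ of codimension at most $b-1$ mapping into the $k$-collapse locus; your Grassmannian incidence count in $\cV$ itself, producing an irreducible component $Z$ of $\overline{\Gamma_k}$ of codimension at most $b-1$ with collapsible points dense in it, plays the same role. The ``Hence'' paragraph is also fine. The gap is in the second half, precisely at the step you flagged, and it is genuine: your dichotomy does not close. In the second branch, $b$ successive strict enlargements of the partial sum $L_{Q^{(1)}_\bullet}+\cdots+L_{Q^{(t)}_\bullet}$ only force its dimension up to roughly $\dim L_{Q^{(1)}_\bullet}+(b-1)$, nowhere near $m-b+1$ when $m$ is large (the sweeping subspaces can be very small, e.g.\ lines). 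In the first branch, nothing guarantees that a sum of at most $b$ general sweeping subspaces contains $Z$; the honest termination argument only shows that adding general members eventually stops enlarging the sum, at which point the sum contains $Z$ --- but the number $t$ of steps needed is bounded only by $\dim\cV$, so you would get a strict $tk$-collapse with $t$ depending on the ambient data, which defeats the point of the theorem (the bound must be $bk$). Note also that the $L_{Q_\bullet}$ through points of $Z$ need not lie in $Z$, so ``sweeping'' gives you less than it appears to.

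The missing idea is the paper's sumset-stabilization trick, which is what simultaneously controls the number of summands by $b$ and produces a \emph{linear} $\Theta$. In your setup it would run as follows (the paper does the same thing with its fiber $Y\inc\cV^b$ and then pushes forward along $v\mapsto w*v$): form the iterated sums $Z,\ Z+Z,\ Z+Z+Z,\dots$ of the whole irreducible cone $Z$ and take closures $Z_1\inc Z_2\inc\cdots$. Each $Z_s$ is an irreducible closed cone, so every strict inclusion in this chain raises the dimension by at least one; since $Z_1$ already has codimension at most $b-1$, the chain stabilizes by step $b$, and the paper's argument ($Z_s=Z_{2s}$, then $Z_s+Z_s\inc Z_s$) shows the stable member is a $K$-vector subspace $\Theta$ of codimension at most $b-1$, a dense subset of which consists of sums of at most $b$ elements with strict $k$-collapses, hence of elements with strict $bk$-collapses; one then passes from a dense subset to all of $\Theta$ as in the paper. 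The crucial difference from your construction is that one adds the full irreducible cone to itself (so each strict growth spends one unit of a codimension budget of size $b-1$), rather than adding individual small subspaces $L_{Q_\bullet}$, which can strictly enlarge the sum up to $\dim\cV$ times while gaining little each time. Without this (or an equivalent linearization with a bound of $b$ on the number of steps), your $\Theta$ either fails the codimension bound or fails the $bk$ bound.
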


\begin{proof}  We may replace $\cM$ by a matrix whose entries are a basis for $\cV$, and so we assume
that the entries of $\cM$ are $K$-linearly independent and that the vector space dimension of $\cV$ is $M$.  
We may therefore assume that the nonzero entries of $\cM$ are linearly independent.  Moreover,
we may omit the entries that are zero.  We write
$\cV^b$ for the $K$-vector space of $b$-tuples of elements of $\cV$. It should not cause confusion if we also think 
of elements of $\cV^b$ as  $1 \times b$  matrices whose entries are in $\cV$.  We write $\PP(\cV^b)$ for the
corresponding projective space over $K$:  it consists of $b$-tuples of elements $\cV$,  not all 0, modulo multiplication
by scalars in $K^{\times} = K-\{0\}$.  We write $\PP(W_b)$ for the projective space associated with
the vector space $W_b$ of  $b \times 1$ matrices over $K$.  Given $v \in \cV^b$ and 
$w \in W_b$, we write $w * v$ for the unique entry of $vw$,  which is an element of $\cV$.  For a given choice of $w$,
$w *v$ is a $K$-linear combination of the entries of $v$:  all of the entries of $w$ occur as coefficients.  By varying
$w$, we get all $K$-linear combinations of entries of $v$.  Let $\mu$ denote the map $W_b \times \cV^b \to \cV$
such that $(w,v) \mapsto w*v$.

Next, consider the subset $\cZ \inc W_b \times \cV^b$ consisting of pairs $(w,v)$ such that $w*v$ has 
a strict $k$-collapse or is 0.  The set of forms of degree $d$ with a strict $k$-collapse together with 0 form a
constructible set by Proposition~\ref{colcl}(c).  $\cZ$ is the inverse image of this set under the regular morphism $\mu$, and so 
$\cZ$ is constructible in $W_b \times \cV^b$.
 Since $\cZ$ is closed under multiplying either coordinate by a nonzero scalar,
it determines a constructible set  $X \inc  \PP(W_b) \times \PP(\cV^b)$.    If the projection map  $\pi_2:X \to \PP(\cV^b)$ 
does not have dense image, the set of matrices representing points of the complement of the closure of its image   
is a dense open set $U$ in $\cV^b$  such that every nonzero linear combination of the entries of any element 
of $U$ is $k$-\gu. Therefore we may assume that the projection map has dense, constructible image in $\cV^b$, 
and so the image has nonempty interior.

Since the image of the projection map contains a dense open subset of $\PP(\cV^b)$, the dimension of $X$ 
is at least $bM-1$.  It follows that there is a point of  $\PP(W_b)$ such that the fiber of $\pi_1: X \to \PP(W_b)$  is a constructible
subset of $\PP(W_b)$  that
 has dimension at least  $(bM-1)-(b-1) = bM-b$.  Fix a nonzero $b \times 1$ matrix $w$ representing an element 
 where the fiber has dimension at least $bM - b$, and so has codimension at most $bM-1 - (bM-b) =b-1$ in $\PP(\cV^b)$.
Choose an irreducible locally closed subset $X_1$ of the fiber over $w$ of codimension at most $b-1$ in $\PP(\cV^b)$.  
Let $Y_1 = Y$ be the affine cone over $X_1$: 
$Y_1$ is also irreducible, and has codimension at most $b-1$ in the affine space $\cV^b$.   Let
$Y_s = Y + \cdots + Y$, by which we mean $\{v_1 + v_2 + \cdots + v_s: v_j \in Y, \, 1\leq j \leq s\}$.  Then $Y_s$
is a family of $1 \times b$ matrices $v$ with entries in  $\cV$  such that every
$w*v = w*v_1 + w*v_2 + \cdots + w*v_s$  has a strict $sk$-collapse or is 0. Note that $Y_s$ is a cone in the affine space
$\cV^b$, and it is an
irreducible constructible set, since it is the image of $Y^k$ under a regular morphism. Let $Z_s$ be the closure
of $Y_s$.  It is an irreducible closed set in the affine space $\cV^b$, and it is also a cone.  The image $X_s$ of $Z_s -\{0\}$ is 
 a closed irreducible set in $\PP(\cV^b)$. 
 
 The chain of closed varieties $Z_1 \inc Z_2 \inc \cdots Z_j \inc \cdots$ must have the property that for some $s$,  
 $Z_s = Z_{s+1}$,  since whenever it increases strictly the codimension drops, and so the number of increases
 cannot exceed the codimension, which is a most $b-1$.  Thus, $s \leq b$.  We claim that if $Z_s = Z_{s+1}$ then 
 $Z_j = Z_s$ for all $j \geq s$. By induction, it suffices to show that $Z_{s+1} = Z_{s+2}$.  But since
 $Z_s = Z_{s+1}$, we have that $Y_s$ is dense in $Y_{s+1}$,  and so $Y_s \times Y$ is dense in $Y_{s+1} \times Y$.  
 But then the image of $Y_s \times Y$ is dense in the image of $Y_{s+1} \times Y$ under the map  $(v',v) \mapsto
 v' + v$, which means that $Y_{s+1} = Y_s + Y$ is dense in $Y_{s+1}  + Y = Y_{s+2}$,  and this implies that
 $Z_{s+1} = Z_{s+2}$.   Hence,  $Z_s = Z_{2s}$.  Similarly, since
 $Y_s\times Y_s$  is dense in $Z_s \times Z_s$, $Y_{2s}  = Y_s + Y_s$ is dense in $Z_s + Z_s$.  Hence, 
 $Z_s + Z_s$ is contained in the closure of $Y_{2s}$, which is $Z_{2s} = Z_s$.  It follows that the cone $Z_s$ is a vector
 space, which we denote $\{w\}\times\cW$, where $\cW \inc \cV^b$, and it has codimension at most $b-1$ in $\cV^b$.  
 The points in $v \in \cW$ such that $w*v$ has a strict $bk$-collapse are dense.   But this set is also the inverse image of a closed
 set in $R_d$ under a regular map, and is therefore closed.  Hence, for every $v \in \cW$, 
$w*v$ has a strict $bk$-collapse. The map $v \mapsto w*v$ is clearly a surjection of $\cV^b$ onto
$\cV$.  Let $\Theta$ be the image of this map.  Since $\cW$ has codimension at most $b-1$ in 
$\cV^b$,  we have $\Theta$ has codimension at most $b-1$ in $\cV$.  Since every element of
$\Theta$ has a strict $bk$-collapse, the proof is complete. \end{proof}

Given a finite-dimensional vector space $V$ over a field $K$, we say that a condition holds for elements of the
vector space in general position if it holds for the elements of a dense open subset of $V$.  In the theorem
below, this is applied to the vector space of $b$-tuples of elements of $\cD F$ for a form $F$. 
   
 \begin{theorem}\label{GtoS} Let $i \geq 2$ be an integer, and let $K$ be an algebraically closed field of characteristic 
 not dividing $i$.   Let $\fK_i$ be a key function from 
 $\N_+ \to \N_+$ for degree $i$ for polynomial rings over $K$.   Then for all $b \in \N_+$,  
 if $F$ is a form of degree $i$ that  has strength at least  $\fK_i(bk) + b-1$, then any $b$-tuple of elements of $\cD F$
 in general position has entries that are linearly independent over $K$ and spans a vector space of strength 
 at least $k$.  
 \end{theorem}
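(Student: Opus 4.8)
The plan is to deduce the statement from Theorem~\ref{SaG}, applied to a $1\times M$ matrix $\cM$ whose entries form a $K$-basis of $\cD F$ (a space of forms of degree $i-1\ge1$), with the given $b$ and $k$. By that theorem it suffices to show that, under the strength hypothesis on $F$, there is \emph{no} $K$-subspace $\Theta\inc\cD F$ of codimension $\leq b-1$ all of whose nonzero elements have a strict $bk$-collapse (for a single form this is the same as a $bk$-collapse); the theorem then provides a dense open set of $b$-tuples of elements of $\cD F$ whose entries are $K$-linearly independent and span a space of strength $\geq k$ — the strength assertion being legitimate because the collapse condition negated in Theorem~\ref{SaG} concerns all $K$-linear combinations of the entries, not just the entries themselves.

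So assume such a $\Theta$ exists; I aim to produce a $\bigl(\fK_i(bk)+b-1\bigr)$-collapse of $F$, contradicting strength $\geq\fK_i(bk)+b-1$. Since $D\mapsto DF$ is a $K$-linear surjection from the space of constant-coefficient derivations of $R$ onto $\cD F$, I can choose a coordinate system $\vect zN$ of $R$ for which $\partial F/\partial z_1,\dots,\partial F/\partial z_e$ is a basis of a fixed complement of $\Theta$ in $\cD F$ (so $e\le b-1$), $\partial F/\partial z_{e+1},\dots,\partial F/\partial z_t$ is a basis of $\Theta$ (with $t=\di_K\cD F$), and $\partial F/\partial z_j=0$ for $j>t$. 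Set $F':=F|_{z_1=\cdots=z_e=0}$. Then $F-F'\in(\vect ze)$, so $F-F'$ is a graded $K$-linear combination of at most $e\le b-1$ products $z_pH_p$ with $\deg H_p=i-1$; that is, $F-F'$ has a $(b-1)$-collapse.

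The crux is that $\cD F'$ consists of substitutions of elements of $\Theta$: indeed $\partial F'/\partial z_j=0$ for $j\le e$ and for $j>t$, while $\partial F'/\partial z_j=(\partial F/\partial z_j)|_{z_1=\cdots=z_e=0}$ for $e<j\le t$, and since $\{\partial F/\partial z_j:\,e<j\le t\}$ is a basis of $\Theta$ and substitution is $K$-linear, every element of $\cD F'$ is $\theta|_{z_1=\cdots=z_e=0}$ for some $\theta\in\Theta$. Substitution is multiplicative, so it carries a strict $bk$-collapse of $\theta$ to a strict $bk$-collapse of $\theta|_{z_1=\cdots=z_e=0}$ or annihilates it; hence no element of $\cD F'$ is $bk$-strong. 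If $F'=0$, then $F=F-F'$ already has a $(b-1)$-collapse, hence a $\bigl(\fK_i(bk)+b-1\bigr)$-collapse, a contradiction. If $F'\ne0$, then $F'$ is a nonzero form of degree $i$ with no $bk$-strong element in $\cD F'$, so the defining property of the key function $\fK_i$, in contrapositive form, shows $F'$ is not $\fK_i(bk)$-strong, i.e.\ $F'$ has a $\fK_i(bk)$-collapse; writing $F=F'+(F-F')$ exhibits a $\bigl(\fK_i(bk)+(b-1)\bigr)$-collapse of $F$, contradicting the strength hypothesis. Euler's identity $iF=\sum_jz_j\,\partial F/\partial z_j$, available since $\ch K\nmid i$, is used in passing: it guarantees $F'\ne0\Rightarrow\cD F'\ne0$, so that $\fK_i$ may legitimately be applied to $F'$, and divided by $i$ it also shows that $F$ being $\bigl(\fK_i(bk)+b-1\bigr)$-strong forces $\di_K\cD F\ge\fK_i(bk)+b$, so that in fact $\Theta\ne0$.

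I expect the main obstacle to be choosing $F'$ correctly. Collapsing $iF=\sum_jz_j\,\partial F/\partial z_j$ directly, using that the relevant partials lie in $\Theta$, only yields a collapse of $F$ whose length is proportional to $\di_K\Theta$, which is uselessly large. Passing to $F'$ replaces the uncontrolled ``$\Theta$-part'' of $F$ by a single $\fK_i(bk)$-collapse furnished by the key function, at the cost of just the $b-1$ correction terms coming from $F-F'$; checking that substitution behaves well on collapses and on $\cD F'$ is then routine.
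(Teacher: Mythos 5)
Your proposal is correct and follows essentially the same route as the paper: apply the second statement of Theorem~\ref{SaG} to get the subspace $\Theta$ of codimension at most $b-1$, change coordinates so that killing at most $b-1$ variables makes all remaining partials land in (substitutions of) $\Theta$, apply the key function to the truncated form $F'$ (the paper's $\ov F$), and recombine to exhibit a $\bigl(\fK_i(bk)+b-1\bigr)$-collapse of $F$, contradicting the strength hypothesis. Your extra care with the existence of the coordinate system, the case $F'=0$, and the Euler-formula remarks only fills in details the paper leaves implicit.
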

 \begin{proof}  Assume that $F$ is a form of degree $i$ that 
 does not have a strict  $(\fK_i(bk) + b-1)$-collapse.
If the conclusion of the theorem fails, then by Theorem~\ref{SaG},
 there is a $K$-vector subspace $\Theta$ of $\cD F$ of codimension
 at most $b-1$  such that every element of that $K$-vector subspace has a strict $bk$-collapse.  
 We may make a linear change of variables and so assume that if $j < N - (b-1)$ then ${\partial F \over \partial x_j}
 \in \Theta$. Let $Q$ be the ideal generated the final substring of at most $b-1$ consecutive variables 
 $x_{s+1}, \, \ldots, \, x_N$
 such that all the partial derivatives with respect to variables $\vect x s$ not in $Q$ are in $\Theta$.  Since the formation 
 of partial derivatives with respect to $x_j$ not in $Q$ commutes with killing $Q$, 
 if we let  $\ov{F}$ denote the image of $F$ modulo $Q$ in $K[\vect x s]$,  where $N-s \leq b-1$,  then we have
 that every element in the span of the partial derivatives of $\ov{F}$ has a strict $bk$-collapse.  By
 the defining property of $\fK_i$,  $\ov{F}$ has a strict $\fK_i(bk)$-collapse, and is in an ideal $J$
 generated by at most $\fK_i(bk)$ forms of smaller degree.  This means that $F \in J + Q$,  where
 $Q = (x_{s+1}, \, \ldots, x_N)$ is generated by at most $b-1$ elements of degree 1, and so has an
  $(\fK_i(bk) + b-1)$-collapse, a contradiction.
\end{proof}

\begin{corollary}\label{KtoR} Let $K$ be an algebraically closed field for which we have a key function
$\fK_i$ for some $i \geq 2$, where the characteristic of $K$ does not divide $i$.  
Suppose also $A_{i-1}$ is a function such that a $K$-vector space of dimension
$n$ whose nonzero elements are forms of degree $i-1$ of strength at least $A_{i-1}(n)$ is generated by
a regular sequence.  Then we may take $\fJ_i(k):=  \fK_i\bigl(kA_{i-1}(k)\bigr) + k-1$.  \end{corollary}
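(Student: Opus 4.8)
The plan is to bound the $J$-rank of a degree-$i$ form $F$ by working inside its space of partials $\cD F$, whose elements are forms of degree $i-1$: use Theorem~\ref{GtoS} to extract from $\cD F$ a $k$-dimensional subspace of large strength, feed that subspace to the hypothesis on $A_{i-1}$ to get a regular sequence of length $k$ inside the ideal $(F)+(\cD F)$, and conclude via the elementary inequality $\mathrm{grade}\le\mathrm{height}$.

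In detail, I would fix an algebraically closed field $K$ whose characteristic does not divide $i$, a polynomial ring $R$ over $K$, and a form $F\in R$ of degree $i$ that is $\fJ_i(k)$-strong, where $\fJ_i(k):=\fK_i\bigl(kA_{i-1}(k)\bigr)+k-1$; the goal is to show that the $J$-rank of $F$ (Definition~\ref{jrk}), i.e.\ $\height\bigl((F)+(\cD F)\bigr)$, is at least $k$. Next I would invoke Theorem~\ref{GtoS} with $b:=k$ and with $A_{i-1}(k)$ playing the role of the parameter called $k$ in that theorem: the hypothesis it requires is that $F$ have strength at least $\fK_i\bigl(k\cdot A_{i-1}(k)\bigr)+k-1=\fJ_i(k)$, which holds by assumption (and the characteristic does not divide $i$, as needed). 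Its conclusion provides a $k$-tuple $(G_1,\dots,G_k)$ of elements of $\cD F$ in general position whose entries are $K$-linearly independent forms of degree $i-1$ and span a $K$-subspace $W\inc\cD F$ with $\dim_K W=k$ and strength at least $A_{i-1}(k)$. The linear-independence part of this conclusion is exactly what forces $\dim_K W=k$, so that precisely the integer $k$ is fed into $A_{i-1}$.

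Then I would apply the hypothesis on $A_{i-1}$ to $W$: being a $k$-dimensional space of forms of degree $i-1$ of strength at least $A_{i-1}(k)$, it is generated by a regular sequence, so in particular the linearly independent sequence $G_1,\dots,G_k$ (a basis of $W$) is a regular sequence. Consequently the ideal $(F)+(\cD F)$ contains the regular sequence $G_1,\dots,G_k$ of length $k$, so its grade, and a fortiori its height, is at least $k$. This is exactly the required $J$-rank bound, and since $R$ and $F$ were arbitrary subject only to the stated restriction on the characteristic, this shows that $\fJ_i(k):=\fK_i\bigl(kA_{i-1}(k)\bigr)+k-1$ may be taken as a $J$-rank function (Definition~\ref{rK}) for the relevant set of characteristics and for $i$.

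The one substantive step is Theorem~\ref{GtoS} itself (which in turn rests on Theorem~\ref{SaG}): converting a strength hypothesis on the single form $F$ into a strength hypothesis on a generic $b$-tuple of elements of $\cD F$ together with the linear independence of that tuple. Everything afterward --- applying $A_{i-1}$ and then reading off the height from the presence of a length-$k$ regular sequence --- is routine, so the only place I would expect to need care is in correctly lining up the parameters $b=k$ and $A_{i-1}(k)$ when invoking Theorem~\ref{GtoS}.
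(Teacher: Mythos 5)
Your proposal is correct and follows the paper's own argument essentially verbatim: apply Theorem~\ref{GtoS} with $b=k$ and with $A_{i-1}(k)$ in the role of that theorem's $k$, so a generic $k$-tuple in $\cD F$ is linearly independent and spans a subspace of strength at least $A_{i-1}(k)$, hence is a regular sequence by the hypothesis on $A_{i-1}$. The concluding step, that a length-$k$ regular sequence inside $(F)+(\cD F)$ forces height at least $k$, is exactly the (implicit) final step of the paper's proof.
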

\begin{proof} Let $F$ be a form of degree $i$.   
We may apply Theorem~\ref{GtoS} using $k$ and $A_{i-1}(k)$ for the values of $b$
and $k$.  It follows that a vector subspace of $\cD F$ generated by $k$ elements in general position
has strength $A_{i-1}(k)$,  and so is generated by a regular sequence. \end{proof}

We are now ready to give the proof of Theorem~\ref{SJ}.\\

\noindent\emph{Proof of Theorem~\ref{SJ}.} This is simply the result of combining Theorem~\ref{SJrank} and Corollary~\ref{KtoR}.  
The condition in the definition of $\etA_2$ simply guarantees, when $n_1 = 0$,  
that the height of $\cD F$ is at least $b$ for every form $F \in V$.  \hfil \qed \\

In the case where one simply has a vector space of dimension $n$ whose nonzero elements are all forms
of degree $d$, this takes a simpler form.  
  
 Theorem~\ref{SJ} shows that for a class $\cC$ of characteristics and degree at most $d$,  
 if one has explicit key functions $\fK_i$ for the characteristics in $\cC$ and $i \leq d$, then
 one obtains specific $\etA$ for degree sequences bounded by $d$.  The results of \S~\ref{BfromA}
 then yield the functions $\etB$ up to degree $d$ (i.e., the existence of small subalgebras) and, hence, 
we obtain explicit bounds for projective dimension, proving Stillman's conjecture in degree $d$, but
in a constructive way.  Moreover, these bounds are obtained for a situation more general than that
of homogeneous ideals, as shown in \S~\ref{BfromA}:  one gets them for modules such that one has bounds
on the size of the presentation, and without homogeneity assumptions.             
  
  In \S\ref{cubic}, we obtain our main results on cubics using both this result and other ideas.
  The cases of characteristics 2 and 3 
 require larger choices of $\etuA$. The difficulties are handled by using choices of $\fJ_3$ arising form variant 
 methods and applying Theorem~\ref{SJrank}.   In \S\S\ref{sum}-\ref{hesscol} we construct the functions $\fK_4$ for 
 characteristic not 2, 3,  which enables us to give explicit functions $\etA$, $\etB$, and $C(r,s,d)$ for degree 
 $d \leq 4$ if the characteristic is not 2 or 3.  The argument needed for $d=4$ is very intricate.

\section{The cubic case: $\fK_3$, $\fJ_3$, and $\etuA(n_1,n_2,n_3)$}\label{cubic}

The main result of this section is Theorem~\ref{A3b} below.  We first note:

 \begin{theorem}\label{G3} For any algebraically closed field of characteristic $\not= 2,\,3$ we may
 take $\fK_3(k) = 2k$.  \smallskip
 Hence, if $K$ is a field of characteristic $\not= 2,\,3$,  we may take $\fJ_3(k) = (2k+1)(k-1)$. \end{theorem}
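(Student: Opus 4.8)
The plan is to verify, directly from Definition~\ref{key}, that $\fK_3(k) = 2k$ works as a key function for cubics in characteristic $\neq 2,3$, and then to read off the formula for $\fJ_3$ from Corollary~\ref{KtoR}. The guiding observation is that for a cubic $F$ the space $\cD F$ is a space of \emph{quadratic} forms, which puts us exactly in the range where Theorem~\ref{2tr} is available; the only extra ingredient is Euler's identity, used to convert a collapse of $\cD F$ into a collapse of $F$.

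For the key-function statement I would argue as follows. Let $F$ be a form of degree $3$ in $R = K[\vect xN]$ that is $2k$-strong, and set $V = \cD F$, a $K$-subspace of the quadratic forms of $R$. First I would show that $V$ is $(2k,0)$-safe, i.e.\ not contained in an ideal generated by at most $2k$ linear forms. If it were, then after replacing the generators by a basis of their $K$-span we could take that ideal to be a prime $P$ generated by at most $2k$ independent linear forms (the quotient being a polynomial ring), that is, a $2k$-linear prime. Since $\ch(K) \neq 3$, Euler's identity $3F = \sum_{j=1}^N x_j\,\partial F/\partial x_j$ gives $F \in (\cD F) \inc P$, so $F$ lies in a $2k$-linear prime and hence has a $2k$-collapse (Discussion~\ref{strength}), contradicting $2k$-strength. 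Thus $V = \cD F$ is $(2k,0)$-safe, and since $\ch(K) \neq 2$, Theorem~\ref{2tr} furnishes a Zariski dense open subset of $\cD F$ all of whose elements have no $k$-collapse, i.e.\ are $k$-strong. This open set is nonempty because $F$ is $1$-strong, hence nonzero, hence $\cD F \neq 0$ (again by Euler). So $\fK_3(k) = 2k$ satisfies the requirement of Definition~\ref{key}, and it is visibly nondecreasing in $k$.

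For the $\fJ_3$ statement I would simply invoke Corollary~\ref{KtoR} with $i = 3$: then $\ch(K)$ does not divide $3$, the key function $\fK_3(k) = 2k$ has just been supplied, and by Theorem~\ref{ngu} (equivalently Corollary~\ref{A2}) one may take $A_2(n) = n-1$ as a degree-two strength bound guaranteeing that an $n$-dimensional space of quadrics of that strength is generated by a regular sequence. Corollary~\ref{KtoR} then yields
\[
\fJ_3(k) = \fK_3\bigl(k\,A_2(k)\bigr) + k - 1 = 2k(k-1) + (k-1) = (2k+1)(k-1),
\]
as asserted.

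I do not expect a genuine obstacle here: the substance is already packaged inside Theorem~\ref{2tr} and Corollary~\ref{KtoR}, both of which we may use freely. The only subtlety worth flagging is the dependence on the excluded characteristics — $\ch(K) \neq 3$ is precisely what makes Euler's identity return $F$ to the ideal $(\cD F)$ (and is also demanded by Corollary~\ref{KtoR}), while $\ch(K) \neq 2$ is what Theorem~\ref{2tr} requires; in characteristics $2$ and $3$ the statement genuinely fails, and one must instead fall back on the larger bounds for $\fJ_3$ recorded in Theorem~\ref{R323I}.
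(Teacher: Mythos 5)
Your proposal is correct and follows essentially the same route as the paper: the paper likewise deduces the key-function statement directly from Theorem~\ref{2tr} (with the Euler-identity step you spell out being the implicit reason $\cD F$ cannot lie in an ideal generated by $2k$ linear forms when $F$ is $2k$-strong and $\ch(K)\neq 3$), and obtains $\fJ_3(k)=(2k+1)(k-1)$ from Corollary~\ref{KtoR} together with $A_2(k)=k-1$ from Theorem~\ref{ngu}. Your computation $2k(k-1)+k-1=(2k+1)(k-1)$ matches the paper exactly.
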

 \begin{proof} The first statement is immediate from Theorem~\ref{2tr}, and the second follows at once
 from Corollary~\ref{KtoR} and the fact that by the last statement in Theorem~\ref{ngu}, we may 
 choose $A_2(k) = k-1$. \end{proof}
  
 Hence, by Theorem~\ref{GtoS}:
 
\begin{theorem}\label{K2} For any algebraically closed field $K$ of characteristic $\not=2,3$, if $b$ is a positive integer 
and $F$ is a form of degree 3 in a polynomial ring over $K$ such that $F$  has strength at least $2bk + b-1$,
then any $b$-tuple of elements of $\cD F$ in general position has entries that are linearly independent over $K$ and 
span a vector space of strength at least $k$.   \end{theorem}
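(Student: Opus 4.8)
The plan is to obtain Theorem~\ref{K2} as an immediate specialization of Theorem~\ref{GtoS}, once a concrete key function for degree $3$ is available. The one substantive ingredient is Theorem~\ref{G3}, which says that over an algebraically closed field $K$ with $\ch(K) \neq 2, 3$ one may take $\fK_3(k) = 2k$; everything else is bookkeeping about the hypotheses.

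Concretely, first I would fix the key function $\fK_3(k) = 2k$ supplied by Theorem~\ref{G3}, which applies precisely because $\ch(K) \neq 2, 3$. Then I would invoke Theorem~\ref{GtoS} with $i = 3$: its running hypothesis that the characteristic does not divide $i$ holds since $\ch(K) \neq 3$, and its hypothesis that $F$ has strength at least $\fK_3(bk) + b - 1$ becomes, upon substituting $\fK_3(bk) = 2bk$, exactly the hypothesis of the present statement, namely that $F$ has strength at least $2bk + b - 1$. The conclusion delivered by Theorem~\ref{GtoS} --- that a $b$-tuple of elements of $\cD F$ in general position has $K$-linearly independent entries spanning a vector space of strength at least $k$ --- is verbatim what we want, so the proof is complete.

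I expect no genuine obstacle at this stage: the work has already been discharged into Theorem~\ref{G3} (which in turn rests on Theorem~\ref{2tr} --- that a space of quadrics not contained in a $2k$-linear prime has a Zariski-dense open subset of forms with no $k$-collapse --- together with the choice $A_2(k) = k - 1$ coming from Theorem~\ref{ngu}) and into Theorem~\ref{GtoS} (whose proof uses the constructibility and codimension machinery of Theorem~\ref{SaG}). If one wanted to bypass Theorem~\ref{GtoS} and argue directly, the only delicate point would be the reduction, carried out inside the proof of Theorem~\ref{GtoS}, from the assumption that every element of a codimension-$\leq (b-1)$ subspace $\Theta$ of $\cD F$ has a strict $bk$-collapse to the conclusion that $F$ itself has a strict $(2bk + b - 1)$-collapse: after a linear change of variables one arranges that the partials with respect to all but at most $b - 1$ of the variables lie in $\Theta$, passes to the image $\overline{F}$ of $F$ modulo the ideal $Q$ generated by those $\leq b - 1$ variables, applies the key function property of $\fK_3$ to $\overline{F}$ (all of whose partials have strict $bk$-collapses), and lifts the resulting $2bk$-generator containment back to $R$, absorbing the at most $b-1$ generators of $Q$. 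Since Theorem~\ref{GtoS} is already in hand, none of this needs to be redone here.
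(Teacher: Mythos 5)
Your proposal is correct and is exactly the paper's argument: the paper states Theorem~\ref{K2} with the phrase ``Hence, by Theorem~\ref{GtoS}'' immediately after Theorem~\ref{G3}, i.e., it too simply substitutes the key function $\fK_3(k)=2k$ (valid since $\ch(K)\neq 2,3$) into Theorem~\ref{GtoS} with $i=3$, where the strength hypothesis $\fK_3(bk)+b-1=2bk+b-1$ matches verbatim. Your bookkeeping of the hypotheses (characteristic not dividing $i=3$, and the provenance of $\fK_3$ via Theorems~\ref{2tr} and~\ref{ngu}) is accurate, so nothing further is needed.
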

 
 We next want to show that one can construct a function of $\fJ_3$ for cubics when $K$ is an algebraically
closed field of characteristic 2 or of characteristic 3. 

\begin{theorem}\label{R323} Let $R$ be a polynomial ring over an algebraically closed field.
If $K$ has characteristic 2, then we may take $\fJ_3(k) = 2(k-1)(2k+1)$.  
If $K$ has characteristic 3, we may take $\fJ_3(k) =  2k^2-k$.
\end{theorem}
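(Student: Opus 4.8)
The plan is to produce explicit $J$-rank functions $\fJ_3$ for cubics in characteristics $2$ and $3$, i.e.\ to show that if a cubic $F$ is as strong as the stated bound then the ideal $(F,\cD F)$ has height at least $k$. Since $\cD F\inc(F,\cD F)$ it suffices to bound $\height(\cD F)$ from below; Euler's identity $\sum_i x_i\partial_iF=3F$ shows that in characteristic $2$ one even has $(F,\cD F)=(\cD F)$. In characteristic $\not=2,3$ one obtains $\fJ_3$ by combining the key function $\fK_3(k)=2k$ of Theorem~\ref{G3} with Corollary~\ref{KtoR}, but this route is unavailable here: in characteristic $2$ there is no key function for cubics — e.g.\ $F=\sum_{i=1}^m x_{2i-1}^2x_{2i}$ lies only in the linear ideal $(x_1,x_3,\dots,x_{2m-1})$, hence has strength about $m-1$, yet $\partial_{x_{2i}}F=x_{2i-1}^2$ is a perfect square of rank $1$ — and in characteristic $3$ Corollary~\ref{KtoR} does not formally apply since $3\mid 3$. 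So I would argue directly, by contraposition, in both cases.

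\emph{The common reduction.} Suppose $\height(\cD F)\le k-1$. Then $\cD F$ contains no regular sequence of length $k$, so no general $k$-tuple of elements of $\cD F$ is linearly independent and $(k-1)$-strong (such a tuple would be a regular sequence by Theorem~\ref{ngu}, forcing $\height(\cD F)\ge k$). Applying Theorem~\ref{SaG} with $b=k$ and with the parameter ``$k$'' there taken to be $A_2(k)=k-1$, we obtain a subspace $\Theta\inc\cD F$ of codimension $\le k-1$ every nonzero element of which has a strict $k(k-1)$-collapse, i.e.\ (Proposition~\ref{qucol}) has rank $\le 2k(k-1)$. After a linear change of variables we may assume $\partial_{x_i}F\in\Theta$ for $i\le s$, where $N-s\le k-1$; writing $\overline F$ for the image of $F$ modulo $x_{s+1},\dots,x_N$, every element of $\cD\overline F$ has rank $\le 2k(k-1)$, and $\height(\cD\overline F)\le\height(\cD F)\le k-1$. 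Pick $F_1\in\cD\overline F$ of maximal rank $r\le 2k(k-1)$. By Corollary~\ref{cDsq}, $\cD\overline F\inc(\cD F_1)$ when the characteristic is $3$, while when it is $2$ the image of $\cD\overline F$ modulo $(\cD F_1)$ consists of squares of linear forms; in either case $\dim_K\cD F_1\le 2k(k-1)$, and we may fix coordinates so that $\cD F_1$ is contained in the span of variables $z_1,\dots,z_{2k(k-1)}$. Let $G$ be the part of $\overline F$ involving only the remaining variables $y_1,\dots,y_t$.

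\emph{Characteristic $3$.} Here all partials of $\overline F$ lie in $(z_1,\dots,z_{2k(k-1)})$, so $G$ has all partials zero. Over an algebraically closed field of characteristic $3$ the only degree-$3$ monomials with every exponent divisible by $3$ are the $y_j^3$, and $\sum_j c_j y_j^3=\bigl(\sum_j c_j^{1/3}y_j\bigr)^3$; hence $G=\ell^3$ for a single linear form $\ell$, and $\overline F\in(z_1,\dots,z_{2k(k-1)},\ell)$. Therefore $F$ lies in an ideal generated by at most $2k(k-1)+1+(k-1)=2k^2-k$ linear forms, i.e.\ $F$ has a $(2k^2-k)$-collapse — contradicting $(2k^2-k)$-strength. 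So one may take $\fJ_3(k)=2k^2-k$.

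\emph{Characteristic $2$, the delicate case.} Now each partial of $\overline F$ is, modulo $(z_1,\dots,z_{2k(k-1)})$, a perfect square, so every partial of $G$ is a perfect square; a cubic in characteristic $2$ has this property exactly when its Hessian matrix of second partials vanishes, i.e.\ when $G=\sum_{i,j}d_{ij}y_i^2y_j=\sum_j y_j\ell_j^2$ with $\ell_j=\sum_i\sqrt{d_{ij}}\,y_i$. Thus $G$ lies in the ideal generated by the span $L$ of the $\ell_j$, and it remains to bound $\dim_K L$. Since $\partial_{y_k}\overline F\equiv\ell_k^2$ modulo $(z_1,\dots,z_{2k(k-1)})$, the ideal $(\cD\overline F)+(z_1,\dots,z_{2k(k-1)})$ contains every $\ell_k^2$, hence has height $\ge\dim_K L$; on the other hand its height is $\le\height(\cD\overline F)+2k(k-1)\le (k-1)+2k(k-1)$, so $\dim_K L\le(k-1)(2k+1)$. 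Consequently $\overline F$ lies in an ideal generated by $\le 2k(k-1)+(k-1)(2k+1)$ linear forms, and $F$ in one generated by $\le 2k(k-1)+(k-1)(2k+1)+(k-1)=2(k-1)(2k+1)$ linear forms — again contradicting strength, so one may take $\fJ_3(k)=2(k-1)(2k+1)$. I expect the main obstacle to be exactly the characteristic-$2$ bookkeeping: the reduction to bounded polar rank via Theorem~\ref{SaG}, followed by the height estimate that controls the dimension of the perfect-square part $L$ — this perfect-square contribution, which has no analogue when the characteristic is not $2$, is what roughly doubles the bound.
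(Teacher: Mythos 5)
Your proposal is correct and follows essentially the same route as the paper's proof: the contrapositive reduction via Theorem~\ref{SaG} with $b=k$ to a subspace of $\cD F$ of codimension at most $k-1$ all of whose nonzero elements have rank at most $2k(k-1)$, then Corollary~\ref{cDsq} applied to a maximal-rank element, then a count of linear (and, in the paper, a few quadratic) forms, arriving at exactly the stated values of $\fJ_3(k)$. The only differences are in bookkeeping: you kill the at most $k-1$ extra variables at the outset (as in the proof of Theorem~\ref{GtoS}) where the paper instead carries along at most $k-1$ extra elements of $\cD F$ and invokes Euler's formula, and in characteristic $2$ you replace the paper's dichotomy on the dimension of the square part by the normal form $\sum_j y_j\ell_j^2$ together with a height bound on $(\cD \ov F)+(z_1,\dots,z_{2k(k-1)})$ coming from the hypothesis $\height(\cD F)\le k-1$; both versions yield the same bounds $2(k-1)(2k+1)$ and $2k^2-k$.
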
 

\begin{proof} Let $k$ be given and let $F$ be a cubic form that is $\fJ_3(k)$-strong according to the
appropriate formula above.  
We shall show that the J-rank of $F$ is at least $k$. By the last statement in Theorem~\ref{ngu}, 
it will suffice if $\cD F$ contains a $k$-tuple of linearly independent quadrics over $K$ spanning  a $K$-vector
space of strength at least $k-1$. If this is not true, let $b = k$ 
and note that by the last sentence of Theorem~\ref{SaG}, 
there is a vector space $\Theta$ of codimension at most $b-1 = k-1$ in $\cD F$ such that every element has
a $k(k-1)$-collapse.   Choose an element  $G \in \Theta$ such that the 
rank  $r$ of $G$ is maximum.  Since $G$ has a $k(k-1)$-collapse, $r \leq 2k(k-1)$. 

In the characteristic 2 case, let $r = 2h+\epsilon$, where $\epsilon$ is 0 or 1, and $2h$ is the dimension of
$\cD G$. We then have that $h \leq k(k-1)$.  The ideal $(\cD G)R$ is generated by $2h$ variables.
By Corollary~\ref{cDsq},  the image of $\Theta$ mod the ideal $(\cD G)R$, is a vector space
consisting of squares of linear forms and 0:  otherwise, a linear combination of $G$ and another element of $\Theta$
will have larger rank.  Let $q$ be the dimension of the space $W$ spanned by these linear forms.  
Note that if $q - 2h \geq k$,  then the J-rank of $F$ must be at least $k$, since the image of $\cD F$
modulo $(\cD G)$ will contain the image of the squares of the linear forms in $W$. Therefore, we may assume that
$q \leq 2h+k-1 \leq 2k(k-1) + k-1$.  Then $\Theta$ is in the ideal spanned by the $2h$ variables in $\cD G$
and the $q$ linear forms spanning $W$.  It follows that $\cD F$ is in the ideal generated by
these $2h+q$ elements, along with at most $k-1$ elements of $\cD F$:  as many as needed to span $\cD F/\Theta$.
Since we may use Euler's formula (the characteristic is 2, and the degree of $F$ is 3),  we have
that $F$ is in the ideal generated by $2h+q + k-1 \leq 2k(k-1)+ 2k(k-1)+k-1 + k-1 = 2(k-1)(2k+1)$
elements of lower degree, a contradiction. 

In the characteristic 3 case, every element of $\Theta$ is 
in the ideal $J$ generated by $\cD G$, by  Corollary~\ref{cDsq}.  Choose a $K$-basis $\vect y N$ for $R_1$ 
such that $\vect y r$ is a basis for $\cD G$:  since the characteristic of $K$ is not 2,  the rank of $G$ is the
same as the $K$-vector space dimension of $\cD G$.
Now consider the image $\ov{F}$ of $F$ modulo $J$ in the polynomial ring $R/J \cong K[y_{r+1}, \, \ldots, y_N]$.
If $j > r$, then $\partial \ov{F}/\partial y_j$ is simply the image in $R/J$ of $\partial F/\partial y_j$.  Hence,
$\cD \ov{F}$ is contained in the image of $\cD F$ mod $J$. This is spanned by the images of
at most $k-1$ elements of $\cD F$, since $\Theta \inc J$.  Thus $\cD \ov{F}$ has $K$-vector space
dimension at most $k-1$ over $K$.  

We may take a new basis $\vect z {N-r}$ for the forms of degree 1 in $R/J$ such that $\partial\ov{F}/\partial z_h$
are 0 for $h > k-1$.   This implies that every term of $\ov{F}$ that is not a scalar times the cube of a variable is in 
$(\vect z {k-1})R$:   if $cz_iz_jz_t$  or $cz_iz_j^2$ occurs in $\ov{F}$ for $i,j,t$  (respectively, $i,j$) distinct and with 
$c \in K-\{0\}$, then $\partial F/\partial z_i$ has a $cz_jz_t$ term (respectively, a $cz_j^2$ term), and so $z_j$ or 
$z_t$ (respectively, $z_j$) must be in $J$.  The sum of the terms of $\ov{F}$ involving cubes of variables may be 
written as the cube of a single linear form $L$, and this implies that $\ov{F}$ is in the ideal generated by the 
$k$ elements $\vect z{k-1}$ and $L$.  It follows that $F$ is in the ideal generated by $r + k$ linear forms, and 
$r \leq 2(k-1)k$, which yields that $F$ has a $\bigl(2(k-1)k + k\bigr)$-collapse, a contradiction. \end{proof}
 
 We can now carry through the calculation of $\etuA$ in case $d \leq 3$.  

First, we assume that $n_1 =0$. Thus, $\delta = (0, n_2, n_3)$,  and the case where $n_3 = 0$ has
already been handled by Corollary~\ref{A2}.

By the last statement in Theorem~\ref{ngu}, we may take $A_2(0,n_2) = n_2-1$ and hence $A_2(n_1,n_2) =
n_1+n_2-1$.  

By Theorem~\ref{SJrank}, we have at once:

\begin{theorem}\label{A3b} Let $K$ be an algebraically closed field.
Let $b = 2(n_2+n_3) + \eta +1$ if $n_2 \not=0$,  and $2(n_2+n_3) + \eta$ if $n_2 = 0$. 
Then we may take
$$\etuA(0,n_2,n_3) = \Bigl(0, \lceil {b \over 2}\rceil, \fJ_3(b)\Bigr)$$
where $\fJ_3(b) = (2b+1)(b-1)$ if the $\ch(K)$ is not 2 or 3,   $\fJ_3(b) = 2(2b+1)(b-1)$ if
$\ch(K)$ is 2,  and $\fJ_3(b) = 2b^2-b$ if $\ch(K)$ is 3.   
More generally, 
$$\etuA(n_1,n_2,n_3) = \Bigl(0, \lceil {b\over 2}\rceil + n_1, \fJ_3(b) +n_1\Bigr).$$ \qed
\end{theorem}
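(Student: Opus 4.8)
The plan is to obtain Theorem~\ref{A3b} as a direct instance of Theorem~\ref{SJrank}, once suitable nondecreasing J-rank functions $\fJ_2$ and $\fJ_3$ are in hand. For degree $2$, I would take $\fJ_2(k) := \lceil k/2\rceil$, which is manifestly nondecreasing; it is a J-rank function in every characteristic because, by Proposition~\ref{onequad} together with the classification recalled in Proposition~\ref{qucol}, a $\lceil k/2\rceil$-strong quadratic form has rank at least $2\lceil k/2\rceil + 1 > k$, so its J-rank (which equals its rank) is at least $k$. For degree $3$, I would cite the J-rank functions already established: $\fJ_3(k) = (2k+1)(k-1)$ when $\ch(K)\neq 2,3$ by Theorem~\ref{G3}, and $\fJ_3(k) = 2(k-1)(2k+1)$ when $\ch(K)=2$, $\fJ_3(k) = 2k^2 - k$ when $\ch(K)=3$ by Theorem~\ref{R323}. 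Each of these polynomials is nondecreasing for $k\ge 1$, so Theorem~\ref{SJrank} applies with these choices.

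Next I would carry out the substitution. Fix $\delta = (n_1,n_2,n_3)$; the case $n_3 = 0$ is already covered by Corollary~\ref{A2}, so assume $n_3\neq 0$. Then $n = n_1+n_2+n_3$, hence $n-n_1 = n_2+n_3$, and $h$ (the number of nonzero entries among $n_2,n_3$) equals $2$ if $n_2\neq 0$ and $1$ if $n_2 = 0$. The argument fed to $\fJ_i$ in Theorem~\ref{SJrank} is $h-1+2(n-n_1)+\eta$, which equals $1 + 2(n_2+n_3)+\eta$ when $n_2\neq 0$ and equals $2(n_2+n_3)+\eta = 2n_3+\eta$ when $n_2 = 0$ --- in both cases exactly the integer $b$ of the theorem. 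Theorem~\ref{SJrank} therefore yields $\etA_1(\delta) = 0$, $\etA_2(\delta) = \fJ_2(b)+n_1 = \lceil b/2\rceil + n_1$, and $\etA_3(\delta) = \fJ_3(b)+n_1$, which is precisely the asserted formula; the $n_1=0$ specialization gives the first display, and the ``$+n_1$'' in the general case is exactly the loss of strength upon killing the $n_1$ linear forms, already incorporated into Theorem~\ref{SJrank} via Proposition~\ref{obv}(c).

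There is essentially no obstacle here: the substantive content sits upstream, in Theorem~\ref{SJrank} (whose proof rests on Theorem~\ref{codimsing} and Theorem~\ref{MAX2}) and in Theorems~\ref{G3} and~\ref{R323}. The only genuine point to check is the degree-$2$ claim in characteristic $2$, where $\dim\cD F$ can be one less than the rank of $F$; but the J-rank of $F$ is the height of the ideal generated by $F$ together with $\cD F$, which by Proposition~\ref{onequad} equals the rank of $F$ in all characteristics, so the bound $\fJ_2(k) = \lceil k/2\rceil$ is uniform. The remainder is the bookkeeping of identifying $h-1+2(n-n_1)+\eta$ with $b$ in the two cases $n_2 = 0$ and $n_2\neq 0$.
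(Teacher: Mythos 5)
Your proposal is correct and follows essentially the same route as the paper: Theorem~\ref{A3b} is obtained by specializing Theorem~\ref{SJrank} with the J-rank functions of Theorems~\ref{G3} and~\ref{R323} in degree $3$ and the evident degree-$2$ J-rank function $\lceil k/2\rceil$ (justified via Proposition~\ref{onequad}), together with the identification $h-1+2(n-n_1)+\eta = b$ in the two cases $n_2=0$, $n_2\neq 0$. Your explicit check that J-rank equals rank for quadrics in characteristic $2$ is a welcome bit of detail the paper leaves implicit, but it is not a departure from its argument.
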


For application to the degree four case  in \S\ref{4case} we need 
the following:

\begin{corollary}\label{explA3} Let $A_3(n) := \fJ_3(2n-1)$, which is $2(4n-1)(n-1)$ if the characteristic is 
different from 2, 3.
Then a sequence of $n$ linearly independent cubic forms in a polynomial ring $R$ over an algebraically closed 
field such that the vector space $V$ spanned by the forms is $A_3(n)$-strong is a regular sequence. \end{corollary}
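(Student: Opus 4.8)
The plan is to deduce this from Theorem~\ref{A3b} applied to an $(n-2)$-dimensional subspace, together with the ``add one form at a time'' argument sketched in Remark~\ref{stregh}. We may assume $n \geq 2$, since for $n=1$ the cubic $F_1$ spanning $V$ is nonzero by linear independence and hence is a regular sequence of length one; set $m := n-2 \geq 0$. Let $\vect F n$ be $K$-linearly independent cubics spanning $V$, and suppose $V$ is $\fJ_3(2n-1)$-strong. By Proposition~\ref{obv}(a) the subspace $V' := \lc F_1, \ldots, F_m \rc$, spanned by $m$ cubics, is again $\fJ_3(2n-1)$-strong.

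The crucial step is to invoke Theorem~\ref{A3b} with dimension sequence $(0,0,m)$ and $\eta = 3$: since $n_2 = 0$ there, the parameter is $b = 2(0+m)+3 = 2m+3 = 2n-1$, so the hypothesis imposed on the cubic component is exactly that it be $\fJ_3(2n-1)$-strong. This is where the value $2n-1$ in the statement is pinned down. It follows (via Theorem~\ref{etA}) that $F_1, \ldots, F_m$ is a regular sequence and that $R/(F_1,\ldots,F_m)$ satisfies $\rom{R}_3$; being a homogeneous complete intersection with this property it is a UFD, directly from Theorem~\ref{etA} with $\eta=3$ (or, equivalently, by Corollary~\ref{groth}).

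Next I would append the two remaining forms. First note that $\fJ_3(2n-1) \geq 1$ for $n \geq 2$ in all three characteristic cases (each formula for $\fJ_3$ is increasing and positive at the argument $2n-1 \geq 3$). Applying Proposition~\ref{obv}(f) with $\cB = \{F_1,\ldots,F_n\}$, distinguished form $F = F_{m+1}$, no auxiliary forms (so $n' = m = 0$, all forms lying in degree $3$), and $I = (F_1,\ldots,F_m)$, one gets that $F_{m+1}$ is irreducible modulo $(F_1,\ldots,F_m)$. In the UFD $R/(F_1,\ldots,F_m)$ an irreducible element is prime, so $(F_1,\ldots,F_{m+1})$ is prime and $R/(F_1,\ldots,F_{m+1})$ is a domain; in particular $F_{m+1}$ is a nonzerodivisor on $R/(F_1,\ldots,F_m)$. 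A second application of Proposition~\ref{obv}(f), now with $F = F_n$ and $I = (F_1,\ldots,F_{m+1})$, shows $F_n$ is nonzero modulo $(F_1,\ldots,F_{m+1})$, hence a nonzerodivisor on that domain. Therefore $F_1,\ldots,F_n$ is a regular sequence. When $\ch(K) \neq 2,3$ the closed form is then immediate: $\fJ_3(2n-1) = (2(2n-1)+1)((2n-1)-1) = (4n-1)(2n-2) = 2(4n-1)(n-1)$.

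The only real subtlety I anticipate is the bookkeeping that produces $2n-1$ rather than something like $2n+\eta$: one must recognize that $\rom{R}_3$ (not merely reducedness or a domain condition) is what is needed at the $(n-2)$-form stage, precisely so that the remaining two forms can be adjoined using primeness of an irreducible element in a UFD, and that feeding $\eta = 3$ together with an $(n-2)$-dimensional space of cubics into Theorem~\ref{A3b} yields $b = 2(n-2)+3 = 2n-1$. Beyond that the argument is a direct concatenation of Proposition~\ref{obv} and Theorem~\ref{A3b}, with no genuinely new input.
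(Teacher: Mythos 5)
Your proposal is correct and is essentially the paper's own argument: the paper likewise reduces to requiring strength $\etA(0,0,n-2)$ with $\eta=3$ (so $b=2(n-2)+3=2n-1$ in Theorem~\ref{A3b}) and then adjoins the last two forms exactly as in Remark~\ref{stregh}, via irreducibility (Proposition~\ref{obv}) giving primeness in the UFD and then a nonzerodivisor on the resulting domain. Your use of Proposition~\ref{obv}(f) in place of part (g) and the explicit check that $\fJ_3(2n-1)\geq 1$ are just more detailed bookkeeping of the same route.
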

\begin{proof} As in Remark~\ref{stregh},  it suffices if any $n-2$ or fewer of the forms generate an ideal whose 
quotient ring is a UFD, and so it suffices if the strength of $V$ is at least $\etA(0,0,n-2)$ with $\eta = 3$.  In this case,  
$b = 2(n-2) + 3 = 2n-1$. \end{proof}

Note that in all cases,  $\fJ_3(b)$ is quadratic in $b$ with leading coefficient 2 or 4.

By the results of \S\ref{BfromA}  we obtain as well explicit choices
of $\etB$ and $C$.  However, the construction of $\etB$ is recursive,
and the discussion below shows that one gets extraordinarily large values from these techniques.
It remains of great interest to find better bounds for the functions $\etB$.  So far as we know,
it is even possible that one can use a polynomial in $n$ of degree $d$ to bound the projective
dimension of an ideal generated by $n$ forms of degree at most $d$.  If $d=2$, a specific bound that is
quadratic in $n$ is presented in Question 6.2 of \cite{HMMS1}, but it is open question whether this
bound is correct for the case of $n$ quadrics.

\begin{discussion} {\bf The size of $\bm{B(n_1,n_2,n_3)}$.}  The behavior of the value of $B(0,0,n)$ 
obtained from the method of \S\ref{BfromA} is complicated, and the values are very large.  The roles
played by the linear and quadratic forms that arise in the process of repeated application of the method
have a heavy impact. Assume the simplest case, which is that the characteristic is not 2 or 3.  
What can happen is that, for a certain constant positive integer $a$,  one cubic form may collapse, producing
roughly $an^2$ linear forms and $an^2$ quadratic forms.    What may happen next is that all of the quadratic
forms collapse.  This may produce roughly $\cN = an^2 2^{an^2}$ linear forms.  To get another cubic to collapse 
(mod the linear forms one already has) the number $\cN$ of linear forms must be added into the level of strength needed
to get one more cubic to collapse.  That means that getting one more cubic form to collapse will result in at
least  $\cN$ new linear forms and $\cN$ new quadratic forms.  If all  the quadratic forms collapse before another
cubic collapses,  one obtains $\cN2^\cN$ new linear forms, and this number must be added to the level of strength
one needs for the cubics to guarantee a regular sequence.  It should be clear that continuing
in this way, where at each stage all the quadratic forms introduced collapse  before the next 
cubic does, will produce, so far as one can tell {\it a priori}, something worse than $n$-tuple exponential 
behavior, since $n$ repetitions are needed before all the cubics have collapsed. 
Because we believe that these results are far, far larger than needed to bound projective dimension, we have not made 
detailed estimates.
\end{discussion} 

 \section{Sum decompositions of symmetric matrices}\label{sum}
 
 \subsection*{Overview of the argument for quartics}
 
  The results of this and the next three sections will handle the proof of the existence of 
 $\etuA(n_1,n_2, n_3, n_4)$.  The problem we confront is the construction of $\fK_4$.  
 Roughly speaking, we must show  that if $F$ is quartic and every element of $\cD F$ has 
 a strict $k$-collapse, then $F$ itself has a strict $k'$-collapse, where $k'$ is ``small" in the 
 sense that it may depend on $k$ but not on $N$.  To prove
 this we study the Hessian $\fH$ of $F$.  We let $\GL(N,K)$ act on the variables to put
 them in general position. 
 
 Given a matrix $\cM$ with entries in a vector space over a field $K$, we refer to a $K$-linear combination
 of the columns of $\cM$ as an  \LCK-column of $\cM$.  The assumption that every element of $\cD F$
 has a $k$-collapse implies that every \LCK-column of the Hessian $\big(\partial^2F/\partial X_i \partial X_j\bigr)$
 has a collective $(k,\,k)$-collapse (cf. see the last three paragraphs of Definition~\ref{defsafety}), i.e.,
 all of the entries of the \LCK-column are in the sum of an auxiliary ideal generated by at most $k$ homogeneous polynomials
 of  degree 1 and the auxiliary vector space spanned by at most $k$ quadrics.  The reason is that every
 \LCK-column consists of the partial derivatives of an appropriate linear combination $G$ of the $\partial F/\partial X_i$.
 Since $G \in \cD F$,  $G$ has a $k$-collapse, and we can write $G = \sum_{t=1}^k L_t Q_t$, where the $L_t$
 are linear forms 
 and the $Q_t$ are quadrics.  The product rule implies that every $\partial G/\partial X_j$ is in the $K$-vector space 
 sum of the ideal generated by the $L_t$ and the $K$-vector space spanned by the $Q_t$. 

 We want to show that under this $(k,k)$-collapse condition on the \LCK-columns of the Hessian $\fH$,
 we can write $\fH = \fH_1 + \fH_2$,  where these are symmetric matrices
 of quadrics such that (1) all of the \LCK-columns of $\fH_1$ have entries with ``small" 
 rank (independent of the column)  and (2) the entries of
 $\fH_2$ span a vector space $W$ of ``small" dimension.  This means that we will show, in effect,
 that a single vector space $W$ of ``small" dimension can be used for the collective $(k',h')$-collapse
 of all of the columns of $\fH$:  $k'$, $h'$ are typically larger than the original $k$ and $h$ (which were $k$ and
 $k$) but are
 given by functions of them independent of the number of variables generating the polynomial ring
 and also of the base field.  Achieving condition (2) is studied in the next section. 
 After that, in  Section~\ref{hesscol}, we study symmetric matrices $\fH_1$ of quadrics in which  every entry of every \LCK-column has small rank, independent of the column.  
 We prove that for such a matrix, $G = (G_{ij})$, $\sum_{ij} x_ix_j G_{ij}$
 has a small collapse.  We then use the fact that an integer multiple (a restriction on the characteristic
 is needed to make certain the integer is nonzero in the base field) of $F$ is congruent to $G$ modulo an ideal
 with a small number of generators (a basis for $W$) to conclude that $F$ has a small strict collapse.  All of this is
 made precise in the sequel.
 
 Some of the needed results are valid in higher degree cases, and we prove them in that generality.
 
 It will be convenient to use general position arguments in which a certain finite family of elements is taken to 
 be algebraically independent over $K$.  For this purpose we shall sometimes pass to a larger
 algebraically closed extension field obtained from $K$ by adjoining finitely many indeterminates
 and taking an algebraic closure. In many cases this is essentially equivalent to working 
 on a Zariski dense open set of a variety parametrizing the family of elements.  The following
 result gives such an equivalence. 
 
 \begin{theorem}\label{genkh}    Let $\fH$ be an $N \times N$ 
 matrix of $d$-forms in a polynomial ring $R$ over an algebraically closed field $K$.  Let $B$ denote a varying
 element in GL$(N,K)$ and 
 let $v$ denote a varying $N \times 1$ column over $K$. 
 \begin{enumerate}[(a)] 
\item  For a given $v$, $\fH v$ has a collective $(k,h)$-collapse if and only if $B\fH v$ has a collective $(k,h)$-collapse
for some $B \in \GL(N,K)$
 if and only if $B\fH v$ has a
collective $(k,h)$-collapse
 for all $B \in \GL(N,K)$.  
 \item The following conditions are equivalent.
\begin{enumerate}[(1)] 
\item For a dense open set $U$ of $N \times 1$ matrices over $K$,  $\fH v$ has a collective $(k,h)$-collapse 
for all $v \in U$.
\item For a dense open set $U$ of $N \times 1$ matrices over $K$ and all $B \in \GL(N,K)$,  
$B\fH v$ has a collective $(k,h)$-collapse for all $v \in U$.
\item For a dense open set  $\cU$ of $\GL(N,K)$,  every column of $\fH A$ has a collective $(k,h)$-collapse for all $A \in \cU$.
\item For a dense open set $\cU$ of $N \times N$ matrices over $K$ and all $B \in \GL(N,K)$, every column of  $B\fH A$ 
has a collective $(k,h)$-collapse for all $A \in \cU$.
\item For a dense open set $\cU$ of $\GL(N,K)$, every column of  $A\tr \fH A$ has collective $(k,h)$-collapse for all $A \in \cU$.
\item Let $\vect t N$ be indeterminates over $K$ and let $L$ denote the algebraic closure of $K(\vect t N)$.
Let $T$ be the $N \times 1$ column matrix  $ \bigl(t_1\,\, \ldots \,\, t_N\bigr)\tr$. 
Then $\fH T$ has a collective $(k,h)$-collapse over $L$.  
Moreover, the coefficients of all elements occurring in this collective collapse may be taken in 
$D$, where $D$ is a module-finite extension of $K[\vect t N]_g$ and $g \in K[\vect t N] -\{0\}$. 
\end{enumerate} 
\end{enumerate}
 \end{theorem}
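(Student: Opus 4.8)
The plan is to prove (a) first, essentially a one-line remark, and then to reduce every equivalence in (b) other than (1)$\Leftrightarrow$(6) to routine translations between statements about columns $v$ and statements about matrices $A$; the genuine content is (1)$\Leftrightarrow$(6), which I would prove using Chevalley's theorem together with a spreading-out and specialization argument. For (a): since $B\in\GL(N,K)$ is invertible, the entries of $B\fH v$ are $K$-linear combinations of the entries of $\fH v$ and conversely, so the two columns have the same $K$-span of entries, hence generate the same ideal of $R$ and, unless both are $0$, have top degree $d$ among their entries; since having a collective $(k,h)$-collapse depends only on the ideal generated by a set of forms together with their top degree (Definition~\ref{defsafety}), all three conditions in (a) coincide.

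For (b) I would use repeatedly: $\GL(N,K)$ is an irreducible variety and a dense open subset of the affine space $\A^{N^2}$ of all $N\times N$ matrices; for each $i$ the morphism $A\mapsto(i\text{-th column of }A)$ from $\GL(N,K)$ to $\A^N$ is dominant with image $\A^N\setminus\{0\}$; and the columns of $\fH A$ are $\fH$ applied to the columns of $A$. Then (1)$\Leftrightarrow$(2) is immediate from (a) (the same dense open $U$ works, and the ``for all $B$'' is automatic). For (1)$\Leftrightarrow$(3): if (3) holds with dense open $\cU\inc\GL(N,K)$, the image of $\cU$ under $A\mapsto(\text{first column})$ is constructible and dense in $\A^N$, hence contains a dense open $U$, and for $v\in U$, $\fH v$ occurs as a column of some $\fH A$ with $A\in\cU$; conversely, if (1) holds with dense open $U\inc\A^N$, then $\cU=\bigcap_{i=1}^N\{A\in\GL(N,K):i\text{-th column of }A\in U\}$ is a finite intersection of nonempty open subsets of the irreducible variety $\GL(N,K)$, hence dense open. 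The equivalences (3)$\Leftrightarrow$(4) and (3)$\Leftrightarrow$(5) follow the same way, using additionally that left multiplication of a column by a fixed invertible matrix (namely $B$, or $A\tr$ for fixed $A$) does not change whether it has a collective $(k,h)$-collapse, by (a), and that a dense open subset of $\GL(N,K)$ is a dense open subset of $\A^{N^2}$.

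For (1)$\Leftrightarrow$(6): form, as a finite union over the allowed degrees and numbers of the forms $P_1,\dots,P_{k+h}$, the incidence variety $Z\inc\A^N\times\A^M$, where $\A^M$ parametrizes the coefficients of homogeneous $P_1,\dots,P_k$ of positive degrees $\le d-1$, homogeneous $P_{k+1},\dots,P_{k+h}$ of degree $d$, and homogeneous multipliers $G_{js}$ of complementary degrees, cut out by the identities $(\fH v)_j=\sum_s G_{js}P_s$ for all $j$. Since $Z$ is closed, Chevalley's theorem shows $C:=p(Z)\inc\A^N$ is constructible, and (1) holds if and only if $C$ is dense in $\A^N$ (a dense constructible subset of $\A^N$ contains a dense open subset). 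If $C$ is dense, then $p$ is dominant on some irreducible component $Z_0$ of $Z$; a closed point of the generic fibre of $Z_0\to\A^N$ has residue field $E$ finite over $K(\vect tN)$, with the coordinate $v_i$ specializing to $t_i$, so, fixing an embedding $E\inc L=\overline{K(\vect tN)}$, we obtain precisely a collective $(k,h)$-collapse of $\fH T$ over $L$. Clearing denominators in the minimal polynomials of the finitely many coefficients occurring produces $g\in K[\vect tN]-\{0\}$ over which they are all integral, so $D:=K[\vect tN]_g[\text{those coefficients}]$ is a finitely generated integral, hence module-finite, extension of $K[\vect tN]_g$; this gives (1)$\Rightarrow$(6).

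For (6)$\Rightarrow$(1), the map $\Spec D\to\Spec K[\vect tN]_g$ is finite and surjective; after discarding any $P_s$ equal to $0$ and fixing a nonzero coefficient $c_s\in D$ of each remaining $P_s$ ($s>k$), let $\Omega$ be the set of $v\in\A^N$ with $g(v)\ne0$ lying outside the images of the proper closed sets $V(c_s)\inc\Spec D$ — a dense open set, because a finite morphism preserves dimension, so each such image is proper closed. For $v\in\Omega$ pick a point $w$ of the nonempty finite fibre of $\Spec D$ over $v$; its residue field is $K$ since $K$ is algebraically closed, giving a $K$-algebra map $\psi\colon D\to K$ with $\psi(t_i)=v_i$, and applying $\psi$ coefficientwise to $(\fH T)_j=\sum_s G_{js}P_s$ carries the left side to $(\fH v)_j$ and the right side to $\sum_s\psi(G_{js})\psi(P_s)$, where $\psi(P_s)$ is homogeneous of degree $\le d-1$ for $s\le k$ and is a nonzero form of degree $d$ for $s>k$; hence $\fH v$ has a collective $(k,h)$-collapse. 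I expect the main obstacle to be exactly this specialization step together with the construction of $D$: one must arrange $D$ so that specialization at $K$-points is possible on a dense open set, and must track the data of the collapse so that after specialization there are still at most $k$ auxiliary forms of degree $\le d-1$ and at most $h$ of degree $d$. Everything else reduces to (a) and standard facts about constructible and dense open sets.
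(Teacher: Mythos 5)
Your proof is correct. Part (a) and the equivalences (1)--(5) are handled just as in the paper: (a) via the equal $K$-spans of the entries of $\fH v$ and $B\fH v$, and (1)--(5) by routine dense-open bookkeeping on $\GL(N,K)$ (the paper justifies density of the column projection of a dense open subset of $\GL(N,K)$ by a dimension count; your justification via surjectivity of $A\mapsto A^{(i)}$ onto $\A^N\setminus\{0\}$ and irreducibility of $\GL(N,K)$ is an equivalent one). Where you genuinely diverge is the implication (1)$\Rightarrow$(6). The paper argues by contrapositive: if $\fH T$ has no collective $(k,h)$-collapse over $L$, then for every degree pattern the system of coefficient equations is inconsistent over $L$, so by the Nullstellensatz it generates the unit ideal; this is descended first to a module-finite extension $D$ of $K[t_1,\ldots,t_N]_g$ (using that $L$ is the directed union of such rings) and then to $K[t_1,\ldots,t_N]_g$ itself, and specializing at a point of $U$ where $g\neq 0$ yields a contradiction. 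You instead argue directly: the locus $C$ of columns $v$ admitting a collapse is the union of the Chevalley images of finitely many closed incidence varieties, density of $C$ forces some irreducible component to dominate $\A^N$, and a closed point of its generic fibre produces the collapse over a finite extension of $K(t_1,\ldots,t_N)$ inside $L$, with module-finiteness of $D$ obtained by clearing denominators in minimal polynomials. Your route gives a direct, geometric construction of the $L$-collapse and sidesteps the paper's need to produce a single $g$ working simultaneously for all degree patterns; the paper's route avoids introducing the incidence variety and gets $D$ for free from the directed-union description of $L$. For (6)$\Rightarrow$(1) the two arguments coincide (lying over for the finite extension, residue field $K$ at maximal ideals in the fibre, coefficientwise specialization); note that your extra precaution of avoiding the images of the loci $V(c_s)$ is harmless but unnecessary, since a collective $(k,h)$-collapse only requires \emph{at most} $k$ forms of lower degree and at most $h$ forms of degree $d$, so specializing some auxiliary form to $0$ costs nothing.
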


 \begin{proof} (a) is immediate from the observation that the entries of an $N \times 1$ column $w$ have the same
 $K$-span as those of $Bw$ for all $B \in \GL(N,K)$.  
 Part (a) makes it immediately clear that (1) $\dbimp$ (2).
 
(2) $\imp$ (3)  because $U \times \cdots \times U$  ($N$ copies) with elements  thought of as $N \times N$ 
matrices over $K$
 (we may think of the latter as $N$-tuples of columns) is a Zariski dense open set of the $N \times N$ matrices
 over $K$,  and its intersection with $\GL(N,K)$
 is therefore a dense open subset of $\GL(N,K)$.  For the converse, consider a dense open set $\cU$ of
 $\GL(N,K)$, again thought of as $N$-tuples of columns.  The projection map from $\cU$ to, say, its first column
 must have dense image in the space $V$ of $N \times 1$ columns:  otherwise, if the closure has dimension
 $N' < N$,  the dimension of $\GL(N,K)$ would be at most $N' + (N-1)N < N^2$.  The image therefore contains
 a dense open subset $U$ of $V$. Thus, (1) and (3) are equivalent. The equivalence of (2), (4), and (5) is then
 immediate from part (a). 
 
 Let $\fH = \bigl(m_{ij}\bigr)$.  
 The statement in $(1)$ is equivalent to an equational statement formulated below.  
 In the sequel, quantification on $i$, $\mu$ and $\nu$ is 
 as follows: $1 \leq i \leq N$, $1 \leq \mu \leq k$, and $1 \leq \nu \leq h$.
 Let $\ud = \vect d k$  be  a sequence of $k$ positive integers such
 that for every $\mu$, $1 \leq d_{\mu} < d$.  For every $\mu$,  let $H_{\mu}$ be a form
 of degree $d_\mu$ in $\vect x N$ with unknown coefficients and for all $\mu, \, i$ let $H'_{\mu, i}$ be a form
 of degree $d - d_{\mu}$ in $\vect x N$ with unknown coefficients.  For all $\nu$, let
 $P_\nu$ be a form of degree $d$ in $\vect x N$ with unknown coefficients, and for all $\nu,\,i$,  let $Z_{\nu,i}$ be an
 unknown. Denote all the unknown coefficients including the $Z_{\nu,i}$ as $\vect Ys$.   
 For all $\ud$, let $\cE_{\ud}$ denote the set of coefficients of the monomials in the $x_j$ that
 occur in the elements 
  $$
  \sum_{j=1}^N  m_{ij}t_j - \Bigl(\sum_{\mu = 1}^k H_\mu H'_{i,\mu} + \sum_{\nu = 1}^h P_{\nu} Z_{\nu,i} \Bigr),
  $$
  for $1 \leq i \leq N$.  These coefficients will be in $K[\vect t N,\, \vect Y s]$.  
 Then (1) is equivalent to the following statement.  There exists $g \in K[\vect t N] - \{0\}$ such that 
 for every choice of $\ud$, if the $t_i$ are specialized
 to elements $\vect c N$ in $K$ such that $g(\vect c N) \not = 0$, 
 then at least one of the systems of equations obtained by setting all $\cE_{\ud}$ equal to 0 and
 setting $t_i = c_i$ for all $i$ has   a solution in $K$.  Here, the condition $g(\vect c N) \not=0$
 defines the open subset of linear combinations of the columns that have a collective $(k,h)$-collapse.

If we do not have a collective $(k,h)$-collapse for $\fH T$ over $L$  then, for every choice of $\ud$,
$\cE_{\ud}$ defines the empty set over $L$.   Hence, for every choice of $\ud$, the elements
$\cE_{\ud}$ generate the unit ideal in $L[\vect y s]$.  
Since  $L$ is the directed union of the rings  $D$ where  $D$ is a module-finite
extension of  a ring of the form $K[\vect t N]_g$ for some $g \in K[\vect tN]-\{0\}$, we may choose
a subring  $D$ of $L$ module-finite over $K[\vect t N]_g$ for some $g \not=0$ such that
each of the ideals  $(\cE_{\ud})D[\vect Y s]$ is the unit ideal.  Since an ideal becomes the unit ideal
after extension to a module-finite overring iff it was already the unit ideal, it follows that we can
choose  $g \in K[\vect tN] -\{0\}$ such that each of the ideals  $(\cE_{\ud})K[\vect t N]_g[\vect Ys]$ is the
unit ideal.   This is equivalent to the statement that the product of these ideals is the unit ideal, and
we can conclude thats there exists $g \in K[\vect t N]-\{0\}$ with a power in the product of 
the ideals $(\cE_{\ud})K[\vect t N][\vect Y s]$.  

Consider the dense open set in $K^N$ where $g$ does not vanish.
It meets the dense open set  $U$ given in the statement of (1).   Specialize the $t_i$ to values $c_i$ in $K$ such
that $\vect c N$ is in the intersection.
For some $\ud$ the polynomials $(\cE_{\ud})$ vanish for a choice of values for $\vect Ys$  in $K$: because the point is in $U$, 
there is a collective $(k,h)$-collapse over $K$.   But this forces $g$ to vanish, a contradiction.  

Now suppose that one has the collective $(k,h)$-collapse over $L$ and, hence, over some $D_g$ as described.  
Then for any point $v$  of $K^N$ (thought of as $N \times 1$ matrices over $K$) where $g$ does not vanish,  there is a maximal ideal  $\fm$ of $D_g$ lying over the maximal ideal 
of $K[\vect tN]$ corresponding to the  point $v$  (since 
$D_g$ is a module-finite extension of $K[\vect t N]_g$).  The quotient of $D_g$ by $\fm$ is $\cong K$,  and the
specialization $D_g \surj D_g/m \cong K$ yields a collective $(k,h)$-collapse of $\fH v$.
 \end{proof}
  
 \begin{discussion}\label{gensetup} Let $\fH = (m_{ij})$ be an $N \times N$ 
 matrix of forms of degree $d$ in a polynomial ring $R$ over an algebraically closed field $K$. Let $\vect t N$ be indeterminates
 over $K$ and $T$ be the column matrix $\bigl(\vect t N\bigr)\tr$.  Let $L$ be an algebraic closure of $K(\vect t N)$.
 We refer to a collective $(k,h)$-collapse for $\fH T$ over $L\otimes_KR$ as a {\it generic} collective $(k,h)$-collapse
 for the columns of $\fH$. 
 Suppose that
 $$(*) \quad\sum_{j=1}^N m_{ij}t_j = \sum_{\mu=1}^k H_{\mu}H'_{\mu,i} + \sum_{\nu=1}^h P_{\nu}z_{\nu,i}, \ 1 \leq i \leq N$$
displays the collapse.  Here, every  $H_{\mu}$ has degree $d_i$, $1 \leq d_i < d$, every $H'_{\mu,i}$ has 
degree $d-d_i$, every $P_{\nu}$, $1 \leq \nu \leq h$  is a $d$-form and the $z_{\nu,i}$ are scalars. 

 We may choose $D \inc L$ where $D$ is module-finite over $K[\vect t N]_g$ for 
 $g \in K[\vect t N]-\{0\}$,  so that $D$ contains all the coefficients occurring in $(*)$.  
If $h$ cannot be decreased, which we typically will be able to assume, 
the $h$-rowed matrix consisting of all coefficients of the
$P_{\nu}$ has rank $h$,  and some $h \times h$ minor (the entries will be in $D$) will be a nonzero element 
$\beta \in D$.  By enlarging $D$, if necessary,
we may assume that some minor $\beta$ is a unit in $D$.  The $P_\nu$ form a basis for an $h$-dimensional
vector space over $L$ that we denote $W_L$.  We shall write $W_D$ for the free $D$-module spanned
by the $P_\nu$.  Let $\theta:D \to \Omega$ be a $K$-linear homomorphism from $D$ to an algebraically
closed field $\Omega$ containing $K$.  By applying $\theta$ we obtain a collective $(k,h)$-collapse
for $\fH \theta(T)$ over $\Omega\otimes_KR = \Omega[\vect x N]$.  We use images of the $k$ elements 
of $D[\vect x N]$ of degree strictly smaller than $d$ and the images of the $P_\nu$:  the latter span a 
vector space of $d$-forms over $\Omega$
that we denote $W_\theta$.  This vector space may be identified with $\Omega \otimes_D W_D$.  

 A major case will be that where $\theta:D \surj K$ is simply obtained by 
killing a maximal ideal of $D$.  Note that if we have $\theta_0:K[\vect t N] \to \Omega$ such
that $\theta_0(g) \not=0$,   where $D$ is module-finite over $K[\vect t N]_g$,  then $\theta_0$ extends to
$K[\vect t N]_g$ and, hence, to $D$.  

By killing a varying maximal ideal $\fm$ of $D$,  we obtain collective $(k,h)$-collapses 
for various \LCK-columns $\fH v$ of $\fH$,  where $v$ is a point of $K^N$ with $g(v) \not=0$.
Consistent with the conventions above, we denote the $h$-dimensional $K$-vector space of forms 
spanned by the images of  $P_{\nu}$ as $W_\theta$,  where $\theta: D \to D/\fm \cong K$.   The next result
analyzes when one has an element with a strict $s$-collapse in $W_L$.  \end{discussion} 
 
 \begin{theorem}\label{gencol} Let notation and terminology be as in Discussion~\ref{gensetup},
 so that we have fixed a  generic $(k,h)$-collapse for $\fH$ over $L$,  and suppose that
 it is defined over $D \inc L$ where $D$ is module-finite of $K[\vect t N]_g$,  with $g \in K[\vect tN]-\{0\}$. 
 Fix a positive integer $s$.  
 \begin{enumerate}[(a)]
\item  The following two conditions are equivalent
   \begin{enumerate}[(1)]
 \item There is a nonzero form of $W_L$  with a strict $s$-collapse in $L\otimes_KR$.
 \item For some larger choice of $D' \supseteq D$, and every specialization
 $\theta:D' \to K$,   $W_\theta$ has a nonzero form
 with a strict $s$-collapse.  The same holds for every larger choice $D''$ of $D'$.
 \end{enumerate}
 
\item The following two conditions are also equivalent.
 
 \begin{enumerate}[(1)] 
   \item There is no nonzero form of $W_L$  with a strict $s$-collapse in $L\otimes_KR$:  that is,
  $W_L$ is $s$-strong.
  \item For some larger choice of $D' \supseteq D$ and every specialization $\theta:D' \to K$,
  $W_{\theta}$ has no nonzero form with a strict $s$-collapse, i.e., $W_\theta$ is $s$-strong.  The same
  holds for every larger choice $D''$ of $D'$.  
 \end{enumerate}
 \end{enumerate}
 \end{theorem}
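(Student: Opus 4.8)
The plan is a standard spreading-out argument: package the condition ``$W$ contains a nonzero form with a strict $s$-collapse'' into a single constructible subset of $\Spec D$, and then exploit that $\Spec D$ is irreducible (since $D$ is a domain) and Jacobson (since $D$ is a finitely generated $K$-algebra), so that a constructible subset either contains the generic point and hence a dense open, or is contained in a proper closed set and hence avoids a dense open. The two parts (a) and (b) will fall out as the two branches of this dichotomy.

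First I would construct the relative collapse locus. Recall from Discussion~\ref{gensetup} that $W_D$ is the free $D$-module on a basis $\vect P h$ and that, after the enlargement made there, the distinguished $h\times h$ minor $\beta$ is a unit of $D$. Let $\mathbb{A}(W_D)$ be the affine $D$-scheme with $\mathbb{A}(W_D)(\Lambda)=\Lambda\otimes_D W_D$ for $D$-algebras $\Lambda$, with structure map $\pi\colon\mathbb{A}(W_D)\to\Spec D$. For each sequence $\ud=(\vect d s)$ of positive integers with $d_\mu<d$, let $\cC_{\ud}$ be the closed $D$-subscheme of the product of $\mathbb{A}(W_D)$ with the affine spaces parametrizing tuples $\bigl((H_\mu)_\mu,(H'_\mu)_\mu\bigr)$ of forms of degrees $d_\mu$ and $d-d_\mu$, cut out by equating the coefficients in $\sum_\nu a_\nu P_\nu=\sum_\mu H_\mu H'_\mu$. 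Let $\cZ$ be the union over the finitely many $\ud$ of the images of the $\cC_{\ud}$ in $\mathbb{A}(W_D)$, with the zero section removed; by Chevalley's theorem and the fact that the zero section is closed, $\cZ$ is constructible. The crux is a fibre computation: for $\mathfrak p\in\Spec D$ with residue field $\kappa$ and algebraic closure $\overline\kappa$, one has $\mathfrak p\in\pi(\cZ)$ if and only if $W_D\otimes_D\overline\kappa$ contains a nonzero form with a strict $s$-collapse. This holds because the formation of the image of a morphism of finite type commutes with base change at the level of $\overline\kappa$-points (the Nullstellensatz over the algebraically closed $\overline\kappa$ detects nonemptiness of the fibres of $\cC_{\ud}\to\mathbb{A}(W_D)$), so that the geometric fibre of $\cZ$ over $\mathfrak p$ is precisely the strict $s$-collapse locus of Proposition~\ref{colcl}(c) inside $W_D\otimes_D\overline\kappa$. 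Two specializations matter: at the generic point $\eta$, since $\overline{\fra(D)}=L$ and $W_D\otimes_D L=W_L$ (using that $\beta$ is a unit), we get $\eta\in\pi(\cZ)$ iff $W_L$ has a nonzero form with a strict $s$-collapse; at a maximal ideal $\mathfrak m$ with $\theta\colon D\to D/\mathfrak m=K$ (the residue field being $K$ because $K$ is algebraically closed), $W_D\otimes_D K=W_\theta$, so $\ker\theta\in\pi(\cZ)$ iff $W_\theta$ has a nonzero form with a strict $s$-collapse.

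Now I would run the dichotomy, observing throughout that any enlargement $D'\supseteq D$ inside $L$ is again a finitely generated $K$-domain lying in a ring of the type of Discussion~\ref{gensetup} (since $L$ is a directed union of such), that $W_{D'}=D'\otimes_D W_{D}$, whence $W_{\theta''}=W_{\theta''|_{D'}}$ for $\theta''\colon D''\to K$ with $D''\supseteq D'$, and that $\MaxSpec D'$ is dense in $\Spec D'$ with every nonempty open meeting it. For part (a): if condition (1) holds, i.e.\ $\eta\in\pi(\cZ)$, then $\pi(\cZ)$ is dense and constructible, hence contains a principal open $D(f)$ with $0\neq f\in D$; taking $D':=D_f$ makes the corresponding $\pi'(\cZ')$ all of $\Spec D'$, so every $\theta\colon D'\to K$ yields $W_\theta$ with a nonzero form having a strict $s$-collapse, and the same is inherited by all larger $D''$ — this is condition (2). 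Conversely, if condition (2) holds for some $D'$, then every maximal ideal of $D'$ lies in $\pi'(\cZ')$; as these are dense, $\pi'(\cZ')$ is dense and constructible, hence contains the generic point, so $W_L$ has a nonzero form with a strict $s$-collapse — condition (1). Part (b) is the complementary branch: if $W_L$ is $s$-strong, i.e.\ $\eta\notin\pi(\cZ)$, then $\pi(\cZ)$ is not dense, hence contained in a proper closed set $V(\mathfrak a)$; choosing $0\neq f\in\mathfrak a$ and taking $D':=D_f$ makes $\pi'(\cZ')$ empty, so every $\theta\colon D'\to K$ gives $W_\theta$ with no nonzero form admitting a strict $s$-collapse, i.e.\ $s$-strong, inherited by all larger $D''$; the reverse implication is identical to the one in part (a), with ``dense'' replaced by ``misses a dense open'' (using that $\MaxSpec D'\cap\pi'(\cZ')=\emptyset$ forces $\pi'(\cZ')$ to omit a nonempty open and hence the generic point).

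The step I expect to be the main obstacle is the relative constructibility/fibre computation — making precise that the scheme-theoretic collapse locus $\cZ$ over $\Spec D$ restricts, at each point, to the absolute strict $s$-collapse locus of Proposition~\ref{colcl} over the algebraically closed residue field. The delicate point is that images of finite-type morphisms commute with base change only at the level of geometric points, which is exactly why the algebraic closedness of $\overline{\fra(D)}=L$ and of $D/\mathfrak m=K$ is used, and why one is forced to pass to $L$ rather than to the fraction field of $D$. Everything else — the dichotomy for constructible subsets of an irreducible Jacobson scheme, the bookkeeping about enlarging $D$ within $L$, and the stability of $W_\theta$ under further enlargement — is routine.
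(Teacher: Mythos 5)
Your proposal is correct, and it reaches the theorem by a genuinely different route from the paper's. You encode everything into a single constructible incidence locus: the relative strict $s$-collapse locus $\cZ$ inside the affine space of $W_D$ over $\Spec D$, pushed down by Chevalley's theorem to a constructible subset $\pi(\cZ)$ of $\Spec D$ whose fibres over points compute the collapse condition over the algebraically closed residue fields (this is where the Nullstellensatz enters for you), and you then run the generic-point versus closed-point dichotomy on the irreducible Jacobson scheme $\Spec D$: if $\pi(\cZ)$ contains the generic point it contains a dense open, and inverting an element gives a $D'$ all of whose specializations collapse; if it misses the generic point it lies in a proper closed set, and inverting an element gives a $D'$ all of whose specializations are $s$-strong; density of $\MaxSpec$ yields both converse implications directly. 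The paper argues differently: it proves only the forward directions, for (a) by enlarging $D$ to contain the finitely many coefficients witnessing one collapse together with the inverses of a suitable minor and of a nonzero coefficient, and for (b) by applying the Nullstellensatz over $L$ to place powers of the unknown coefficients $z_\nu$ in the ideal generated by the coefficient polynomials $\gamma_\lambda$ and spreading that certificate out to a large $D'$; it then gets both converses for free from the observation that (a)(1)/(b)(1) and (a)(2)/(b)(2) are mutually exclusive exhaustions of the possibilities. Your version buys uniformity and geometric transparency (one locus and one dichotomy handle both parts and both directions), at the cost of invoking Chevalley and the Jacobson property and of two compressed but routine checks: lifting a $K$-point of the constructible image to a $K$-point of $\cC_{\ud}$ when verifying the fibre computation, and replacing your localization $D_f$ by an enlargement of the form allowed in Discussion~\ref{gensetup} (module-finite over some $K[\vect t N]_{g'}$), e.g.\ by inverting the nonzero constant term of an integral equation for $f$ instead of $f$ itself. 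The paper's argument is more elementary and effective, and its exclusivity trick spares it the converse directions entirely.
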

 \begin{proof}  If the first condition in part (a) holds we can choose $D'$ large enough to contain elements
 that are a basis for $W_L$ over $L$ such that these elements remain linearly independent over $K$ for any
 specialization $L' \surj K$ (by enlarging $D'$ to contain the inverse of a suitable minor), to contain the coefficients
 of a nonzero element that has a strict $s$-collapse, the inverse of at least one of its nonzero coefficients,
 as well as the coefficients of the polynomials needed to exhibit the collapse.  Then for every specialization
 $\theta: D' \surj K$, the image of the element with the collapse is nonzero and has a strict $s$-collapse. This shows
 $(1) \imp (2)$ in part (a).   The statement about larger choices $D''$ is then obvious, since any specialization
 $D'' \surj K$ restricts to a specialization $D' \surj K$.
   
 Now assume that no nonzero element of $W_L$ has a strict $s$-collapse.   Again choose $D'$ sufficiently
 large to contain the coefficients of a basis for $W_L$ over $L$ such that these elements remain linearly independent 
 over $K$ for any specialization $D' \to K$.  Call this basis $\vect \beta h$.  Let $\vect z h$ be new indeterminates.
 Let $\vect P s$, 
 $P'_1, \, \ldots, \, P'_s \in L[\vect x N]$ be polynomials in $\vect x N$  with new unknown coefficients $y_j$ 
 such that $P_i$ has degree $d_i$ and $P_i'$ has degree $d-i$.  The fact that only the zero element
 in $W_L$ has an $s$-collapse means that for every choice of $\vect d s$ and every choice of 
 specialization of $\vect zh$, $\vect yj$ to values in $L$, 
 if the unique entry $\Delta$ of $\bigl(z_1\,\, \cdots  \,\,z_h)\fH T$  is 0, then
 all of the $z_\nu$ have value 0. Let $\gamma_\lambda$ denote the coefficients of $\Delta$ when they are thought
 of as polynomials in $\vect x N$.  The $\gamma_\lambda$ are polynomials over $L$ in the $z_\nu$ and $y_j$.
 (Note that $T$ has entries in $L$.)  By Hilbert's Nullstellensatz, we then have that for every choice of $\vect d s$,
 every $z_\nu$ has a power in the ideal generated by the $\gamma_\lambda$ in $L[z_\nu, y_j: \nu, j]$.  
 The same holds over $D'$ for a sufficiently large choice of $D'$:  $D'$ can be chosen so large as to work for every
 choice of $\vect d s$.   For this or any larger
 choice of $D'$,  if one has a specialization $\theta:D' \surj K$,  there is no nonzero element with a strict $s$-collapse
 in $W_\theta$.    This shows $(1) \imp (2)$ in part (b).  
 
 Since (a) part (1) and (b) part (1) give a mutually exclusive exhaustion of the possibilities, we have that in
 every instance either the statement in (a) part (2) or the statement in (b) part (2) holds, these are obviously
 mutually exclusive.  It follows that in each part, statements (1) and (2) are equivalent.  
 \end{proof}
 
When one has a collective $(k,h)$-collapse for a set of forms of degree $d$,  the set of $k$ forms of degree
strictly smaller than $d$ generates an ideal $\fA$: recall that is called the {\it auxiliary} ideal of the collapse, while the set of 
$h$ forms of degree $d$ span a vector space of dimension at most $h$ that we call the {\it auxiliary} vector 
space of the collapse.  The next result will be useful in modifying a collective $(k,h)$-collapse so that the representation of an element
as a sum of an element in the auxiliary ideal and another in the auxiliary vector space is unique.

\begin{theorem}\label{mvclpse}  Let $a,k,h \in \N$ with $a,k >0$ and let $S$ be a set of $d$-forms over the 
polynomial ring $R$  with a collective $(k,\,h)$-collapse with auxiliary ideal $\fA$ and auxiliary vector space $V$.   
 Define a sequence $(k_i, b_i)$ by the rule $k_0 = k$, $b_0 = 0$,  $b_{i+1} = ak_i$,  $k_{i+1} = k_i + b_{i+1}$. 
 By a straightforward induction, $b_i = (a+1)^{i-1}ak$ for $ i \geq 1$ and $k_i = (a+1)^i k$, $i \geq 0$. 
 Choose a maximal sequence of linearly independent elements $\vect v m \in V$ such that $v_i$ has a strict
 $b_i$-collapse.  Necessarily,  $m \leq h$.  Write $V = V_1 \oplus W$,  where $V_1$
 is the span of $\vect v m$.  Because $v_i$ has a $b_i$-collapse for $1 \leq i \leq m$, for each such $i$ we
 can choose an ideal  $\fJ_i$ with at most
 $b_i$ homogeneous generators of positive degree smaller than the degree of $v_i$ such that 
 $v_i$ is in $\fJ_i$.   Then $S$ has a collective $(k', h-m)$-collapse with 
 $k' = k_m = (a+1)^{m}k$. The auxiliary ideal is $\fA + \sum_{i=1}^m\fJ_i$ and the auxiliary vector space
 is $W$.  \end{theorem}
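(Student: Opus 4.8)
The plan is to unwind the definition of a collective collapse and then repackage the given $(k,h)$-collapse, moving the span $V_1$ of the chosen forms $v_i$ out of the auxiliary vector space and into the auxiliary ideal. By hypothesis, a collective $(k,h)$-collapse of $S$ means $S \inc \fA + (V)$, where $\fA$ is generated by at most $k = k_0$ homogeneous forms of degree $<d$ and $V \inc R_d$ with $\di_K V \leq h$; since $\vect vm$ are linearly independent elements of $V$, we get $m \leq h$ immediately. First I would extract the ideals $\fJ_i$ from the strict collapses: a strict $b_i$-collapse of $v_i$ exhibits $v_i$ as a graded linear combination of at most $b_i$ forms of positive degree strictly less than $\deg v_i = d$, so taking $\fJ_i$ to be the ideal generated by those forms, one has $v_i \in \fJ_i$ and $\fJ_i$ has at most $b_i$ homogeneous generators of positive degree $<d$, as the statement requires.

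Next I would assemble the new collapse. Writing $V = V_1 \oplus W$ gives $(V) = (V_1) + (W)$, and since $\vect vm$ span $V_1$ with each $v_i \in \fJ_i$, we have $(V_1) = (\vect vm) \inc \sum_{i=1}^m \fJ_i$. Combining,
$$ S \inc \fA + (V) = \fA + (V_1) + (W) \inc \Bigl(\fA + \sum_{i=1}^m \fJ_i\Bigr) + (W), $$
which is a collective collapse of the asserted shape. Its auxiliary ideal $\fA + \sum_{i=1}^m \fJ_i$ is generated by at most $k_0 + \sum_{i=1}^m b_i$ homogeneous forms of degree $<d$ (each $v_i$ has degree $d$, so every generator coming from a $\fJ_i$ has degree $<d$, as do the generators of $\fA$), and the recursion $k_{i+1} = k_i + b_{i+1}$ telescopes to $k_0 + \sum_{i=1}^m b_i = k_m = (a+1)^m k$; thus there are at most $k' := k_m$ of them. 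Its auxiliary vector space is $W \inc R_d$, and $\di_K W = \di_K V - m \leq h - m$. Hence $S$ has a collective $(k', h-m)$-collapse with auxiliary ideal $\fA + \sum_{i=1}^m \fJ_i$ and auxiliary vector space $W$, as claimed.

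I do not expect a real obstacle here; the content is bookkeeping. The only points that want some care are: (i) observing that the factors produced by each strict $b_i$-collapse genuinely have degree below $d$, so they join the auxiliary ideal rather than perturbing the auxiliary vector space; (ii) checking that the generator count telescopes \emph{exactly} to $k_m$ rather than merely being bounded by it, which comes straight from $k_{i+1} = k_i + b_{i+1}$ and $k_0 = k$ (and also reproves the closed forms $b_i = (a+1)^{i-1}ak$, $k_i = (a+1)^i k$ quoted in the statement); and (iii) noting that maximality of the sequence $\vect vm$, while not actually needed for the conclusion proved here, is what forces the construction to terminate and --- since $b_{m+1} = a k_m \ge k_m$ as $a \ge 1$ --- guarantees that $W$ contains no nonzero form with a strict $k_m$-collapse, which is what will make the auxiliary-ideal/auxiliary-space splitting of an element essentially unique in the applications.
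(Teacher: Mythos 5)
Your proof is correct and is in substance the same as the paper's: the paper absorbs the $v_i$ into the auxiliary ideal one at a time by induction, increasing the generator count from $k_i$ to $k_{i+1}=k_i+b_{i+1}$, while you carry out the identical absorption in a single step via $S \subseteq \fA + (V_1) + (W) \subseteq \bigl(\fA + \sum_{i=1}^m \fJ_i\bigr) + (W)$ together with the same telescoping count $k + \sum_{i=1}^m b_i = k_m$. Your closing observation about maximality is also exactly the point the paper records at the end of its proof (in the slightly stronger form that no nonzero element of $W$ has a strict $ak'=b_{m+1}$-collapse, which is what the subsequent uniqueness arguments use).
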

 
\begin{proof}  If no element of $V-\{0\}$ has a strict $ak$-collapse,  then the 
 sequence of $v_i$ is empty,  $m=0$,  and $W = V$.  If some $v_1 \not=0$ has an $ak$-collapse,
 then $S$ has a collective $(k + ak, \, m-1)$ collapse, where the auxiliary ideal is the sum of 
 $\fA + \fJ_1$, which has at most $k+ak$-generators, and the auxiliary vector space is the span of
 $v_2, \ldots, \, v_m$ and $W$.  By a straightforward induction on $i$,  $S$ has a collective $(k_i, m - i)$
 collapse in which the auxiliary ideal is 
 $\fA + \sum_{\nu=1}^i \fJ_\nu$ 
  and the auxiliary vector space is
 the span of $v_{i+1}, \ldots, v_m$ and $W$:   one writes each element of $S$ as 
 the sum of an element  
 $F \in \fA + \sum_{\nu=1}^i \fJ_\nu$ 
 and a sum  $c_{i+1} v_{i+1} + c_{i+2}v_{i+2} + \cdots
 + c_mv_m + w$,  where the $c_\nu$ are scalars and $w \in W$, and then takes $F + c_{i+1}v_{i+1}$ as the
 new strictly $k_{i+1}$-collapsible part, and $c_{i+2}v_2 + \cdots + c_mv_m + w$ as the part in the auxiliary
vector space.    The number of generators needed for the auxiliary ideal increases
 from $k_{i}$ by the maximum number of generators for $\fJ_{i+1}$ which is at most $b_{i+1}$,  
 and it is clear that the dimension of the auxiliary vector space is at most $h-i$.  In particular,
 it follows for $i = m$, that one has a collective $(k', h-m)$-collapse as stated.   By the maximality
 of the sequence $\vect v m$,  no element of $W-\{0\}$ has a strict $ak' = ak_m = b_{m+1}$-collapse, 
 or we could take that element to be $v_{m+1}$.   \end{proof}

\begin{proposition}\label{uq} If a family of forms of degree $d$ has a collective $(k,h)$-collapse that uses an 
 auxiliary vector space $V$ in which no nonzero element has a strict $2k$-collapse (i.e., $V$ is $2k$-strong),  
 then every $K$-linear combination of elements of the family can be written uniquely as the sum
 of an element with a strict $k$-collapse and an element of $V$. \end{proposition}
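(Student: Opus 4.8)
The plan is to separate the assertion into an existence statement, which is a direct unwinding of the definition of a collective $(k,h)$-collapse, and a uniqueness statement, where the hypothesis that $V$ is $2k$-strong is the only ingredient that does any work. Let $\fA$ be the auxiliary ideal of the given collapse, so that $\fA$ is generated by at most $k$ forms of positive degree $< d$ and every form of the family -- hence every $K$-linear combination $f$ of forms of the family -- lies in $\fA + V$. Since $f$ is a $d$-form and $V$ consists of $d$-forms, in any expression $f = a + v$ with $a \in \fA$ and $v \in V$ the element $a = f - v$ is automatically a $d$-form. The first step is therefore to record the elementary fact that a $d$-form lying in an ideal generated by at most $m$ forms of positive degree $< d$ has a strict $m$-collapse (with the convention that the zero form has a strict $m$-collapse for every $m$): taking the degree-$d$ homogeneous component of an expression $a = \sum_j h_j g_j$ rewrites $a$ as a sum of at most $m$ products of forms whose degrees are positive, since $0 < \deg g_j < d$, and sum to $d$. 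Applying this with $m = k$ gives existence.

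For uniqueness, I would suppose $f = a + v = a' + v'$ with $a$ and $a'$ admitting strict $k$-collapses and $v, v' \in V$. Then $v' - v = a - a'$ is a $d$-form; writing out the two $k$-collapses exhibits $a$ and $a'$ as lying in ideals each generated by at most $k$ forms of positive degree $< d$, so $a - a'$ lies in an ideal generated by at most $2k$ such forms, and hence, by the elementary fact above applied with $m = 2k$, it has a strict $2k$-collapse. But $v' - v$ is an element of $V$, which is $2k$-strong, and for a single form a ``$k$-collapse'' is automatically strict, so $2k$-strength of $V$ means exactly that no nonzero element of $V$ has a strict $2k$-collapse. Therefore $v' - v = 0$, whence $v = v'$ and then $a = a'$.

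I do not expect a genuine obstacle. The only point that deserves care -- and the one I would either isolate as a small lemma or check inline -- is the passage between ``a $d$-form contained in an ideal with at most $m$ homogeneous generators of positive degree $< d$'' and ``a $d$-form with a strict $m$-collapse'', in the direction needed here, together with the trivial observation that the union of the generator sets coming from two strict $k$-collapses has at most $2k$ elements. The actual content of the proposition is just that the threshold $2k$ in the strength hypothesis on $V$ has been chosen precisely so that the difference of two $d$-forms each having a strict $k$-collapse can never be a nonzero element of $V$.
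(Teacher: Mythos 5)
Your proof is correct and follows essentially the same route as the paper: the paper's argument is exactly your uniqueness step, namely that the difference of two elements with strict $k$-collapses has a strict $2k$-collapse and hence cannot be a nonzero element of the $2k$-strong space $V$. The existence part, which you spell out via degree-$d$ homogeneous components, is left implicit in the paper as an unwinding of the definition of a collective $(k,h)$-collapse, so there is no substantive difference.
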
 
 
\begin{proof} If  $G_1 + v_1 = G_2 + v_2$ where the $G_i$ have a strict $k$-collapse and the $v_i \in V$,
 then $G_2 - G_1 = v_1 - v_2 \in V$ has a strict $2k$-collapse, a contradiction unless $v_1 = v_2$. \end{proof}
 
 \begin{discussion} {\bf Uniqueness of representation.}
 By combining Theorem~\ref{mvclpse} and Proposition~\ref{uq}, we can start with a 
 collective $(k,h)$-collapse of a family of $d$-forms and, taking $a = 2$ (or a larger value)
 in Theorem~\ref{mvclpse}, modify it to a collective
 $(3^{m}k, h-m)$-collapse for some $m$,  $0 \leq m \leq h$ in which the representation
 of every form in the family as the sum of an element with a strict $(3^{m}k)$-collapse
 and an element in the auxiliary vector space of dimension $h-m$ is unique.  Let
 $k' = 3^{m}k$ and $h' = h-m$.  Note that $k' \leq 3^hk$ and $h' \leq h$.   In the situation
 of Theorem~\ref{gencol},  we may modify a generic collective $(k,h)$-collapse in this
 way to a $(k',h')$-collapse in which the auxiliary vector space is $2k'$-strong.  By Theorem~\ref{gencol}, 
 the same condition holds for the collapses that arise by specialization as in Theorem~\ref{gencol} 
 part (b)(2).  \end{discussion}

\section{Using a single auxiliary vector space}\label{findW}         

 Again let all notation be as in Discussion~\ref{gensetup}, so that $L$ is an algebraic closure of
 $K(\vect t N)$, and that we have a collective $(k,h)$-collapse for $\fH T$:  we may assume that
 there is no such collapse for a smaller value of $h$, so that the auxiliary vector space has 
 dimension $h$.

\begin{discussion}\label{Tmtx}
Suppose that $t_{ij}$, where $1 \leq i,\,j \leq N$, are any $N^2$ algebraically independent elements in an algebraically closed field
$\Omega$.  Let $\cT = \bigl(t_{ij}\bigr)$ and let $T^{(j)}$ denote the $j\,$th column of $\cT\tr$, whose
$i\,$th entry is  $t_{ji}$.  For every $j$, $1 \leq j \leq N$, let $L_j$ be the algebraic closure of $K(t_{j1}, \, \ldots, \, t_{jN})$ within $\Omega$ and fix a $K$-isomorphism of $L$ with $L_j$ that extends the $K$-isomorphism of $K[\vect tN]$ with
$K[t_{j1}, \, \ldots, \, t_{jN}]$ that sends $t_i \mapsto t_{ji}$,  $1 \leq i \leq N$.   Let $\theta_j$ be the composite
$L \to L_j \inj \Omega$.
Specializing
$T$ to $T^{(j)}$  yields a collective $(k,h)$-collapse for $\fH T^{(j)}$:  let $W_j$ denote the auxiliary vector space over
$\Omega$ that is used. We write $\fA_j$ for the auxiliary ideal in $L_j[\vect x N]$, which will be generated
by $k$ or fewer polynomials of degree strictly less than $d$. Thus, we may write
$$
\fH T^{(j)} = \fC^{(j)} + \fD^{(j)}
$$ 
where the entries of columns $\fC^{(j)}$ and $\fD^{(j)}$ have coefficients in $L_j$,
the entries of $\fC^{(j)}$ are in the ideal $\fA_j$, and the entries of $\fD^{(j)}$ span the vector
space $W_j$.   By left multiplying by $\cT = \bigl(t_{ij}\bigr)$ we get a collective collapse for the $j\,$th 
column of $\cT \fH \cT\tr$  that uses the same vector space  $W_j$, since 
$$
\cT \fH T^{(j)} = \cT \fC^{(j)} + \cT \fD^{(j)}.
$$  
Because we have 
a collective $(k,h)$-collapse for each of the columns of $\cT \fH \cT\tr$,  we can write
$$\cT \fH \cT\tr = \fP + \fQ$$ 
where each column of the $N \times N$ matrix $\fP$ has a strict $k$-collapse and the $j\,$th column of the matrix
$\fQ$ is in the vector space $W_j$  generated over $\Omega$ by $d$-forms with coefficients in $L_j$.  
Each $W_j$ has dimension $h$.  \medskip
\end{discussion}

The next result plays a critical role in the proof of our main result for quartics.

\begin{theorem}\label{oneW}  Let $\fH$ be a symmetric matrix with entries that are 0 or else forms of degree
$d$ in a polynomial ring $R$ over an algebraically closed field $\Omega$. Let $j$ be any integer, $1 \leq j \leq N$.  Suppose that  for $i = j$ and for $N-h \leq i \leq N$,  the $i\,$th column  $\fH^{(i)}$ of $\fH$ 
can be written as $\fP^{(i)} + \fQ^{(i)}$,  where the entries of $\fP^{(i)}$ are in an ideal
$\fA_i$ generated by at most $k$ forms of degree strictly less than $d$, and the $\fQ^{(i)}$ span a vector
space $V_i$ over $\Omega$ of dimension at most $h$ whose nonzero elements are $d$-forms.
This explicitly shows that for $i = j$ and for $N-h \leq i \leq N$,  the $i\,$th column of $\fH$ has
a $(k,\,h)$-collapse using the ideal $\fA_i$ and the auxiliary vector space $V_i$.  
Also assume that if $i < N-h$ (respectively, $i \geq N-h$),   $V_i$ is spanned by the bottommost 
$h$ elements of $\fQ^{(i)}$ (respectively, the bottommost $h+1$ elements of $\fQ^{(i)}$).
Let $W_\Omega = W$ be the vector space spanned over $\Omega$ by the entries of the bottommost $h+1$ 
elements of the $\fQ^{(i)}$ for $N-h \leq i \leq N$. Let $\fd := \di_{\Omega}(W)$.   Then the $j\,th$ column of $\fH$ has a
$\bigl(k(h+1), \fd \bigr)$-collapse using the ideal $\fA = \fA_j + \sum_{i=N-h+1}^N \fA_i$ and the
auxiliary vector space $W$. \end{theorem}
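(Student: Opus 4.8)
The plan is to split on whether the index $j$ lies in the ``tail'' $\{N-h,\dots,N\}$ or not, and in the harder case to use the symmetry of $\fH$ to transport the relevant entries of the $j$-th column into positions already controlled by $W$. First I would record the bookkeeping that does most of the work: the ideal $\fA = \fA_j + \sum_{i=N-h+1}^N\fA_i$ is generated by at most $k + hk = k(h+1)$ forms of degree $<d$, and $W$ is by construction spanned by $d$-forms and has dimension $\fd$; so it suffices to show that every entry of $\fH^{(j)} = \fP^{(j)} + \fQ^{(j)}$ lies in $\fA + W$. Since the entries of $\fP^{(j)}$ already lie in $\fA_j \inc \fA$, the whole task is to prove that $V_j \inc \fA + W$, where $V_j$ is the span of the entries of $\fQ^{(j)}$.

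If $j \geq N-h$, this is immediate: by hypothesis $V_j$ is spanned by the bottommost $h+1$ entries $\fQ^{(j)}_{N-h},\dots,\fQ^{(j)}_N$ of $\fQ^{(j)}$, and these are among the generators of $W$ by the very definition of $W$; hence $V_j \inc W$, and the $j$-th column of $\fH$ has a collapse using $\fA_j$ and $V_j$, hence a fortiori the asserted $\bigl(k(h+1),\fd\bigr)$-collapse using $\fA$ and $W$.

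If $j < N-h$, then by hypothesis $V_j$ is spanned by the bottommost $h$ entries $\fQ^{(j)}_{N-h+1},\dots,\fQ^{(j)}_N$, so it is enough to place each $\fQ^{(j)}_q$ with $q \in \{N-h+1,\dots,N\}$ in $\fA + W$. I would write $\fQ^{(j)}_q = \fH^{(j)}_q - \fP^{(j)}_q = \fH_{qj} - \fP^{(j)}_q$, noting $\fP^{(j)}_q \in \fA_j \inc \fA$, and then treat $\fH_{qj}$ by symmetry: $\fH_{qj} = \fH_{jq} = \fP^{(q)}_j + \fQ^{(q)}_j$, using the column-$q$ decomposition, which is available because $q \geq N-h$. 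Here $\fP^{(q)}_j \in \fA_q \inc \fA$ since $q \in \{N-h+1,\dots,N\}$, while $\fQ^{(q)}_j \in V_q$, and $V_q$ — again because $q \geq N-h$ — is spanned by the bottommost $h+1$ entries of $\fQ^{(q)}$, so $V_q \inc W$. Therefore $\fH_{qj} \in \fA + W$, hence $\fQ^{(j)}_q \in \fA + W$, hence $V_j \inc \fA + W$, and every entry of $\fH^{(j)}$ lies in $\fA + W$. This is precisely a collective $\bigl(k(h+1),\fd\bigr)$-collapse of the $j$-th column with auxiliary ideal $\fA$ and auxiliary vector space $W$.

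I do not expect a genuine obstacle here: the argument is short, and the only point needing care is the index accounting — in particular why $\fA$ requires $\fA_q$ only for $q$ in the strict tail $\{N-h+1,\dots,N\}$ together with $\fA_j$, and why $\fA_{N-h}$ may be omitted entirely: in the case $j<N-h$ one only ever invokes column decompositions at indices $q \geq N-h+1$, and in the case $j \geq N-h$ one uses nothing beyond $\fA_j$ and $V_j \inc W$. A secondary, purely cosmetic point is that when $j < N-h$ the index $q \geq N-h+1$ is automatically distinct from $j$, so the symmetry step $\fH_{qj} = \fH_{jq}$ is applied at genuinely off-diagonal positions, though this is not needed for correctness.
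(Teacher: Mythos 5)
Your proof is correct and follows essentially the same route as the paper's: in the nontrivial case $j<N-h$ you use the symmetry $\fH_{qj}=\fH_{jq}$ to re-express the bottommost $h$ entries of the $j\,$th column via the decompositions of the rightmost columns, landing each in $\fA_q+W$, exactly as in the paper's argument with $H_i=P_i+Q_i$. The index bookkeeping (at most $k(h+1)$ generators for $\fA$, and $V_q\inc W$ for $q\geq N-h$) matches the paper's as well.
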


\begin{proof} The statement is obvious for elements of the $j\,$th column if $j \geq N-h$: each is the sum of
an element with a strict $k$-collapse using $\fA_j$ and an auxiliary vector space contained in $W$.
Now consider any element $F$ 
of the vector space spanned by the $j\,$th column of $\fH$ for $j < N-h$.  It is the sum of an element
$F_0$ with a strict $k$-collapse using the ideal $\fA_j$ for that column and an element $q$ in the span of 
$\fQ^{(j)}$.  Then $q = \sum_{i=N-h+1}^N c_i q_i$  where the $c_i \in \Omega$ and 
$q_{N-h+1}, \, \ldots, \, q_N$  are the $h$ 
bottommost elements in  $\fQ^{(j)}$.   
Each of the $q_i$ 
comes from writing an element $H_i$ among the
bottommost $h$ entries of the $j\,$th column of $\fH$ as $G_i + q_i$,  where $G_i$ has a strict $k$-collapse 
using $\fA_j$. But by transposing the symmetric matrix $\fH$, we may also think of $H_i$ as an element 
of one of the rightmost $h$ columns of $\fH$,   and, as such, we can write $H_i = P_i + Q_i$  where  $P_i$ has
a strict $k$-collapse using the ideal $\fA_i$ for the $i\,$th column of $\fH$  and $Q_i \in W$.   
Thus, $q_i = H_i - G_i = P_i - G_i + Q_i$ and
$$F = F_0 + \sum_{i = N-h+1}^N c_i(P_i - G_i + Q_i),$$
where each term is either in $\fA_j$, or in $\fA_i$, $N-h+1 \leq i \leq N$, or in $W$.  Since 
$\fA = \fA_j + \sum_{i= N-h+1}^N \fA_i$
is the sum of $h+1$ homogeneous ideals with at most $k$ homogeneous generators of degree strictly less than $d$,
$\fA$ is a homogeneous ideal with at most $k(h+1)$ homogeneous generators of degree strictly less
than $d$, and the $j\,$th column of $\fH$ therefore has a $\bigl(k(h+1),\, \fd\bigr)$-collapse
using the ideal $I$ and the auxiliary vector space $W$. \end{proof}

\begin{theorem}\label{propW} Let notation and hypothesis  be as in Discussions~\ref{gensetup}~and~\ref{Tmtx}.
\begin{enumerate}[(a)]
\item The vector space spanned over $\Omega$ by a column of $\fQ$ is spanned by any $h$ off-diagonal entries of
the column.  
\item The vector space $W_\Omega = W$ spanned by the $h+1$ rightmost columns of $\fQ$  is the same as 
the vector space spanned by the entries of the columns  $\fD^{(j)}$ for $N-h \leq j \leq N$.  It is also the same 
as the  span of the entries of $\fQ$ with indices  $i, j$ satisfying  $N-h \leq i, \, j \leq N$ with $i \not= j$,  i.e., of the 
off-diagonal entries of the $(h+1) \times (h+1)$ submatrix of $\fQ$ in the lower right corner. 
The latter set of generators of $W$ consists of elements that have coefficients in the field $\cL$
that is the algebraic closure in $\Omega$ of the field $K(t_{ji}: N-h \leq  i \leq N, \, 1 \leq j \leq \N)$.  
Moreover, $\fd := \di_\Omega(W)$ is at most $h(h+1)$.   

\item Every column of $\cT \fH \cT\tr$ has
a collective $\bigl(k(h+1),\, \fd \bigr)$-collapse in which the $\fd$ elements of degree $d$ can be
taken to be any basis for $W$. In particular, these $\fd$ elements may be chosen from the off-diagonal
elements of the square size $h+1$ submatrix of $\cT \fH \cT\tr$ in the lower right corner, in which case
they have coefficients in $\cL$.  The ideal of the collapse is generated by the ideals of the collapses
of the given column and the rightmost $h$ columns of $\cT \fH \cT\tr$.  
 Thus the auxiliary vector space for all these $\bigl(k(h+1),\, \fd \bigr)$-collapses, which is $W$,  is independent
 of the choice of the column.  
   
 \item Let $Z_0$ be any $N-(h+1)$ size square matrix over $\Omega$ and let $Z$ be the direct sum
 of $Z_0$ with a size $h+1$ identity matrix $I$, so that, in block form, 
 $Z = \begin{pmatrix} Z_0 & 0\\ 0 & I \end{pmatrix}$. Suppose that $ZT$ has entries that are algebraically
 independent over $K$.  Then every column of  $(Z\cT)\fH(Z\cT)\tr$ has a $\bigl(k(h+1),\, \fd\bigr)$-collapse
 using $W$ as the auxiliary vector space. 
 
  \item We also have the $\bigl(k(h+1),\, \fd \bigr)$-collapses described in part (c) and (d) if $\Omega$ is replaced
 by any larger algebraically closed field $\Omega'$:   $W$ is replaced by $W_\Omega' := \Omega' \otimes_\Omega W$
 but may be taken to have the same basis.  
 
\end{enumerate}
\end{theorem}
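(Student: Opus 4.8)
The plan is to deduce everything from Theorem~\ref{oneW} by feeding it the matrix $\fH' := \cT\fH\cT\tr$ together with the column decompositions manufactured in Discussion~\ref{Tmtx}; the genuinely new work is only to verify Theorem~\ref{oneW}'s spanning hypothesis and to track which coefficients land in which subfield of $\Omega$. For part (a), fix a column $\fQ^{(j)} = \cT\fD^{(j)}$. Since $\cT \in \GL(N,\Omega)$ the $\Omega$-span of its entries equals that of the entries of $\fD^{(j)}$, namely the auxiliary space $W_j$, and $\di_\Omega W_j = h$: the $h$ forms spanning the auxiliary space over $L$ remain $L$-linearly independent by minimality of $h$, hence remain linearly independent as forms with coefficients in the subfield $\theta_j(L) = L_j \inc \Omega$, and therefore over $\Omega$, since independence of coefficient vectors survives field extension. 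Expanding $\fD^{(j)}$ against an $L_j$-basis of $W_j$, the coefficient vector of the $i$th entry of $\fQ^{(j)}$ becomes the $i$th row of $\cT S$ for a fixed $N\times h$ matrix $S$ of rank $h$ over $L_j$; for any $h$ off-diagonal indices $i_1,\dots,i_h\ne j$ the corresponding rows of $\cT$ have entries algebraically independent over $L_j$, so the $h\times h$ block they cut from $\cT S$ is a generic $h\times h$ matrix and hence invertible, making those $h$ off-diagonal entries a basis of $W_j$. The diagonal entry is excluded precisely because its coefficients $t_{jk}$ already lie in $L_j$.

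Part (c) is then Theorem~\ref{oneW} applied to the symmetric $\Omega$-matrix $\fH' = \cT\fH\cT\tr$: its $i$th column is $\cT\fH T^{(i)} = \fP^{(i)} + \fQ^{(i)}$ with $\fP^{(i)}$ in an ideal $\fA_i$ on at most $k$ generators of degree $<d$ and $\fQ^{(i)}$ spanning $W_i$ of dimension $h$, and part (a) says the bottommost $h$ entries of $\fQ^{(i)}$ (the bottommost $h+1$ when $i\ge N-h$) already span $W_i$ — exactly the spanning hypothesis Theorem~\ref{oneW} requires. Its conclusion is the collective $\bigl(k(h+1),\fd\bigr)$-collapse of column $j$ with auxiliary ideal $\fA_j + \sum_{i=N-h+1}^N\fA_i$ and auxiliary space $W$ generated by the bottommost $h+1$ entries of the columns $\fQ^{(i)}$, $N-h\le i\le N$. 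Applying part (a) to each of those columns, $W$ is already generated by the $h(h+1)$ off-diagonal entries of the lower-right $(h+1)\times(h+1)$ block of $\fQ$, equivalently of $\cT\fH\cT\tr$ (so $\fd\le h(h+1)$); and tracing each such entry $\fQ_{ij}=\sum_k t_{ik}(\fD^{(j)})_k$ through its construction shows its coefficients lie in $\cL$, since $\fD^{(j)}$ has coefficients in $L_j$ and only the block indices occur. Hence the $\fd$ degree-$d$ generators of the collapse may be taken to be any basis of $W$, in particular one drawn from that block with coefficients in $\cL$, and $W$ is manifestly independent of the chosen column. This establishes (b) and (c).

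For (d) one replaces $\cT$ by $Z\cT$ with $Z = Z_0 \oplus I_{h+1}$: the bottommost $h+1$ rows of $Z\cT$ coincide with those of $\cT$, so the columns $\fD^{(j)}$ ($N-h\le j\le N$), the fields $L_j$, the space $W$, and the field $\cL$ are all unchanged, while the hypothesis that $Z\cT$ (equivalently $ZT$) has algebraically independent entries over $K$ lets the generic-position argument of (a), and hence the whole argument for (c), run verbatim with $Z\cT$ in place of $\cT$; this gives every column of $(Z\cT)\fH(Z\cT)\tr$ a $\bigl(k(h+1),\fd\bigr)$-collapse with the same auxiliary space $W$. For (e), the collapses of (c) and (d) are exhibited by finitely many explicit identities $F = \sum_\mu H_\mu H'_\mu + \sum_\nu P_\nu z_\nu$ over $\Omega[\vect xN]$ with the $P_\nu$ a basis of $W$; over any larger algebraically closed $\Omega'\supseteq\Omega$ these identities remain valid and still display collective $\bigl(k(h+1),\fd\bigr)$-collapses (collapses are insensitive to base change between algebraically closed fields, cf.\ Proposition~\ref{basech}), the entries of $\cT$ (or $Z\cT$) stay algebraically independent over $K$ inside $\Omega'$, and a basis of $W$ over $\Omega$ stays a basis of $W_{\Omega'} := \Omega'\otimes_\Omega W$, so $\fd$ is unchanged.

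The step I expect to be most delicate is the coefficient-field accounting in part (b): the chosen basis of the auxiliary space must simultaneously span $W$ — which is what the genericity of part (a) buys — and visibly have coefficients in the intended small field $\cL$, and these two requirements constrain the generic-position argument from opposite sides, so the order in which one chooses the generic matrix $\cT$, the subfields $L_j$, and the basis of $W$ has to be arranged with some care; everything else is bookkeeping layered on top of Theorem~\ref{oneW}.
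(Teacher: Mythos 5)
Your proposal is correct and follows essentially the same route as the paper: part (a) via the rank-$h$ coefficient matrix of $\fD^{(j)}$ against rows of $\cT$ generic over $L_j$ (the paper phrases the nonvanishing of the relevant $h\times h$ determinant via an explicit specialization, which is the justification your ``generic block'' remark compresses), (b) as a corollary, (c) and (d) by feeding $\cT\fH\cT\tr$ and $(Z\cT)\fH(Z\cT)\tr$ into Theorem~\ref{oneW} with the spanning hypotheses supplied by (a) and the algebraic independence of the entries of $Z\cT$, and (e) by base change. Your side remarks equating the block entries of $\fQ$ with those of $\cT\fH\cT\tr$ and $Z\cT$ with $ZT$ are loose but harmless, and match the level of detail in the paper's own statement and proof.
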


\begin{proof}  (a)  Consider the $j\,$th column $\fD^{(j)}$ of $\fD$,  which consists of polynomials with
coefficients in $L_j$.  We can choose distinct integers $1 \leq i_1 < \cdots < i_h \leq N$ such
that the entries of the column in the positions with indices $i_1, \, \ldots, i_h$ are a basis for
the span of the entries of the column.  Then $h$ off-diagonal entries of the $j\,$th column of $\fQ$,
which is $\cT \fD^{(j)}$ are given by integers $1 \leq a_1 < \cdots < a_h \leq N$ such that the
$a_i$ are all different from $j$.  Let $\cT_0$ be the submatrix of $\cT$ consisting of the rows
of $\cT$ indexed by $\vect a h$.  Then $\cT_0$ is an $h \times N$ matrix of indeterminates
$t_{a_i,b}$ with $a_i \not= j$.  Thus, these indeterminates are algebraically independent over
$L_j$.  The fact that the entries of $\cT_0 \fD^{(j)}$ are linearly independent now follows
from the fact that we can specialize the entries of $\cT_0$ without affecting $\fD^{(j)}$.  In particular
we can specialize the entries of  $\cT_0$ so that its $\nu\,$th row has 1 as its $i_{\nu}$ entry
and 0 for all of its other entries.   Then $\cT_0$ specializes to a matrix $M$ such that the entries
of $M \fD^{(j)}$ are the entries of $\fD^{(j)}$ indexed by $\vect i h$, which are independent over
$L_j$.  

(b) It is now clear that the vector space over $\Omega$ spanned by the $h$ bottommost off-diagonal entries 
of one of the rightmost $h+1$ columns over $\fQ$ is the same as the $\Omega$-vector space spanned by all of 
the entries of that column. 
Since multiplying by the invertible matrix $\cT$ does not affect the $\Omega$-span of the entries of a column,
this is also the same as the span of all entries of the $\cD^{(j)}$ for $0 \leq N-j \leq h$.  

(c) and (d).  We may write $\cT\fH\cT\tr$ in block form as 
$\begin{pmatrix} M_{11} & M_{12} \\ M_{12}\tr & M_{22} \end{pmatrix}$
where $M_{11}$ is square and symmetric of size $N-h-1$,  $M_{12}$ is $(N-h-1) \times (h+1)$,  
and $M_{22}$ is square and symmetric of size $h+1$.  Then $\cZ\cT \fH (\cZ \cT)\tr =
\begin{pmatrix} Z_0M_{11}Z_0\tr & Z_0M_{12} \\ M_{12}\tr Z_0\tr & M_{22} \end{pmatrix}$.  In particular,
the last $h+1$ columns are the columns of $Z \begin{pmatrix} M_{12} \\ M_{22} \end{pmatrix}$
and each of these consists of $\Omega$-linear combinations of the entries of the last $h+1$
columns of $\cT \fH\cT\tr$.  Hence, each of the last $h+1$ columns of $\cZ\cT \fH (\cZ \cT)\tr$
has a collective $(k,h)$-collapse using the ideal for the corresponding column of $\cT\fH\cT\tr$
and a subspace of $W$. We may now apply Theorem~\ref{oneW}, taking $\fH$ to be
$\cT\fH\cT\tr$ for part (a) and $\cZ\cT \fH (\cZ\cT)\tr$ for part  (d).   (Note that (c) is actually
the special case of (d) where $Z_0$ is an identity matrix.)  The conditions in Theorem~\ref{oneW}
that the auxiliary vector spaces be spanned by certain bottommost elements follow from
part (a) and the fact that the entries of $Z\cT$ (or $\cT$) are algebraically independent over $K$.  
 
(e) is obvious.  \end{proof} 

\begin{theorem}\label{W0}  Let notation and hypothesis as in Discussions~\ref{gensetup}~and~\ref{Tmtx}.
Let $W$ be defined as in Theorem~\ref{propW}(b), and apply Theorem~\ref{mvclpse} with $a = 3$ to find
an integer $m \in \N$ and subspace $W_0$ of $W$ of dimension $\fd_0 = \fd-m$ such that every column of
$\cT \fH \cT\tr$ has a collective $(k', \fd_0)$-collapse with $k' = 4^mk$ and no nonzero element
of $W_0$ has a strict $3k'$-collapse, i.e., $W_0$ is $3k'$-strong.  Then we can write
$\cT \fH \cT\tr = \fP + \fQ$ uniquely, where $\fP$ has the property that every column
has a strict $k'$-collapse, and $\fQ$ has entries in $W_0$. The matrices $\fP$ and $\fQ$ are 
symmetric.  Moreover, if $Z_0$ is
an $N-h-1$ by $N-h-1$ matrix with algebraically independent entries over $K(t_{ij}: 1\leq i,j \leq N)$
and $Z$ is the direct sum of $Z_0$ with a size $h+1$ identity matrix, then
every column of $Z\fP Z\tr$  has a strict collective $k'$-collapse.  In particular, if $\fP_0$ is the
size $N-h-1$ square submatrix of $\fP$ in the upper left corner,  then
every column of $Z_0 \fP_0 Z_0\tr$ has a strict collective $k'$-collapse. \end{theorem}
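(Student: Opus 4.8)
The plan is to produce the common auxiliary vector space, shrink it to a strong subspace, extract $\fP$ and $\fQ$ by uniqueness, and then push the decomposition through the generic conjugation $Z = Z_0 \oplus I_{h+1}$. First I would invoke Theorem~\ref{propW}(c): every column of $\cT\fH\cT\tr$ has a collective collapse whose degree-$d$ part may be taken inside the single, column-independent subspace $W$ of Theorem~\ref{propW}(b), with $\fd = \di_\Omega W \le h(h+1)$. Next I apply the construction of Theorem~\ref{mvclpse} with $a = 3$ to $W$: choose a maximal linearly independent sequence $v_1,\dots,v_m$ in $W$ with $v_i$ admitting a strict $b_i$-collapse (the $b_i$, $k_i$ as there), absorb each $v_i$ into the auxiliary ideal of each column, and let $W_0$ be the complementary summand, so $\fd_0 = \fd - m$. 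Because $v_1,\dots,v_m$ and the ideals $\fJ_i$ depend only on $W$, this gives simultaneously, for \emph{every} column of $\cT\fH\cT\tr$, a collective $(k',\fd_0)$-collapse with common auxiliary vector space $W_0$, where $k' = k_m$ is the bound furnished by Theorem~\ref{mvclpse}; and maximality of the peeled sequence means no nonzero element of $W_0$ has a strict $b_{m+1} = 3k'$-collapse, i.e.\ $W_0$ is $3k'$-strong.

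Since $W_0$ is then $2k'$-strong, Proposition~\ref{uq}, applied to the family of entries of each column, shows that every entry of $\cT\fH\cT\tr$ has a unique expression as a form with a strict $k'$-collapse plus an element of $W_0$; gathering these entrywise produces $\fP$ and $\fQ$ with $\cT\fH\cT\tr = \fP + \fQ$, the columns of $\fP$ having strict $k'$-collapses and the entries of $\fQ$ lying in $W_0$. Symmetry is forced: $\cT\fH\cT\tr$ is symmetric since $\fH$ is, so the $(i,j)$ and $(j,i)$ entries agree, and uniqueness of the decomposition then gives $\fP_{ij} = \fP_{ji}$ and $\fQ_{ij} = \fQ_{ji}$.

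For the conjugated statement, pass to an algebraically closed $\Omega' \supseteq \Omega$ containing the entries of $Z_0$, so that $W_0' := \Omega' \otimes_\Omega W_0$ is still $3k'$-strong by base change (Proposition~\ref{basech}). Re-running Theorem~\ref{propW}(d),(e) together with the same peeling (which again depends only on $W$), every column of $(Z\cT)\fH(Z\cT)\tr = Z(\cT\fH\cT\tr)Z\tr$ has a collective $(k',\fd_0)$-collapse using $W_0'$; and since $Z(\cT\fH\cT\tr)Z\tr = Z\fP Z\tr + Z\fQ Z\tr$ with $Z\fQ Z\tr$ having all entries in $W_0'$, subtracting shows every column of $Z\fP Z\tr$ likewise has a collective $(k',\fd_0)$-collapse with auxiliary vector space $W_0'$. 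Now I exploit that the lower-right $(h+1)\times(h+1)$ block of $Z$ is the identity: the rightmost $h+1$ columns of $Z\fP Z\tr$ are just $Z$ applied to the rightmost $h+1$ columns of $\fP$, so they have honest strict $k'$-collapses with $\fQ$-part $0$; by symmetry of $Z\fP Z\tr$ and uniqueness, the $W_0'$-component of every entry lying in the last $h+1$ rows or columns vanishes. Finally a downward induction on the column index — transferring vanishing $W_0'$-entries by symmetry, and using the genericity of $Z_0$ (as in the proof of Theorem~\ref{propW}(a)) to see that the $W_0'$-part of a column is spanned by a generic set of its off-diagonal entries, which are now zeros — forces the $W_0'$-component of every entry of $Z\fP Z\tr$ to be $0$. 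Hence every column of $Z\fP Z\tr$ has a strict $k'$-collapse, and restricting to the upper-left $(N-h-1)$-block gives the assertion for $Z_0\fP_0 Z_0\tr$.

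The main obstacle is this last step. The naive bound on $Z\fP Z\tr$ only puts each column in a sum of many column-ideals, with far more than $k'$ generators, so one genuinely needs the vanishing of the $\fQ$-parts of the rightmost columns (from the identity block of $Z$) together with the uniqueness supplied by the $3k'$-strength of $W_0$ to keep the generator count at $k'$ and to eliminate the auxiliary vector space altogether; keeping the various genericity hypotheses of Theorems~\ref{oneW} and \ref{propW} satisfied after conjugation by $Z_0$ is the delicate bookkeeping.
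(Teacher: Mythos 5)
The first two-thirds of your argument (constructing $W_0$ via Theorem~\ref{mvclpse} with $a=3$, obtaining the entrywise-unique decomposition $\cT\fH\cT\tr = \fP + \fQ$ from Proposition~\ref{uq}, deducing symmetry from uniqueness, and using Theorem~\ref{propW}(d),(e) plus the same peeling to give every column of $(Z\cT)\fH(Z\cT)\tr$, hence of $Z\fP Z\tr$, a collective $(k',\fd_0)$-collapse with auxiliary space $W_0'$) agrees with the paper. The gap is in your final step. The $W_0'$-components $w_{ij}$ of the entries of $Z\fP Z\tr$ are defined only through the uniqueness of the decomposition, and that ``projection'' onto $W_0'$ is additive only so long as the collapsible parts being combined stay within the $3k'$ strength budget of $W_0$; it is not linear on arbitrary $\Omega'$-linear combinations. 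Consequently the $w_{ij}$ do not inherit the structure (a generic matrix times a fixed column defined over a smaller field) that drives Theorem~\ref{propW}(a), and your assertion that ``the $W_0'$-part of a column is spanned by a generic set of its off-diagonal entries'' has no justification: the span of $\{w_{ij}\}_i$ could a priori have dimension up to $\fd_0 \leq h(h+1)$, while you only know vanishing at the $h+1$ entries sitting against the identity block -- and those rows are special, not generic, since $W$ itself was built from exactly those rows of $\cT$. So the downward induction as described does not close; the vanishing of the $W_0'$-components of the last $h+1$ rows and columns, by itself, does not propagate.

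The paper gets around precisely this obstruction by never conjugating by $Z_0$ in one shot: it factors $Z_0$ as a product of elementary matrices $E_{ij}(y)$, $D_i(y)$ whose parameters $y_{ij}$ are algebraically independent, sets $\cT_\nu = Y_\nu\cdots Y_1\cT$ (each with algebraically independent entries over $K$, so the canonical decomposition $\cT_\nu\fH\cT_\nu\tr = \fP_\nu + \fQ_\nu$ exists at every stage), and proves by induction that $\fP_{\nu+1} = Y\fP_\nu Y\tr$ and $\fQ_{\nu+1} = Y\fQ_\nu Y\tr$. The point is that an elementary conjugation combines at most two entries in a row and two in a column, so each entry of $Y\fP_\nu Y\tr$ has at worst a strict $2k'$-collapse; comparing with the canonical decomposition and using that $W_0$ is $3k'$-strong forces the two decompositions to coincide. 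At $\nu = s$ this identifies $Z\fP Z\tr$ with the canonical $\fP$-part of $(Z\cT)\fH(Z\cT)\tr$, whose columns have strict collective $k'$-collapses by construction. If you want to salvage your outline, you need some device of this kind that keeps every comparison within the $2k'$-versus-$3k'$ window; the symmetry-plus-genericity induction you sketch does not supply it.
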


\begin{proof}  Let $M = N-h-1$ and let $Z_0 = \bigl( z_{ij}\bigr)$ have entries that are algebraically independent
over the algebraic closure $\cK$ of $K(t_{ij}: 1 \leq i,j \leq N)$.     

For $1 \leq i, j \leq M$ with $i \not=j$ and $y \in \Omega$ let 
$E_{ij}(y)$ denote the elementary size $M$ square matrix obtained by adding $y$ times
the $j\,$th row of the size $M$ identity matrix to the $i\,$th row.  
Let $D_i(y)$ be the diagonal matrix with $y$ in the $i\,$th spot on the diagonal and 1 in all
other spots on the diagonal.  We refer to all of these as {\it elementary matrices}.
We may work over $\kappa(z_{ij}: 1 \leq i,j \leq M)$, where $\kappa$ is the prime field of $K$,
to write 
$$
(*) \quad Z_0 = \prod_{j=1}^M \Biggl(D_j(y_{jj})\biggl(\,\prod_{1 \leq i \leq M,\, i\not= j} E_{ij}(y_{ij})\biggr)\Biggr)
$$
This simply corresponds to performing elementary operations on $Z_0$ until one gets an
identity matrix, and then writing down the product of the inverses of the elementary matrices
used.  At the first step,  $y_{11} = z_{11}$.  The next $M-1$ steps subtract multiples
of the first entry of the first row so that the other entries of the first row become 0.  After $sM$ steps
the first $s$ rows agree with those of the identity matrix.   One takes $y_{s+1,s+1}$ to
be the entry of the matrix obtained at the $(s+1, s+1)$ spot. One additional left multiplication
by $D_{s+1}(y_{s+1,s+1})$ enables us to assume that the $(s+1, s+1)$ entry is 1.   One then performs
column operations to make all of the other of the entries of the $s+1$ row 0.  The elements we need to 
invert in this process are never 0,  because that does not happen even if we specialize
the $z_{ij}$ to the entries of the size $M$ identity matrix.  Since the $z_{ij}$ are in the
field (even the $\kappa$-algebra) generated by the $y_{ij}$, the $M^2$ elements $y_{ij}$
are algebraically independent both over $\kappa$ and over $L$:  the $z_{ij}$ and the
$y_{ij}$ generate the same field over $\kappa$ or $L$.  

Let $s = M^2$ and denote the product on the right hand side of $(*)$ as $E_s \cdots E_1$.
Let  $Y_\nu$ be the direct sum of $E_\nu$ and a size $h+1$ identity matrix, $1 \leq \nu \leq s$.
Then $Z = Y_m \cdots Y_1$. By a straightforward induction on $\nu$,  each of the
matrices  $$Y_1, Y_2Y_1, \, \ldots, \, Y_{\nu}\cdots Y_1, \, \ldots, \,
Y_m \cdots Y_1 = Z$$
has entries that are algebraically
independent over $L$,
and it follows that each of the matrices 
$$\cT, Y_1\cT,  Y_2Y_1\cT, \, \ldots, \,Y_{\nu}\cdots Y_1\cT,\, \ldots, \,
Y_m \cdots Y_1\cT = Z\cT$$ has algebraically independent
entries over $K$.  Let $\cT_\nu = Y_\nu \cdots Y_1 \cT$, $1 \leq \nu \leq s$ and $\cT_0 = \cT$.  Then for each of the
matrices $\cT_\nu$, $0 \leq \nu \leq s$, we have a decomposition:  
$\cT_\nu \fH \cT_\nu\tr \fP_\nu + \fQ_\nu$,   where $\fP$ and $\fP_\nu$ have the property
that every column has a collective $(k',h')$-collapse, and the $\fQ_\nu$ have all
entries in $W_0$.  

The choice of $a = 3$ guarantees that these representations are unique.  Since $\cT\fH\cT\tr$ is symmetric,
$\fP$ and $\fQ$ are symmetric  as well:  by Proposition~\ref{uq}, an element of $\cT\fH\cT\tr$ can be written as the sum of an element with a strict $k'$-collapse and an element in  $W_0$ in only one way (the element with the $k'$-collapse   
need not be assumed to come from a specific auxiliary ideal --- one only needs that every element of $W_0 -\{0\}$
is at least  $2k'$-strong). 
We now show by induction on $\nu$ that for $\nu \geq 1$ we have
$$
\fP_\nu = (Y_\nu \cdots Y_1)\fP(Y_\nu \cdots Y_1)\tr\hbox{\rm\ \ and\ \ }
\fQ_\nu = (Y_\nu \cdots Y_1)\fQ(Y_\nu \cdots Y_1)\tr.
$$ 
This yields the desired conclusion when $\nu = s$.  

This comes down to showing that if  the entries of $\cT_{\nu}$ and the element $y$  are algebraically
independent and $Y = Y_{\nu+1}$ is the direct sum of an  elementary matrix, either $E_{ij}(y)$ or $D_i(y)$, with
a size $h+1$ identity matrix,  then $Y\fP_{\nu} Y\tr = \fP_{\nu+1}$ and  $Y\fQ_{\nu} Y\tr = \fQ_{\nu+1}$.
Clearly, we have $\cT_{\nu+1} = Y\fP_\nu Y\tr + Y\fQ_\nu Y\tr$.  The last term is a matrix with entries
in $W_0$.  If $Y$ is diagonal, every entry of $Y \fP_\nu Y\tr$ is a multiple of an entry of $\fP$ and so
has a strict $k'$-collapse.  If $Y = E_{ij}(y)$, every entry of $Y\fP_\nu$ is either an entry of $\fP_\nu$
or a linear combination of two entries of $\fP_\nu$ in the same column.  Hence every entry of $Y\fP_\nu$
has a strict $k'$-collapse.  Every entry of $(Y\fP_\nu)Y\tr$ is, likewise, a linear combination of at most two entries
from a row of $Y\fP$,  and so has, at worst, a strict $(2k')$-collapse.   Thus, in all cases,  every entry of
$Y\fP_\nu Y\tr$ has, at worst, a strict $(2k')$-collapse.  Let $F$ be an entry of $Y\fP_\nu Y\tr$.  Then
we have found a representation $F = G_1 + H_1$  where $G_1$ has a strict $(2k')$-collapse and $G_1$
is in $W_0$.   From the fact that $Y \cT_{\nu} Y\tr = Y_{\nu+1} = \fP_{\nu+1} + \fQ_{\nu + 1}$
we also have $F = G_2 + H_2$,  where $G_2$ has a strict $k'$-collapse and $H_2 \in W_0$.
It follows that $G_2 - G_1 = H_1 - H_2 \in V_0$ is an element of $V_0$ with a strict $(3k')$-collapse.
This is a contradiction, by our choice of $a$,  unless $H_1 - H_2 = 0$.  This implies that
$H_1 = H_2$ and $G_1 = G_2$.  This completes the proof. \end{proof}

 \section{Hessian-like matrices of quadrics with entries of stably bounded rank}\label{hesscol}
 
Theorem~\ref{stabPcol} below plays a critical role in the 
construction of $\etuA(n_1,\,n_2, \, n_3, n_4)$ for algebraically closed fields of characteristic
0 or $\geq 5$ in the next section.   Note that in this result, we do not need to assume that
$\fP_0$ is symmetric.

\begin{lemma}\label{rankZP} Let $\cK$ be an algebraically closed field of characteristic different from $2$.
Let $M,\,N$ be positive integers.
Let $R= \cK[\vect x N]$ be a polynomial ring over $\cK$.   
Let $\cZ = \bigl(z_{ij}\bigr)$ be an $M \times M$ matrix of indeterminates over $\cK$. 
 Let $\fP_0 = \bigl(f_{ij}\bigr)$ be an $M \times M$ matrix whose 
entries are quadratic forms in $R$. 
Then all off-diagonal entries of  $\cZ \fP_0 \cZ\tr$ have the same rank $r_1$ over $\cF$,
and all diagonal entries have the same rank $r \leq r_1$. If $M = 1$ we make the convention
that $r_1 := r$.    For a Zariski 
dense open subset $U$ of $\GL(M,\,\cK)$, for all $\alpha \in U$ all off-diagonal (respectively, diagonal) 
entries of $\alpha \fP_0 \alpha\tr$ have
rank $r_1$ (respectively, $r$).  For an off-diagonal entry $f$  in the $(i,j)$ spot of $\alpha \fP_0 \alpha$ 
with $\alpha \in U$, the $i\,$th row and $j\,$th column
of $\alpha \fP_0 \alpha\tr$ are both contained in the ideal $(\cD f)R$.  
\end{lemma}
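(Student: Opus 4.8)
The plan is to reduce the statement to two ingredients. The first is a purely formal relabelling symmetry. Writing $g_{ij}=\sum_{k,l}z_{ik}z_{jl}f_{kl}$ for the $(i,j)$ entry of $\cZ\fP_0\cZ\tr$ and $\cF=\overline{\cK(z_{pq}:1\le p,q\le M)}$, every $\sigma\in S_M$ gives a $\cK$-algebra automorphism of $\cF$ with $z_{pq}\mapsto z_{\sigma(p),q}$, which carries $g_{ij}$ to $g_{\sigma(i),\sigma(j)}$ and preserves the rank of a quadratic form (a field automorphism is invertible and acts entrywise on the Hessian). Since $S_M$ is transitive on the ordered off-diagonal pairs and on the diagonal pairs, all off-diagonal entries have one common rank $r_1$ over $\cF$ and all diagonal entries one common rank $r$; specializing $z_{jl}\mapsto z_{il}$ sends $g_{ij}$ to $g_{ii}$, and rank over a domain cannot increase under a ring homomorphism, so $r\le r_1$. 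For the open set $U$ in the next-to-last assertion, each entry of $\alpha\fP_0\alpha\tr$ is obtained from the corresponding $g_{ij}$ by specializing $z_{pq}\mapsto\alpha_{pq}$, so its rank is always $\le r_1$ off the diagonal and $\le r$ on it; conversely ``rank $\ge r_1$'' (resp.\ ``$\ge r$'') for a fixed entry is the nonvanishing of a fixed maximal minor of its Hessian, hence Zariski open in $\alpha\in\GL(M,\cK)$ and nonempty, since it already holds generically in $\cZ$. Intersecting these finitely many dense opens defines $U$.

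The second ingredient, which I would establish first as a standalone lemma valid over any field of characteristic $\ne2$ (and with no symmetry hypothesis), is: if $q,q'$ are quadratic forms and $\mathrm{rank}(q+tq')\le\mathrm{rank}(q)=:s$ for all $t$ in an infinite set of scalars, then $q'\in(\cD q)R$. To prove this I would change coordinates so the Hessian of $q$ is $\begin{pmatrix}I_s&0\\0&0\end{pmatrix}$ and write the Hessian of $q'$ as $\begin{pmatrix}A&B\\ B\tr&C\end{pmatrix}$ with $C$ of size $N-s$; for generic $t$ the block $I_s+tA$ is invertible, a Schur-complement computation gives $\mathrm{rank}(q+tq')=s+\mathrm{rank}\bigl(tC-t^2B\tr(I_s+tA)^{-1}B\bigr)$, and the hypothesis forces the matrix $C-tB\tr(I_s+tA)^{-1}B$ to vanish identically as a rational function of $t$; evaluating at $t=0$ gives $C=0$, which says exactly that $q'$ lies in the ideal of $x_1,\dots,x_s$, equal to $(\cD q)R$ because $\partial q/\partial x_m=2x_m$ (this is where char $\ne2$ is used).

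Granting these, the final assertion is short. Fix $\alpha\in U$ and distinct $i,j$, set $\fG:=\alpha\fP_0\alpha\tr$ and $f:=\fG_{ij}$; then $\mathrm{rank}(f)=r_1$ by the choice of $U$. For any $j'\ne j$ and any scalar $t$, the matrix $\beta(t)$ obtained from $\alpha$ by replacing row $j$ with $(\text{row }j)+t(\text{row }j')$ is a transvection multiple of $\alpha$, hence in $\GL(M,\cK)$, and a direct computation gives $(\beta(t)\fP_0\beta(t)\tr)_{ij}=\fG_{ij}+t\fG_{ij'}$; as the image of $g_{ij}$ under $z_{pq}\mapsto\beta(t)_{pq}$, this has rank $\le r_1$ for \emph{every} $t$, so the pencil lemma yields $\fG_{ij'}\in(\cD f)R$. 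Running the same argument with a row operation on row $i$ (for $i'\ne i$) produces $\fG_{ij}+t\fG_{i'j}$ of rank $\le r_1$, hence $\fG_{i'j}\in(\cD f)R$. With the trivial membership $\fG_{ij}\in(\cD f)R$ (Euler, char $\ne2$), this is precisely the assertion that the $i$-th row and $j$-th column of $\fG$ lie in $(\cD f)R$.

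The step I expect to require the most care is the interface between the two ingredients: one must be sure that $\beta(t)\fP_0\beta(t)\tr$ is a genuine coordinatewise specialization of $\cZ\fP_0\cZ\tr$ for \emph{all} $t$, so that its $(i,j)$ entry inherits the bound $\mathrm{rank}\le r_1$ for the whole pencil, which is what the pencil lemma consumes; and the equality $\mathrm{rank}(f)=r_1$ (not merely $\le r_1$), supplied by $\alpha\in U$, is exactly what makes the hypothesis $\mathrm{rank}(q+tq')\le\mathrm{rank}(q)$ true. The Schur-complement normalization inside the pencil lemma is routine, and it makes transparent that symmetry of $\fP_0$ is never used anywhere in the proof.
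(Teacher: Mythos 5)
Your proposal is correct, and its skeleton is the same as the paper's: establish that the generic off-diagonal and diagonal ranks $r_1\ge r$ are constant, observe that every entry of $\beta\fP_0\beta\tr$ is a coefficientwise specialization of the corresponding generic entry and so its rank cannot exceed the generic value, take $U$ to be the dense open locus where the maxima are attained, and then use elementary row operations on $\alpha$ to produce pencils $f+t\,(\text{other entry})$ of bounded rank, concluding membership in $(\cD f)R$ from a rank-pencil criterion; your index bookkeeping for both the row and the column, including the diagonal entries via $j'=i$ and $i'=j$ and the entry $f$ itself via Euler's formula, is sound. The one genuine divergence is the pencil criterion itself: the paper does not reprove it inside Lemma~\ref{rankZP} but cites its Proposition~\ref{rk}(b), which is established earlier by a leading-term analysis of determinants (and, in characteristic $2$, half-discriminants and Pfaffians), whereas you derive the needed special case from scratch by the Schur-complement computation, normalizing the symmetric matrix of $q$ to $\operatorname{diag}(I_s,0)$ and forcing the block $C$ to vanish. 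Your route is self-contained and arguably more transparent, but it is tied to characteristic $\ne 2$ (which is all this lemma assumes), while the paper's Proposition~\ref{rk} is stated so as to also serve the characteristic-$2$ analysis elsewhere; also, your remark that symmetry of $\fP_0$ is never used matches the paper, which likewise states the lemma without a symmetry hypothesis.
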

\begin{proof} Let $\cZ_i$ be the $i\,$th row of $\cZ$ and $\cZ^j$ the $j\,$th column.  It is clear that the 
maximum rank that can be achieved at the $(i,j)$ spot in 
$\alpha \fP_0 \alpha\tr$
is the same as the rank at the $(i,j)$ spot of $Z \fP_0 Z\tr$: rank $\rho$ is achieved if and only if the size
$\rho$ minors of certain matrix of polynomials in the $z_{ij}$ do not all vanish. (Each quadratic form
in $\fP_0$ corresponds to a symmetric $N \by N$ matrix of constants in $\cK$, and $Z_i \fP_0 \cZ^j$ will
correspond to an $N \by N$ symmetric matrix whose entries are in $\cK[z_{ij}:0\leq i,j\leq M]$).  
If $i\not=j$, by interchanging the  $i\,$th and $j\,$th rows of $\alpha$ one sees that if a certain rank
is achieved at the $(i,i)$ spot in $\alpha \fP_0 \alpha\tr$,  it is also achieved at the $(j,j)$ spot.  Moreover
this shows as well that the rank achieved at the $(i,h)$ spot is also achieved at the $(j,h)$ spot if $h \not=i,j$, 
so that for any column, the highest rank that can be obtained at an off-diagonal spot is constant.
Since the same is true for rows,  the highest possible ranks $r$, $r_1$ are constant both for spots on and 
for spots off the main diagonal.  We show that $r \leq r_1$ below.

Choose $\alpha \in U$ so that $\fP_1 = \alpha \fP_0 \alpha\tr = \bigl(g_{ij}\bigr)$ has maximum possible 
rank for all entries.
If $i\not=h,j$ and $c \in \cK$ by adding $c$ times the $h\,$th row to the $i\,$th row and also $c$ times the 
$h\,$th column to the  $i\,$th column in $\fP_1$  we obtain $cg_{hj} + g_{ij}$  at the $(i,j)$ spot. 
By Proposition~\ref{rk},  the rank of $cg_{hj} + g_{ij}$ will increase from $r_1$ for
infinitely many values of $c$ unless $g_{hj} \inc (\cD g_{ij})R$, and it will be at least the rank of
$g_{hj}$ even when $h = j$.  This shows that $r \leq r_1$ and that for $h, j$ distinct from $i$,
$g_{hj} \in (\cD g_{ij})$, even if $h =j$.  But $g_{ij} \in \cD g_{ij}$ in general, so that the entire $i\,$th
row of $\fP_1$ is in the ideal $\cD g_{ij}$.  The entire $j\,$th column is in $\cD g_{ij}$
by a similar argument.  \end{proof}

\begin{example} The ranks $r_1$ and $r$ can be different.  Let $V$ be a $1 \times M$ row of linearly
independent linear forms and let $\fP_0 = VV\tr$.  Then $A \fP_0 A\tr = (AV)(AV)\tr$,  and each diagonal
entry is the square of a linear form,  and so has rank 1.   Each off-diagonal entry is
the product of two linearly independent linear forms and has rank 2.    If $\fP_0$ is skew-symmetric
the diagonal entries are 0 while the off diagonal entries can have arbitrarily large rank. \end{example}

We are now ready to prove the last of the critical results needed to establish the existence of
$\uA(n_1,\,n_2,\,n_3,\,n_4)$.  We only need the case $a=1$ to handle quartics, but there
are potential applications to higher degree cases.

\begin{theorem}\label{stabPcol} Let $\cK$ be an algebraically closed field of characteristic different from $2$,
let $M$, $N$, and $a$ be positive integers, and
let $R= \cK[\vect x N]$ be a polynomial ring over $\cK$.   
Let $\cZ = \bigl(z_{ij}\bigr)$ be an $M \times M$ matrix of indeterminates over $\cK$ and 
let $\fP_0 = \bigl(f_{ij}\bigr)$ be an $M \times M$ matrix whose 
entries are quadratic forms in $R$.  Let $r_1$ be the greatest rank of an off-diagonal entry of $\cZ\fP_0\cZ\tr$  and let $r$ be the greatest 
rank of a diagonal entry.
Let $V = \bigl(L_1 \,\, \ldots \,\, L_M\bigr)$ be  a $1 \times M$ row of forms of degree $a$ in $R$.   
Then $f:=\sum_{0 \leq i,j \leq M} L_iL_j f_{ij}$, the unique entry of $V\fP_0 V\tr$, 
has a strict $(2r_1 + 2r)$-collapse (a strict $(r_1 + 2r)$-collapse 
if $\fP_0$ is symmetric).                         
\end{theorem}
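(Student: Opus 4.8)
The plan is to first replace $\fP_0$ by a generic conjugate $\fP_1 = \alpha\fP_0\alpha\tr$ with $\alpha\in\GL(M,\cK)$ chosen in the dense open set $U$ of Lemma~\ref{rankZP}, so that every off-diagonal entry $g_{ij}$ has rank exactly $r_1$, every diagonal entry has rank $r$, and --- crucially --- for each off-diagonal $(i,j)$ the entire $i\,$th row and $j\,$th column of $\fP_1$ lies in the ideal $(\cD g_{ij})R$. Writing $V' = V\alpha^{-1} = \bigl(L_1'\,\,\ldots\,\,L_M'\bigr)$, the form $f = V\fP_0 V\tr = V'\fP_1 V'\tr = \sum_{i,j} L_i' L_j' g_{ij}$ is unchanged, so it suffices to bound the strict collapse of this new expression. (I should double-check that the $L_i'$ are still degree-$a$ forms and that passing from $\fP_0$ to $\fP_1$ does not change $r_1,r$, which is exactly what Lemma~\ref{rankZP} gives.)

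Next I would separate the diagonal and off-diagonal contributions: $f = \sum_i (L_i')^2 g_{ii} + \sum_{i\neq j} L_i' L_j' g_{ij}$. The diagonal part $\sum_i (L_i')^2 g_{ii}$: each $g_{ii}$ has rank $\leq r$, hence lies in an ideal generated by $\lceil r/2\rceil$ linear forms by Proposition~\ref{qucol} (indeed $g_{ii}$ itself has a $\lceil r/2\rceil$-collapse, and is a sum of $\lceil r/2\rceil$ products of linear forms, or in char.\ 2 an extra square --- but here the characteristic is $\neq 2$, so $g_{ii}$ is literally in the ideal $(\cD g_{ii})R$ which has $\leq r$ generators, and in fact a $\lceil r/2\rceil$-collapse). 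Multiplying by $(L_i')^2$ keeps it a sum of $\leq r$ (or $\lceil r/2\rceil$ in the non-square-needing case) products of forms of strictly smaller degree. The off-diagonal part I would handle by fixing for each $i$ one representative off-diagonal entry in row $i$, say $g_{i,\sigma(i)}$ with $\sigma(i)\neq i$; then by Lemma~\ref{rankZP} \emph{all} of $g_{ij}$ for $j\neq i$ lie in $(\cD g_{i,\sigma(i)})R$, a prime-like ideal generated by $\leq r_1$ linear forms. Hence $\sum_{j\neq i} L_i' L_j' g_{ij}$, summed over the fixed $i$, lies in the ideal generated by those $\leq r_1$ linear forms. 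Summing over all $i$ naively would give $Mr_1$ generators, which is too many; the point of working off the \emph{generic} matrix is that we instead exploit symmetry of the index pair.

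The key trick, and I expect it to be the main obstacle, is to organize the off-diagonal sum $\sum_{i\neq j} L_i' L_j' g_{ij}$ so that it uses only $O(r_1)$ linear forms total, not $O(Mr_1)$. The idea: pick two distinct indices, say $1$ and $2$. By the lemma, row $1$ of $\fP_1$ (minus $g_{11}$) lies in $(\cD g_{12})R$ and column $2$ of $\fP_1$ lies in $(\cD g_{12})R$; more generally $g_{ij}\in(\cD g_{12})R$ whenever $\{i,j\}$ avoids... hmm, here I must be careful --- actually $g_{ij}\in(\cD g_{ik})R$ for $j\neq i$ and $g_{ij}\in(\cD g_{kj})R$ for $i\neq j$. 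Chaining these, for a generic $\fP_1$ all off-diagonal entries lie in a single ideal $(\cD g_{12})R + (\cD g_{21})R = (\cD g_{12})R$ (symmetric or not, $g_{12}$ and $g_{21}$ differ only in the non-symmetric case) generated by at most $2r_1$ linear forms (or $r_1$ if $\fP_0$ is symmetric, since then $\fP_1$ is symmetric and $g_{12}=g_{21}$). Then $\sum_{i\neq j} L_i' L_j' g_{ij}$ lies in an ideal with $\leq 2r_1$ (resp.\ $\leq r_1$) linear generators, so it contributes a strict $2r_1$-collapse (resp.\ $r_1$-collapse). Adding the diagonal contribution's strict $2r$-collapse (a sum of $\leq 2r$ products, being generous; $\leq 2r$ suffices in all characteristics $\neq 2$ since each $(L_i')^2 g_{ii}$ is $(L_i')^2$ times a form in an $r$-generated ideal, and there are $\leq r$ generators so $\leq r$ products, but summing the $M$ diagonal terms again threatens an $M$-dependence --- I would instead note $g_{ii}\in(\cD g_{ij})R$ for any $j\neq i$ by the lemma, so the diagonal entries \emph{also} live in the single ideal $(\cD g_{12})R$, contributing no new generators!). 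Thus in fact $f$ itself lies in the ideal $(\cD g_{12})R + (\cD g_{21})R$, giving a strict $(2r_1)$-collapse outright, which is stronger than claimed; to match the stated bound $2r_1+2r$ (resp.\ $r_1+2r$) I would present the diagonal and off-diagonal estimates separately as above, where the $2r$ (resp.\ $2r$) absorbs the slack from handling the rank-$r$ diagonal entries by their own $\cD$-ideals --- the honest accounting being: off-diagonal part needs $\leq 2r_1$ (resp.\ $r_1$) linear forms, diagonal part needs $\leq 2r$ products of lower-degree forms, total strict collapse $\leq 2r_1+2r$ (resp.\ $r_1+2r$). The genuinely delicate point to verify carefully is the chaining argument showing all relevant $g_{ij}$ (including diagonal ones when convenient, and handling $M=1,2$ as edge cases where some index is forced) lie in one small ideal; this is where the convention $r_1:=r$ when $M=1$ and the parenthetical in the lemma must be invoked.
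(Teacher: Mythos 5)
Your proposal breaks down at its central step, the ``chaining'' claim. Lemma~\ref{rankZP} says only that for the off-diagonal entry $g_{ij}$ in the $(i,j)$ spot of a generic conjugate $\fP_1=\alpha\fP_0\alpha\tr$, the $i\,$th \emph{row} and $j\,$th \emph{column} of $\fP_1$ lie in $(\cD g_{ij})R$; it does not say that all off-diagonal entries lie in a common small ideal, and ideal membership does not chain: from $g_{34}\in(\cD g_{32})R$ and $g_{32}\in(\cD g_{12})R$ you cannot conclude $g_{34}\in(\cD g_{12})R$, since $g_{32}\in(\cD g_{12})R$ gives no control over $\cD g_{32}$. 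A concrete counterexample is the one given in the paper right after Lemma~\ref{rankZP}: take $\fP_0=(v_iv_j)$ the outer product of $M\geq 4$ independent linear forms. A generic conjugate has entries $w_iw_j$ with $w_1,\,\ldots,\,w_M$ independent, $r_1=2$, and $g_{34}=w_3w_4\notin(w_1,w_2)R=(\cD g_{12})R$. So your conclusion that $f$ lies outright in an ideal generated by $2r_1$ linear forms (which you yourself flag as ``stronger than claimed'') is false in general, and the fallback accounting you sketch for the diagonal terms relies on the same invalid chaining, so the $M$-dependence you worry about is never actually removed.

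The paper's proof is structured quite differently and explains where the $+2r$ really comes from. After passing to a generic conjugate via Lemma~\ref{rankZP}, one only uses the $\cD$-ideals of a single off-diagonal entry of the first row and a single off-diagonal entry of the first column (at most $2r_1$ linear forms, or $r_1$ in the symmetric case), together with the $r$ variables $\vect x r$ obtained by diagonalizing the maximal-rank diagonal entry $f_{11}$; this kills all terms $L_iL_jf_{ij}$ with $i=1$ or $j=1$. The remaining sum $\sum_{i,j\geq 2}L_iL_jf_{ij}$ is \emph{not} controlled entrywise by ideal membership at all. Instead, one observes that the generic diagonal entry $G=\sum_i z_i^2f_{ii}+\sum_{i\neq j}z_iz_jf_{ij}$ has rank at most $r$, so all size $r+1$ minors of its Hessian vanish; extracting the coefficient of the top power of $z_1$ in suitable minors yields polynomial identities in $z_2,\,\ldots,\,z_M$ (the identities $(\dagger)$ in the paper), and substituting $L_j$ for $z_j$ shows that, modulo the ideal above, the residual sum equals a sum of $r$ products of forms of positive degree. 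That strict $r$-collapse, plus the $r$ variables, plus the $2r_1$ (resp.\ $r_1$) linear forms, gives the stated $2r_1+2r$ (resp.\ $r_1+2r$). Any correct proof has to exploit the rank-$\leq r$ condition on the generic diagonal entry in some such quantitative way; it cannot be replaced by containments of individual entries in $\cD$-ideals of finitely many fixed entries.
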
 

\begin{proof}
By Lemma~\ref{rankZP}
the ranks $r$ and $r_1$ will be achieved for the specialization of $\cZ$ to a matrix $\alpha$ in an open subset $U$
of $\GL(M,\, \cK)$.  Hence, we may replace $\fP_0$ by $\alpha \fP_0 \alpha\tr$ for such an $\alpha$ and assume that 
$f_{11}$ has rank $r$.  Because $Z\alpha^{-1}$ has algebraically independent entries over $\cK$, our hypothesis
is preserved.  At the same time we replace $V$ by $V\alpha^{-1}$.   

 Moreover, since the characteristic is different from 2, for each quadratic form $f_{ij}$ we have a corresponding symmetric $N\times N$ symmetric matrix  $\Lambda_{ij} = \bigl(\lambda^{ij}_{st}\bigr)$.  After a $\cK$-linear change of 
coordinates in $R$   we may assume that $\Lambda_{11}$ is diagonal, with the first $r$ entries on the diagonal equal 
to 1 and the other diagonal entries 0.  We note that in the discussion below we have $1 \leq i,j \leq M$ and
$1 \leq s,t \leq N$.  Sometimes there are additional restrictions on these indices.

Let $Z$ be the first row of $\cZ$.  To simplify notation we let
$z = z_{11}$ and $z_j = z_{1j}$ for $j >1$.   
Then  $G = \sum_{1 \leq i \leq M}z_i^2f_{ii} + \sum_{1 \leq i \not= j \leq M} z_iz_j f_{ij}$  is 
the $(1,1)$ entry of $\cZ \fP \cZ\tr$ and  has rank  $r$. Call the corresponding $N \times N$ symmetric matrix  
$\Lambda$:  it has coefficients in $\cK [z_i: 1 \leq i \leq M]$.  In $\Lambda$, the first $r$ entries of the main diagonal
have the form  
$$
(\#) \quad z^2 +\quad \hbox{\rm lower degree terms in\ }z
$$ 
Note that $z$ occurs with degree at most one in other
entries of $\Lambda$.    Consider the $N-r$ size square submatrix $C$ of $\Lambda$ in the lower right corner.
We want to study the constraints imposed on $C$ and on $\Lambda$ by the condition that $\Lambda$ 
have rank at most $r$, i.e., that the $r+1$ size minors of $\Lambda$ vanish.

Fix $s, t > r$ corresponding to some entry of $C$.  
Consider the  $r+1$ size square submatrix $\Theta_{s,t}$ of $\Lambda$
coming from the rows indexed $1,\, 2,\, \ldots,\, r,\, s$ and the columns indexed $1,\, 2,\, \ldots,\, r,\, t$.
It contains the upper left $r \times r$ block whose main diagonal consists of terms as described in $(\#)$.
When we calculate its determinant as a sum of $(r+1)!$ products of entries,  the main diagonal provides 
a term of degree  $z^{2r+1}$ or $z^{2r}$ depending on whether or not $z$ occurs with nonzero
coefficient in the $(s,t)$ entry of $\Lambda$.  We cannot obtain $z^{2r+1}$ in any other way. Hence, if
$z$ occurs with nonzero coefficient in the $(s,t)$ entry of $\Lambda$,  this $r+1$ size minor of $\Lambda$
does not vanish, a contradiction.  Hence, we may assume that for all $s>r$ and $t >r$,   $z$ does not
occur in the $(s,t)$ entry of $\Lambda$.  This implies that the determinant of $\Theta_{s,t}$ has degree
at most $2r$ as a polynomial in $z$.  We may calculate the coefficient of $z^{2r}$ in terms of
the $z_i$ for $2 \leq i \leq M$ and the scalars $\lambda^{ij}_{st}$.

As already mentioned, we obtain a term of degree $2r$ in $z$ by using the first $r$ terms on the 
main diagonal and the term in the $(s,t)$ spot.  

Since  $\Lambda = \sum_i z_i^2 \Lambda_{ii} + \sum_{i,j} z_iz_j \Lambda_{ij}$, the only other way 
to get  $z^{2r}$ when we compute the determinant of $\Theta_{s,t}$ is by taking  $r-1$ terms from the 
main diagonal, omitting, say,
the  $\rho$\,th term, where  $1\leq \rho \leq r$,  and two terms, one, $\Lambda_{s \rho}$, 
from the bottom row of $\Theta_{s,t}$ (which is part of the $s\,$th row of $\Lambda$) with column index $\rho$, 
and the other, $\Lambda_{\rho t}$, from the rightmost column of $\Theta_{s,t}$ (which is part of the $t\,$th column of 
$\Lambda$) with row index $\rho$.  
The contribution to the coefficient of $z^{2r}$ from the term of this type in the determinant of $\Theta_{s,t}$ 
coming from one choice of $\rho$ is 
$$ -(\hbox{\rm the coefficient of $z$ in $\Lambda_{s \rho}$})(\hbox{\rm the coefficient of $z$ in $\Lambda_{\rho t}$}).$$
 The coefficient of $z = z_1$ in $\Lambda_{st} = \sum_{i,j} \lambda^{ij}_st z_iz_j$  comes from the $z_1z_j$ terms,
 $j \geq 2$, and the $z_i z_1$ terms, $i \geq 2$,  so that the coefficient of $z$ is
 $$\sum_{j \geq 2} \lambda^{j1}_{st}z_j + \sum_{i \geq 2} \lambda^{1i}_{st}z_i$$ which we can rewrite as
 $$\sum_{i \geq 2} (\lambda^{i1}_{st} + \lambda^{1i}_{st})z_i$$ or as
 $$\sum_{j \geq 2} (\lambda^{j1}_{st} + \lambda^{1j}_{st})z_j.$$ 
 
Taking account also of the contribution to the coefficient of $z^{2r}$ that is entirely from the main diagonal in 
$\Theta_{s,t}$, which is the product of $z^{2r}$ with $\Lambda_{s,t}$ (we observed earlier that $z$ does not occur
in this entry), 
we have that the coefficient of $z^{2r}$ in the determinant of $\Theta_{s,t}$ is
$$
\sum_{i,j\geq 2} \lambda_{st}^{ij}z_iz_j -
\sum_{\rho=1}^r \biggl(\sum_{i\geq 2} (\lambda^{i1}_{\rho s} + \lambda^{1 i}_{\rho s})z_i 
\sum_{j\geq 2} (\lambda^{j 1}_{t \rho} + \lambda^{1j}_{t \rho})z_j\biggr).
$$
Since $\Theta_{s,t}$ has rank $r$, the displayed expression vanishes, i.e., for all $s,t > r$,
$$ (\dagger)\quad \sum_{i,j\geq 2} \lambda_{st}^{ij}z_iz_j =
\sum_{\rho=1}^r \biggl(\sum_{i\geq 2} (\lambda^{i1}_{\rho s} + \lambda^{1 i}_{\rho s})z_i 
\sum_{j\geq 2} (\lambda^{j1}_{t  \rho} + \lambda^{1 j}_{t \rho})z_j\biggr).$$

Since the $z_j$ are indeterminates, we may substitute the $L_j$ for the $z_j$ in $(\dagger)$ 
to obtain that for all $s,t > r$,
$$ (\dagger\dagger)\quad \sum_{i,j\geq 2} \lambda_{st}^{ij}L_iL_j =
\sum_{\rho=1}^r \biggl(\sum_{i\geq 2} (\lambda^{i1}_{\rho s} + \lambda^{1 i}_{\rho s})L_i 
\sum_{j\geq 2}(\lambda^{j1}_{t \rho} + \lambda^{1j}_{t \rho})L_j\biggr).$$

Choose an off-diagonal entry $g_1$ in the first row of $\fP_0$ and an off-diagonal entry $g_2$ of
the first column.  Then by Lemma~\ref{rankZP},  the $2r_1$ linear forms needed to span $\cD g_1$ and
$\cD g_2$ generate an ideal that contains all the $f_{1j}$ and all of the $f_{j1}$.  Note that
if $\fP_0$ is symmetric, we may take $g_1 = g_2$ and we only need $r_1$ elements.   (If
$M = 1$,  we use instead $f_{11}$ as the generator of the ideal need.)  Let $\fA$ be the ideal
generated by these elements and $\vect x r$.  The congruences below are taken modulo $\fA$,
which has at most $2r_1 + r$ generators ($r_1 + r$ if $\fP_0$ is symmetric).

$$ 
f = \sum_{i,j} L_iL_j f_{ij} \equiv \sum_{i \geq 2, j \geq 2} L_iL_j f_{ij}
$$
since the $f_{i,1}, \, f_{1,j} \in \fA$.  We can rewrite this as
$$      
\sum_{i \geq 2, j \geq 2} L_iL_j\biggl( \sum_{s,t} \lambda^{ij}_{st} x_s x_t \biggr)
$$
which, since $\vect x r \in \fA$,
$$
\equiv \sum_{i \geq 2,j \geq 2}L_iL_j \biggl(\sum_{s>r, t>r}\lambda^{i,j}_{s,t} x_sx_t\biggr)
$$
since $\vect x r \in \fA$.  This becomes
$$ 
\sum_{i \geq 2, j \geq 2} \biggl(\sum_{s>r,t>r} (\lambda^{ij}_{st} x_sx_t L_i L_j) \biggr)=
$$
$$ 
\sum_{s>r,t>r} \biggl( \sum_{i \geq 2,j \geq 2} (\lambda^{ij}_{st} L_i L_j)x_sx_t\biggr) =
\sum_{s>r,t>r} x_sx_t \biggl( \sum_{i \geq 2, j \geq 2}(\lambda^{ij}_{st} L_i L_j)\biggr)
$$ 
Using $(\dagger\dagger)$ this becomes
$$ 
\sum_{s>r,t>r} x_sx_t \,\sum_{\rho=1}^r \biggl(\sum_{i\geq 2} (\lambda^{i1}_{\rho s} + \lambda^{1i}_{\rho s})L_i 
\sum_{j\geq 2} (\lambda^{j1}_{t \rho} + \lambda^{1j}_{t \rho})L_j\biggr) =
$$
$$
\sum_{\rho=1}^r \Biggl(\sum_{s>r,t>r} x_sx_t \, \biggl(\sum_{i\geq 2} (\lambda^{i1}_{\rho s} + \lambda^{1 i}_{\rho s})L_i 
\biggr) \biggl(\sum_{j\geq 2} (\lambda^{j1}_{t \rho}+ \lambda^{1j}_{t \rho})L_j\biggr)\Biggr) = 
$$
$$
\sum_{\rho=1}^r  \Biggl(\sum_{s>r} \biggl(x_s \sum_{i\geq 2} (\lambda^{i1}_{\rho s}+\lambda^{1i}_{\rho s})L_i \biggr)\Biggr) \Biggl(\sum_{t>r} \biggl(x_t\sum_{j\geq 2} (\lambda^{j1}_{t \rho} + \lambda^{1j}_{t \rho})L_j\biggr)\Biggr),
$$
which shows that $f$ has a strict $r$-collapse modulo an ideal generated by at most $2r_1+r$ forms ($r_1 + r$
if $\fP_0$ is symmetric).  This proves that $f$ has a strict $2(r_1+r)$-collapse (a strict $(r_1 +2r)$-collapse if $\fP_0$ is symmetric).  \end{proof}

\section{The quartic case:  $\fK_4$,  and $\etuA(n_1,n_2,n_3,n_4)$}\label{4case}

Before proving the existence of $\fK_4$ for characteristic not 2 or 3, we need two observations.
One is that by iterating Euler's formula, we have that for a homogeneous polynomial $F$ of degree
$d$ over a field $K$,  $d(d-1)F = \sum_{i,j} x_ix_j \partial^2F/\partial x_i \partial x_j$.

Second, we need the following observation:

\begin{lemma}\label{partials} Let $K$ be a field and let $K[\vect x N]$ be a polynomial ring. Let
$G$ be a form of degree $d \geq 3$ such that $d$ is not zero in $K$.
If  $G$ has a $k$-collapse such that $h$ of the $k$ terms in the collapse expression 
involve a linear factor, where $0 \leq h \leq k$, then $\cD G$ has a collective $(2k-h, h)$-collapse.   
Hence, $\cD G$ has a collective $2k$-collapse.
In particular, if $G$ has degree 3 and has a $k$-collapse, then $\cD G$ has a collective $(k,k)$-collapse.
\end{lemma}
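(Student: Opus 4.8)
The plan is to expand a chosen $k$-collapse of $G$ by the Leibniz (product) rule and then sort the resulting products of partials by degree. First I would write the given $k$-collapse as $G=\sum_{t=1}^k A_tB_t$, where each $A_t$ and $B_t$ is a nonzero form of positive degree and $\deg A_t+\deg B_t=d$ (any zero term may be discarded). Relabel so that, for $1\le t\le h$, the factor $A_t=:L_t$ is linear, while for $h<t\le k$ neither factor is linear, so that both $A_t$ and $B_t$ have degree between $2$ and $d-2$. Note that when $d=3$ every summand automatically has a linear factor — its two factor-degrees are positive and sum to $3$ — so that $h=k$ in that case. Note also that, since $d$ is nonzero in $K$, Euler's identity $dG=\sum_j x_j\,\partial G/\partial x_j$ forces $\cD G$ to have top degree exactly $d-1$ whenever $G\ne0$; the case $G=0$ is trivial.

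Next I would differentiate: for each variable $x_j$,
\[
\frac{\partial G}{\partial x_j}=\sum_{t=1}^k\left(\frac{\partial A_t}{\partial x_j}\,B_t+A_t\,\frac{\partial B_t}{\partial x_j}\right).
\]
For $h<t\le k$ both summands lie in the ideal $(A_t,B_t)$, generated by two forms of degree $\le d-2$. For $1\le t\le h$, the summand $(\partial L_t/\partial x_j)\,B_t$ is a scalar multiple of the $(d-1)$-form $B_t$, and $L_t\,(\partial B_t/\partial x_j)$ lies in the principal ideal $(L_t)$ with $L_t$ of degree $1\le d-2$ (here $d\ge3$ is used). Summing over $t$ and over $j$ shows that $\cD G$ is contained in the ideal generated by the $2(k-h)$ forms $A_t,B_t$ $(h<t\le k)$ together with the $h$ linear forms $L_t$ $(1\le t\le h)$ — a total of $2k-h$ generators, all of degree $\le d-2$ — and the $h$ forms $B_t$ $(1\le t\le h)$ of degree $d-1$. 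Since $\cD G$ has top degree $d-1$, this is precisely a collective $(2k-h,\,h)$-collapse. Amalgamating the two parts of the auxiliary data gives a collective $2k$-collapse ($2k$ generators of positive degree), and specializing to $d=3$, where $h=k$, yields the stated collective $(k,k)$-collapse.

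There is no essential obstacle; the argument is a direct computation, and the only point needing care is the degree bookkeeping: checking that both factors of a non-linearly-factored summand have degree at most $d-2$, that a linear factor has degree at most $d-2$ (which is where $d\ge3$ enters), and that $\cD G$ has top degree exactly $d-1$ (which is where the hypothesis that $d$ is nonzero in $K$ enters), so that the generators produced land in the correct degree slots of the collective-collapse bookkeeping of Definition~\ref{defsafety}.
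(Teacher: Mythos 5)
Your proposal is correct and follows essentially the same route as the paper: write the collapse with the $h$ linearly-factored terms separated out, apply the product rule, and note that the derivatives land in the ideal generated by the $2k-h$ factors of degree at most $d-2$ plus the span of the $h$ degree-$(d-1)$ cofactors of the linear forms. Your extra remark via Euler's identity about $\cD G$ having top degree exactly $d-1$ is a harmless bit of added care that the paper's proof leaves implicit.
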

\begin{proof} Suppose that $G = \sum_{s=1}^h L_s M_s + \sum_{t=1}^{k-h} N_tP_t$
where the  $L_i$ are linear forms, the $M_i$ have degree $d-1$,  and the $N_s$, $P_s$
have degrees between 2 and $d-2$.  By the product rule, each partial derivative of $G$ is in
the ideal generated by the $L_s$, $N_t$, $P_t$ ($k-h + k-h +h$ elements) and the $M_s$
($h$ elements).  The final two remarks follows at once. (For the final remark, note that every
term in a collapse of degree 3 polynomial involves a linear factor.) \end{proof} 

We are now ready to prove the existence of $\fK_4$.  If $G$ is a form in $K[\vect x N]$, $\nabla G$ denotes
the column of partial derivatives $\partial G/\partial x_i$ of $G$.

\begin{theorem}\label{main4G}  Let $K$ be an algebraically closed field of characteristic not $2$ or $3$.  Then 
we may take $\fK_4(k) = 6k(k+1)4^{k(k+1)} + (k+1)^2$.
\end{theorem}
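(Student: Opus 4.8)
The plan is to prove the contrapositive: assuming every element of $\cD F$ has a $k$-collapse, we show $F$ has a strict $\fK_4(k)$-collapse, which by the remark in Definition~\ref{key} gives the assertion; we are free to replace $K$ by any algebraically closed extension, since by Proposition~\ref{basech} this does not affect the existence of a strict collapse of $F$. The first step is to pass to the Hessian $\fH = \bigl(\partial^2 F/\partial x_i\partial x_j\bigr)$, a symmetric matrix of quadrics. Each $G \in \cD F$ is a cubic, so every term of any $k$-collapse of $G$ has a linear factor, and Lemma~\ref{partials} shows that the space $\cD G$ spanned by the quadrics $\partial G/\partial x_j$ has a collective $(k,k)$-collapse. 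Since $\cD G$ is the span of the entries of $\fH a$ when $G$ corresponds to the coefficient column $a = (\vect a N)\tr$, every \LCK-column of $\fH$ has a collective $(k,k)$-collapse. By the equivalences in Theorem~\ref{genkh}, $\fH T$ then has a collective $(k,h)$-collapse over $L = \overline{K(\vect t N)}$; choosing $h$ minimal, $h \le k$.

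The second step feeds $\fH$ into the machinery of Sections~\ref{sum}--\ref{findW}. By Discussions~\ref{gensetup} and~\ref{Tmtx} and Theorem~\ref{propW}, after conjugating by a generic matrix $\cT = (t_{ij})$ every column of $\cT\fH\cT\tr$ has a collective $\bigl(k(h+1),\fd\bigr)$-collapse with one common auxiliary vector space $W$, where $\fd := \dim W \le h(h+1)$. Theorem~\ref{W0} (which applies Theorem~\ref{mvclpse} with $a=3$) then gives a unique decomposition $\cT\fH\cT\tr = \fP + \fQ$ into symmetric matrices of quadrics in which every column of $\fP$ has a strict $k'$-collapse, $k' = 4^m k(h+1)$ with $m \le \fd$, and every entry of $\fQ$ lies in a $3k'$-strong subspace $W_0 \inc W$ of dimension $\fd_0 = \fd - m$. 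Crucially, Theorem~\ref{W0} also shows that for the upper-left $(N-h-1)\times(N-h-1)$ submatrix $\fP_0$ of $\fP$ and a generic square matrix $\cZ$ of the same size, every column of $\cZ\fP_0\cZ\tr$ still has a strict $k'$-collapse; hence every entry of $\cZ\fP_0\cZ\tr$ is a quadric lying in an ideal generated by at most $k'$ linear forms, so it has rank at most $2k'$. In particular the integers $r_1$ and $r$ of Theorem~\ref{stabPcol} --- the largest ranks of off-diagonal, respectively diagonal, entries of $\cZ\fP_0\cZ\tr$ --- satisfy $r_1,\,r \le 2k'$.

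The last step decomposes $F$ itself. The iterated Euler identity gives $12F = X\tr\fH X$ with $X = (\vect x N)\tr$, and putting $W := (\cT\tr)^{-1}X$ (so that $\fH = \cT^{-1}(\cT\fH\cT\tr)(\cT\tr)^{-1}$) this becomes $12F = W\tr\fP W + W\tr\fQ W$. Writing $W = \mx W'\\ W''\emx$ with $W'$ of length $N-h-1$ and $W''$ of length $h+1$, and $\fP = \mx \fP_0 & B\\ B\tr & C\emx$ accordingly,
$$12F \;=\; W'\tr\fP_0 W' \;+\; \bigl(2\,W'\tr B\,W'' + W''\tr C\,W''\bigr) \;+\; W\tr\fQ W.$$
The last summand lies in the ideal generated by a basis of $W_0$, so it has a strict $\fd_0$-collapse; the bracketed summand equals $\sum_{j=1}^{h+1}W''_j E_j$ with each $E_j$ a cubic, so it has a strict $(h+1)$-collapse; and $W'\tr\fP_0 W'$ is the unique entry of $V\fP_0 V\tr$ with $V = W'\tr$ a row of linear forms, so Theorem~\ref{stabPcol} (case $a=1$, $\fP_0$ symmetric) gives it a strict $(r_1+2r) \le 6k'$-collapse. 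Summing, and using that $12$ is a unit, $F$ has a strict collapse with at most $6k' + (h+1) + \fd_0$ terms. Since $h \le k$, $m \le \fd \le h(h+1) \le k(k+1)$, and $\fd_0 \le \fd$, we have $6k' = 6\cdot 4^m k(h+1) \le 6k(k+1)4^{k(k+1)}$ and $(h+1) + \fd_0 \le (h+1) + h(h+1) = (h+1)^2 \le (k+1)^2$, so $F$ has a strict $\bigl(6k(k+1)4^{k(k+1)} + (k+1)^2\bigr)$-collapse; Proposition~\ref{basech} carries this back to the original field.

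Nearly all of the real work is already packaged into Theorems~\ref{genkh}, \ref{propW}, \ref{W0}, and especially the rank estimate of Theorem~\ref{stabPcol}. The main obstacle in putting the proof together is bookkeeping: arranging the block decomposition so that everything outside $\fP_0$ either lands in the small ideal $(W_0)$ or --- like the $W''$-corner, which involves only the $h+1$ linear forms of $W''$ --- contributes only a bounded number of collapse terms independent of $N$, and then verifying that the constants accumulate to exactly the claimed value of $\fK_4(k)$.
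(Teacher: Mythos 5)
Your proposal is correct and follows essentially the same route as the paper: Hessian plus Lemma~\ref{partials}, then Theorem~\ref{genkh}, Theorem~\ref{oneW}, Theorem~\ref{propW}, and Theorem~\ref{W0} to get the symmetric decomposition $\cT\fH\cT\tr = \fP + \fQ$ with a single small auxiliary space, the rank bound on entries of the generic transform of $\fP_0$, the iterated Euler identity, and Theorem~\ref{stabPcol} to collapse the $\fP_0$ part, with the constants assembling to the same value. The only differences are bookkeeping ones: you keep the cross and corner blocks and the $\fQ$-term explicit instead of reducing modulo the ideals $I_1+I_2$ as the paper does, and your substitution $W=(\cT\tr)^{-1}X$ handles the change of coordinates more carefully than the paper's display $(*)$.
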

\begin{proof}  Let $R = K[\vect x N]$ and let $F$ be a quartic form over $K$.  Suppose that 
 $\cD F$  consists entirely of forms with a $k$-collapse.  By Definition~\ref{key}, what we need to
 show is that $F$ has
$\bigl(6k(k+1)4^{k(k+1)} + (k+1)^2\bigr)$-collapse.    
By Proposition~\ref{basech}(b),  it suffices to show this after enlarging the field.  

Let $\fH$ be the  Hessian of $F$.  A linear combination of columns of $\fH$ is the same as $\nabla G$,  
where $G$ is the corresponding linear combination of  $\partial F/\partial x_1, \ldots, \partial F/\partial x_N$. 
It follows from Lemma~\ref{partials} that all of the \LCK-columns of
$\fH$ have a collective $(k,k)$-collapse:  each has the form $\nabla G$ for some $G \in \cD F$.  

Let notation and hypotheses be as in Discussions~\ref{gensetup}~and~\ref{Tmtx}.  In particular, let
$\cT$ be as in Discussion~\ref{Tmtx}, and let $Z$ and $Z_0$ be as in the statement of 
Theorem~\ref{propW}, part (d).
By Theorem~\ref{genkh}, Theorem~\ref{oneW}, 
and Theorem~\ref{propW} with $h = k$,  we first obtain a collective $(k(k+1), k(k+1))$-collapse for the columns of 
$\cT\fH\cT\tr$  using a single auxiliary vector space. We also obtain from Theorem~\ref{W0} 
that  $\cT \fH \cT\tr$  has a decomposition $\fP + \fQ$ such that,
if  (as indicated above)  $Z_0$ is a size $N-k-1$ square matrix of new indeterminates and
$\fP_0$ is the size $N-k-1$ square submatrix in the upper left corner of $\fP$ (this notation is the
same as in the statement of  Theorem~\ref{W0}) then the following hold.
\begin{enumerate}
\item  The elements of $Z_0 \fP_0 Z_0$ all have
a strict $k(k+1)4^{k(k+1)}$-collapse.
\item The elements of $\fQ$ are in a vector space $W_0$ of dimension
at most  $k(k+1)$ over the extended base field. 
\item Hence, the rank of each element of $Z_0\fP_0 Z_0$ is at most  $r = 2k(k+1)4^{k(k+1)}$. 
\end{enumerate}

The iterated Euler's formula discussed above for $d = 4$ yields that $\bigl(12F\bigr) = X \fH X\tr$, where 
$X$ is the row matrix with entries $\vect x N$. Then, 

$$(*) \quad \bigl(12F\bigr) = X(\fP + \fQ )X\tr = X\fP X\tr + X\fQ X\tr.$$

The entries of $\fQ$ are in $W_0$ and so they are in the ideal $I_1$ generated by at most  $k(k+1)$ linear forms. 
Let $I_2$ be the ideal generated by  $x_{N-k+1}, \, \ldots, x_N$.  Modulo $I_2$,  $X$ becomes the concatenation
of a $1 \times (N-K)$ matrix $X_1$ and a $1 \times k$ block of  zeros. 
Modulo $I_1$,  $\fQ \equiv 0$.   So   $X\fH X\tr \equiv 
X(\fP)X\tr$,  and modulo  $I_1 +I_2$,   $X \fH X\tr  \equiv X_1 \fP_0 X_1\tr$.  

Hence, from $(*)$, we  have $12F \equiv  X_1 \fP_0 X_1\tr$.   By Theorem~\ref{stabPcol} in the symmetric case, 
since $r$ bounds the ranks of all elements, 
 $X_1 \fP_0 X_1\tr$ has a strict $3r$-collapse,
i.e., a strict $6k(k+1)4^{k(k+1)}$-collapse.  Since $I_1 + I_2$ has $k(k+1) + (k+1)$ generators, 
this implies that $12F$ and, hence, $F$, has a strict
$\bigl(6k(k+1)4^{k(k+1)} + k(k+1) + k+1\bigr)$-collapse, as required. \end{proof}

\begin{theorem}\label{mainquart} For algebraically closed fields of characteristic $\not=2,3$,  a choice of the function 
$\etuA(n_1,n_2,n_3, n_4)$ exists for all $\eta \geq 1$, and can be calculated from
Theorem~\ref{SJ}.  Hence, choices of the functions $\etB(n_1,n_2,n_3, n_4)$ and
$C(n_1,n_2,n_3, n_4)$ can also be made explicit. \end{theorem}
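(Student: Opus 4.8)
The plan is to show that Theorem~\ref{SJ} applies with $d = 4$ over any algebraically closed field $K$ with $\ch K \notin \{2,3\}$, and then to read off the formulas. Such a field has characteristic $0$ or characteristic $\geq 5 > 4 = d$, so the standing hypothesis of Theorem~\ref{SJ} (characteristic $0$ or $> d$) is met. First I would assemble the ingredients Theorem~\ref{SJ} demands. For the regular-sequence functions in degrees $2$ and $3$: by the last sentence of Theorem~\ref{ngu} (equivalently Corollary~\ref{A2}) one may take $A_2(n) = n-1$, and by Corollary~\ref{explA3} one may take $A_3(n) = \fJ_3(2n-1) = 2(4n-1)(n-1)$, the latter valid precisely because $\ch K \neq 2,3$. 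For the key functions: the discussion following Definition~\ref{key} gives $\fK_2(k) = 0$ (needs $\ch K \neq 2$), Theorem~\ref{G3} (or Theorem~\ref{K2I}) gives $\fK_3(k) = 2k$ (needs $\ch K \neq 2,3$), and Theorem~\ref{main4G} gives $\fK_4(k) = 6k(k+1)4^{k(k+1)} + (k+1)^2$ (needs $\ch K \neq 2,3$). Each $\fK_i$ is visibly nondecreasing in $k$ and each $A_i$ is nondecreasing in $n$, so the monotonicity hypotheses hold; if one wished to be fully formal, any such function can be replaced by $n \mapsto \max_{m\leq n}(\cdot)$ without disturbing its defining property.

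Next I would invoke Theorem~\ref{SJ} verbatim. Given $\delta = (n_1,n_2,n_3,n_4)$, let $h$ be the number of nonzero entries among $n_2,n_3,n_4$, let $n' = n_2+n_3+n_4$, and set $b = h-1+2n'+\eta$. Then Theorem~\ref{SJ} licenses
$$\etuA(\delta) = \Bigl(0,\ \lceil b/2\rceil + n_1,\ \fK_3\bigl(bA_2(b)\bigr) + b-1 + n_1,\ \fK_4\bigl(bA_3(b)\bigr) + b-1 + n_1\Bigr).$$
Substituting the catalogue above, the degree-$3$ entry simplifies to $2b(b-1) + (b-1) + n_1 = (2b+1)(b-1) + n_1$ (consistent with Theorem~\ref{A3b}), while the degree-$4$ entry is $\fK_4\bigl(2b(4b-1)(b-1)\bigr) + b-1 + n_1$, a closed-form expression. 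This establishes the existence and effective computability of $\etuA(n_1,n_2,n_3,n_4)$ for every $\eta \geq 1$.

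Finally, for $\etB$ and $C$ I would appeal to \S\ref{BfromA}. The well-ordering recursion recalled there takes the now-explicit $\etuA$ to $\etB(\delta)$ by a finite, effective procedure (so $\etB(n_1,n_2,n_3,n_4)$ is explicit, albeit — as the discussion on the size of $B(n_1,n_2,n_3)$ already signals — enormous), and the flat-base-change argument of \S\ref{BfromA} then converts a generator bound $\etB$ for a small subalgebra into a projective-dimension bound $C(n_1,n_2,n_3,n_4)$.

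I do not expect any real obstacle: the substantive content, above all the construction of $\fK_4$ in Theorem~\ref{main4G}, is already in hand, and this statement is a pure assembly of Theorem~\ref{SJ} with the list of key functions and the recursion of \S\ref{BfromA}. The only points requiring attention are the characteristic bookkeeping — verifying that each cited result is available when $\ch K \notin \{2,3\}$, in particular that Corollary~\ref{KtoR}/Theorem~\ref{GtoS} need the characteristic not to divide $3$ (for $\fK_3$) and $4$ (for $\fK_4$), which is automatic here — and confirming the monotonicity of the $A_i$ and $\fK_i$, both of which are routine.
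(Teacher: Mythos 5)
Your proposal is correct and is essentially the paper's own route: Theorem~\ref{mainquart} is stated there as a direct assembly of Theorem~\ref{SJ} with $A_2(n)=n-1$, $A_3(n)=\fJ_3(2n-1)$ (Corollary~\ref{explA3}), $\fK_3(k)=2k$ (Theorem~\ref{G3}), and $\fK_4$ from Theorem~\ref{main4G}, followed by the recursion and flat base change of \S\ref{BfromA} for $\etB$ and $C$, exactly as you do (compare your degree-$4$ entry with Corollary~\ref{explA4}). The characteristic bookkeeping and monotonicity checks you flag are indeed the only points needing verification, and you handle them correctly.
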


In fact, one obtains a closed form for the functions $\etuA$.  The result is rather complicated, but
Corollary~\ref{explA3}, Corollary~\ref{explA}, and Theorem~\ref{main4G} imply at once:

\begin{corollary}\label{explA4} For algebraically closed fields of characteristic not 2 or 3,  if $\delta = (0,\,0,\,0,\,n)$,
corresponding to an $n$-dimensional vector space whose nonzero forms all have degree $4$, 
we may take $$\etA(\delta) = \fK_4\bigl((2n+\eta)A_3(2n+\eta)\bigr) + 2n+\eta-1$$ with $\fK_4$ as in
Theorem~\ref{main4G} and $$A_3(2n+\eta) = 2(8n+4\eta-1)(2n+\eta-1).$$ \end{corollary}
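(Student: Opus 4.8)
The plan is to obtain Corollary~\ref{explA4} as an immediate specialization of Corollary~\ref{explA} to degree $d = 4$, once the explicit inputs $A_3$ and $\fK_4$ are supplied. Recall that Corollary~\ref{explA}, which is itself the restatement of Theorem~\ref{SJ} in the case $\delta = (0,\ldots,0,n)$, gives $\etA_d(\delta) = \fK_d\bigl((2n+\eta)A_{d-1}(2n+\eta)\bigr) + 2n+\eta-1$. Thus essentially all that is needed is to check that the hypotheses of Theorem~\ref{SJ} are met for $d = 4$ over an algebraically closed field of characteristic not $2$ or $3$, and then to record the closed forms for the functions it invokes.

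First I would check the hypotheses of Theorem~\ref{SJ}. A characteristic not equal to $2$ or $3$ is either $0$ or at least $5$, hence $0$ or $> 4 = d$, as required. We must exhibit nondecreasing functions $A_i$ for $2 \le i \le 3$ with the regular-sequence property: the last statement of Theorem~\ref{ngu} allows $A_2(n) = n-1$, and Corollary~\ref{explA3} allows $A_3(n) = \fJ_3(2n-1) = 2(4n-1)(n-1)$, which is nondecreasing for $n \ge 1$. We must also exhibit nondecreasing key functions $\fK_i$ for $2 \le i \le 4$: by the discussion following Definition~\ref{key} we may take $\fK_2 \equiv 0$ (the characteristic is $\ne 2$); by Theorem~\ref{G3} we may take $\fK_3(k) = 2k$; and by Theorem~\ref{main4G} we may take $\fK_4(k) = 6k(k+1)4^{k(k+1)} + (k+1)^2$. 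Each of these is nondecreasing in $k$. Hence Theorem~\ref{SJ}, and therefore Corollary~\ref{explA}, applies with $d = 4$.

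Second I would substitute. Corollary~\ref{explA} with $d = 4$ gives
$$\etA(\delta) = \fK_4\bigl((2n+\eta)A_3(2n+\eta)\bigr) + 2n+\eta-1,$$
with $\fK_4$ as in Theorem~\ref{main4G}, which is the first displayed formula. Setting $m = 2n+\eta$ in $A_3(m) = 2(4m-1)(m-1)$ yields $A_3(2n+\eta) = 2\bigl(4(2n+\eta)-1\bigr)\bigl(2n+\eta-1\bigr) = 2(8n+4\eta-1)(2n+\eta-1)$, which is the second displayed formula, completing the argument.

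There is no real obstacle here: the content is entirely in the earlier results Theorem~\ref{SJ}, Corollary~\ref{explA}, Corollary~\ref{explA3}, and Theorem~\ref{main4G}. The only points demanding any attention are verifying that the cited auxiliary functions $A_2, A_3, \fK_2, \fK_3, \fK_4$ are nondecreasing --- for $\fK_4$ this holds because $k(k+1)$, $4^{k(k+1)}$, and $(k+1)^2$ are each nondecreasing for $k \ge 1$ --- and carrying out the single arithmetic substitution $m \mapsto 2n+\eta$ in the formula for $A_3$.
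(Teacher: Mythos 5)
Your proposal is correct and matches the paper's own (very short) derivation: the paper likewise obtains Corollary~\ref{explA4} immediately from Corollary~\ref{explA} (the $\delta=(0,\ldots,0,n)$ case of Theorem~\ref{SJ}) together with the explicit $A_3$ of Corollary~\ref{explA3} and the $\fK_4$ of Theorem~\ref{main4G}, followed by the same substitution $A_3(2n+\eta)=2(8n+4\eta-1)(2n+\eta-1)$. Your extra verification that the auxiliary functions are nondecreasing and that characteristic $\ne 2,3$ gives characteristic $0$ or $>4$ is consistent with, if slightly more explicit than, what the paper leaves implicit.
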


\section{Questions and conjectures}\label{qu} 

We believe that the values we have found for the the functions $\etA$ and $\etB$ are very far from best
possible (except for $\etA$ in the case of quadrics).  We strongly expect that the best possible Stillman bounds
are far better than those we have found.   The conjectures below express these expectations.

\begin{conjecture}\label{K} For all characteristics  the best possible bound for $\fK_d(k)$ with $d$ fixed and $k$
varying is at worst $c_dn^{\kappa(d)}$ for some positive constant $c_d$ and  function $\kappa$ of $d$. 
 \end{conjecture}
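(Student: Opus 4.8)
The natural strategy is induction on $d$, carried along together with the functions $A_i$, $\fJ_i$, so that at the inductive step one may assume that $\fK_2, \ldots, \fK_{d-1}$ --- and hence, by Theorem~\ref{SJ} and Corollary~\ref{KtoR}, the functions $A_i$ and $\fJ_i$ for $i < d$ --- are bounded by polynomials in their argument of degree depending only on $i$. For the passage from $d-1$ to $d$ one would follow the architecture of the proof of Theorem~\ref{main4G}: given a degree $d$ form $F$ whose space $\cD F$ of first partials consists entirely of $k$-collapsible forms, Lemma~\ref{partials} shows that every \LCK-column of the Hessian $\fH$ (a symmetric matrix of $(d-2)$-forms) has a collective $(2k,h)$-collapse for some $h \leq k$; one then seeks a single auxiliary vector space $W$ with $\di W$ polynomial in $k$ and a decomposition $\fH = \fH_1 + \fH_2$ in which the entries of $\fH_2$ lie in $W$ and every \LCK-column of $\fH_1$ has entries of strength (of rank, when $d=4$) bounded by a polynomial in $k$; finally one feeds this into a higher-degree generalization of Theorem~\ref{stabPcol} together with the iterated Euler relation $d(d-1)F = \sum_{i,j} x_ix_j\,\partial^2F/\partial x_i\partial x_j$ to conclude that $F$ itself has a strict collapse of polynomial size.

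The first and decisive task is to eliminate the exponential step. In the present argument the bound $k' = 4^m k$ of Theorem~\ref{W0} arises because the subspace $W_0 \inc W$ on which no nonzero element has a small collapse is extracted by peeling off one collapsing direction at a time (Theorems~\ref{mvclpse}~and~\ref{uq}, applied with $a=3$), each peel multiplying the collapse threshold by $a+1=4$, while the number $m$ of peels is only bounded by $\di_\Omega W$, which is itself roughly quadratic in $k$. The plan is to replace this iterative device by a \emph{simultaneous} one: prove directly, by an incidence--dimension argument on $\PP(W) \times (\text{the constructible locus of forms having a bounded collapse})$ in the spirit of Theorem~\ref{SaG}, that there is a subspace $W_0 \inc W$ of codimension bounded by a polynomial $p(k)$ such that every nonzero element of $W_0$ has strength at least $2p(k)$, \emph{without} the multiplicative cost. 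If this succeeds then the symmetry-preserving descent of Theorem~\ref{W0} goes through with $k' = \mathrm{poly}(k)$; since Theorems~\ref{oneW}, \ref{propW}, and \ref{stabPcol} already cost only polynomially, one would obtain $\fK_4(k) = \mathrm{poly}(k)$ outright, improving the exponential value of Theorem~\ref{main4GI}.

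For $d \geq 5$ two further ingredients are required. First, Theorem~\ref{stabPcol} must be upgraded from symmetric matrices of \emph{quadrics} with off-diagonal entries of bounded \emph{rank} to symmetric matrices of $(d-2)$-forms with off-diagonal entries of bounded \emph{strength}: here $\cD$ of an entry plays the role formerly played by the span of the partials of a quadric, and the inductive hypothesis on $\fK_{d-2}$ and the $A_i$, $\fJ_i$ is what keeps all the intermediate spaces of polynomial dimension. Second, the Euler relation is worthless when $\ch K \leq d$, and must be replaced by the identity for Hasse--Schmidt (divided-power) derivatives, which expresses $F$, over $\Z$ and with unit coefficient, as a sum of terms $x_ix_j$ times a second Hasse--Schmidt derivative of $F$; the combinatorial reductions used for cubics in characteristics $2$ and $3$ (Theorems~\ref{R323}~and~\ref{R323I}) indicate that this is feasible, though the bookkeeping grows heavy. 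Putting these together would yield $\fK_d(k) \leq c_d k^{\kappa(d)}$ by induction, and, via Theorem~\ref{SJ} and Corollary~\ref{KtoR}, polynomial-in-$k$ bounds for $\fJ_d$ and for $\etuA$ on any fixed dimension sequence.

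The step I expect to be the main obstacle is the simultaneous-extraction step of the second paragraph. It is a quantitative form of the assertion that a space of forms of large dimension contains a large-codimension subspace all of whose nonzero members have large strength --- in essence, that the generic element of a high-dimensional space of forms is highly strong, with effective estimates. The qualitative version is known, but the proofs available to us (via Noetherianity of $\mathrm{GL}_\infty$-stable varieties) are ineffective, and extracting a polynomial bound appears to require a genuinely new idea; this is why we record only the conjecture.
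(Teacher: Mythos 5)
The statement you are treating is Conjecture~\ref{K}; the paper does not prove it, and neither do you --- as you say yourself in your final paragraph, your text is a program, not a proof, so the honest verdict is that there is a genuine gap, one you have at least located accurately. Your diagnosis of the paper's $d=4$ argument is correct: the sole exponential loss in Theorem~\ref{main4G} enters through Theorem~\ref{W0}, where the strong subspace $W_0\inc W$ is extracted by peeling off one collapsible direction at a time (Theorem~\ref{mvclpse} with $a=3$), each peel multiplying the collapse threshold by $4$ while the number of peels is only bounded by $\di W\leq k(k+1)$; and Theorems~\ref{oneW}, \ref{propW}, and \ref{stabPcol} do indeed cost only polynomially. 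The missing ingredient is exactly the one you name: an effective, non-iterative statement that a space of forms not largely covered by the constructible locus of $s$-collapsible elements contains a subspace of polynomially bounded codimension that is $\mathrm{poly}(s)$-strong, with no multiplicative $(a+1)^m$ penalty. Nothing in the paper (in particular not the incidence argument of Theorem~\ref{SaG}, which is what produces the codimension-$(b-1)$ subspace in the first place) supplies this, and the only known qualitative substitutes are ineffective, so this step is not a refinement of existing lemmas but a new theorem that would have to be proved.

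Two further points keep your program from being ``bookkeeping away from a proof.'' First, for $d\geq 5$ the needed analogue of Theorem~\ref{stabPcol} concerns symmetric matrices of $(d-2)$-forms with entries of bounded strength, whereas the proof given rests entirely on the linear algebra of quadratic forms: symmetric Hessian matrices of scalars, minors and Pfaffians, Proposition~\ref{rk}, Corollary~\ref{cDsq}, and Lemma~\ref{rankZP}. There is no higher-degree counterpart of Corollary~\ref{cDsq} (a maximal-strength element of a space of $e$-forms, $e\geq 3$, need not have the whole space inside the ideal of its partials), so replacing rank by strength there is a second genuine open problem, not an upgrade. Second, the conjecture is asserted for all characteristics, while the paper's degree-4 key function requires characteristic $\neq 2,3$, and Theorem~\ref{2tr} --- the engine behind $\fK_3$ in good characteristic --- is false in characteristic 2; your Hasse--Schmidt suggestion is a plausible direction but is not carried out here or in the paper, where characteristics 2 and 3 are handled for cubics only by ad hoc arguments (Theorem~\ref{R323}). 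So your proposal is a reasonable and well-informed research plan, consistent with the expectations the authors themselves express after Conjecture~\ref{K}, but the statement remains a conjecture both in the paper and after your writeup.
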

 
 Note that for $d= 3$,  $c_3 = 2$ and $\kappa(3) = 1$.  Of course, our result for $\fK_4$ is much worse than
 polynomial, but we believe that there is a polynomial bound.
\begin{conjecture}\label{A} For all characteristics  the best possible bound for $\etA$ for a given value of $\eta$ for
$n$ polynomials of degree $d$, where $d$ is fixed and $n$ varies, is, at worst, $C_{d,\eta}n^{\lambda(d)}$ for some positive constant $C_{d,\eta}$ depending only on $d$ and $\eta$ and some function $\lambda$ of $d$. \end{conjecture}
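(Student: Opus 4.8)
The plan is to reduce the conjecture, one degree at a time, to the single assertion that the key functions $\fK_i$ (equivalently, by Corollary~\ref{KtoR} and Theorem~\ref{SJrank}, the J-rank functions $\fJ_i$) can be chosen to grow polynomially in their argument for each fixed degree — that is, to Conjecture~\ref{K}. Granting this, one proceeds by induction on $d$; recall that the $\etA$ in question are already known to exist in all characteristics, non-effectively, by \cite{AH2}, so only the polynomial growth is at issue. The base case $d = 2$ is Corollary~\ref{A2}: there $\etA_2(n_1,n_2) = n_2 - 1 + \lceil \eta/2 \rceil + n_1$ is affine in $n = n_1 + n_2$, so $\lambda(2) = 1$ works and the $\eta$-dependence sits in an additive term. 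For the inductive step fix $d \ge 3$ and $\eta$, and let $\delta = (\vect n d)$ with $n = n_1 + \cdots + n_d$; with $b := h - 1 + 2n' + \eta$ as in Theorem~\ref{SJ} ($n' = n_2 + \cdots + n_d$, $h$ the number of nonzero $n_i$ with $i \ge 2$), the quantity $b$ is an affine function of $n$ whose coefficients depend only on $\eta$. When $\ch K$ is $0$ or $> d$, Theorem~\ref{SJ} gives $\etA_i(\delta) = \fK_i\bigl(b\,A_{i-1}(b)\bigr) + b - 1 + n_1$ for $3 \le i \le d$ and $\etA_2(\delta) = \lceil b/2 \rceil + n_1$; for $\ch K \le d$ one instead applies Theorem~\ref{SJrank} with J-rank functions $\fJ_i$ built by the direct methods of Theorem~\ref{R323}. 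Here $A_{i-1}$, the strength that forces a regular sequence in degree $i-1$, may be taken to be $\thA_{i-1}$ at a shifted argument (Remark~\ref{stregh}, Corollary~\ref{groth}), so the inductive hypothesis with $\eta = 3$ bounds $A_{i-1}(m)$ by a polynomial of degree $\lambda(i-1)$ in $m$. If moreover $\fK_i(k) \le c_i k^{\kappa(i)}$, then $\etA_i(\delta)$ is bounded by a polynomial in $n$ of degree $\kappa(i)\bigl(\lambda(i-1) + 1\bigr)$; taking $\lambda(d) := \max_{3 \le i \le d}\kappa(i)\bigl(\lambda(i-1)+1\bigr)$ (with $\lambda(1) = 0$, $\lambda(2) = 1$) closes the induction, and the constant $C_{d,\eta}$ absorbs all of the $\eta$-dependence.

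Thus the whole conjecture rests on Conjecture~\ref{K}, and more precisely on producing polynomially-growing $\fJ_d$ for every $d$ and every admissible set of characteristics. Through degree $3$ this is already complete in all characteristics: $\fK_3(k) = 2k$ (Theorem~\ref{G3}) makes $\fJ_3$ quadratic when $\ch K \ne 2, 3$, and Theorem~\ref{R323} makes $\fJ_3$ quadratic when $\ch K$ is $2$ or $3$; hence $\lambda(3) = 2$ and the conjecture holds for $d \le 3$. The real content is $d \ge 4$, where at present we have only the exponential bound $\fK_4(k) = 6k(k+1)4^{k(k+1)} + (k+1)^2$ of Theorem~\ref{main4G}, and, in characteristics $2$ and $3$, no explicit $\fJ_4$ at all.

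The principal obstacle — and the reason this is only a conjecture — is therefore to replace the exponential $\fK_4$ (and, inductively, $\fK_d$) by a polynomial. In the present argument the blow-up is structural, not cosmetic: starting from the $(k,k)$-collapse that Lemma~\ref{partials} forces on every \LCK-column of the Hessian $\fH$, the single-auxiliary-vector-space mechanism of Theorems~\ref{oneW}~and~\ref{propW} already costs a factor $k+1$ in both parameters, and to make the decomposition $\cT\fH\cT\tr = \fP + \fQ$ unique — hence $\GL$-equivariant enough to push a collapse of the upper-left block $\fP_0$ back to one of $\fH$ — Theorems~\ref{mvclpse}~and~\ref{W0} inflate the strict-collapse parameter from $k$ to $k' = 4^m k$ with $m$ as large as $\fd \sim k(k+1)$, which is exactly what produces the rank bound $r \le 2k(k+1)4^{k(k+1)}$ fed into Theorem~\ref{stabPcol}. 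A polynomial $\fK_4$ seems to require one of: (i) bounding the ranks of the entries of $\fP_0$ polynomially in $k$ \emph{ab initio}, by a direct geometric analysis of the variety of quartics with everywhere-collapsing gradient rather than by first passing to the uniquely-represented form; or (ii) replacing the ``extract one low-rank element, multiply the generator count by $a$'' recursion of Theorem~\ref{mvclpse} by a single step losing only a polynomial factor. A plausible alternative route is to feed in the qualitative finiteness results on strength and $\GL$-varieties of \cite{BCC, DES, Dra1, Dra2, ESS1, ESS2, ESS3, ESS4} and to extract explicit polynomial degrees $\kappa(d)$ from them; but converting those qualitative statements into effective polynomial bounds — and handling the characteristics $\le d$ along the way — appears to demand substantial new quantitative input, and that is where we expect the genuine difficulty to lie.
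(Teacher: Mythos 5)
The statement you were asked about is a conjecture: the paper offers no proof, only the remark following it that Conjecture~\ref{K} implies Conjecture~\ref{A} with $\lambda(d) = \kappa(d)\bigl(\lambda(d-1)+1\bigr)$ (via Theorem~\ref{SJ}(3)/Corollary~\ref{explA}), together with the observation that the case $d \leq 3$ is already established with $\lambda(d) = d-1$. Your writeup reproduces essentially this same reduction and the known cases, and correctly isolates the genuinely open content (polynomially growing $\fK_d$, resp.\ $\fJ_d$, for $d \geq 4$, plus the characteristics dividing $d$); so it is consistent with the paper's own discussion, but, as you yourself acknowledge, it is a reduction rather than a proof, and the conjecture remains open.
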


This has been proved here for $d \leq 3$, where we may take $\lambda(d)  = d-1$.  However, we do not feel that there is
enough evidence yet to make a conjecture about what $\lambda(d)$ is in general.  Because of the formula in part (3) of Theorem~\ref{SJ}, 
or by Corollary~\ref{explA} we have
that $\etA_d(0,\, \ldots, 0,\, n) = \fK_d\bigl((2n+\eta)A_{d-1}(2n+\eta)\bigr) + 2n +\eta -1$.  This shows that
Conjecture~\ref{K} implies Conjecture~\ref{A}, and that one has, at worst, $\lambda(d) = 
\kappa(d)\bigl(\lambda(d-1) +1\bigr).$
\begin{conjecture}\label{B} For all characteristics, the bound
for the projective dimension of an ideal generated by $n$ forms of degree at most $d$, where $d$ is fixed
and $n$ varies, is, at worst,  $C_dn^d$ for some
positive constant $C_d$ depending only on $d$. \end{conjecture}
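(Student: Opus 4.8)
A plan of attack on Conjecture~\ref{B} is as follows. As in \S\ref{BfromA}, if the $n$ given forms lie in the subalgebra generated by a regular sequence $G_1,\dots,G_s$ of forms of degree $\leq d$, then $R$ is flat (indeed free) over the polynomial ring $A=K[G_1,\dots,G_s]$ in $s$ variables, $I=(I\cap A)R$, and flat base change gives $\pd_R(R/I)\leq\pd_A\bigl(A/(I\cap A)\bigr)\leq s$. Thus Conjecture~\ref{B} reduces to the purely combinatorial statement that the small-subalgebra function may be taken polynomial of degree $d$ in the number of forms: $B(0,\dots,0,n)=O(n^d)$, and more generally $\etB(\delta)=O(n^d)$ with $n=\sum_i n_i$. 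I would attempt this by induction on the degree bound $d$, the cases $d\leq 1$ being trivial.

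The first genuine step would be to prove Conjecture~\ref{K}, a polynomial bound $\fK_d(k)\leq c_dk^{\kappa(d)}$ for the key functions. This holds for $d\leq 3$ (with $\fK_3(k)=2k$ by Theorem~\ref{G3}), but the quartic bound of Theorem~\ref{main4GI} is super-exponential and would have to be replaced; establishing a polynomial $\fK_4$, and polynomial $\fK_d$ for all $d$ and all characteristics, is itself a serious problem. Granting Conjecture~\ref{K}, the recursion displayed just after Conjecture~\ref{A} (which comes from part (3) of Theorem~\ref{SJ} and Corollary~\ref{explA}) yields Conjecture~\ref{A}: $\etuA(\delta)$ is polynomial of degree $\lambda(d)=\kappa(d)\bigl(\lambda(d-1)+1\bigr)$ in $n$. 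At this stage one already obtains the conclusion of Conjecture~\ref{B} in the ``strong'' case: if the span of the given forms, together with the lower-degree data arising from their collapses, is $\etuA$-strong, then the forms form a regular sequence and $\pd_R(R/I)=n\leq C_dn^d$.

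The remaining step, which I expect to be the main obstacle, is to pass from polynomial $\etuA$ to polynomial $\etB$. The recursion of \S\ref{BfromA} is far too lossy: one collapse $F_i=\sum_{t=1}^{k}P_tQ_t$ of a single degree-$e$ generator introduces $\Theta(k)$ new forms of degree $<e$, and $k$ is comparable to the current strength requirement, which itself grows with the current number of forms; iterating through all generators and all degrees produces the $n$-tuple-exponential behavior described after Corollary~\ref{explA3}. Instead I would look for a global construction: build the subalgebra degree by degree from the bottom, requiring at each degree $i$ only that the images of the degree-$i$ generators modulo the subalgebra generated in degrees $<i$ span an $\etuA$-strong space, and --- crucially --- bound the total dimension of the accumulated data in degrees $\leq i$ by a polynomial in $n$ that is uniform in $N$. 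The heart of the matter is a ``polynomial closure'' statement: the smallest subalgebra of the required form has $\mathrm{poly}_d(n)$ generators. This seems genuinely hard; it is open already when $d=2$, where it is equivalent to the assertion (cf. Question~6.2 of \cite{HMMS1}) that an ideal generated by $n$ quadrics has projective dimension $O(n^2)$.

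One route to the polynomial-closure statement would be to choose collapses compatibly across all generators and all degrees at once and to invoke a Noetherianity or boundedness result for the relevant parameter varieties, in the spirit of \cite{Dra1, Dra2, ESS1}, so as to bound the number of new generators produced independently of $N$; turning such a statement into an explicit polynomial in $n$ would require effective forms of those Noetherianity results, which are not presently available. This last point is where I expect the real difficulty to lie.
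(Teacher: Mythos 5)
The statement you are addressing is presented in the paper as a conjecture (Conjecture~\ref{B}); the paper offers no proof of it and explicitly records that it is open even for $d=2$. Your proposal, to its credit, does not claim otherwise: it is a reduction of the conjecture to other open problems, i.e.\ a plan of attack rather than a proof. The reduction itself is consistent with what the paper already says: the flat (free) base change argument of \S\ref{BfromA} shows that a polynomial bound on $\etB$ (or $B$) would yield a polynomial Stillman bound, and the recursion $\lambda(d)=\kappa(d)\bigl(\lambda(d-1)+1\bigr)$ recorded after Conjecture~\ref{A} shows that Conjecture~\ref{K} implies Conjecture~\ref{A}. The two genuine gaps are exactly the ones you name and do not fill: (i) a polynomial key function $\fK_d$ for $d\geq 4$ (the paper's $\fK_4$ is super-exponential, nothing is available in characteristics $2,3$ for $d=4$, and nothing at all for $d>4$); and (ii) the passage from polynomial $\etuA$ to polynomial $\etB$, your ``polynomial closure'' step, which the paper's recursive construction of $\etB$ emphatically does not provide (the paper estimates worse than $n$-tuple exponential growth already for cubics) and which is wide open even for quadrics. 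Since neither step is supplied, your argument does not establish the conjecture.

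Two smaller inaccuracies. First, your assertion that the $d=2$ case of the polynomial-closure statement is \emph{equivalent} to Question~6.2 of \cite{HMMS1} overstates matters in both directions: that question asks for the specific bound $h(n-h+1)$, which implies an $O(n^2)$ Stillman bound but is stronger than it; and conversely a polynomial bound on projective dimension is not known to imply a polynomial bound on $B$ or $\etB$ --- the paper notes only the one-way inequality that $\etB$ must exceed the Stillman bound, and states that the existence of a polynomial bound for $\etB$ is open even for quadrics. So even a proof of Conjecture~\ref{HMMS} would not by itself give your polynomial-closure statement, though it would give Conjecture~\ref{B} for $d=2$ directly. Second, your final suggestion to invoke Noetherianity results in the spirit of \cite{Dra1,Dra2,ESS1} to bound the number of new generators would at best give existence of some bound independent of $N$, not an explicit polynomial in $n$; as you acknowledge, effective forms of those results are not available, so this remains a direction rather than an argument.
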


Note that while the conjecture for $\etA$ has been
verified here for $d \leq 3$, the conjecture for Stillman bounds is not known even in the case $d=2$. 

The following is raised as Question 6.2 in \cite{HMMS1}.  We conjecture this explicitly: of course, it implies 
Conjecture~\ref{B} for the case $d=2$.
\begin{conjecture}\label{HMMS}  If $R$ is a polynomial ring over a field and  $I$ an ideal of $R$ generated by $n$ quadrics and  of height $h$, then  $\pd_R(R/I) \leq  h(n - h + 1)$. \end{conjecture}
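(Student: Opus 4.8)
The plan is to attempt this by induction on $n$, the number of quadric generators, which we may take to be minimal (passing to a minimal generating set only decreases $n$, and $h(n'-h+1)\le h(n-h+1)$ for $n'\le n$). Since $\pd_R(R/I)$ is unaffected by extension of the base field, we may also assume $K$ is algebraically closed, and write $V\inc R_2$ for the $n$-dimensional space of quadrics generating $I$. If $n=h$, then $I$ is an ideal of height $n$ generated by $n$ elements in a Cohen--Macaulay ring, hence a complete intersection, so $\pd_R(R/I)=n=h(n-h+1)$; this is the base case. Suppose now $n>h$. The first step is the observation that $V$ cannot be $h$-\gu: restricting to any $(h+1)$-dimensional subspace $W\inc V$, Proposition~\ref{obv}(a) shows $W$ is still $h$-\gu, and since $\al(h+1)=h$, Corollary~\ref{A2} (with $n_1=0$, $n_2=h+1$) forces a homogeneous basis of $W$ to be a regular sequence, whence $\height(I)\ge h+1$, a contradiction. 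Therefore some nonzero $F\in V$ has an $h$-collapse, so by Proposition~\ref{qucol} the rank $r$ of $F$ satisfies $r\le 2h$, and after a linear change of coordinates $F$ involves only $x_1,\dots,x_r$.

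The second step is to peel off the quadric $F$. When $r\le 2$, $F=\ell_1\ell_2$ is a product of (possibly equal) linear forms, and one uses the short exact sequence
$$0\to \bigl(R/(I:\ell_1R)\bigr)(-1)\xrightarrow{\ \ell_1\ } R/I\to R/(I+\ell_1R)\to 0,$$
giving $\pd_R(R/I)\le\max\bigl(\pd_R R/(I:\ell_1R),\ \pd_R R/(I+\ell_1R)\bigr)$. In $R/\ell_1R$ the ideal $(I+\ell_1R)/\ell_1R$ is generated by at most $n-1$ quadrics (the image of $F$ is $0$) in one fewer variable, with height $\height(I+\ell_1R)-1\in\{h-1,h\}$; since $\pd_R(M)=\pd_{R/\ell_1R}(M)+1$ by Auslander--Buchsbaum for a nonzero $R/\ell_1R$-module $M$, the inductive hypothesis bounds $\pd_R R/(I+\ell_1R)$ by $h(n-h+1)$. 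The term $\pd_R R/(I:\ell_1R)$ is handled using $\ell_2\in I:\ell_1R$ to reduce to an ideal in one fewer variable; here one must control the quadric-generator count and the height of the colon ideal, for which it is convenient to strengthen the inductive hypothesis so as to allow a bounded number of linear generators alongside the quadrics, carrying an accordingly adjusted bound. This is essentially the mechanism by which \cite{HMMS1} settles $h=2$, where the trichotomy of Proposition~\ref{allred} supplies either a suitable reducible quadric or a direct reduction, and by which \cite{HMMS2} settles $(h,n)=(3,4)$.

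The main obstacle is the case $3\le r\le 2h$. Here $V(F)$ is an irreducible quadric cone of rank $r$, not a union of hyperplanes, so no short exact sequence of the above kind comes from $F$ alone, and one must exploit the global hypothesis $\height(I)=h$ rather than merely the existence of one low-rank element of $V$. A plausible route is to combine the structural description $F=Q(\ell_1,\dots,\ell_r)$ with either a localization of $R/I$ at a minimal prime of height $h$, or with the $r$ linear forms cutting out the singular locus of $V(F)$, so as to build an exact sequence in which the number of quadric generators strictly drops; but controlling simultaneously the drop in height, the quadric-generator count, and the number of auxiliary linear forms through each reduction is delicate, and this bookkeeping is exactly what blocks a uniform argument for all $h\ge 3$, $n\ge 5$. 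Overcoming it seems to require a genuinely new ingredient: either a quadric analogue, valid for ranks $r>2$, of the hyperplane-section/short-exact-sequence toolkit used for $h=2$, or a direct sharp bound on the Betti numbers $\beta_i(R/I)$ in terms of $n$ and $h$ that sidesteps the inductive peeling entirely.
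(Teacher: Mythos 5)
You should first note that the statement you are asked to prove is stated in the paper as Conjecture~\ref{HMMS}, not as a theorem: the paper offers no proof, and indeed records in the introduction that the bound $h(n-h+1)$ is known only for $h=2$ (by \cite{HMMS1}) and for $(h,n)=(3,4)$ (by \cite{HMMS2}), while for $h\geq 3$ and $n\geq 5$ the question is open. The examples of \cite{Mc} show the bound, if true, is sharp. So there is no proof in the paper to compare against, and the relevant question is whether your proposal closes the open cases. It does not, and you say so yourself.

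Your preliminary reductions are sound: the base case $n=h$ is a complete intersection; if $n>h$ then $V$ cannot be $h$-strong, since otherwise Proposition~\ref{obv}(a) and Corollary~\ref{A2} applied to an $(h+1)$-dimensional subspace would force $\height(I)\geq h+1$; hence by Proposition~\ref{qucol} some nonzero $F\in V$ has rank $r\leq 2h$; and the hyperplane-section exact sequence handles $r\leq 2$ in the spirit of \cite{HMMS1}. But the case $3\leq r\leq 2h$ that you flag as "the main obstacle" is not a loose end to be tidied — it is precisely the open content of the conjecture. For $h\geq 3$ the low-rank element of $V$ produced by the strength argument is generically an irreducible quadric of rank up to $2h\geq 6$, no linear form is available to peel off, and no known exact-sequence or Betti-number mechanism controls the simultaneous bookkeeping of height, quadric count, and auxiliary linear generators through the reduction. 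Your proposal therefore has a genuine gap, and the gap coincides with the unsolved problem: what you have written is a correct account of why the known techniques prove the cases already in the literature and stall exactly where the conjecture remains open.
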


The ideal $I_{m,n,d}$ constructed in \cite{Mc}       
 has height $m$, is minimally generated by $m+n$ homogeneous polynomials of degree $d$, while its 
 projective dimension is $m + n {m+d-2 \choose d-1}$, which is a polynomial of total degree $d$ in 
 $m, n$.  These examples for $d=2$ show that one cannot improve the bound in Conjecture~\ref{HMMS},
 and that in a bound that is polynomial in $n$  for the projective dimension for fixed $d$ must have degree at least $d$.
 
The problem of giving explicit bounds for $\etA$ remains in characteristic 2, 3 even when $d=4$.
The corresponding problem for $\etA$ and $\etB$ if $d > 4$ is untouched.

Moreover, so far as we know, there is almost nothing known about lower bounds for $\etB$, even for
quadrics, except  the obvious fact that it must exceed the Stillman bound on projective dimension. 
In particular, so far as we know, it is possible that a polynomial bound for  $\etB$ exists in every
degree.  This problem is wide open, even for quadrics.

\begin{question}
We note that the results of \cite{AH2} show that everything about the primary decomposition of
an ideal generated by $n$ forms of specified degrees is bounded in terms of $n$ and the
degrees of the forms: this includes the number of primes, and  the numbers of  minimal generators and
their degrees for both the primes and primary ideals occurring.   We want to point out that it is largely an 
open question what can happen even for a regular sequence of $n$ quadrics.  Since the multiplicity of the 
quotient is  $2^n$,
the number of associated primes (which are the same as the minimal primes) is at most $2^n$.
This can happen, e.g., if the regular sequence is $x_1x_2, x_3x_4, \ldots, x_{2n-1}x_{2n}$.   We do
not know what might happen with the numbers of generators nor with their degrees. \end{question}
\bigskip\bigskip\bigskip\bigskip\bigskip\bigskip

\quad\bigskip

$\begin{array}{ll}
\textrm{Altair Engineering}                             &\qquad\qquad \textrm{Department of Mathematics}\\
\textrm{1820 E.\ Big Beaver Rd.}                  &\qquad\qquad \textrm{University of Michigan}\\
\textrm{Troy, MI 48083}                               &\qquad\qquad \textrm{Ann Arbor, MI 48109--1043} \\
\textrm{USA}                                                &\qquad\qquad \textrm{USA}\\
\quad & \quad\\
\textrm{E-mail: antigran@gmail.com }          &\qquad\qquad \textrm{E-mail: hochster@umich.edu}\\ 
        
\end{array}$

\end{document}